\numberwithin{equation}{section}
\definecolor{citegreen}{rgb}{0,0.3,0}
\definecolor{refred}{rgb}{0.5,0,0}
\theoremstyle{plain}
\newtheorem {theorem}{Theorem}[section]
\newtheorem {lemma}[theorem]{Lemma}
\newtheorem {proposition} [theorem]{Proposition}
\newtheorem {corollary} [theorem]{Corollary}
\newtheorem{definition}[theorem]{Definition}
\newtheorem{remark}[theorem]{Remark}
\theoremstyle{remark}
\DeclarePairedDelimiter\abs{\lvert}{\rvert}
\DeclarePairedDelimiter\bigabs{\Big\lvert}{\Big\rvert}
\DeclarePairedDelimiter\norm{\lVert}{\rVert}
\newcommand\ddfrac[2]{\frac{\displaystyle #1}{\displaystyle #2}}
\newcommand{\R}{\mathbb R}
\newcommand{\N}{\mathbb N}
\newcommand{\Z}{\mathbb Z}
\newcommand{\e}{\varepsilon}
\renewcommand{\theta}{\vartheta}
\newcommand{\GR}{\mathcal{G}_{R, kR}}
\newcommand{\FR}{\mathcal{F}_{R, kR}}
\newcommand{\capo}{\text{Cap}(\Omega)}
\newcommand{\crit}{\text{Crit}(\phi)}
\newcommand{\barint}
{\rule[.036in]{.12in}{.009in}\kern-.16in \displaystyle\int}
\newcommand{\A}{\mathcal {A}}
\newcommand{\dive}{{\mathrm{div}}}
\newcommand{\ep}{\varepsilon}
\newcommand{\pa}{\partial }
\newcommand{\numberset}{\mathbb}
\renewcommand{\N}{\numberset{N}}
\renewcommand{\R}{\numberset{R}}
\newcommand{\Sf}{\numberset{S}}
\renewcommand{\A}{\mathscr{A}}
\newcommand{\diag}{\mathop {\rm Diag}\nolimits}
\newcommand{\ric}{\mathop {\rm Ric}\nolimits}
\newcommand{\AVR}{{\rm AVR}(g)}
\newcommand{\capa}{{\rm Cap}}
\newcommand{\D}{{\rm D}}
\newcommand{\dd}{{\,\rm d}}
\newcommand{\HH}{{\rm H}}
\newcommand{\ffi}{\varphi}
\newcommand{\na}{\nabla}
\newcommand{\cg}{\tilde g}
\newcommand{\Om}{\Omega}
\renewcommand{\phi}{\varphi}
\renewcommand{\epsilon}{\varepsilon}
\newenvironment{comm}{\color{red}}{\color{black}}
\newenvironment{bcomm}{\color{blue}}{\color{black}}
\title[Sharp geometric inequalities for closed hypersurfaces]{Sharp geometric inequalities for closed hypersurfaces in manifolds
with nonnegative Ricci curvature.}
\author[V.~Agostiniani]{Virginia Agostiniani}
\address{V.~Agostiniani, Universit\`a degli Studi di Verona, strada Le Grazie 15, 37134 Verona, Italy} 
\email{virginia.agostiniani@univr.it}
\author[M.~Fogagnolo]{Mattia Fogagnolo}
\address{M.~Fogagnolo, Universit\`a degli Studi di Trento,
via Sommarive 14, 38123 Povo (TN), Italy}
\email{mattia.fogagnolo@unitn.it}
\author[L.~Mazzieri]{Lorenzo Mazzieri}
\address{L.~Mazzieri, Universit\`a degli Studi di Trento,
via Sommarive 14, 38123 Povo (TN), Italy}
\email{lorenzo.mazzieri@unitn.it}
\begin{document}

\maketitle


\begin{abstract}
In this paper we consider complete noncompact Riemannian  manifolds $(M, g)$ with nonnegative Ricci curvature and Euclidean volume growth, of dimension $n \geq 3$. For every bounded open subset $\Omega \subset M$ with smooth boundary, we prove that
\[
\int\limits_{\partial \Om} \left|\frac{\rm H}{n-1}\right|^{n-1} 
\!\!\!\!\!{\rm d}\sigma 
\,\,\geq
\,\,\AVR\,\big|\mathbb{S}^{n-1}\big| \,\,,
\]
where ${\rm H}$ is the mean curvature of $\partial \Omega$ and $\AVR$ is the asymptotic volume ratio of $(M,g)$.
Moreover, the equality holds true if and only if $(M{\setminus}\Omega, g)$ is isometric to a truncated cone
over $\partial\Omega$. An optimal version of Huisken's Isoperimetric Inequality for $3$-manifolds is obtained using this result. 
Finally, exploiting a natural extension of our techniques to the case of parabolic manifolds, we also deduce an enhanced version of Kasue's non existence result for closed minimal hypersurfaces in manifolds with nonnegative Ricci curvature.
\end{abstract}


\section{Introduction and main results}

The classical Willmore inequality \cite{Willmore}
for a bounded domain $\Om$ of $\R^3$ with smooth boundary
says that
\begin{equation}
\label{eq:will}
\int\limits_{ \pa\Om}  
{\HH}^{\,2}  
\dd\sigma
\,\geq\,
16\pi,  
\end{equation}
where $\HH$ is the mean curvature of $\pa \Omega$. Such an inequality has been extended in \cite{Chen_2} to submanifolds of any co-dimension in $\R^n$, for $n \geq 3$. In particular, for a bounded domain $\Omega$ in $\R^n$ with smooth boundary there holds
\begin{equation}
\label{will-chen}
\int\limits_{ \pa\Om}  
\left|\frac{\HH}{n-1}\right|^{n-1}  
\!\!\!\!\dd\sigma
\,\geq\,
|\Sf^{n-1}| \, ,
\end{equation}
with equality attained if and only if $\Omega$ is a ball.
Implicit in this statement is the fact
that the underlying metric by which
$\HH$ and $\dd\sigma$
are computed is the Euclidean metric $g_{\R^n}$.
Note that the above rigidity statement 
can be rephrased by saying that the equality in \eqref{eq:will}
is fulfilled if and only if 
$(\pa\Om,{g_{\pa\Omega}})$
is homothetic to
$(\Sf^{n-1},g_{\Sf^{n-1}})$, where $g_{\pa\Omega}$ is the metric induced by $g_{\R^n}$ on the submanifold $\pa\Omega$ and $g_{\Sf^{n-1}}$
is the standard round metric.

Recently, in \cite{Ago_Maz_3}, 
the Willmore-type inequality \eqref{will-chen} and the corresponding
rigidity statement 
have been deduced as a 
consequence of suitable monotonicity-rigidity
properties of the function 
\begin{equation}
\label{U_Will}
U(t)
\,:=\,
t^{-(n-1)}
\!\!\!
\int\limits_{\{u=t\}}
\!\!
|\D u|^{n-1}\dd\sigma,
\qquad\qquad 
t\in(0,1],
\end{equation}
associated with the level set
flow of the electrostatic potential $u$
generated by the uniformly
charged body $\Omega$.
In other words, 
$u$ is the unique harmonic
function in $\R^n\setminus
\overline\Omega$
which vanishes at infinity and
such that $u=1$ on $\pa\Omega$.
More precisely, what is proven in \cite{Ago_Maz_3}
is that the function $U$ is nondecreasing
and that this monotonicity is strict
unless $\Om$ is a ball.
Once this fact is established, the proof
of \eqref{will-chen} consists of a few lines.
Indeed, exploiting first the global
feature of the monotonicity 
\big(i.e. $U(1)\geq U(0^+)$\big)
and using then the asymptotic expansion
at infinity of $u$ and $|\D u|$
one gets
\begin{equation}
\label{dis_interm}
\int\limits_{ \pa\Om}  
|\D u|^{n-1}\dd\sigma
\,=:\,
U(1)
\,\geq\,
\lim_{t\to0^+}
U(t)
\,=\,
(n-2)^{n-1}
|\Sf^{n-1}|.
\end{equation}
On the other hand, 
computing the derivative at $t = 1$
\begin{equation}
\label{pass_2}
U'(1)
\,=\,
(n-2)
\int\limits_{\pa\Om} 
|\D u|^{n-2}
\left[\, \HH-
\big(\tfrac{n-1}{n-2}\big)
\, {|\D u|}
\, \right] 
\dd \sigma \, ,
\end{equation}
and using  $U'(1)\geq0$,
we deduce that
\begin{equation}
\label{pass_3}
\int\limits_{ \pa\Om}  
\left|\frac{\HH}{n-1}\right|^{n-1}
\!\!\!\dd\sigma
\,\,\geq\,\,
\int\limits_{ \pa\Om}  
\left|\frac{\D u}{n-2}\right|^{n-1}
\!\!\!\dd\sigma, 
\end{equation}
where we have also applied
the H\"older inequality.
The coupling of the 
latter inequality 
with the former \eqref{dis_interm}
yields the desired \eqref{will-chen}.
In this paper, we show that the 
strategy described above
can be adapted to a much more general setting, 
giving rise to a new Willmore-type inequalities, holding on manifolds with nonnegative Ricci curvature of dimension greater than three. 

\bigskip

\noindent {\em Throughout this paper, we systematically assume that the dimension of the underlying manifold is at least $3$.}

\bigskip

\noindent Our main result reads: 

\begin{theorem}
[Willmore-type inequality]
\label{willth}
Let $(M, g)$ be a 
complete noncompact Riemannian manifold 
with $\ric\geq 0$ and Euclidean volume growth.
If $\Omega\subset M$ is a bounded and open subset with smooth boundary, then
\begin{equation}
\label{will}
\int\limits_{\partial\Omega}\,\left\vert{\frac\HH{n-1}}\right\vert^{n-1}\!\!\!\!\!\!\dd
\sigma
\,\geq\,
\AVR\abs{\Sf^{n-1}} \, 
,
\end{equation}
where $\AVR\in(0,1]$ is the 
asymptotic volume ratio of $(M,g)$.
Moreover, the equality holds if and only if 
$(M\setminus\Omega,g)$ 
is isometric to 
\begin{equation}
\label{metric_cone}
\Big(\,\big[r_0, +\infty) \times\pa\Omega
\,,\, 
\dd r\otimes\!\dd r +(r/r_0)^2 g_{\pa\Omega}
\Big),
\qquad
\mbox{with}
\quad
r_0
\,=\,
\bigg(\frac{|\pa\Omega|}{\AVR|\Sf^{n-1}|}\bigg)^{\frac1{n-1}}.
\end{equation}
In particular, $\partial \Omega$ is a connected totally umbilic submanifold with constant mean curvature.
\end{theorem}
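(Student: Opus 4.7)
The plan is to extend the monotonicity-rigidity strategy outlined in the introduction to the Riemannian setting. First, I would construct the \emph{capacitary potential} on $M \setminus \overline{\Om}$, namely the unique harmonic function $u \in C^\infty(M \setminus \overline{\Om})$ with $u = 1$ on $\pa \Om$ and $u(x) \to 0$ at infinity. Existence, regularity and the decay at infinity rely on the Euclidean volume growth assumption, which forces $(M,g)$ to be non-parabolic. A preliminary but crucial step is to establish the precise asymptotic behavior at infinity: in a suitable averaged sense one proves $u(x) \sim C\, d(x, o)^{-(n-2)}$ and $|\D u(x)| \sim C(n-2)\, d(x, o)^{-(n-1)}$, with a constant $C$ that encodes both the capacity of $\Om$ and the asymptotic volume ratio. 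This sharp refinement of the Colding--Minicozzi asymptotic theory for harmonic functions on manifolds with $\ric \geq 0$ and Euclidean volume growth is what makes the $\AVR$ factor appear explicitly in the limit.

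Next, I would introduce the \emph{monotone quantity}
\begin{equation}
U(t) \,:=\, t^{-(n-1)} \int\limits_{\{u = t\}} |\D u|^{n-1}\, \dd \sigma, \qquad t \in (0, 1],
\end{equation}
well defined for a.e.~$t$ by the coarea formula, and prove that $t \mapsto U(t)$ is nondecreasing. The core computation is Bochner's identity applied to $u$: differentiating $U(t)$ via coarea, integrating by parts between two regular level sets, and exploiting both $\ric(\D u, \D u) \geq 0$ and the refined Kato inequality $|\nabla^2 u|^2 \geq \tfrac{n}{n-1}|\nabla |\D u||^2$ for harmonic functions, one obtains $U'(t) \geq 0$. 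Handling the critical set of $u$ is done through an approximation argument, excising neighborhoods of $\{\D u = 0\}$ and taking limits via Sard's theorem and capacity estimates.

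The inequality \eqref{will} then follows by chaining the two estimates exactly as in the Euclidean proof. The global monotonicity of $U$ together with the asymptotic behavior at infinity yields
\begin{equation}
\int\limits_{\pa \Om} |\D u|^{n-1}\, \dd \sigma \,=\, U(1) \,\geq\, \lim_{t \to 0^+} U(t) \,=\, (n-2)^{n-1}\,\AVR\,|\Sf^{n-1}|,
\end{equation}
which is precisely where the Euclidean volume growth enters sharply. On the other hand, $U'(1) \geq 0$ combined with the H\"older inequality reproduces the mean-curvature inequality \eqref{pass_3} on $\pa \Om$, and combining the two gives \eqref{will}.

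For the rigidity statement, equality in \eqref{will} forces $U$ to be constant on $(0,1]$. Tracing this back through the Bochner identity imposes $\ric(\D u, \D u) \equiv 0$, forces $|\D u|$ to be constant on each level set, and constrains $\nabla^2 u$ so that every $\{u = t\}$ is totally umbilic. A warped-product splitting argument then identifies $(M \setminus \Om, g)$ with a truncated metric cone over $\pa \Om$; the parameter $r_0$ is fixed by matching the area of $\pa \Om$ with that of the cross-section at radius $r_0$, yielding \eqref{metric_cone}, while connectedness, umbilicity and constant mean curvature of $\pa \Om$ follow from its being a cross-section of a metric cone. I expect the main obstacle to be two-fold: rigorously handling the critical set of $u$ in the monotonicity, and establishing the sharp asymptotics at infinity that precisely recover the $\AVR$ factor; once these technical ingredients are in place, both the inequality and its rigidity follow cleanly from the same short argument that works in $\R^n$.
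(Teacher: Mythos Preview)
Your proposal is correct and follows essentially the same strategy as the paper: construct the capacitary potential, prove monotonicity of $U$ via Bochner plus refined Kato (with care at the critical set), establish integral asymptotics of $|\D u|$ at infinity to extract the $\AVR$ factor in $\lim_{t\to 0^+}U(t)$, and chain the inequalities as in the Euclidean case, with rigidity forcing a warped-product cone. The only organizational difference is that the paper packages the monotonicity proof through a conformal change $\tilde g = u^{2/(n-2)}g$, $\varphi = -\log u$, which turns the analysis into that of a harmonic function on a ``cylindrical'' background and makes the splitting argument for rigidity particularly clean; your direct Bochner approach is equivalent and the paper in fact states the monotonicity-rigidity theorem in exactly your terms before proving it conformally.
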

Let us remark that no connectedness assumption on $\Omega$ is required in the above statement. On the other hand, if the equality holds in~\eqref{will}, then we obtain that $\partial \Omega$ is connected as a by-product of the rigidity statement combined with the fact that $(M, g)$ has one end, see Proposition \ref{oneend}.

We recall that since $\ric\geq0$ in the above statement,
then, by the classical Bishop-Gromov 
Volume Comparison Theorem,
the function
\begin{equation}
\label{def:vol_ratio}
(0,+\infty)
\ni
r\longmapsto\Theta(r)=\frac{n|B(p,r)|}{r^n|\Sf^{n-1}|}
\end{equation}
is nonincreasing.
In particular, we have that
the asymptotic volume ratio
\[
\AVR =\lim_{r\to+\infty}\Theta(r)
\]
is well defined.
Moreover, we have that this limit
does not depend on the point $p\in M$,
and that 
$\lim_{r\to0^+}\Theta(r)=1$.
Hence, we have that
$0\leq\AVR\leq1$. Moreover, $\AVR = 1$ if and only if $(M, g)= (\R^n, g_{\R^n})$.
Assuming Euclidean volume growth
means assuming  $\AVR>0$.
Observe in particular that for $n = 3$, if $\ric \equiv 0$ then $(M, g)$ is isometric to $(\R^3, g_{\R^3})$ and consequently $\AVR = 1$. On the other hand, for $n \geq 4$ there exists an important class of complete noncompact Ricci flat Riemannian manifolds with $0 <\AVR <1$, that is the class of Ricci flat \emph{Asymptotically Locally Euclidean}  (ALE for short) manifolds. We refer the reader to Definition \ref{def:ALE} for the precise notion. For the time being, we just recall
that a $n$-dimensional Riemannian
manifold is ALE if it is asymptotic 
to $\big((\R^n\setminus\{0\})/\Gamma\,,\,g_{\R^n}\big)$,
where $\Gamma$  is a finite subgroup of ${\rm SO}(n)$
acting freely on $\R^n\setminus\{0\}$. This family of Riemannian manifolds is widely studied. In this regard, we first mention  that in \cite{bando-kasue-nakajima} it is proved that any Ricci flat manifold with Euclidean volume growth and strictly faster than quadratic curvature decay is actually ALE.
Moreover, we point out that $4$-dimensional Ricci flat ALE manifolds appear as important examples of \emph{gravitational instantons}, that are noncompact hyperkh\"aler $4$-manifolds with decaying curvature at infinity,  introduced by Hawking in \cite{hawking_instantons} in the framework of his Euclidean quantum gravity theory. An explicit example  is given by the famous Eguchi-Hanson metric, introduced in \cite{eguchi-hanson}, where $n = 4$, $\ric \equiv 0$ and $\Gamma = \Z_2$. We remark that ALE gravitational instantons are completely classified in \cite{kronheimer1} and \cite{kronheimer2}. Concerning the general class of gravitational instantons, let us cite, after the important works of Minerbe \cite{minerbe1, minerbe2, minerbe3},  the recent PhD thesis \cite{chen-instantons}, where gravitational instantons with strictly faster than quadratic curvature decay are classified. We refer the reader to the latter work and to the references therein for a more complete picture on this subject. The following corollary is the direct application of Theorem~\ref{willth} to the class ALE manifolds with nonnegative Ricci curvature.

\begin{corollary}
\label{cor:ALE}
Let $(M, g)$ be an ALE 
Riemannian manifold with $\ric\geq 0$. Then,
\begin{equation}
\label{will-ale}
\inf\left\{ \left.\,\,\int\limits_{\partial\Omega}\left\vert{\frac\HH{n-1}}\right\vert^{n-1}\!\!\!\!\dd
\sigma\,\, \right| \,\, \Omega \subset M\,\, \text{\emph{bounded and smooth}}\right\} \, = \,\, \frac{\vert\Sf^{n-1}\vert}{\text{\emph{card}}\,\Gamma} \,.
\end{equation}
Moreover, if the infimum is attained by some $\Omega$, then $M\setminus\Omega$ is isometric to 
\begin{equation}
\label{cone-ale}
\Big(\big[r_0, +\infty) \times\big(\Sf^{n-1}/\Gamma\big)
\,,\, 
\dd r\otimes\!\dd r + r^2 g_{\Sf^{n-1}/\Gamma}\Big),
\qquad
\mbox{with}
\quad
r_0
\,=\,
\left(\frac{\text{\emph{card}}\,\Gamma\abs{\partial \Omega}}{\vert\Sf^{n-1}\vert}\right)^{\!\frac{1}{n-1}},
\end{equation}
for some $r_0>0$ and some finite subgroup 
$\Gamma$ of ${\rm SO}(n)$.
In particular, $(\pa\Omega,g_{\pa\Omega})$
is homothetic to $\big(\Sf^{n-1}/\Gamma,g_{\Sf^{n-1}/\Gamma}\big)$.
\end{corollary}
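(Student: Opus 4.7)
The plan is to reduce the corollary to Theorem~\ref{willth} after identifying the asymptotic volume ratio of an ALE manifold. By the very definition of ALE (Definition~\ref{def:ALE}), outside a compact set $M$ is diffeomorphic to $(\R^n\setminus \bar B_R)/\Gamma$ through a chart in which the pulled-back metric tends to the flat quotient metric at infinity. A direct volume computation in this chart then gives
\[
\AVR \,=\, \frac{1}{\mathrm{card}\,\Gamma}\,,
\]
and substituting into \eqref{will} produces the lower bound $\abs{\Sf^{n-1}}/\mathrm{card}\,\Gamma$ for the integral in \eqref{will-ale}.

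To show that this value is actually the infimum, I would test on a sequence of domains $\Omega_k$ exhausting $M$, say the preimages under the ALE chart of coordinate balls $B_{r_k}/\Gamma$ with $r_k\to+\infty$. The ALE decay estimates give $\HH=(n-1)/r_k+o(1/r_k)$ uniformly on $\partial \Omega_k$, while the area satisfies $\abs{\partial \Omega_k}=\bigl(\abs{\Sf^{n-1}}/\mathrm{card}\,\Gamma\bigr)\,r_k^{n-1}(1+o(1))$. Multiplying these asymptotics yields
\[
\int\limits_{\partial \Omega_k} \Bigl|\tfrac{\HH}{n-1}\Bigr|^{n-1}\!\dd\sigma \,\longrightarrow\, \frac{\abs{\Sf^{n-1}}}{\mathrm{card}\,\Gamma}\,,
\]
which, combined with the previous lower bound, establishes~\eqref{will-ale}.

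For the rigidity statement, assume the infimum is attained by some $\Omega$. Then equality holds in Theorem~\ref{willth}, and $(M\setminus\Omega, g)$ is isometric to the truncated metric cone $\bigl([r_0, +\infty) \times \partial\Omega,\, \dd r\otimes \dd r + (r/r_0)^2 g_{\partial \Omega}\bigr)$. Restricting this identification to the end of $M$ and composing with the ALE chart, the cone end must coincide, as a Riemannian manifold, with the ALE end modelled on $(\R^n\setminus\{0\})/\Gamma$. Since a Riemannian cone admits a unique asymptotic model, namely its own link sitting at distance one, this forces $(\partial\Omega, g_{\partial\Omega})$ to be homothetic to $(\Sf^{n-1}/\Gamma, g_{\Sf^{n-1}/\Gamma})$ with homothety ratio $r_0$. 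Substituting $g_{\partial\Omega}=r_0^2\,g_{\Sf^{n-1}/\Gamma}$ into the cone metric then recovers the form \eqref{cone-ale}, while equating areas $\abs{\partial\Omega}=r_0^{n-1}\abs{\Sf^{n-1}}/\mathrm{card}\,\Gamma$ produces the stated value of $r_0$.

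The main obstacle is the cross-section identification in the last paragraph: one must compare two a priori different descriptions of the end of $M$, the smooth Riemannian cone coming from the rigidity part of Theorem~\ref{willth} and the ALE chart with decaying perturbation, and conclude that they can be compatible only if the link of the cone is isometric to the round quotient $\Sf^{n-1}/\Gamma$. Everything else is an essentially bookkeeping computation once Theorem~\ref{willth} and the ALE asymptotics are in hand.
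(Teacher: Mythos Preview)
Your proposal is correct and follows the paper's overall architecture: compute $\AVR=1/\mathrm{card}\,\Gamma$ from the ALE chart, invoke Theorem~\ref{willth} for the lower bound, exhibit a maximizing sequence to pin down the infimum, and then match the rigidity cone against the ALE model at infinity. The one genuine difference lies in how the infimum is realized. The paper does not compute the Willmore functional on coordinate balls directly; instead it appeals to the intermediate Theorem~\ref{improve}, which uses Colding's pointwise asymptotics for $\D G$ and $\D\D(G^{2/(2-n)})$ under a quadratic curvature decay hypothesis (Theorem~\ref{pointasy}) to show that the Willmore energy of large \emph{Green's-function} level sets converges to $\AVR\,|\Sf^{n-1}|$. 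Your approach, working in the ALE chart and reading off $\HH=(n-1)/r_k+o(1/r_k)$ from~\eqref{ale2} and $|\partial\Omega_k|=(|\Sf^{n-1}|/\mathrm{card}\,\Gamma)\,r_k^{n-1}(1+o(1))$ from~\eqref{ale1}, is more elementary and self-contained for the ALE case, bypassing the Green-function machinery entirely; the paper's route, on the other hand, isolates a result (Theorem~\ref{improve}) valid for the broader class of manifolds with quadratic curvature decay, of which ALE is only a special instance. The rigidity argument you sketch is essentially the same as the paper's: both compare the exact cone furnished by Theorem~\ref{willth} with the $C^0$ asymptotic cone coming from~\eqref{ale1} and conclude that the links must be homothetic.
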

We observe at once that on ALE manifolds the rigidity is much  stronger, being characterized by cones whose cross sections are homothetic to $\Sf^{n-1}/\Gamma$. Notice also that if $\Gamma$ is trivial one recovers the classical Willmore inequality \eqref{will-chen}.
Moreover, we point out that \eqref{will-ale} also says that, in every ALE manifold with nonnegative Ricci curvature, the lower bound we find for the Willmore-type functional is actually an infimum. This fact holds true for a larger class of manifolds. Indeed, as proved in Theorem \ref{improve}, it is sufficient to assume the hypotheses of Theorem \ref{willth} together with a quadratic curvature decay condition. Understanding metric and topological consequences of curvature decay conditions is a very interesting and widely studied problem in geometric analysis. Dropping any attempt of being complete, we refer the interested reader  to the aforementioned~\cite{bando-kasue-nakajima}, to the seminal~\cite{cheeger-gromov-taylor}, to~\cite{reiris}, where the case $n=3$  is considered, and to~\cite{yeganefar} and the references therein.

To make the picture more complete,
let us also mention that Willmore-type inequalities are 
proven in \cite{Ago_Maz_2}
for asymptotically flat (AE) static metrics 
in the framework of General Relativity, and in \cite{schulze2} for integral $2$-varifolds in Cartan-Hadamard manifolds.
\smallskip

Theorem \ref{willth} will be deduced as a consequence of 
the monotonocity-rigidity properties of the function $U$
defined as in \eqref{U_Will},
where now $u$ is the unique solution to the problem
\begin{equation}
\label{pb}
\begin{cases}
\,\,\Delta{u}= 0 & \mbox{in} \,\, M\setminus\overline\Omega 
\\
\,\,\,\,\,\,\,u=1 & \mbox{on}\,\, \partial\Omega 
\\
u(q)\to 0 &\mbox{as} \,\, d(O,q)\to+\infty,
\end{cases}
\end{equation}
with $O$ being a fixed point in $\Omega$ and 
$d$ a distance function on $M$.
Observe that the hypotheses of Theorem \ref{willth},
namely the nonnegativity of the Ricci tensor and 
the Euclidean volume growth,
guarantee the existence of the solution to problem \eqref{pb},
as explained in Section 2. 
Once that the monotonicity-rigidity of $U$ is known, 
 the proof of Theorem \ref{willth}
consists of exactly the same formal steps
outlined in \eqref{dis_interm}--\eqref{pass_3}, the main difference being the careful computation of the limit value
\[
\lim_{t\to 0^+}U(t)
\,=\,
\AVR(n-2)^{n-1}
|\Sf^{n-1}|.
\]  
We remark that
whereas in the classical Euclidean context the  limit 
was deduced from the pointwise asymptotic expansion of $u$
and  $|\D u|$ at infinity,
here it will be deduced from some quite delicate integral asymptotic expansions, in the spirit of \cite{colding3}. This is an important technical difference from the Euclidean case. Such integral asymptotics will be worked out  in Section 4. 
For completeness, we state the monotonicity-rigidity
result concerning $U$ in the wider generality
of Theorem \ref{thm:M/R} below. 
Indeed, the same monotonicity-rigidity properties
are shared by the whole family
of functions $\{U_\beta\}$, with $\beta\geq (n-2)/(n-1)$,
where $U_\beta:  (0,1] \, \longrightarrow \, \R $ 
is defined as
\begin{equation}
\label{def:U_beta}
U_\beta(t)
\,=\,\, 
t^
{-\beta\big(\frac{n-1}{n-2}\big)}
\!\!\!\!\! 
\int\limits_{ \{ u \,= \, t \}} \!\!\!\!  
|\D u|^{\beta+1} \, \dd \sigma.
\end{equation}
Note that $U_\beta$ coincides with the  function $U$ defined in \eqref{U_Will}
when $\beta=n-2$. 
Moreover, such a Monotonicity-Rigidity Theorem holds for a wider class of manifolds than the ones with Euclidean volume growth. Namely, we prove it for any complete noncompact Riemannian manifold with $\ric \geq 0$ admitting a solution to \eqref{pb}. This class of manifolds coincides with the thoroughly studied class of nonparabolic ones, as we are going to see in Section 2.

\begin{theorem}[Monotonicity-Rigidity Theorem for nonparabolic manifolds]
\label{thm:M/R}
Let $(M, g)$ be a nonparabolic Riemannian manifold
with $\ric\geq 0$.
Given a bounded and open subset $\Omega\subset M$ with smooth boundary,
let $u$ be the solution to problem~\eqref{pb} and let 
$U_\beta$ be the function defined in~\eqref{def:U_beta}. 
Then, for every 
$\beta\geq (n-2)/(n-1)$, 
the function $U_\beta$ is differentiable, with derivative
\begin{equation}
\label{derivata_di_U}
\frac{\dd U_\beta}{\dd t}(t)
\,\,=\,\,
\beta
\,\,t^{-\beta\,\big(\frac{n-1}{n-2}\big)}
\!\!\!\!\!\! \int\limits_{ \{ u = t\}} \!\!\!\!  |\D u|^\beta
\left[\, \HH-
\big(\tfrac{n-1}{n-2}\big)
\, {|\D\log u|}
\, \right] 
\dd \sigma,
\end{equation}
where $\HH$ is the mean curvature of the level set 
$\{u=t\}$ computed with respect to the unit normal vector field $\nu 
=
-{\D u}/{|\D u|}$.
The derivative of $U_\beta$ fulfills
\begin{equation}
\label{eq:monot}
\begin{split}
\frac{\dd U_\beta}{\dd t}(t)
\,\,= \,\,
\frac{\beta}{t^2} 
\int_{ \{ u < t \}} 
 u^{2-\beta\,\big(\frac{n-1}{n-2}\big) } 
\, |\D u|^{\beta-2} \, 
&\bigg\{\,\ric(\D u,\D u) \, + \\
&\,\,+\left[\,\,
\big|\D\D u\big|^2-{ \big(\textstyle\frac n{n-1}}\big)
\big|\D|\D u|\big|^2 \,\, \right]  \, + \\ 
&
\,\,+
\big({\textstyle\beta-\frac{n-2}{n-1}}\big)
\,\, \big|\D^T|\D u|\big|^2  \, + \,\,
\\
& 
\,\,+
\big({\textstyle\beta-\frac{n-2}{n-1}}\big) \,\,
|\D u|^2
\left[ \, 
\HH- 
\big({\textstyle\frac{n-1}{n-2}}\big) |\D \log u|
 \, \right]^2 
 \,\bigg\}
\,\, \dd\mu \, , 
\end{split}
\end{equation}
where $\HH$ is the mean curvature of the level sets of $u$ computed with respect to the unit normal 
vector field $\nu$.
In particular, $U_\beta$ is nondecreasing. 
Moreover, $(\dd U_\beta/\dd t)(t_0)=0$
for some $t_0\leq 1$ and some $\beta\geq (n-2)/(n-1)$
if and only if $(M, g)$ has Euclidean volume growth and
$ \big( \{u\leq t_0\}, g\big)$ is isometric to 
\begin{equation}
\label{cono}
\left(\,\big[r_0, +\infty) \times\{u=t_0\}
\,,\, 
\dd r\otimes\!\!\dd r +\left(\frac r{r_0}\right)^{\!\!2} g_{\{u=t_0\}}
\right),
\qquad
\mbox{with}
\quad
r_0
\,=\,
\bigg(\frac{|\{u=t_0\}|}
{\AVR|\Sf^{n-1}|}\bigg)^{\!\frac1{n-1}}.
\end{equation} 
In this case, in particular, $\{u = t_0\}$ is a connected totally umbilic submanifold with constant mean curvature.
\end{theorem}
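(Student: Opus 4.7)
My plan splits into three stages: establishing differentiability and the boundary derivative formula, converting it to the Bochner-driven volume identity, and finally analyzing the rigidity case. For the first stage, at any regular value $t$ of $u$ — a set of full measure in $(0,1]$ by Sard's theorem — the level set $\{u=t\}$ is a smooth embedded hypersurface and nearby levels evolve smoothly along the flow of $\D u/|\D u|^2$. Using this flow in the first-variation formula for $I_\beta(t):=\int_{\{u=t\}}|\D u|^{\beta+1}\dd\sigma$, together with the identity $\HH=\D^2 u(\D u,\D u)/|\D u|^3$ (which follows from $\HH=-\divg(\D u/|\D u|)$ upon invoking $\Delta u=0$), the terms involving $\D^2 u(\D u,\D u)$ collapse to yield $I_\beta'(t)=\beta\int_{\{u=t\}}|\D u|^\beta\HH\,\dd\sigma$. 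Differentiating the prefactor $t^{-\beta(n-1)/(n-2)}$ of $U_\beta$ and writing $|\D u|/t=|\D\log u|$ on $\{u=t\}$ yields \eqref{derivata_di_U}. Continuity of $U_\beta'$ across the measure-zero critical set would be obtained by passing to the limit along regular levels.

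To establish \eqref{eq:monot} I would apply the divergence theorem twice on the region $\{s<u<t\}$, with $0<s<t$ regular values. The first application, to the vector field $u^{-\alpha}|\D u|^\beta\D u$ with $\alpha=\beta(n-1)/(n-2)$, uses $\Delta u=0$ and $\langle\D|\D u|,\D u\rangle=|\D u|^2\HH$ to produce the intermediate identity
\[
U_\beta(t)-U_\beta(s)=\beta\int_{\{s<u<t\}} u^{-\alpha}|\D u|^{\beta+1}\Bigl[\HH-\tfrac{n-1}{n-2}|\D\log u|\Bigr]\dd\mu,
\]
a volume integral but not yet in the form of \eqref{eq:monot}. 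A second integration by parts with an auxiliary vector field, whose divergence is expanded via the Bochner identity $\tfrac{1}{2}\Delta|\D u|^2=|\D^2 u|^2+\ric(\D u,\D u)$, then rewrites the bracket by completing the square in $\HH-\tfrac{n-1}{n-2}|\D\log u|$ and using the orthogonal decomposition $\big|\D|\D u|\big|^2=\big|\D^T|\D u|\big|^2+|\D u|^2\HH^2$ along each level set, producing exactly the four summands on the right-hand side of \eqref{eq:monot}. Sending $s\to 0^+$ requires the flux through $\{u=s\}$ to vanish in the limit; this is where the nonparabolicity assumption combines with gradient estimates and integral asymptotics for harmonic potentials on $\ric\geq 0$ manifolds (worked out in Section 4) to kill the inner boundary contribution.

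Granting \eqref{eq:monot}, monotonicity is immediate: the Ricci term is nonnegative by hypothesis, the refined Kato inequality $|\D^2 u|^2\geq\tfrac{n}{n-1}\big|\D|\D u|\big|^2$ for harmonic functions handles the second bracket, and the condition $\beta\geq(n-2)/(n-1)$ renders the remaining two contributions nonnegative. For rigidity at $t_0\leq 1$, pointwise vanishing of the integrand on $\{u<t_0\}$ forces $\ric(\D u,\D u)\equiv 0$, equality in the refined Kato inequality, and — when $\beta>(n-2)/(n-1)$ — also $|\D u|$ constant on each level set together with $\HH=\tfrac{n-1}{n-2}|\D\log u|$; the endpoint $\beta=(n-2)/(n-1)$ is handled from the equality case of Kato alone. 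These data make every $\{u=s\}$ with $s\leq t_0$ totally umbilical with constant mean curvature; lifting through the flow of $\D u/|\D u|^2$ yields a warped-product metric whose warping factor, pinned down by the Ricci-flat umbilic ODE, is linear in the arclength parameter $r$, giving the truncated cone in \eqref{cono}. Computing $\AVR$ directly on the cone then reads off the stated formula for $r_0$ and simultaneously forces $(M,g)$ to have Euclidean volume growth. The main technical obstacle I anticipate is the second integration by parts — choosing the auxiliary vector field so that all four nonnegative contributions of \eqref{eq:monot} emerge with the correct coefficients — together with the justification of the passage $s\to 0^+$ via decay estimates for $u$, $|\D u|$ and $\D^2 u$ on nonparabolic manifolds with $\ric\geq 0$.
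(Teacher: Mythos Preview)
Your direct approach---working with $u$ and $g$ rather than the paper's conformal reformulation via $\tilde g = u^{2/(n-2)}g$ and $\varphi = -\log u$---is in principle a legitimate alternative, and the paper itself carries out the dictionary between the two pictures when deducing Theorem~\ref{thm:M/R} from its conformal analogue. The boundary-derivative formula and the algebraic manipulation producing the four nonnegative summands via Bochner and the tangential/normal decomposition of $\D|\D u|$ are both correct.

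The genuine gap is at the step where you send $s\to 0^+$. You propose killing the far boundary contribution using ``integral asymptotics for harmonic potentials \ldots\ (worked out in Section~4)'' together with decay for $\D^2 u$. But Section~4 establishes its asymptotics only under the additional hypothesis of \emph{Euclidean volume growth}, whereas Theorem~\ref{thm:M/R} is stated for arbitrary nonparabolic manifolds; moreover, those estimates concern $u$ and $|\D u|$ in integral form on geodesic spheres and annuli, and say nothing about $\D^2 u$ or about the boundary flux on level sets of $u$ that survives your second integration by parts. The paper avoids this difficulty entirely by a Colding--Minicozzi argument: in the conformal variables, the maximum-principle-based sharp gradient estimate $|\nabla\varphi|_{\tilde g}\leq\sup_{\partial\Omega}|\nabla\varphi|_{\tilde g}$ (valid on any nonparabolic manifold with $\ric\geq 0$) forces $\Phi_\beta$---equivalently $U_\beta$---to be uniformly bounded. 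Combined with the compact-region identity $e^{-S}\Phi_\beta'(S)-e^{-s}\Phi_\beta'(s)\geq 0$, one argues by contradiction: if $\Phi_\beta'(s)>0$ somewhere, integrating the resulting differential inequality gives $\Phi_\beta(S)\to+\infty$, violating boundedness; hence $\Phi_\beta'\leq 0$ everywhere, so $\Phi_\beta$ is monotone and bounded, and thus $\Phi_\beta'(S)\to 0$ as $S\to\infty$. This eliminates the boundary at infinity without any asymptotics at all, and it is the key idea your plan is missing. A secondary point: your treatment of the critical set of $u$ during the divergence-theorem steps is underspecified; the paper invokes the Cheeger--Naber--Valtorta Minkowski-content estimate to justify discarding it, and for $\beta<1$ an additional cutoff construction (Remark~\ref{difficulties}) is needed.
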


Observe that the quantity on the right hand side of \eqref{eq:monot} is nonnegative because of the refined Kato's inequality for harmonic functions reading as
\begin{equation}
\label{kato-intro}
\big|\D\D u\big|^2 \geq \left(\dfrac n{n-1}\right)
\big|\D|\D u|\big|^2.
\end{equation}
The vanishing of \eqref{derivata_di_U} can in particular be interpreted as an overdetermining condition on \eqref{pb}, forcing the ambient manifold to split as a cone and the solution $u$ to be radially symmetric. We mention, in this context of PDE's and splitting results, the paper \cite{farina-mari-valdinoci}.
Although the above Theorem holds on any nonparabolic manifold with $\ric \geq 0$, the rigidity statement implies that it is  sharp just on manifolds with Euclidean volume growth. Studying monotonicity-rigidity properties of this type, but tailored on manifolds with different volume growths at infinity could be the object of future works.   
\begin{remark}
\label{optimal}
{As already observed, the Willmore-type inequalities follow from Theorem \ref{thm:M/R} applied with $\beta = n-2$. In particular, for this purpose, it is sufficient to prove it just for $\beta \geq 1$. In fact, in this paper we propose a proof suited to this range of parameters. To cover the optimal range of parameters, it is sufficient to adapt the arguments recently introduced in the updated version of \cite{Ago_Maz_3}.}
\end{remark}

As for the Euclidean case, Theorem \ref{thm:M/R} will actually be proved working in the manifold $(M\setminus \Omega, \tilde{g})$ where $\tilde{g}$ is conformally related to $g$ by
\[
\tilde{g}=u^{\frac{1}{n-2}} g,
\]
where $u$ is a solution to~\eqref{pb}. In this setting, integral identities and splitting techniques are employed to infer the monotonicity of (the conformal analogue of) $U_\beta$ and the related rigidity. We  point out that these techniques can easily produce a more general version of Theorem \ref{thm:M/R} for  nonparabolic ends with $\ric \geq 0$ of a noncompact Riemannian manifold. 
This \emph{conformal splitting method}, introduced in \cite{Ago_Maz_1}, has proved to be fruitful in various other situations. In \cite{Ago_Maz_2,bor-maz1} and \cite{bor-maz2} it has been applied to the relativistic setting, while in \cite{fogagnolo-mazzieri-pinamonti}, geometric and analytic properties of $p$-harmonic functions in exterior domains have been studied, obtaining as a by-product a new proof of the classical Minkowski inequality. Moreover, a different approach to exterior problems, but still relying on a conformal change of metric, has been introduced in~\cite{borghini-mascellani-mazzieri}.

\medskip

So far, we have considered {\em nonparabolic} Riemannian manifolds with $\ric \geq 0$. We now turn our attention to \emph{parabolic} Riemannian manifolds with nonnegative Ricci curvature. As we will see in Section~\ref{sec:ingredients}, for this class of manifolds, problem~\eqref{pb} does not admit a solution, {\em en revanche} the following problem does
\begin{equation}
\label{prob-ex-par}
\begin{cases}
\,\,\,\Delta{\psi}=0 & \mbox{in} \,\, M \setminus \overline{\Omega} \\
\,\,\,\,\,\,\,\,\psi=0 & \mbox{on}\,\, \partial\Omega \\
\psi(q) \to +\infty &\mbox{as} \,\, d(O, q) \to +\infty \, ,
\end{cases}
\end{equation}
where $\Omega \subset M$ is any bounded and open subset with smooth boundary. Inspired by the fact that problem~\eqref{prob-ex-par} presents strong formal analogies with the conformal reformulation of problem~\eqref{pb} in terms of $\tilde{g}$ (see problem~\eqref{ricciconf} below), we also provide a Monotonicity-Rigidity Theorem for parabolic manifolds with $\ric \geq 0$ involving $\psi$ in place of $u$. For $\beta \geq 0$, we define the function $\Psi_\beta: [0, \infty) \to \R$ as
\begin{equation}
\label{Psi}
\Psi_\beta (s) \,\, = \!\!\!\!\int\limits_{\{\psi = s\}} 	\!\!\abs{\D \psi}^{\beta + 1} \dd \sigma \, , 
\end{equation}
and we prove the following result:
\begin{theorem}[Monotonicity-Rigidity Theorem for parabolic manifolds]
\label{mono-par}
Let $(M, g)$ be a parabolic manifold 
with $\ric\geq 0$.
Let $\Omega \subset M$ be a bounded and open subset with smooth boundary, and let $\psi$ be a solution to problem \eqref{prob-ex-par}.
Then, for every $\beta\geq (n-2)/(n-1)$, 
the function $\Psi_\beta$ is differentiable 
with derivative
\begin{equation}
\label{derivata_di_Psi}
\frac{\dd\Psi_\beta}{\dd s}(s) 
\,= \, 
- \, \beta\!\!\!\! \int\limits_{\{\psi = s \}}\!\!\!\!  
|\D\psi|^\beta\,\HH \,\dd\sigma \, ,
\end{equation}
where $\HH$ is the mean curvature of the level set $\{\psi=s\}$
computed with respect to the unit normal vector field 
$\nu=\D \psi/|\D \psi|$.
Moreover, for every $s\geq0$,
the derivative fulfills 
\begin{equation}
\label{eq:der_fip-par}
\frac{\dd\Psi_\beta}{\dd s}(s)  \,\, 
=\,\,  -\,\beta\!\!\!\!
\int\limits_{\{\psi \geq  s\}} \!\!
{
|\D\psi|^{\beta-2} 
\Big(\ric(\D\psi,\D\psi)
+\big|\D\D \psi\big|^2
 +  \, (\beta-2) \, \big| \D |\D \psi |\big|^2\,\Big) 
}
\dd\mu \, .
\end{equation}
In particular, $\dd\Psi_\beta/\dd s$ is always nonpositive. 
Moreover, $(\dd\Psi_\beta/\dd s)(s_0)=0$
for some $s_0\geq1$ and some $\beta\geq (n-2)/(n-1)$
if and only if $ \big( \{\psi\geq s_0\} , g \big)$ is isometric  
to the Riemannian product 
$\big([s_0, +\infty) \times \{\psi = s_0\},
d\rho\otimes d\rho + {g}_{\{\psi = s_0\}}\big)$. In this case, in particular, $\partial \Omega$ is a connected totally geodesic submanifold.
\end{theorem}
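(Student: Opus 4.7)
The strategy parallels the proof of Theorem \ref{thm:M/R} with $\psi$ in place of $u$: first I would rewrite $\dd \Psi_\beta/\dd s$ as a boundary integral over $\{\psi=s\}$ via first variation, then convert it into a bulk integral over $\{\psi \geq s\}$ via Bochner's identity and the divergence theorem. To derive \eqref{derivata_di_Psi}, let the level sets flow along $X = \D\psi/|\D\psi|^2$ (so that $\psi$ increases at unit rate). The first variation of $\int_{\{\psi = s\}} |\D\psi|^{\beta+1}\,\dd\sigma$ produces the integrand $(\beta+1)|\D\psi|^{\beta-1}\langle \D|\D\psi|,\nu\rangle + |\D\psi|^\beta \HH$, where I use $\dive_{\Sigma_s}(X) = \HH/|\D\psi|$. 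Decomposing $0 = \Delta \psi$ into tangential and normal parts along $\Sigma_s$ yields the key identity $\langle \D|\D\psi|,\nu\rangle = -\HH |\D\psi|$, and substituting it collapses the integrand to $-\beta |\D\psi|^\beta \HH$, giving \eqref{derivata_di_Psi}.

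For \eqref{eq:der_fip-par}, I would apply the divergence theorem to the vector field $Y = |\D\psi|^{\beta-1}\D|\D\psi|$ on the slab $\{s \leq \psi \leq T\}$, then send $T \to +\infty$. Bochner's identity $\tfrac12 \Delta|\D \psi|^2 = |\D \D\psi|^2 + \ric(\D\psi,\D\psi)$ together with $\Delta \psi = 0$ yields
\[
\dive(Y) \,=\, |\D\psi|^{\beta-2}\bigl[\,\ric(\D\psi,\D\psi) + |\D\D\psi|^2 + (\beta-2)\big|\D|\D\psi|\big|^2\,\bigr].
\]
Using $\langle \D|\D\psi|,\nu\rangle = -\HH|\D\psi|$ from the previous step, the flux of $Y$ across $\{\psi = s\}$ (with outer normal $-\nu$ relative to $\{\psi \geq s\}$) equals $\int_{\{\psi = s\}}|\D\psi|^\beta \HH\,\dd \sigma$, which by \eqref{derivata_di_Psi} equals $-\tfrac{1}{\beta}\dd\Psi_\beta/\dd s$. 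Identity \eqref{eq:der_fip-par} then follows once one shows that the flux of $Y$ across $\{\psi = T\}$ vanishes in the limit $T \to +\infty$. This vanishing is the central technical obstacle of the proof: it demands sharp asymptotic control on $\psi$ and $|\D\psi|$ at infinity on a parabolic manifold with $\ric\geq 0$, and should follow from gradient estimates of Cheng--Yau type combined with the global integrability of the bulk integrand. A secondary technical issue, the possible blow-up of $|\D\psi|^{\beta-2}$ near the critical set of $\psi$ when $\beta < 2$, is handled by regularizing $|\D\psi|$ by $(|\D\psi|^2 + \epsilon)^{1/2}$ and sending $\epsilon \to 0^+$; as in Remark \ref{optimal}, it is enough for our applications to work in the range $\beta \geq 1$.

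The formula \eqref{eq:der_fip-par} combined with $\ric \geq 0$ and the refined Kato inequality $|\D\D\psi|^2 \geq \tfrac{n}{n-1}\big|\D|\D\psi|\big|^2$ gives the pointwise lower bound
\[
\ric(\D\psi,\D\psi) + |\D\D\psi|^2 + (\beta-2)\big|\D|\D\psi|\big|^2 \,\geq\, \bigl(\beta - \tfrac{n-2}{n-1}\bigr)\,\big|\D|\D\psi|\big|^2 \,\geq\, 0,
\]
proving $\dd\Psi_\beta/\dd s \leq 0$. For the rigidity, suppose $(\dd\Psi_\beta/\dd s)(s_0) = 0$. Then the nonnegative integrand of \eqref{eq:der_fip-par} vanishes a.e.\ on $\{\psi \geq s_0\}$, so $\ric(\D \psi,\D\psi) \equiv 0$ there and equality is attained in Kato. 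The latter forces $\D\D\psi$ to be diagonal in a frame with first vector $\D\psi/|\D\psi|$ and eigenvalues $(a, -a/(n-1),\dots,-a/(n-1))$; equivalently, the level sets $\{\psi = s\}$ with $s \geq s_0$ are totally umbilic with $|\D\psi|$ and $\HH$ constant on each. Since the integrand stays zero for every $s \geq s_0$, the derivative $\dd\Psi_\beta/\dd s$ vanishes identically on $[s_0, +\infty)$, so \eqref{derivata_di_Psi} gives $\int_{\{\psi = s\}}|\D\psi|^\beta \HH\,\dd\sigma = 0$ for all such $s$; by constancy of $|\D\psi|$ and $\HH$ on each level set, this yields $\HH \equiv 0$, hence $a \equiv 0$, so $\D\D\psi \equiv 0$ on $\{\psi \geq s_0\}$. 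Thus $\D\psi$ is a nontrivial parallel vector field there, and integrating its normalized flow produces an isometry of $\{\psi \geq s_0\}$ with the Riemannian product $[s_0, +\infty) \times \{\psi = s_0\}$ endowed with $\dd\rho\otimes \dd\rho + g_{\{\psi = s_0\}}$; connectedness of $\{\psi = s_0\}$ follows from the one-end property of $(M,g)$ (cf.\ Proposition \ref{oneend}).
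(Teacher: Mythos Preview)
Your overall architecture matches the paper's: Bochner identity for $|\D\psi|^\beta$, divergence theorem on slabs $\{s\le\psi\le S\}$, then pass $S\to+\infty$. The derivative formula \eqref{derivata_di_Psi} and the slab identity are obtained in the paper exactly as you outline (up to cosmetic differences: they write the vector field as $\D|\D\psi|^\beta$ rather than $Y$, and get \eqref{derivata_di_Psi} via coarea/divergence rather than first variation).

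The one substantive weak point is your handling of the limit $S\to+\infty$. You correctly flag the vanishing of the outer flux as ``the central technical obstacle'', but the proposed remedy (``Cheng--Yau type estimates combined with global integrability of the bulk'') is both vague and circular: global integrability of the bulk is a \emph{consequence} of the result, not an input. The paper's mechanism is different and cleaner. First, Theorem~\ref{existence-par} gives a uniform bound on $|\D\psi|$ on $M\setminus\Omega$; together with the constancy of $\Psi_0$ this makes $\Psi_\beta$ uniformly bounded (Lemma~\ref{bound-lemma-par}). Second, the slab identity shows that $s\mapsto\Psi_\beta'(s)$ is nondecreasing (the bulk integrand is nonnegative). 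These two facts force $\Psi_\beta'\le 0$ everywhere (if $\Psi_\beta'(s)>0$ somewhere, integrate to contradict boundedness) and then $\Psi_\beta'(S)\to 0$ as $S\to+\infty$ (since $\Psi_\beta'$ is nondecreasing, nonpositive, and $\Psi_\beta$ is bounded below). Passing to the limit in the slab identity then yields \eqref{eq:der_fip-par}. This is the Colding--Minicozzi argument already used in the conclusion of Theorem~\ref{thm:main_conf}; no pointwise asymptotics of the flux are needed.

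Your rigidity argument is correct but more elaborate than necessary. For $\beta>(n-2)/(n-1)$, vanishing of the integrand together with the refined Kato inequality gives directly $\big|\D|\D\psi|\big|\equiv 0$ and $\ric(\D\psi,\D\psi)\equiv 0$ on $\{\psi\ge s_0\}$; plugging $\big|\D|\D\psi|\big|=0$ back into the raw Bochner identity $\tfrac12\Delta|\D\psi|^2=|\D\D\psi|^2+\ric(\D\psi,\D\psi)$ yields $\D\D\psi\equiv 0$ at once, without going through the level-set geometry and the auxiliary step $\HH\equiv 0$. (Your route is needed only at the endpoint $\beta=(n-2)/(n-1)$, which the paper treats separately via~\cite{Bou_Car}; see Remark~\ref{difficulties-rigidity}.)
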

Observe that the right-hand side of \eqref{eq:der_fip-par} is nonnegative again by \eqref{kato-intro}, that in fact holds for any harmonic function on any Riemannian manifold. .

Combining Theorem \ref{thm:M/R} and \ref{mono-par}, we obtain as a straightforward consequence an enhanced version of a theorem of Kasue, \cite[Theorem C (2)]{kasue_minimal}, asserting that if a smooth boundary $\partial \Omega \subset M$  has mean curvature $\HH \leq 0$, then $\HH\equiv 0$ on  $\partial \Omega$ and $M \setminus \Omega$ is isometric to a half cylinder. Our result actually gives precise lower bounds for the supremum of $\HH$ in terms of our monotone quantities and their derivatives.

\begin{theorem}[Enhanced Kasue's Theorem]
\label{enkath}
Let $(M, g)$ be a complete noncompact Riemannian manifold with $\ric \geq 0$, and let $\Omega \subset M$ be a bounded and open subset with smooth boundary. Then, for every $\beta \geq (n-2)/(n-1)$, the following assertions hold true.
\begin{enumerate}
\item[(i)] If $(M, g)$ is nonparabolic, then
\begin{equation}
\label{enka1}
\sup_{\partial \Omega} \HH \,\,\geq \,\,\frac{1}{\int_{\partial \Omega} \abs{\D u}^\beta \dd\sigma} \, \left[ {U_\beta}(0) + \frac{1}{\beta} \frac{\dd U_\beta}{\dd t}(0)\right]\,  > \, 0 \, ,
\end{equation}
where $U_\beta$ is defined in \eqref{def:U_beta} and its derivative satisfies \eqref{eq:monot}. 
\item[(ii)] If $(M, g)$ is parabolic, then
\begin{equation}
\label{enka2}
\sup_{\partial \Omega} \HH \,\, \geq \,\, - \, \frac{1}{\int_{\partial \Omega} \abs{\D \psi}^\beta \dd\sigma} \, \left[ \frac{1}{\!\beta} \frac{\dd \Psi_\beta}{\dd s} (0)\right] \, \geq \, 0\, ,
\end{equation}
where $\Psi_\beta$ is defined in \eqref{Psi} and its derivative satisfies \eqref{eq:der_fip-par}.
\end{enumerate}
\end{theorem}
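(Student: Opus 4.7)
The plan is to derive both parts of Theorem~\ref{enkath} as essentially one-line consequences of the Monotonicity--Rigidity Theorems~\ref{thm:M/R} and~\ref{mono-par}. In each case, the derivative formula~\eqref{derivata_di_U} or~\eqref{derivata_di_Psi}, evaluated at the boundary time of the associated level-set flow, produces an explicit identity for the weighted integral $\int_{\partial \Omega}|\D u|^\beta \HH \dd \sigma$ (resp.\ $\int_{\partial \Omega}|\D \psi|^\beta \HH \dd \sigma$), and the desired supremum bound follows by applying the trivial pointwise estimate $\HH \leq \sup_{\partial \Omega}\HH$ against a nonnegative weight.

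For part~(ii), I would first observe that $\{\psi = 0\} = \partial \Omega$, so plugging $s = 0$ into~\eqref{derivata_di_Psi} gives
\[
\frac{\dd \Psi_\beta}{\dd s}(0) \,=\, -\,\beta \int_{\partial \Omega}|\D \psi|^\beta \HH \dd\sigma.
\]
Using $\HH \leq \sup_{\partial\Omega}\HH$ pointwise, together with the nonnegativity of $|\D\psi|^\beta$, one immediately obtains~\eqref{enka2}. The final nonnegativity $-\Psi_\beta'(0)/\beta \geq 0$ is a consequence of~\eqref{eq:der_fip-par}, the hypothesis $\ric \geq 0$, and the refined Kato inequality~\eqref{kato-intro} for harmonic functions, which jointly force $\Psi_\beta'(s) \leq 0$ for every $s \geq 0$.

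For part~(i), I would evaluate~\eqref{derivata_di_U} at $t = 1$. Since $u \equiv 1$ on $\partial \Omega$, we have $|\D \log u| = |\D u|$ there, and
\[
\frac{\dd U_\beta}{\dd t}(1) \,=\, \beta \int_{\partial \Omega}|\D u|^\beta \HH \dd \sigma \,-\, \beta\,\frac{n-1}{n-2}\,U_\beta(1),
\]
whence $\int_{\partial \Omega}|\D u|^\beta \HH \dd \sigma = \tfrac{1}{\beta} U_\beta'(1) + \tfrac{n-1}{n-2}\, U_\beta(1)$. Combining $\HH \leq \sup_{\partial \Omega}\HH$ with the global monotonicity $U_\beta(1) \geq \lim_{t \to 0^+} U_\beta(t) =: U_\beta(0)$ and the pointwise nonnegativity $U_\beta'(1) \geq 0$ provided by Theorem~\ref{thm:M/R}, together with $(n-1)/(n-2) \geq 1$, produces~\eqref{enka1}. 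Strict positivity is immediate from $U_\beta(1) > 0$, which holds because $u$ is a nonconstant harmonic function.

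I foresee no substantive obstacle beyond the minor bookkeeping of the symbols $U_\beta(0)$ and $(\dd U_\beta/\dd t)(0)$ in~\eqref{enka1}: the natural reading is as the monotone limits $\lim_{t\to 0^+}U_\beta(t)$ and $\lim_{t\to 0^+}(\dd U_\beta/\dd t)(t)$, both of which are controlled by the boundary values $U_\beta(1)$ and $U_\beta'(1)$ via Theorem~\ref{thm:M/R}. Once this identification is fixed, both parts collapse to direct applications of the monotonicity formulas.
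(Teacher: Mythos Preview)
Your proposal is correct and follows essentially the same approach as the paper: evaluate the derivative formulas~\eqref{derivata_di_U} and~\eqref{derivata_di_Psi} at the boundary level set, then apply the pointwise bound $\HH \leq \sup_{\partial\Omega}\HH$ against the nonnegative weight. One notational clarification worth flagging: in the paper the symbols $U_\beta(0)$ and $\tfrac{\dd U_\beta}{\dd t}(0)$ in~\eqref{enka1} are in fact used for the boundary values $U_\beta(1)$ and $U_\beta'(1)$ (the ``$0$'' refers to $\phi=0$, i.e.\ $\partial\Omega$), so the paper obtains an \emph{identity} $\tfrac{n-1}{n-2}U_\beta(1)+\tfrac{1}{\beta}U_\beta'(1)=\int_{\partial\Omega}|\D u|^\beta\HH\,\dd\sigma$ and then applies the sup bound directly; your detour through the limit interpretation and the monotonicity $U_\beta(1)\geq \lim_{t\to 0^+}U_\beta(t)$ together with $(n-1)/(n-2)\geq 1$ is unnecessary but harmless.
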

Kasue's Theorem then follows as a corollary.

\begin{corollary}[Kasue's Theorem]
\label{kasue}
Let $(M, g)$ be a complete noncompact Riemannian manifold with $\ric \geq 0$ and let $\Omega \subset M$ be a bounded and open subset with smooth boundary such that $\HH_{\partial \Omega} \leq 0$ on $\partial \Omega$. Then $(M \setminus \Omega, g)$ is isometric to a Riemannian product $\left([0, +\infty) \times \partial \Omega, \dd r\otimes \dd r + {g}_{\partial\Omega}\right)$ and $\partial \Omega$ is a totally geodesic connected submanifold of $(M, g)$.
\end{corollary}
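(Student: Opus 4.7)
The plan is to derive Corollary \ref{kasue} as a direct consequence of Theorem \ref{enkath}, using the rigidity statements of Theorem \ref{mono-par} to convert a scalar vanishing into the desired isometry. The argument proceeds by a dichotomy depending on whether $(M,g)$ is nonparabolic or parabolic.

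First, I would rule out the nonparabolic case. If $(M,g)$ were nonparabolic, part (i) of Theorem \ref{enkath} would give
\[
\sup_{\partial\Omega}\HH \,\geq\, \frac{1}{\int_{\partial\Omega}\abs{\D u}^\beta\dd\sigma}\left[U_\beta(0)+\frac1\beta\frac{\dd U_\beta}{\dd t}(0)\right]\,>\,0,
\]
contradicting the hypothesis $\HH_{\partial\Omega}\leq 0$. Hence $(M,g)$ is parabolic and part (ii) of Theorem \ref{enkath} applies, producing the solution $\psi$ of \eqref{prob-ex-par} and the chain
\[
0\,\geq\,\sup_{\partial\Omega}\HH \,\geq\, -\,\frac{1}{\beta\int_{\partial\Omega}\abs{\D\psi}^\beta\dd\sigma}\,\frac{\dd \Psi_\beta}{\dd s}(0)\,\geq\, 0.
\]
This forces simultaneously $\HH\equiv 0$ on $\partial\Omega$ and $(\dd\Psi_\beta/\dd s)(0)=0$.

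Second, I would feed this vanishing into Theorem \ref{mono-par}. Inspecting \eqref{eq:der_fip-par}, the integrand is pointwise nonnegative on $M\setminus\overline\Omega$ by the refined Kato inequality \eqref{kato-intro} and the assumption $\ric\geq 0$, so its integral over the largest super-level set $\{\psi\geq 0\}=M\setminus\overline\Omega$ vanishing implies that the integrand vanishes identically on $M\setminus\overline\Omega$. In particular $(\dd\Psi_\beta/\dd s)(s)\equiv 0$ for every $s\geq 0$, so the rigidity statement of Theorem \ref{mono-par} applies on each super-level set and produces isometries $\{\psi\geq s_0\}\simeq \big([s_0,+\infty)\times\{\psi=s_0\},\,\dd\rho\otimes\dd\rho+g_{\{\psi=s_0\}}\big)$. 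Gluing these together as $s_0\to 0^+$ (or, equivalently, using that $\D\D\psi\equiv 0$ on $M\setminus\overline\Omega$, so $|\D\psi|$ is locally constant and $\psi$ may be normalised to coincide with the distance from $\partial\Omega$, along whose normal flow the metric splits) yields the desired global isometry of $(M\setminus\Omega,g)$ with $\big([0,+\infty)\times\partial\Omega,\,\dd r\otimes\dd r+g_{\partial\Omega}\big)$, and simultaneously forces $\partial\Omega$ to be connected and totally geodesic.

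The only mildly delicate point I anticipate is precisely this transition from the rigidity statement of Theorem \ref{mono-par}, phrased on a single super-level set $\{\psi\geq s_0\}$, to the rigidity on the full exterior region $M\setminus\Omega$; this is essentially book-keeping once the integrand in \eqref{eq:der_fip-par} has been shown to vanish pointwise everywhere on $M\setminus\overline\Omega$, because then $\D\psi$ is a parallel vector field of constant length on $M\setminus\overline\Omega$ and the cylindrical splitting extends continuously up to $\partial\Omega$, where the boundary condition $\psi=0$ together with $\HH\equiv 0$ pins down the totally geodesic cross-section and its connectedness.
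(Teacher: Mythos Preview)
Your proposal is correct and follows essentially the same route as the paper: rule out the nonparabolic case via the strict inequality in \eqref{enka1}, then in the parabolic case deduce $(\dd\Psi_\beta/\dd s)(0)=0$ from \eqref{enka2} and invoke the rigidity in Theorem~\ref{mono-par}. Your extra care about extending the splitting from $\{\psi\geq s_0\}$ down to $s_0=0$ is in fact unnecessary: the rigidity clause of Theorem~\ref{mono-par} applies directly at $s_0=0$ (the hypothesis ``$s_0\geq 1$'' there is a typo for ``$s_0\geq 0$'', as the paper's own one-line derivation makes clear), and since $\{\psi\geq 0\}=M\setminus\Omega$ this yields the full cylindrical splitting and the connectedness and total geodesicity of $\partial\Omega$ at once.
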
 
Theorem \ref{willth} and Corollary \ref{kasue} can also be gathered in a single general statement, see Corollary \ref{general}.
\begin{remark}
It is worth pointing out that if $(M, g)$ is a Riemannian cylinder, then $M \setminus \Omega$ can have two connected components. In this situation,  problem \eqref{prob-ex-par} has to be set in one of these two ends.
The results above involving parabolic manifolds, and in particular Theorem \ref{enkath}, can still be proved with trivial modifications in this case. Similarly, one can also deal with unbounded $\Omega$, provided that $\partial \Omega$ is a compact hypersurface and $M\setminus \Omega$ is unbounded too.
\end{remark}

\bigskip

Finally, we combine our sharp Willmore-type inequality \eqref{will} with curvature flow techniques along the lines of an argument presented  by Huisken in \cite{Hui_video}. We obtain a characterization of the infimum of the Willmore functional in terms of the isoperimetric ratio of $3$-manifolds with nonnegative Ricci curvature, refining the analogous result stated in the aforementioned contribution. This is the content of the following theorem.
\begin{theorem}[$\AVR$ \& Isoperimetric Constant]
\label{cisoth}
Let $(M, g)$ be a $3$-manifold  with $\ric \geq 0$. Then,
\begin{equation}
\label{ciso}
\inf\,\ddfrac{\abs{\partial\Omega}^{3}}{36{\pi}\abs{\Omega}^2}\,=\,\inf\,\ddfrac{\int_{\partial \Omega}\!\!{\HH}^2\dd\sigma}{16{\pi}}\, = \,\AVR,
\end{equation}
where the infima are taken over bounded and open subsets $\Omega \subset M$ with smooth boundary.
In particular, the following isoperimetric inequality holds
for any bounded and open $\Omega \subset M$ with smooth boundary
\begin{equation}
\label{iso}
\frac{\abs{\partial \Omega}^{3}}{\abs{\Omega}^2} \, \geq \,36{\pi} \,\AVR.
\end{equation}
Moreover, equality is attained in \eqref{iso} if and only if $M= \R^3$ and $\Omega$ is a ball.
\end{theorem}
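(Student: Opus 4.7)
The plan is to combine the Willmore-type inequality \eqref{will} for $n=3$ (which immediately gives $\int_{\partial\Omega}\HH^2\dd\sigma/(16\pi) \geq \AVR$) with Huisken's curvature-flow argument from \cite{Hui_video} to obtain the sharp isoperimetric inequality \eqref{iso}, and then to identify both infima in \eqref{ciso} via a common minimizing sequence of large geodesic balls.

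Given a bounded smooth $\Omega \subset M$, I would launch the weak inverse mean curvature flow of Huisken-Ilmanen from $\partial\Omega$, producing a family $\{\Sigma_t\}_{t\geq 0}$ bounding regions $\Omega_t \supset \Omega$. Setting $A(t) := |\Sigma_t|$ and $V(t) := |\Omega_t|$, one has $A(t) = |\partial\Omega|\,e^t$ and, on the smooth part of the flow, $V'(t) = \int_{\Sigma_t} \HH^{-1}\dd\sigma$. Since $\ric \geq 0$, the Heintze-Karcher inequality gives $\int_{\Sigma_t}\HH^{-1}\dd\sigma \geq \tfrac{3}{2}V(t)$, and a direct differentiation yields
\[
\frac{\dd}{\dd t}\!\left( \frac{A(t)^3}{V(t)^2}\right) \,=\,\frac{A(t)^{3}}{V(t)^{3}}\,\Bigl( 3V(t) - 2\!\int_{\Sigma_t}\!\HH^{-1}\dd\sigma \Bigr) \,\leq\, 0,
\]
so that $A^3/V^2$ is monotone non-increasing along the IMCF, with the jump times in the weak formulation handled by the standard Huisken-Ilmanen machinery.

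The crucial step is the identification of $\lim_{t\to\infty} A(t)^3/V(t)^2$. As the flow exhausts $M \setminus \Omega$, I would compare with large geodesic balls $B(O,R)$ centered at a fixed $O \in \Omega$: by Bishop-Gromov and its differentiable version, $|B(O,R)|/R^3 \to \tfrac{4\pi}{3}\AVR$ and $|\partial B(O,R)|/R^2 \to 4\pi\,\AVR$ (the latter for a.e.\ $R$, by Sard), so that $|\partial B(O,R)|^3/|B(O,R)|^2 \to 36\pi\,\AVR$. Matching the flow surfaces with large geodesic spheres at infinity then gives $\lim_{t \to \infty} A(t)^3/V(t)^2 \geq 36\pi\,\AVR$, and the monotonicity forces $|\partial\Omega|^3/|\Omega|^2 \geq 36\pi\,\AVR$, proving \eqref{iso}. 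The same sequence of geodesic balls $\Omega_k = B(O, R_k)$ with $R_k \to \infty$ also serves as a minimizing sequence for both infima in \eqref{ciso}: combining $\int_{\partial B(O,R)}\HH\dd\sigma = \tfrac{\dd}{\dd R}|\partial B(O,R)|$ (valid for a.e.\ $R$) with Cauchy-Schwarz $(\int\HH)^2 \leq |\partial B|\int \HH^2$ and the Bishop-Gromov asymptotics forces $\int_{\partial B(O,R_k)}\HH^2\dd\sigma \to 16\pi\,\AVR$, completing $\inf = \AVR$.

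Finally, for the rigidity in \eqref{iso}: equality forces $A(t)^3/V(t)^2 \equiv 36\pi\,\AVR$ along the IMCF, hence equality in Heintze-Karcher at every $t$ and equality in the Willmore inequality on each $\Sigma_t$. The rigidity part of Theorem \ref{willth}, applied to every level surface, then forces $M\setminus\Omega_t$ to be a truncated Euclidean cone for every $t\geq 0$; compatibility of this cone structure with the smoothness of $(M,g)$ at the cone tip forces $\AVR = 1$, so that $(M,g) = (\R^3, g_{\R^3})$ and $\Omega$ is a round ball. The main obstacle in the plan is to make rigorous the asymptotic matching of the weak IMCF to the geodesic-ball comparison, which requires the Huisken-Ilmanen regularity theory together with a careful analysis of the asymptotic roundness of the flow surfaces at infinity.
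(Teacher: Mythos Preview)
Your approach via inverse mean curvature flow is genuinely different from the paper's, which runs the \emph{forward} mean curvature flow. The paper lets a mean-convex domain shrink under weak MCF; Kasue's theorem (Corollary~\ref{kasue}) rules out minimal limits, so by White's long-time behaviour the flow extinguishes in finite time, and Schulze's monotonicity of the isoperimetric difference $D(t)=|\partial\Omega_t|^{3/2}-C|\Omega_t|$ then gives $D(0)\geq\lim_{t\to T^-}D(t)=0$. Non-mean-convex $\Omega$ are handled by passing to the strictly minimising hull $\Omega^*$ and approximating its $C^{1,1}$ boundary by smooth mean-convex hypersurfaces (Huisken--Ilmanen). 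The payoff is that the endpoint comparison is trivial: no asymptotic analysis at infinity is needed.

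Your IMCF route must instead identify $\lim_{t\to\infty}A(t)^3/V(t)^2$ along the weak flow, and the ``matching with large geodesic spheres'' that you flag as the main obstacle is not a soft step: the outward-minimising property of the level sets does not by itself yield $V(t)^2\leq A(t)^3/(36\pi\,\AVR)$ asymptotically, and you have not filled this in. There is also a genuine gap in your argument that geodesic spheres form a minimising sequence for the Willmore functional: Cauchy--Schwarz $(\int\HH)^2\leq|\partial B|\int\HH^2$ gives only a \emph{lower} bound on $\int\HH^2$, redundant with \eqref{will}; to obtain $\limsup_{R\to\infty}\int_{\partial B(O,R)}\HH^2\,\dd\sigma\leq16\pi\,\AVR$ you would need an upper bound (e.g.\ the Laplacian comparison $\HH\leq 2/R$), together with care about the non-smoothness of geodesic spheres. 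The paper bypasses this entirely: it only shows, via Bishop--Gromov, that the \emph{isoperimetric} ratio of smooth approximations of large geodesic balls tends to $\AVR$, and then the squeeze
\[
\AVR\;\leq\;\inf\frac{1}{16\pi}\int_{\partial\Omega}\HH^2\,\dd\sigma\;\leq\;\inf\frac{|\partial\Omega|^3}{36\pi|\Omega|^2}\;\leq\;\AVR
\]
delivers all three equalities at once, without ever evaluating the Willmore energy on geodesic spheres. For the rigidity, the paper likewise tracks equality along the (forward) MCF of the approximants $\Sigma_\epsilon$ and invokes the cone rigidity of Theorem~\ref{willth} on each of them, arriving at $\AVR=1$ through smoothness at the cone tip; your IMCF version of this step is in the same spirit but inherits the unresolved asymptotic issue.
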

Beside the characterization of the isoperimetric constant in terms of the Asymptotic Volume Ratio, the novelties with respect to \cite{Hui_video} lie in the rigidity statement and in the fact that the infimum of the Willmore functional is taken over the whole class of bounded open subsets $\Omega$ with smooth boundary, and not just over \emph{outward minimizing} subsets.
 All of these improvements substantially come from our optimal Willmore-type inequality \eqref{will}.

It is worth noticing that the above theorem can be rephrased in terms of a Sobolev inequality with optimal constant. This is the content of the following corollary.
 \begin{corollary}[$\AVR$ \& {Sobolev Constant}]
\label{sobolevth}
Let $(M, g)$ be a $3$-manifold with $\ric \geq 0$. Then
\begin{equation}
\label{sobolev}
\inf_{f \in W_0^{1, 1}\!(M)}\ddfrac{
\int_M \!\abs{\D f}\, \dd\sigma
}{ \left(\int_M \abs{f}^{3/2} \dd\sigma\right)^{\!\!2/3}} \, = \,  \sqrt[3]{36\pi \, \AVR} \, .
\end{equation}
\end{corollary}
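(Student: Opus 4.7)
The plan is to deduce the Sobolev inequality \eqref{sobolev} from the sharp isoperimetric inequality \eqref{iso} of Theorem~\ref{cisoth} via the Federer-Fleming coarea method, and then to certify optimality by approximating characteristic functions of bounded smooth domains. Using the pointwise bound $\bigl|\D|f|\bigr|\leq|\D f|$ a.e.\ together with a $C^\infty_c$-density argument, I first reduce to the case of a nonnegative, smooth, compactly supported $f$.

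Setting $V(t):=|\{f>t\}|$ and applying Minkowski's integral inequality in $L^{3/2}(M)$ to the layer cake representation $f=\int_0^\infty\chi_{\{f>t\}}\dd t$, I obtain
\[
\left(\int_M f^{\frac{3}{2}} \dd\mu\right)^{\!\frac{2}{3}} \,\leq\, \int_0^\infty V(t)^{\frac{2}{3}}\dd t.
\]
On the other hand, the coarea formula rewrites the numerator in \eqref{sobolev} as $\int_M |\D f|\,\dd\mu=\int_0^\infty |\partial\{f>t\}|\,\dd t$, and by Sard's theorem the super-level set $\{f>t\}$ is a bounded open set with smooth boundary for a.e.\ $t>0$. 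Applying \eqref{iso} to such super-level sets and integrating in $t$ yields
\[
\int_M |\D f|\,\dd\mu \,\geq\, \sqrt[3]{36\pi\AVR}\int_0^\infty V(t)^{\frac{2}{3}}\dd t \,\geq\, \sqrt[3]{36\pi\AVR}\left(\int_M f^{\frac{3}{2}} \dd\mu\right)^{\!\frac{2}{3}},
\]
so the infimum in \eqref{sobolev} is at least $\sqrt[3]{36\pi\AVR}$.

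For the matching upper bound, I fix any bounded open $\Omega\subset M$ with smooth boundary and test against the Lipschitz sequence $f_\ep(x):=\max\{0,\,1-\ep^{-1}\dist(x,\overline\Omega)\}\in W_0^{1,1}(M)$. As $\ep\to 0^+$, the tubular-neighborhood structure of $\partial\Omega$ yields $\int_M |\D f_\ep|\,\dd\mu\to|\partial\Omega|$, while dominated convergence gives $\int_M f_\ep^{3/2}\dd\mu\to|\Omega|$. Hence the infimum in \eqref{sobolev} is bounded above by $|\partial\Omega|/|\Omega|^{2/3}$ for every such $\Omega$, and the characterization $\inf_\Omega |\partial\Omega|^3/|\Omega|^2=36\pi\AVR$ from Theorem~\ref{cisoth} closes the argument. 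The only delicate point I anticipate is the fiberwise applicability of \eqref{iso} to $\{f>t\}$, which requires the a.e.-regularity granted by Sard's theorem combined with the $C^\infty_c$-density reduction; all remaining steps are classical.
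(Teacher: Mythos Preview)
Your proposal is correct and follows exactly the standard route the paper defers to: the Federer--Fleming coarea argument (layer cake plus Minkowski) for the lower bound, and approximation of characteristic functions of smooth domains for the upper bound, both resting on the sharp isoperimetric constant from Theorem~\ref{cisoth}. The paper does not actually write out a proof of Corollary~\ref{sobolevth}; it simply cites the classical equivalence between isoperimetric and Sobolev constants in \cite[pp.~89--90]{schoen-yau_book}, which is precisely the argument you have supplied.
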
 
Once Theorem \ref{cisoth} is established,  \eqref{sobolev} is obtained by very standard tools. 
We refer the reader to \cite[pages 89-90]{schoen-yau_book} for a complete proof of the well known equivalence between the isoperimetric and the Sobolev inequality.
On this regard, it is worth observing that relations  between isoperimetry and mean curvature functionals date back to Almgren~\cite{almgren}, while a first derivation of isoperimetric inequalities through a curvature flow has been obtained by Topping in the case of curves~\cite{topping}. Isoperimetric inequalities in $\R^n$ and in Cartan-Hadamard manifolds through curvature flows have been established by Schulze in \cite{schulze1} and \cite{schulze2}, while
the application to manifolds with nonnegative Ricci curvature is suggested in the already mentioned \cite{Hui_video}. The techniques lectured in \cite{Hui_video} have interesting applications also in connection with the relativistic ADM mass, see \cite{jauregui} for the details. Actually, as pointed out in the discussion following \cite[Theorem 5.13]{Hamilton_book},
a positive isoperimetric/Sobolev constant for complete noncompact Riemannian manifolds with $\ric \geq 0$ and Euclidean volume growth can be deduced via the techniques introduced by Croke in~\cite{croke1} and~\cite{croke2}. However, it is known that such constant is not optimal. Other strictly related issues about isoperimetry in noncompact manifolds with $\ric \geq 0$ are treated in \cite{mondino-spadaro} and \cite{Cho_Eic_Vol}.

\smallskip

\smallskip

This paper is organized as follows. In Section 2 we  review, for ease of the reader, the theory of harmonic functions on Riemannian manifolds with nonnegative Ricci curvature we are going to employ along this work. In Section 3 we introduce the conformal formulation of problem \eqref{pb}. In this setting, we prove (the conformal version of) Theorem \ref{thm:M/R}. In Section 4 we work out the integral asymptotic estimates for the  electrostatic potential on manifolds with nonnegative Ricci curvature. With these estimates at hand, we prove Theorem \ref{willth} and Corollary \ref{cor:ALE}. In Section 5 we prove the Monotonicity-Rigidity Theorem for parabolic manifolds and  deduce Theorem \ref{enkath} and Corollary \ref{kasue}. In Section 6, we prove Theorem \ref{cisoth}. Finally, we have included an Appendix where we describe the relations between our monotonicity formulas and some of those obtained by Colding and Colding-Minicozzi in \cite{Colding_1} and \cite{Colding_Minicozzi}.



\section{Harmonic functions in exterior domains}
\label{sec:ingredients}


In this section we are mainly concerned with characterizing Riemannian manifolds for which problems \eqref{pb} and \eqref{prob-ex-par} admit a solution. We are going to see that complete noncompact nonnegatively Ricci curved manifolds for which a solution to~\eqref{pb} exists are the \emph{nonparabolic} ones, namely, manifolds admitting a positive Green's function, while those admitting a solution to~\eqref{prob-ex-par} are the \emph{parabolic} ones. 

Nothing substantially new appears in this section. We are just collecting, re-arranging and applying classical results contained in~\cite{li_tam_green-ex,li_tam_ext,li_tam_green-volume,liyau,yau},  and~\cite{varopoulos}. 
The interested reader might also refer to the nice survey~\cite{grigoryan}, where the relation with the Brownian motion on manifolds is also explored, or, for a more general account on the vast subject of harmonic functions on manifolds, to the lecture notes~\cite{li-lecture} and the references therein. Other important works in this field will be readily cited along the paper. Before starting, let us mention that the results gathered in this preliminary section are spread in a huge literature, and frequently they do not appear exactly in the form we that need or a with a detailed proof. For this reason, we include the most relevant ones. Important gradient bounds are discussed too.

Along this section, we denote by $\D$ the Levi-Civita connection of the Riemannian manifold considered, and by $\Delta$ the related Laplacian. For any two points $p, q \in M$, we let $d(p, q)$ be their geodesic distance. Moreover, it is understood that we are always dealing with manifolds of dimension $n \geq 3$.


\subsection{Green's functions and parabolicity}

Let us begin with the definition of  Green's functions on Riemannian manifolds. 
\begin{definition}[Green's function] 
\label{green}
A smooth function 
\[
G: (M\times M) \setminus \diag(M) \to \R \, ,
\]
where $\diag(M)=\{(p, p), p\in M\}$,
 is said to be a Green's function for the Riemannian manifold $(M, g)$ if the following requirements are satisfied.
\begin{enumerate}
\item[(i)] $G(p, q)=G(q, p)$ for any $p, q \in M$, $p\neq q$.
\item[(ii)]
$\Delta G(p, \cdot)=0$ on $M\setminus\{p\}$, for any $p\in M$.
\item[(iii)]The following asymptotic expansion holds for $q\to p$:
\begin{equation}
\label{asy1}
G(p, q) \, = \, \big(1+ o(1)\big) \, d^{2-n}(p, q) \, .
\end{equation}
\end{enumerate} 
\end{definition}
It is well known that on a complete noncompact Riemannian manifold there always exists a Green's function. This result has been obtained for the first time by Malgrange in \cite{malgrange_green}, while a constructive proof, best suited for applications, has been given by Li-Tam in \cite{li_tam_green-ex}.
Complete noncompact Riemannian manifolds are then divided into two classes.

\begin{definition}[Parabolicity]
Complete noncompact Riemannian manifolds which support a \emph{positive} Green's function are called \emph{nonparabolic}. Otherwise they are called \emph{parabolic}.
\end{definition}
 
A by-product of Li-Tam's construction of Green's function gives the following very useful characterization of parabolicity, see for example~\cite[Theorem 2.3]{li-lecture} for a proof.
\begin{theorem}[Li-Tam]
\label{litam}
Let $(M, g)$ be a complete noncompact Riemannian manifold. Then, it is nonparabolic if and only if there exists a positive super-harmonic function $f$ defined on the complement of a geodesic ball $B(p, R)$ such that
\begin{equation}
\label{barrier} 
 \liminf_{d(p, q)\to\infty}f(q)<\inf_{\partial B(p, R)} f.
\end{equation}
\end{theorem}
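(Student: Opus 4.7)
The plan is to handle the two implications separately, via the maximum principle and a standard approximation argument.

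For the forward direction, I would take $f := G(p,\cdot)$ on $M\setminus B(p,R)$, where $G$ is the positive Green's function granted by nonparabolicity. Then $f$ is positive and harmonic, hence super-harmonic. The quantity $\gamma := \liminf_{d(p,q)\to\infty} G(p,q)$ is finite: otherwise, the singular expansion \eqref{asy1} combined with $\gamma = +\infty$ would force $G(p,\cdot)$ to attain an interior minimum in $M\setminus\{p\}$, contradicting the strong maximum principle. Exploiting \eqref{asy1} again, $G(p,q) \to +\infty$ as $q \to p$, so by choosing $R>0$ small enough one ensures $\inf_{\partial B(p,R)} G(p,\cdot) > \gamma$, and the required strict inequality follows.

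For the converse, set $\alpha := \inf_{\partial B(p,R)} f$ and $\beta := \liminf_{d(p,q)\to\infty} f(q) < \alpha$, and let $\{\Omega_k\}$ be a smooth exhaustion of $M$ with $\overline{B(p,R)} \subset \Omega_1$. I would consider the capacitor potentials $u_k$ on $\Omega_k\setminus\overline{B(p,R)}$, i.e.\ the harmonic functions with boundary values $1$ on $\partial B(p,R)$ and $0$ on $\partial\Omega_k$. A direct comparison in the maximum principle shows that $\{u_k\}$ is monotone nondecreasing in $[0,1]$, converging locally uniformly to a harmonic function $u$ on $M\setminus\overline{B(p,R)}$ equal to $1$ on $\partial B(p,R)$. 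The barrier $f$ enters here: $v_k := f - \alpha u_k$ is super-harmonic on $\Omega_k\setminus \overline{B(p,R)}$ and nonnegative on its boundary, so the minimum principle gives $\alpha u_k \leq f$. Passing to the limit yields $\alpha u \leq f$, and in particular $\liminf_{d(p,q)\to\infty} u(q) \leq \beta/\alpha < 1$, so that $u \not\equiv 1$.

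The main obstacle is converting the non-triviality of $u$ into an actual positive Green's function. The plan is to appeal to the capacity characterization of nonparabolicity. Extending $u_k$ by $1$ on $\overline{B(p,R)}$ yields an admissible competitor for $\capa_{\Omega_k}(\overline{B(p,R)})$, which equals $\int_{\Omega_k\setminus B(p,R)} |\nabla u_k|^2$ and converges as $k\to\infty$ to the Dirichlet energy of $u$ on $M\setminus \overline{B(p,R)}$. This limit coincides with $\capa(\overline{B(p,R)})$ in $M$, and is strictly positive because $u$ is harmonic and non-constant; the classical equivalence between positivity of the capacity of some non-trivial compact set and existence of a positive Green's function (see, e.g., \cite{grigoryan}) then delivers nonparabolicity.
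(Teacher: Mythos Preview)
The paper does not prove this statement; it records it as a known by-product of Li--Tam's construction and refers to \cite[Theorem 2.3]{li-lecture}. So there is no paper proof to compare against, but your argument is essentially correct and follows the standard route.

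Two small remarks. In the forward direction, your maximum-principle argument that $\gamma<+\infty$ is fine, but if you take $G$ to be the \emph{minimal} positive Green's function you get more: one checks directly (if $c:=\inf_{M\setminus\{p\}}G(p,\cdot)>0$ then $G-c$ would be a strictly smaller positive Green's function, contradiction) that $\liminf_{\infty}G(p,\cdot)=0$, so the strict inequality holds for \emph{any} $R>0$ without the asymptotic expansion. In the converse, the claim that $\int|\nabla u_k|^2$ ``converges to the Dirichlet energy of $u$'' is slightly more than you justify and slightly more than you need: local $C^1$-convergence gives $\lim_k\int_{\Omega_k}|\nabla u_k|^2\geq\int_K|\nabla u|^2>0$ for a suitable compact $K$ once $u$ is non-constant, and that is already enough for positivity of $\capa(\overline{B(p,R)})$. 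Your appeal to the capacity characterization in \cite{grigoryan} is legitimate; alternatively, as in Li--Tam's original approach, one can bypass capacity by directly constructing the Green's function as an increasing limit of Dirichlet Green's functions on an exhaustion, using the barrier $u$ (equivalently $f$) to prevent the limit from being $+\infty$.
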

Notice that if $(M, g)$ is a nonparabolic Riemannian manifold then a barrier function $f$ as in Theorem \ref{litam} is just the function $G_{\mid M\setminus B(p, R)}.$
A positive Green's function $G$ is called \emph{minimal} if 
\[
G(p,q)\leq \tilde{G}(p, q)
\]
for any other positive Green's function $\tilde{G}$. The construction of the Green's function in \cite{li_tam_green-ex}  actually provides the minimal one. 

The following theorem is a fundamental characterization of parabolicity for manifolds with $\ric \geq 0$ in terms of the volume growth of geodesic balls, first provided in \cite{varopoulos}. 

\begin{theorem}[Varopoulos]
\label{var-th}
Let $(M, g)$ be a complete 
noncompact Riemannian manifold with $\ric\geq 0$. 
Then $(M, g)$ is nonparabolic if and only if
\begin{equation}
\label{var}
\int\limits_1^{+\infty} \!\! \frac{r}{\abs{B(p, r )}}\, \dd r \, < \, +\infty \, ,
\end{equation}
for any $p\in M$,
where $B(p, r)$ is a geodesic ball centered at $p$ with radius $r \geq 0$.
\end{theorem}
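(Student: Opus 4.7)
My approach uses the Li--Yau Gaussian heat kernel estimates together with the identification of the minimal positive Green's function as a time-integral of the heat kernel. Recall that on a complete noncompact manifold with $\ric \geq 0$, the heat kernel $p_t(x,y)$ enjoys two-sided Gaussian bounds of the form
\begin{equation*}
\frac{c_1}{|B(x,\sqrt{t})|}\exp\!\Big(\!-\!C_1 \tfrac{d^2(x,y)}{t}\Big) \,\leq\, p_t(x,y) \,\leq\, \frac{c_2}{|B(x,\sqrt{t})|}\exp\!\Big(\!-\!C_2 \tfrac{d^2(x,y)}{t}\Big),
\end{equation*}
valid for all $x,y \in M$ and $t>0$, with constants depending only on $n$. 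The plan is to relate the Green's function to the improper integral $\tilde{G}(x,y) := \int_0^{+\infty} p_t(x,y)\, dt$, which, whenever it converges off the diagonal, provides (up to normalization) the minimal positive Green's function of $(M,g)$; its finiteness for some (equivalently, every) pair $x \neq y$ is equivalent to nonparabolicity.

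For the \emph{necessity} direction, assume that $(M,g)$ is nonparabolic and pick $x \neq y$ with $\tilde{G}(x,y) < +\infty$. Restricting the Li--Yau lower bound to times $t \geq C_1 d^2(x,y)$, the exponential factor is bounded below by $e^{-1}$, so that
\begin{equation*}
+\infty \,>\, \tilde{G}(x,y) \,\geq\, c_1 e^{-1}\!\!\!\int\limits_{C_1 d^2(x,y)}^{+\infty} \!\!\!\frac{dt}{|B(x,\sqrt{t})|} \,=\, 2 c_1 e^{-1}\!\!\!\int\limits_{\sqrt{C_1}\, d(x,y)}^{+\infty} \!\!\!\!\!\frac{r\, dr}{|B(x,r)|}.
\end{equation*}
The Bishop--Gromov doubling property then allows me to replace the base point $x$ by any fixed $p \in M$ at the cost of absolute constants, yielding the convergence of the integral in \eqref{var}.

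For the \emph{sufficiency} direction, assume that \eqref{var} holds, and split $\tilde{G}(x,y) = \int_0^1 p_t\, dt + \int_1^{+\infty} p_t\, dt$. The second piece is controlled via the Li--Yau upper bound by $2c_2 \int_1^{+\infty} r/|B(x,r)|\, dr$, which is finite by hypothesis (once again using volume doubling to switch base point). The first piece is finite for $x \neq y$ by the classical short-time parametrix $p_t(x,y) \sim (4\pi t)^{-n/2} \exp(-d^2(x,y)/4t)$, and a direct change of variables $u = d^2(x,y)/4t$ in this leading-order expression produces, after the appropriate normalization, the singular asymptotic $\tilde{G}(x,y) = (1+o(1))\, d^{2-n}(x,y)$ as $y \to x$ required by condition (iii) of Definition \ref{green}. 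Symmetry, harmonicity away from the diagonal, and positivity are inherited directly from $p_t$, so $\tilde{G}$ is a positive Green's function and $(M,g)$ is nonparabolic, for instance by exhibiting $\tilde{G}(\cdot,q)$ as a barrier $f$ in Theorem \ref{litam}.

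The principal obstacle is that the whole argument rests on two deep inputs: the Li--Yau gradient estimate (leading to the two-sided Gaussian heat kernel bounds) and the identification of $\tilde{G}$ with the minimal positive Green's function, which I would cite without reproducing their proofs. A secondary delicate point is the short-time parametrix expansion securing the $d^{2-n}$ singularity; this is classical but needs to be matched carefully with the normalization of the Green's function in Definition \ref{green}.
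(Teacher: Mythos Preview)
The paper does not supply its own proof of this theorem: it is stated as background and attributed to Varopoulos (with later quantitative refinements cited as the Li--Yau estimate, Theorem~\ref{liyau}). So there is no ``paper's proof'' to match against, only the cited literature.

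Your approach via the Li--Yau two-sided Gaussian heat kernel bounds and the identity $G(x,y)=\int_0^{+\infty} p_t(x,y)\,\dd t$ is correct and is in fact the standard modern route to this result; the computation you sketch, after the change of variables $t=r^2$, recovers not only the qualitative equivalence of Varopoulos but the sharp two-sided estimate $G(p,q)\asymp \int_{d(p,q)}^{+\infty} r/|B(p,r)|\,\dd r$ that the paper quotes separately as Theorem~\ref{liyau}. Two small cautions: (i) for the sufficiency direction you do not need the full short-time parametrix with the precise $d^{2-n}$ asymptotics---it is enough that $\int_0^1 p_t(x,y)\,\dd t<+\infty$ for $x\neq y$, which follows immediately from the Li--Yau \emph{upper} bound and the exponential decay factor, since you only need to exhibit \emph{some} positive superharmonic barrier (cf.\ Theorem~\ref{litam}); (ii) the basepoint-switching step is cleanest if you note that $|B(x,r)|$ and $|B(p,r)|$ are comparable for $r\geq d(p,x)$ by Bishop--Gromov, so only the tail of the integral needs adjusting.
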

The above characterization 
roughly says that on nonparabolic manifolds volumes are growing faster than quadratically, while on the parabolic ones they grow at most quadratically. On the other hand, on a $n$-dimensional complete noncompact Riemannian manifold with $\ric \geq 0$ Bishop-Gromov's Theorem and a result of Yau \cite{yau-growth} respectively show that the growth of volumes of geodesic balls $B(p, r)$ is controlled from above by $r^n$ and from below by $r$. 

From now on we focus our discussion on complete noncompact manifolds with nonnegative Ricci curvature. In the following two subsections we collect some basic though fundamental facts in this context, for the ease of references.

\subsection{Harmonic functions on manifolds with nonnegative Ricci curvature.} A basic tool in the study of the potential theory on Riemannian manifolds is the following celebrated gradient estimate, first provided by Yau in~\cite{yau} (see also the nice presentation given in~\cite{schoen-yau_book}).
\begin{theorem}[Yau's Gradient Estimate]
Let $(M, g)$ be a complete noncompact Riemannian manifold  with $\ric\geq 0$. Let $u$ be a positive harmonic function defined on a geodesic ball $B(p, 2R)$ of center $p$ and radius $2R$. Then, there exists a constant $C=C(n)>0$ such that
\begin{equation}
\label{cheng}
\sup_{x\in B(p, R)} \!\!\frac{\abs{\D u}}{u}\, \leq \,\, \frac{C}{R} \, .
\end{equation}
\end{theorem}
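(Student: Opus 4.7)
The plan is to reproduce the classical argument of Cheng--Yau, based on the Bochner formula applied to $v=\log u$ together with a cutoff device and the maximum principle.

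First I would set $v = \log u$ on $B(p,2R)$. Since $u$ is positive and harmonic, a direct computation gives $\Delta v = -|\D v|^{2}$. Letting $w = |\D v|^{2}$, the Bochner formula, combined with $\ric \geq 0$ and the elementary bound $|\D\D v|^{2} \geq (\Delta v)^{2}/n = w^{2}/n$ (which uses again that $u$, and hence $v$ in the form $\Delta v = -w$, yields a trace constraint on $\D\D v$), gives the key differential inequality
\begin{equation*}
\tfrac{1}{2}\Delta w \,\geq\, \tfrac{1}{n}\, w^{2} \,-\, \langle \D w, \D v\rangle.
\end{equation*}
This is the only place Ricci nonnegativity enters directly in the interior estimate.

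Next I would introduce a cutoff $\phi = \eta\!\circ\! d(p,\cdot)$, where $\eta\colon[0,+\infty)\to[0,1]$ is a fixed smooth function with $\eta\equiv 1$ on $[0,R]$, $\eta\equiv 0$ on $[2R,+\infty)$, and with $|\eta'|^{2}/\eta$ and $|\eta''|$ controlled by $C/R^{2}$. The Laplacian comparison theorem under $\ric\geq 0$ then yields $\Delta \phi \geq -C(n)/R^{2}$ and $|\D\phi|^{2}/\phi \leq C(n)/R^{2}$ away from the cut locus of $p$. The standard way to handle the cut locus issue is Calabi's trick: replace $d(p,\cdot)$ by $d(p_{\varepsilon},\cdot) + \varepsilon$ with $p_{\varepsilon}$ close to $p$ on a minimizing geodesic to the maximum point, so that the comparison inequality holds in the barrier sense, which is enough for the maximum principle argument below.

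Then I would apply the maximum principle to $F = \phi\, w$ on $B(p,2R)$. At an interior maximum $x_{0}$ (if $F$ vanishes on the boundary this is where $F$ attains its sup), one has $\D F(x_{0}) = 0$ and $\Delta F(x_{0}) \leq 0$. Expanding $\Delta(\phi w) = \phi\,\Delta w + 2\langle\D\phi,\D w\rangle + w\,\Delta\phi$ and using $\D w = -(w/\phi)\,\D\phi$ at $x_{0}$ together with the Bochner inequality above, one obtains at $x_{0}$
\begin{equation*}
0 \,\geq\, \tfrac{2}{n}\,\phi\, w^{2} \,-\, 2\,w\,\langle\D\phi,\D v\rangle \,-\, 2\,\tfrac{|\D\phi|^{2}}{\phi}\,w \,+\, w\,\Delta\phi.
\end{equation*}
Multiplying by $\phi$ and estimating the cross term $|2\phi\,w\,\langle\D\phi,\D v\rangle| \leq \tfrac{1}{n}\phi^{2} w^{2} + n|\D\phi|^{2}\,w$ by Young's inequality, one arrives at $\tfrac{1}{n}F(x_{0})^{2} \leq C(n)\,F(x_{0})/R^{2}$, whence $F(x_{0}) \leq C(n)/R^{2}$. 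Since $\phi\equiv 1$ on $B(p,R)$, this gives $w = |\D u|^{2}/u^{2} \leq C(n)/R^{2}$ there, which is exactly~\eqref{cheng}.

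The main obstacle is the Cauchy--Schwarz step giving $|\D\D v|^{2}\geq w^{2}/n$; this is actually sharp only when the Hessian is a multiple of the metric, and needs to be combined correctly with the $-\langle\D w,\D v\rangle$ term via the Young-type splitting at $x_{0}$. The delicate bookkeeping in balancing the coefficients (ensuring that a strictly positive fraction of $\tfrac{2}{n}\phi w^{2}$ survives after absorbing the gradient term) is the only nontrivial part of the computation; once it is carried out, the estimate follows.
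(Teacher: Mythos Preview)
The paper does not give its own proof of this statement: it is quoted as a classical result and attributed to Yau \cite{yau}, with a reference to the presentation in \cite{schoen-yau_book}. Your argument is precisely the standard Cheng--Yau proof one finds in those references, and it is correct as outlined; so there is nothing to compare against, and your write-up would serve perfectly well as a self-contained proof in place of the citation.
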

We now apply the above inequality to a harmonic function $v$ defined in a geodesic annulus $B(p, R_1)\setminus \overline{B(p, R_0)}$. We obtain a decay estimate on the gradient of $u$ that we will employ several times along this paper.
\begin{proposition}
Let $(M, g)$ be a complete noncompact Riemannian manifold with $\ric \geq 0$, and let $u$ be a positive harmonic function defined in a geodesic annulus $B(p, R_1)\setminus \overline{B(p, R_0)}$, with $R_1 > 3R_0$. Then, there exists a geometric constant $C=C(n)$ such that
\begin{equation}
\label{yau}
\abs{\D u} (q)\, \leq \, C \, \frac{u(q)}{d(p, q)},  
\end{equation}
for any point $q$ such that $2R_0 \leq d(p, q)<\dfrac{R_1 + R_0}{2}$. In particular, if $u$ is a harmonic function defined in $M \setminus \overline{B(p, R_0)}$, then 
\begin{equation}
\label{yau1}
\abs{\D u} (q)\leq C\frac{u(q)}{d(p, q)}
\end{equation}
for any point $q$ with $2R_0 \leq d(p, q)$.
\end{proposition}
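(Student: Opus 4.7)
The plan is to reduce the statement to Yau's Gradient Estimate applied at a well chosen radius. Since $u$ is positive and harmonic on the annulus $B(p,R_1)\setminus \overline{B(p,R_0)}$, the strategy is: given a target point $q$ with $r := d(p,q)\in [2R_0,(R_1+R_0)/2)$, select a radius $R$ comparable to $r$ such that the concentric ball $B(q,2R)$ fits inside the annulus where $u$ is defined and positive, and then invoke \eqref{cheng} at the point $q$.

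The concrete choice I would make is $R := r/8$. First I would verify that $B(q,2R) = B(q,r/4) \subset B(p,R_1)\setminus \overline{B(p,R_0)}$. For any $x \in B(q,r/4)$ the triangle inequality gives
\[
r - \tfrac{r}{4}\,\leq\, d(p,x) \,\leq\, r + \tfrac{r}{4}.
\]
The lower bound reads $d(p,x)\geq \tfrac{3r}{4}\geq \tfrac{3R_0}{2}>R_0$, where we used $r \geq 2R_0$. For the upper bound, the hypothesis $R_1>3R_0$ together with $r<(R_1+R_0)/2$ yields $r<2R_1/3$, hence $d(p,x)\leq \tfrac{5r}{4}<\tfrac{5R_1}{6}<R_1$. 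Thus $B(q,2R)$ is entirely contained in the annulus.

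Now I can apply Yau's Gradient Estimate on $B(q,2R)$: there exists a dimensional constant $\wt C=\wt C(n)>0$ such that
\[
\frac{|\D u|(q)}{u(q)}\,\leq\, \frac{\wt C}{R}\,=\,\frac{8\,\wt C}{r}\,=\,\frac{8\,\wt C}{d(p,q)},
\]
which is exactly the claimed inequality \eqref{yau} with $C(n):=8\wt C(n)$.

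For the final assertion concerning $u$ harmonic on $M\setminus \overline{B(p,R_0)}$, the very same construction works without the upper-bound verification: for any $q$ with $d(p,q)\geq 2R_0$, the ball $B(q,d(p,q)/4)$ avoids $\overline{B(p,R_0)}$ by the same triangle-inequality computation and lies in $M\setminus \overline{B(p,R_0)}$ automatically, so Yau's estimate applies and yields \eqref{yau1} with the same constant. I do not expect any real obstacle in this proof; the only nontrivial point is ensuring that the hypothesis $R_1>3R_0$ is actually used to guarantee the upper-bound containment $B(q,2R)\subset B(p,R_1)$, which is the reason for that otherwise mysterious looking numerical constraint in the statement.
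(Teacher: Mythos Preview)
Your proof is correct and follows essentially the same approach as the paper: center a ball at $q$ with radius comparable to $d(p,q)$, verify via the triangle inequality that it sits inside the annulus, and apply Yau's Gradient Estimate there. The only cosmetic difference is the specific radius chosen (you take $R=d(p,q)/8$, while the paper takes the ball $B\big(q,\,d(p,q)-R_0\big)$ directly), which only affects the value of the dimensional constant.
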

\begin{proof}
Let $q$ be such that $2R_0 \leq d(p, q)<\dfrac{R_1 + R_0}{2}$. Then the ball $B\big(q,\, d(p, q)- r_0\big)$ is all contained in the annulus $B(p, R_1)\setminus \overline{B(p, R_0)}$.  In particular, by Yau's inequality \eqref{cheng} we have
\[
\abs{\D u}(q)\leq C\frac{u(q)}{d(p, q)-r_0}\leq 2 C\frac{u(q)}{d(p, q)}.
\]
Letting $R_1 \to \infty$, we get also \eqref{yau1}.
\end{proof}
Another important application of Yau's inequality is the following compactness result for sequences of harmonic functions. A complete proof can be found in \cite[Lemma 2.1]{li-lecture}. 
\begin{lemma}
\label{convergence}
Let $(M, g)$ be a complete Riemannian manifold with $\ric \geq 0$, and let $U \subset M$ be an open and connected subset. Let $\{f_i\}$ be a sequence of positive harmonic functions defined on $U$, and suppose there exists a constant $C$ such that $f_i(p) \leq C$ at some point $p \in U$ for any $i\in \N$. Then, there exists a subsequence $\{f_{i_j}\}$ converging to a positive harmonic function $f$ uniformly on any compact set $K \subset U$. 
\end{lemma}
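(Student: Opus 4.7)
The strategy is classical: (i) Yau's gradient estimate together with a Harnack chain provides a uniform bound on $\{f_i\}$ on every compact $K \subset U$; (ii) a further application of Yau's estimate gives equicontinuity, and Arzelà--Ascoli plus a diagonal argument extracts a locally uniformly convergent subsequence; (iii) elliptic regularity ensures that the limit is harmonic, and the strong maximum principle handles the dichotomy between strict positivity and vanishing identically.

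More precisely, I would first fix an exhaustion $U = \bigcup_{k \in \N} K_k$ by compact connected subsets with $p \in K_1$ and $K_k \subset \mathrm{int}(K_{k+1})$. For each $k$, choose $R_k > 0$ small enough that $B(q, 3R_k) \subset U$ for every $q \in K_k$, and select finitely many balls $B(q_1, R_k), \dots, B(q_{N_k}, R_k)$ covering $K_k$ in such a way that any $q \in K_k$ can be joined to $p$ by a chain of at most $N_k$ consecutively overlapping balls from this cover (possible because $U$ is open and connected). Yau's estimate \eqref{cheng} applied to each positive harmonic $f_i$ on $B(q_j, 2R_k)$ gives $|\D \log f_i| \leq C(n)/R_k$ on $B(q_j, R_k)$. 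Integrating $|\D \log f_i|$ along the chain yields a Harnack-type inequality $f_i(q) \leq A_k f_i(p) \leq A_k\, C$ for every $q \in K_k$ and every $i$, with $A_k$ independent of $i$.

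Second, combining the uniform bound on $K_{k+1}$ from the previous step with Yau's estimate applied on balls $B(q, 2R_{k+1}) \subset U$ centered at points $q \in K_k$ yields a uniform gradient bound $|\D f_i| \leq B_k$ on $K_k$. Hence $\{f_i\}$ is equi-Lipschitz and uniformly bounded on each $K_k$. By the Arzelà--Ascoli theorem and a Cantor diagonal extraction along the exhaustion, some subsequence $\{f_{i_j}\}$ converges uniformly on every compact subset of $U$ to a continuous function $f \geq 0$.

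Finally, in any local coordinate chart the Laplace equation $\Delta u = 0$ is a linear, uniformly elliptic PDE with smooth coefficients, so interior Schauder estimates upgrade locally uniform $C^0$ convergence to locally uniform $C^2$ convergence; consequently $\Delta f = \lim_j \Delta f_{i_j} = 0$ on $U$. The strong maximum principle on the connected open set $U$ then implies that either $f \equiv 0$ or $f > 0$ everywhere, giving the stated conclusion. The only step requiring genuine care is the Harnack chain, where one must organize the finite cover so that the constants $N_k$ and $A_k$ are independent of $i$; this is precisely what the $i$-independence of Yau's constant $C(n)$ guarantees.
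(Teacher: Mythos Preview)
Your proof is correct and follows essentially the same approach as the paper's (the paper refers to \cite{li-lecture} for the details, but the intended argument, visible in a commented-out proof in the source, is identical): Yau's gradient estimate gives a uniform bound on $|\D\log f_i|$, a Harnack chain yields uniform $C^0$ bounds on compacts, and Arzel\`a--Ascoli plus elliptic regularity produces the harmonic limit. The only cosmetic difference is that you invoke interior Schauder estimates to upgrade the convergence, whereas the paper passes to the limit in the distributional identity $\int f_i\,\Delta\psi\,\dd\mu=0$ and then appeals to regularity; both work. Your explicit remark that the strong maximum principle only gives the dichotomy $f\equiv 0$ or $f>0$ is a useful caveat, since the hypotheses (an \emph{upper} bound at $p$) do not by themselves exclude the first alternative---positivity of the limit is in fact supplied by additional lower barriers in the paper's applications.
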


\subsection{Ends of manifolds with nonnegative Ricci curvature}
\label{sub-ends}
It is a well-known and largely exploited fact that a complete noncompact Riemannian manifold $(M, g)$ with nonnegative Ricci curvature that is not a Riemannian cylinder has just one end. However, since a complete proof of this fact is hard to find in standard literature, we discuss the details below.

We employ the following definition of end, that is, the one used in the works by Li and Tam, see for example \cite[Definition 0.4 and discussion thereafter]{li_tam_ext}.
\begin{definition}[Ends of Riemannian manifolds]
\label{ends}
An end  of a Riemannian manifold $(M, g)$ with respect to a compact subset $K \subset M$ is an unbounded connected component of $M \setminus K$. We say that $(M, g)$ has a finite number of ends if the number of ends with respect to any compact subset $K \subset M$ is bounded by a natural number $k$ independent of $K$.
In this case, we say that $(M, g)$ has $k$ ends if it has $k$ ends with respect to a compact subset $K \subset M$ and to any other compact subsets of $M$ containing $K$.
\end{definition}

To state the result, we quickly recall some terminology. A \emph{line} in $(M,g)$ is a curve
$\gamma:\R\to M$ which is a minimal geodesic between 
any two points lying on it.
A $ray$ is half a line. We also recall that a complete Riemannian manifold $(M,g)$ is called a {\em Riemannian cylinder} if it is isometric to the Riemannian product $(\R \times N^{n-1}, \dd t \otimes \dd t + g_{N^{n-1}})$, where $N^{n-1}$ is a compact manifold.

\begin{remark}
\label{more_than_2}
Using the above definition and terminology it is clear that if a Riemannian manifold has at least two ends, 
then it contains a line. 
\end{remark}

\begin{proposition}
\label{oneend}
Let $(M,g)$ be a 
complete noncompact Riemannian manifold
with $\ric \geq 0$.
If $(M, g)$ is not a Riemannian cylinder,
then it has just one \emph{end}.
\end{proposition}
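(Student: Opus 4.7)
The plan is to argue by contradiction, using the Cheeger--Gromoll splitting theorem as the key ingredient. Suppose that $(M,g)$ has at least two ends. By Remark~\ref{more_than_2}, $M$ then contains a line, and since $\ric \geq 0$, the Cheeger--Gromoll splitting theorem yields an isometric decomposition
\[
(M, g) \,\cong\, \big(\R \times N,\; \dd t \otimes \dd t + g_N\big)
\]
for some complete, connected Riemannian manifold $(N, g_N)$ with $\ric_{g_N}\geq 0$ (connectedness of $N$ follows from that of $M$).

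The next step is to show that $N$ must be compact, which will immediately contradict the hypothesis. Assume to the contrary that $N$ is noncompact. I claim that under this assumption $\R \times N$ admits only one end, contradicting the standing assumption. Indeed, let $K \subset \R \times N$ be any compact subset. Since the projections of $K$ onto the two factors are compact, there exist $R>0$ and a compact $N_0 \subset N$ with $K \subset [-R, R]\times N_0$. Thus
\[
M \setminus K \,\supset\, \big((-\infty,-R)\times N\big) \,\cup\, \big((R,+\infty)\times N\big) \,\cup\, \big(\R \times (N\setminus N_0)\big).
\]
Each of the two half-cylinders $(-\infty,-R)\times N$ and $(R,+\infty)\times N$ is connected because $N$ is, and the set $N \setminus N_0$ is nonempty because $N$ is noncompact. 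For any $x \in N \setminus N_0$ the $\R$-slice $\R \times \{x\}$ is contained in $M\setminus K$ and meets both half-cylinders, so the three pieces above lie in a single connected component of $M\setminus K$. Since every point of $M\setminus K$ lies in one of them, $M\setminus K$ has a unique unbounded connected component for every compact $K$; hence $M$ has exactly one end, a contradiction.

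Therefore $N$ must be compact, and the splitting $M \cong \R \times N$ exhibits $(M, g)$ as a Riemannian cylinder, contradicting the hypothesis. This rules out the initial supposition that $M$ has at least two ends, and since $M$ is noncompact it must have exactly one end. The one genuinely non-elementary step is the application of the Cheeger--Gromoll splitting theorem; the remainder is a routine end-counting argument for a Riemannian product, essentially the same reason $\R^n$ has a single end for $n\geq 2$.
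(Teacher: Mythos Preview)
Your argument is essentially correct, but one sentence needs repair. The claim that ``every point of $M\setminus K$ lies in one of them'' is not literally true: a point $(t,y)$ with $t\in[-R,R]$, $y\in N_0$, and $(t,y)\notin K$ lies in none of your three pieces. What your three pieces actually cover is $M\setminus\big([-R,R]\times N_0\big)$; since $[-R,R]\times N_0$ is compact, any \emph{unbounded} component of $M\setminus K$ must meet this connected set and hence coincide with the component containing it. Equivalently, since the number of ends is a supremum over compacts, you may simply replace $K$ by the larger compact $[-R,R]\times N_0$ from the outset, after which your covering claim becomes correct.

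With that fix, your route is valid and genuinely different from the paper's. The paper applies Cheeger--Gromoll \emph{maximally}, writing $M\cong\R^m\times N^{n-m}$ with $N^{n-m}$ containing no line, and then treats the cases $m=0$, $m=1$, $m\geq 2$ separately (the last case via an explicit path-connectedness argument in the product). You instead split only once to get $M\cong\R\times N$ and argue directly that noncompactness of $N$ forces a single end, avoiding both the iterated splitting and the case distinction. Your proof is shorter; the paper's yields the maximal Euclidean factor as a by-product, but that extra information is not used elsewhere.
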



\begin{proof}
Since $(M^n,g)$  has nonnegative Ricci curvature by hypothesis,
then the Cheeger-Gromoll Splitting Theorem \cite{cheegergromoll} implies that
\begin{equation}
\label{eq:Che_Gro}
(M,g)
\,\,
\mbox{is isometric to}
\,\,
\big(\R^m\times N^{n-m},g_{\R^k}+g_{N^{n-m}}\big),
\end{equation}
for some $m\in\{0,\hdots,n\}$,
where the manifold $(N^{n-m},g_{N^{n-m}})$ has 
nonnegative Ricci curvature
and does not contain any line.
The Riemannian manifold $(M, g)$ is a Riemannian cylinder if $m=1$ and $N^{n-1}$ is compact. Let us then suppose that $(M, g)$ is not a Riemannian cylinder. We  consider the cases $m=0$, $m=1$ and $m \geq 2$. 

\emph{Case $m = 0$}.
In this case, $(M,g)$ does not contain 
any line. Then there is no more than one end, 
in view of Remark \ref{more_than_2}.

\emph{Case $m = 1$}.
Since $(M, g)$ is not a cylinder, we have that $N^{n-1}$ is a noncompact Riemannian manifold that contains no lines. Then again by Remark \ref{more_than_2}, it has at most one end. Thus, also $\R \times N^{n-1}$ has at most one end.

\emph{Case $m \geq 2$}.
We show that
\begin{equation}
\label{non_connessione}
M\setminus K\,\,\mbox{is connected for every compact}\,\,K\subset M.
\end{equation}
In view of Definition \ref{ends},
this readily implies that $M$ has at most one end.  
Now, to check \eqref{non_connessione} in view of \eqref{eq:Che_Gro},
it is sufficient to check that for every compact $Q\in\R^m$
and every compact $P\in N^{n-m}$, 
we have that
$(\R^k\times N^{n-m})\setminus(Q\times P)$
is connected. 
Let then $(x,q),\,(y,p)\in(\R^m\times N^{n-m})\setminus(Q\times P)$
and suppose for the moment
that $x,y\in Q$,
so that, in turn, $q,p\notin P$.
Choose $z\in\R^m\setminus Q$
and define the curves
\[
\alpha(t)=\big(tx+(1-t)z,q\big),
\qquad
\beta(t)=\big(ty+(1-t)z,p\big),
\]
$t\in[0,1]$, connecting $(x,q)$ to $(z,q)$
and $(y,p)$ to $(z,p)$,
respectively.
Note that $\alpha(t),\beta(t)\in(\R^m\times N^{n-m})\setminus(Q\times P)$ 
for every $t\in[0,1]$, because $q,p\notin Q$.
Now, let $\gamma(t)$ be a continuous curve 
in $N^{n-m}$ connecting $q$ and $p$.
Then the curve $\big(z,\gamma(t)\big)
\in\big((\R^m\setminus Q)\times N^{n-m}\big)$
connects $(z,q)$ to $(z,p)$.
Gluing together the curves $\alpha$, $\beta$,
and $(z,\gamma)$, we obtain a continuous path 
lying in $(\R^m\times N^{n-m})\setminus(Q\times P)$
and connecting $(x,q)$ to $(y,p)$.
Obtaining such a curve in the case
where either $x\notin Q$ or $y\notin Q$
requires a similar simpler construction.
We have thus proved that  
$(\R^m\times N^{n-m})\setminus(Q\times P)$
is path-connected, hence connected.

\smallskip

We proved that $(M,g)$
has at most one end, if it is not a cylinder. Then, it has exactly one end, because
it is noncompact.  
\end{proof}

Let us now describe separately some aspects of harmonic functions on nonparabolic and parabolic manifolds. In particular, we are going to characterize these two classes of manifolds through a couple of existence results for solutions of suitable boundary value problems in exterior domains (see Theorems~\ref{existence-nonpar} and~\ref{existence-par} below). The monotone quantities analysed in Theorems~\ref{thm:M/R} and~\ref{mono-par} are defined along the level sets of these solutions.

\subsection{The exterior problem on nonparabolic manifolds}
The following is a fundamental estimate proved by Li-Yau in \cite{liyau}.
\begin{theorem}[Li-Yau]
\label{liyau}
Let $(M, g)$ be a nonparabolic Riemannian manifold with $\ric \geq 0$. Then, its minimal Green's function $G$ satisfies
\begin{equation}
\label{liyauf}
C^{-1} \!\!\int\limits_{d(p, q)}^{+\infty} \!\!\frac{r}{\abs{B(p, r)}} \, \dd t \,\, \leq \,\, G(p, q) \,\, \leq \,\, C \!\! \int\limits_{d(p, q)}^{+\infty} \!\!\frac{r}{\abs{B(p, r)}} \, \dd t \, ,
\end{equation}
for some $C=C(n)>0$.
\end{theorem}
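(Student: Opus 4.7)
The plan is to deduce the two-sided bound from the Gaussian estimates on the heat kernel of $(M,g)$, available thanks to Li--Yau's celebrated parabolic Harnack inequality under $\ric \geq 0$, combined with the representation of the minimal Green's function as the time-integral of the minimal heat kernel.

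First, I would record the identity
\[
G(p,q) \,=\, \int_0^{+\infty} \!H(t,p,q) \,\dd t,
\]
where $H$ is the minimal positive fundamental solution of $\partial_t - \Delta$ on $(M,g)$. Nonparabolicity guarantees that the right-hand side is finite at every $(p,q)$ off the diagonal, and the function so defined is positive, symmetric, harmonic in $q$ off the diagonal, and satisfies the short-distance asymptotic~\eqref{asy1}; uniqueness of the minimal Green's function then identifies the two sides.

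Second, I would apply the Li--Yau Gaussian estimates: there exist positive constants $c_1,c_2$ depending only on $n$ such that
\[
\frac{c_1}{|B(p,\sqrt{t})|} \, \expo\!\left(-\frac{c_2\,d^2(p,q)}{t}\right) \,\leq\, H(t,p,q) \,\leq\, \frac{c_2}{|B(p,\sqrt{t})|} \, \expo\!\left(-\frac{c_1\,d^2(p,q)}{t}\right),
\]
for every $t>0$. Setting $d:=d(p,q)$ and performing the change of variable $t=r^2$, both inequalities reduce the statement to be proved to the comparability (up to multiplicative constants depending only on $n$) of $\int_0^{+\infty} \big(r/|B(p,r)|\big)\expo(-\alpha d^2/r^2)\,\dd r$ with the tail $\int_d^{+\infty} r/|B(p,r)|\,\dd r$, for any fixed $\alpha>0$.

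Third, I would split each integral at $r=d$. On $\{r\geq d\}$ the exponential factor is trapped between $\expo(-\alpha)$ and $1$, so this contribution is already comparable to the desired tail. The crux of the argument is then to absorb the short-range contribution $\{r<d\}$ into that tail. Here I would invoke Bishop--Gromov monotonicity of $r\mapsto |B(p,r)|/r^n$: after the change of variable $u=d/r$ it yields a bound of the type $c'\,d^2/|B(p,d)|$ on the short-range integral; a second application of Bishop--Gromov on the annulus $\{d\leq r\leq 2d\}$ shows conversely that $\int_d^{+\infty} r/|B(p,r)|\,\dd r \geq c''\, d^2/|B(p,d)|$, which closes the comparison.

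The main obstacle is precisely this last step: the matching between the exponentially small short-time contribution and the tail $\int_d^{+\infty} r/|B(p,r)|\,\dd r$ is the one that genuinely requires the hypothesis $\ric\geq 0$, which enters twice --- once through its influence on the heat kernel (producing the Gaussian bounds above via Li--Yau) and once through the volume comparison that allows to dominate $d^2/|B(p,d)|$ by the long-range integral.
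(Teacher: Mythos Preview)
The paper does not include a proof of this theorem; it is stated as a classical result and simply attributed to Li--Yau~\cite{liyau}. Your sketch, however, is correct and is precisely the standard derivation: the Green's function is represented as the time integral of the minimal heat kernel, the Li--Yau parabolic Harnack inequality supplies two-sided Gaussian bounds on $H(t,p,q)$ in terms of $|B(p,\sqrt{t})|$, and the substitution $t=r^2$ reduces matters to the comparison you describe. The treatment of the two regimes is sound: on $\{r\geq d\}$ the exponential is uniformly bounded above and below, while on $\{r<d\}$ the Bishop--Gromov inequality $|B(p,r)|\geq (r/d)^n|B(p,d)|$ together with the substitution $u=d/r$ bounds the short-range piece by a dimensional constant times $d^2/|B(p,d)|$, and the reverse Bishop--Gromov bound $|B(p,r)|\leq (r/d)^n|B(p,d)|$ on $[d,2d]$ shows the tail already dominates that quantity.

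One small quibble: in the Gaussian bounds you wrote the same pair $(c_1,c_2)$ in front of and inside the exponential on both sides; in the actual Li--Yau estimates the four constants are in general distinct, though all depend only on $n$. This does not affect the argument.
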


Combining \eqref{var} with \eqref{liyauf}, we get that the minimal Green's function goes to $0$ at infinity, i.e.  for any fixed $p$ in M
\begin{equation}
\label{greenzero}
\lim_{d(p, q)\to\infty} \!\!G(p, q) \, = \, 0 \, .
\end{equation}
An easy application of Laplace Comparison Theorem then gives the following well known fact.

\begin{lemma}
\label{laplace_comp}
Let $(M, g)$ be a nonparabolic Riemannian manifold with $\ric \geq 0$, and let $G$ be its minimal Green's function. Then, for any fixed pole $p$ we have
\begin{equation}
\label{laplacef}
d^{2-n}(p, q) \leq G (p, q).
\end{equation}
for any $q \neq p$ in $M$.
\end{lemma}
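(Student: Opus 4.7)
The plan is a direct maximum-principle comparison between the function $f := d(p,\cdot)^{2-n}$ and the minimal Green's function $G(p,\cdot)$, after establishing that $f$ is subharmonic on $M\setminus\{p\}$ via the Laplace Comparison Theorem. Writing $r = d(p,\cdot)$, away from $p$ and from the cut locus of $p$ one has $|\D r|=1$ and therefore
\[
\Delta\bigl(r^{2-n}\bigr) \,=\, (2-n)\,r^{1-n}\,\Delta r\,+\,(2-n)(1-n)\,r^{-n}.
\]
The assumption $\ric \geq 0$ gives, by the Laplace Comparison Theorem, $\Delta r \leq (n-1)/r$; multiplying this bound by the negative factor $(2-n)\,r^{1-n}$ reverses the inequality, and summing the result with the second summand above yields $\Delta(r^{2-n}) \geq 0$. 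In this way $f$ is a (weakly) subharmonic function on $M\setminus\{p\}$, the distributional behaviour of $\Delta r$ across the cut locus of $p$ being harmless as it enters with the favourable sign $(2-n)<0$.

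To obtain the lower bound $f \leq G$, I would realise the minimal $G$ as the monotone limit $G = \lim_{R\to\infty} G_R$ of the Dirichlet Green's functions on the geodesic balls $B(p,R)$, each normalised so that $G_R(p,q) = (1+o(1))\,d(p,q)^{2-n}$ as $q\to p$, and compare $G_R$ with the competitor $v_R := r^{2-n}-R^{2-n}$. By construction $v_R$ vanishes on $\partial B(p,R)$, is subharmonic on $B(p,R)\setminus\{p\}$, and shares with $G_R$ the same leading singularity at $p$. The difference $v_R - G_R$ is therefore subharmonic on the punctured ball, vanishes on the outer boundary, and is $o(r^{2-n})$ at $p$ by \eqref{asy1}. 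The standard device of subtracting a small multiple $\epsilon G_R$, which near $p$ dominates $v_R - G_R$, and applying the ordinary maximum principle on the compact annulus $B(p,R)\setminus\overline{B(p,\delta)}$ (for $\delta$ small enough depending on $\epsilon$), followed by letting first $\delta \to 0^+$ and then $\epsilon \to 0^+$, gives $v_R \leq G_R$ on $B(p,R)\setminus\{p\}$. Sending $R \to \infty$, using $R^{2-n}\to 0$ together with $G_R \nearrow G$, produces the claimed inequality $d^{2-n}(p,q) \leq G(p,q)$.

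The only genuine technical subtlety is the handling of the singularity of $f$ at the pole: the asymptotic \eqref{asy1} only yields $v_R - G_R = o(r^{2-n})$, which by itself is not strong enough to invoke the maximum principle across the singular point. Inserting the $\epsilon G_R$ barrier is the natural way to upgrade this qualitative bound into strict negativity along the inner annular boundaries. Once this point is dealt with, the argument reduces to the Laplace Comparison Theorem and the basic structural properties of the minimal Green's function already recalled in Section~\ref{sec:ingredients}.
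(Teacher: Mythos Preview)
Your proposal is correct and follows essentially the same route as the paper: subharmonicity of $r^{2-n}$ via the Laplace Comparison Theorem, followed by a maximum-principle comparison with the Green's function. The only difference is in the packaging of the limiting argument: the paper compares $r^{2-n}$ directly with the global minimal $G$ on annuli $\overline{B(p,R)}\setminus B(p,\epsilon)$ and sends $\epsilon\to 0$, $R\to\infty$ using the pole asymptotics~\eqref{asy1} and the decay $G\to 0$ at infinity from~\eqref{greenzero}, whereas you work with the Dirichlet approximants $G_R$, the shifted competitor $v_R=r^{2-n}-R^{2-n}$, and an explicit $\epsilon G_R$ barrier at the pole before letting $R\to\infty$. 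Your treatment of the singularity is in fact more careful---the paper's one-line ``taking into account~\eqref{asy1}'' glosses over exactly the $o(r^{2-n})$ issue you address with the barrier---while the paper's route is slightly shorter because it bypasses the $G_R$ exhaustion by using the already-established vanishing of $G$ at infinity.
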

\begin{proof}
Let $r$ be the function mapping a point $q$ in $M$ to $d(p, q)$. By the Laplacian Comparison Theorem, we have 
\[
\Delta r \,\,  \leq \, \, \frac{n-1}{r}
\]
in the sense of distributions (see e.g. \cite[Theorem 1.128]{Hamilton_book}). Therefore, we have, in the sense of distributions,
\begin{equation}
\label{r^{2-n}}
\Delta r^{2-n} \,\,= \,\,  (n-2)\left[(n-1) r^{-n} - r^{1-n} \Delta r\right] \,\, \geq \,\, 0 \, ,
\end{equation}
and then the the function $r^{2-n} - G(p, \cdot)$ is sub-harmonic. By the maximum principle, for any $ \epsilon > 0$ and $R > \epsilon$
\[
\max_{\overline{B(p, R)} \setminus B(p, \epsilon)} \!\!(r^{2-n} - G(p, \cdot)) \,\, = \,\, \max_{\partial B(p, R) \cup \partial B(p, \epsilon)} \! (r^{2-n} - G(p, \cdot)) \, .
\]
We conclude by passing to the limit as $\epsilon \to 0$ and $R\to \infty$, taking into account the asymptotic behavior at the pole $p$ given by \eqref{asy1} and that $G \to 0$ at infinity, as observed in \eqref{greenzero}.
\end{proof}


Now, we characterize the existence of a solution to problem \eqref{pb} with the nonparabolicity of the ambient manifold. Let us first set up some notation that we are going to use in the rest of the paper. 
With respect to a bounded open subset $\Omega \subset M$ with smooth boundary, we denote by $O$ a generic reference point taken inside $\Omega$.

\begin{theorem}
\label{existence-nonpar}
Let $(M, g)$ be a complete noncompact Riemannian manifold with $\ric\geq 0$, and let $\Omega \subset M$ be a bounded open subset with smooth boundary.
Then, there exists a solution to problem \eqref{pb} if and only if $(M, g)$ is nonparabolic.
\end{theorem}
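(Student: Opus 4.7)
The plan is to prove the two implications separately, using Li--Tam's characterization (Theorem~\ref{litam}) for the forward direction and a monotone approximation scheme together with a Green's function barrier for the reverse direction.

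For the forward implication, suppose $u$ solves \eqref{pb}. Fix any point $p \in \Omega$ and choose $R > 0$ with $\overline{\Omega} \subset B(p, R)$. The restriction of $u$ to $M \setminus \overline{B(p, R)}$ is harmonic, hence superharmonic. Moreover, applying the maximum principle on bounded regions of $M \setminus \overline{\Omega}$ together with the boundary value $u = 1$ on $\partial \Omega$ and the decay $u \to 0$ at infinity, we get $0 < u < 1$ in $M \setminus \overline{\Omega}$. In particular $\inf_{\partial B(p, R)} u > 0$ by the strong maximum principle, while $\liminf_{d(p,q) \to \infty} u(q) = 0$. Thus $u$ itself satisfies the barrier condition \eqref{barrier}, and Theorem~\ref{litam} implies that $(M, g)$ is nonparabolic.

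For the reverse implication, assume $(M, g)$ is nonparabolic. The idea is to produce $u$ as the monotone pointwise limit of solutions of finite Dirichlet problems. Pick a smooth exhaustion $\overline{\Omega} \subset D_1 \Subset D_2 \Subset \cdots$ with $\bigcup_k D_k = M$, and for each $k$ solve
\[
\Delta u_k = 0 \text{ in } D_k \setminus \overline{\Omega}, \qquad u_k = 1 \text{ on } \partial \Omega, \qquad u_k = 0 \text{ on } \partial D_k,
\]
which admits a unique classical solution. The maximum principle gives $0 \leq u_k \leq 1$; comparing $u_{k+1}$ with $u_k$ on $D_k \setminus \overline{\Omega}$, where $u_{k+1} \geq 0 = u_k$ on $\partial D_k$ and $u_{k+1} = 1 = u_k$ on $\partial \Omega$, yields $u_k \leq u_{k+1}$. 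Hence $u(q) := \lim_k u_k(q)$ exists pointwise with $0 \leq u \leq 1$ and $u = 1$ on $\partial \Omega$. By Yau's gradient estimate \eqref{cheng}, via Lemma~\ref{convergence}, the $u_k$ converge locally uniformly together with their derivatives, so $u$ is harmonic in $M \setminus \overline{\Omega}$.

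The delicate step, and the place where nonparabolicity is genuinely needed, is to show that $u \to 0$ at infinity; without an upper barrier going to zero we would only conclude $u \leq 1$. Here one invokes the minimal positive Green's function $G(O, \cdot)$ with pole $O \in \Omega$, which exists precisely because $(M, g)$ is nonparabolic. Set $m := \min_{\partial \Omega} G(O, \cdot) > 0$. Then $G(O, \cdot)/m$ is harmonic and positive on $D_k \setminus \overline{\Omega}$ (for $k$ large so that $O \notin D_k \setminus \overline{\Omega}$ issue is moot since $O \in \Omega$), with $G(O, \cdot)/m \geq 1 = u_k$ on $\partial \Omega$ and $G(O, \cdot)/m \geq 0 = u_k$ on $\partial D_k$. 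The maximum principle gives $u_k \leq G(O, \cdot)/m$ in $D_k \setminus \overline{\Omega}$, and passing to the limit yields $u \leq G(O, \cdot)/m$ throughout $M \setminus \overline{\Omega}$. Combining \eqref{var} (Varopoulos) with the Li--Yau estimate \eqref{liyauf} produces \eqref{greenzero}, so $G(O, q) \to 0$ as $d(O, q) \to \infty$, whence $u(q) \to 0$. This completes the construction of a solution to \eqref{pb}. The main obstacle, as noted, is producing an upper barrier with the correct decay at infinity; it is precisely resolved by the existence of a positive Green's function together with its Li--Yau decay.
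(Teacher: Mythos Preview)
Your proof is correct and follows essentially the same approach as the paper: the forward implication invokes Li--Tam's characterization (Theorem~\ref{litam}) using $u$ itself as the barrier, and the reverse implication constructs $u$ as a limit of solutions to finite Dirichlet problems, bounded above by a multiple of the minimal Green's function, whose decay at infinity (via Li--Yau and Varopoulos) forces $u \to 0$. The only cosmetic differences are that you use a general smooth exhaustion $D_k$ rather than geodesic balls, and you note the monotonicity $u_k \leq u_{k+1}$ to obtain a pointwise limit before appealing to Lemma~\ref{convergence}, whereas the paper extracts a convergent subsequence directly from that lemma.
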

\begin{proof}
The proof is an easy adaptation of arguments already presented in \cite{li_tam_ext}.
By Theorem \ref{litam} the existence of a solution to problem \eqref{pb} implies nonparabolicity of $M$, since the restriction of $u$ to $M\setminus B(O, R)$ with $\Omega \subset B(O, R)$ clearly satisfies condition \eqref{barrier}.

Conversely, assume that $M$ is nonparabolic, and
consider an increasing sequence of radii $\{R_i\}_{i\in\N}$ such that $\Omega\subset B(O, R_1)$ and $R_i \to \infty$. Let, for any $i\in \N$, $u_{i}$ be the solution to the following problem:
\begin{equation}
\label{pb1}
\begin{cases}
\!\Delta{u}=0 & \mbox{in} \,\, B(O, R_i)\setminus \overline{\Omega} \\
\,\,\,\,u=1 & \mbox{on}\,\, \partial\Omega \\
\,\,\,\,u = 0 &\mbox{on} \,\, \partial B(O, R_i).
\end{cases}
\end{equation}
Let  now $G$ be the minimal positive Green's function, 
and consider the function $G(O, \cdot)$. Due to the Maximum Principle for harmonic functions and the  boundary conditions in problem \eqref{pb1} we have that
\begin{equation}
\label{control-ui}
0\leq u_{Ri}(q)\leq \frac{G(O, q)}{\min_{\partial\Omega}G(O, \cdot)} \, ,
\end{equation}
for $ q \in B(O, R_i)$.
Let then $K$ be a compact set contained in $M \setminus \overline{\Omega}$. We can clearly suppose without loss of generality that $K$ is contained in  $B(O, R_i)\setminus \overline{\Omega}$ for any $i$. Then, \eqref{control-ui} and Lemma \ref{convergence} give that $u_{i}$ converges up to a subsequence to a harmonic function $u$ on $K$. We can clearly extend by continuity $u$ to $1$ on $\partial \Omega$. Again by \eqref{control-ui}, and by uniform convergence,
\[
0\leq u(q)\leq \frac{G(O, q)}{\min_{\partial\Omega}G(O, \cdot)} \, .
\]
on $M \setminus \Omega$. Since, by \eqref{greenzero}, $G(O, q)\to 0$ as $d(O, q)\to\infty$, so does $u$, completing the proof.
\end{proof}

We conclude this section with the following easy lemma, which shows that we can control function $u$ by the minimal Green's function $G$.
\begin{lemma}
\label{control}
Let $(M, g)$ be a nonparabolic Riemannian manifold with $\ric \geq 0$, and let $G$ be its minimal Green's function.
Let $u$ be a solution to \eqref{pb} for some open and bounded set $\Omega$ with smooth boundary, and let $O \in \Omega$.
Then, there exist  constants $C_1=C_1(M, \Omega)>0$ and $C_2=C_2(M, \Omega)>0$ such that
\begin{equation}
\label{bound_u_g}
C_1\,G(O, q)\leq u(q)\leq C_2\, G(O, q)
\end{equation}
on $M \setminus \Omega$. In particular 
\begin{equation}
\label{lower}
C_1 d(O, q)^{2-n} \leq u(q).
\end{equation}
on $M \setminus \Omega$.
\end{lemma}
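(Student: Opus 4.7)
The plan is to compare both $u$ and $G(O, \cdot)$ at the two ``boundaries'' of the exterior domain $M \setminus \overline{\Omega}$: the compact boundary $\partial \Omega$ and infinity. Since $O$ lies in the interior of $\Omega$, we have $\dist(O, \partial \Omega) > 0$, so the Green's function $G(O, \cdot)$ is smooth and strictly positive on $\partial \Omega$. Hence the quantities
\[
m \,:=\, \min_{\partial \Omega} G(O, \cdot) \,>\, 0,
\qquad
M_* \,:=\, \max_{\partial \Omega} G(O, \cdot) \,<\, +\infty,
\]
are well defined, and we set $C_1 := 1/M_*$ and $C_2 := 1/m$. With this choice, on $\partial \Omega$ we have
\[
C_1 \, G(O, \cdot) \,\leq\, 1 \,=\, u \,\leq\, C_2 \, G(O, \cdot).
\]

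The heart of the argument is a maximum principle argument on the exhaustion $\{B(O, R) \setminus \overline{\Omega}\}_{R > 0}$. The functions $C_2 G(O, \cdot) - u$ and $u - C_1 G(O, \cdot)$ are both harmonic on $M \setminus \overline{\Omega}$. On $\partial \Omega$, both are nonnegative by the choice of $C_1$ and $C_2$. Applying the classical strong maximum principle on $B(O, R) \setminus \overline{\Omega}$ (which contains no singularity of either $u$ or $G(O, \cdot)$, since $O \in \Omega$), the minima of these functions on the closure are attained on the boundary $\partial \Omega \cup \partial B(O, R)$. Letting $R \to +\infty$ and using that both $u$ and $G(O, \cdot)$ vanish at infinity --- for $u$ by the boundary condition in \eqref{pb}, and for $G(O, \cdot)$ by \eqref{greenzero}, which is in turn a consequence of the Li--Yau bound \eqref{liyauf} combined with Varopoulos' characterization \eqref{var} of nonparabolicity --- the contribution on $\partial B(O, R)$ tends to zero. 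Hence both $C_2 G(O, \cdot) - u \geq 0$ and $u - C_1 G(O, \cdot) \geq 0$ throughout $M \setminus \Omega$, proving \eqref{bound_u_g}.

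The distance estimate \eqref{lower} is then immediate: combining the lower bound just proved with the Laplace-comparison inequality \eqref{laplacef} of Lemma \ref{laplace_comp}, we get
\[
u(q) \,\geq\, C_1 \, G(O, q) \,\geq\, C_1 \, d(O, q)^{2-n}
\]
for all $q \in M \setminus \Omega$. No serious obstacle is expected here: the only mildly subtle point is justifying the maximum principle ``at infinity'', which is handled cleanly by the exhaustion argument together with the decay of both $u$ and $G(O, \cdot)$.
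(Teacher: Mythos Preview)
Your proposal is correct and follows essentially the same approach as the paper: choose $C_1$ and $C_2$ in terms of the extrema of $G(O,\cdot)$ on $\partial\Omega$, apply the maximum principle using that both $u$ and $G(O,\cdot)$ vanish at infinity, and then combine with Lemma~\ref{laplace_comp} to obtain~\eqref{lower}. Your write-up is simply a more detailed version of the paper's terse argument.
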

\begin{proof}
Just set $0<C_1<1/\max_{\partial\Omega}G(O, \cdot)$, and $C_2>1/\min_{\partial\Omega}G(O, \cdot)$. The claim follows from the Maximum principle and the observation that both $u$ and $G$ are vanishing at infinity. The inequality~\eqref{lower} is obtained combining the lower estimate on $u$ by \eqref{bound_u_g} with \eqref{laplacef}. 
\end{proof}

\subsection{The exterior problem on parabolic manifolds}
\label{exterior-parabolic}
The following inequalities, proved in \cite[Theorem 2.6]{li_tam_green-volume}, can be interpreted as a version for parabolic manifolds of the Li-Yau inequalities recalled in Theorem \ref{liyau}. We point out that we are always dealing with Green's functions obtained by the Li-Tam's construction.
\begin{theorem}
\label{litamyau}
Let $(M, g)$ be a parabolic manifold with $\ric \geq 0$, and let $p \in M$. Let $G$ be a Green's function. Then, for any fixed $r_0 > 0$ and for any $q$ with $d(p, q) > 2r_0$ there holds
\begin{equation}
\label{litamyauf-1}
- \, G(p, q) \,\, \leq \,\, C_1 \!\!
\int\limits_{r_0}^{d(p, q)}\!\!\!\frac{r}{\abs{B(p, r)}}\, \dd r\,  + \, C_2 \, ,
\end{equation}
for some constants $C_1$ and $C_2$  depending only on $n$, $r_0$ and the choice of $G$. Moreover, for any $R > r_0$ , there holds 
\begin{equation}
\label{litamyauf-2}
C_3 \int\limits_{r_0}^R \frac{r}{\abs{B(p, r)}} \, \dd r \, + \,  C_4 \,\, \leq  \sup_{\partial B(p, R)} \big(- G(p, \cdot) \big) \, ,
\end{equation}
for some constants $C_3$ and $C_4$  depending only on $n$, $r_0$ and the choice of $G$.
\end{theorem}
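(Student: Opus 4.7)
My approach would be built on the Li--Yau Gaussian heat-kernel bounds, which hold whenever $\ric\geq 0$:
$$\frac{C^{-1}}{|B(p,\sqrt{t})|}\, e^{-C d^2(p,q)/t} \,\leq\, p_t(p,q)\, \leq\, \frac{C}{|B(p,\sqrt{t})|}\, e^{-d^2(p,q)/(Ct)}.$$
On a nonparabolic manifold, $\int_0^\infty p_t(p,q)\,dt$ gives the positive Green's function directly, as in Theorem~\ref{liyau}; on a parabolic manifold this integral diverges, but the Li--Tam Green's function can still be represented, after fixing a reference point $q_0$ with $d(p,q_0)=r_0$, as
$$G(p,q)-G(p,q_0)=\int_0^\infty \bigl[p_t(p,q)-p_t(p,q_0)\bigr]\,dt,$$
with the divergent tails cancelling precisely because of parabolicity. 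The plan is then to reduce both estimates to volume integrals of the form $\int r/|B(p,r)|\,dr$ via the substitution $r=\sqrt t$ together with Bishop--Gromov volume comparison.

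For the upper bound \eqref{litamyauf-1} I would split $\int_0^\infty = \int_0^{d^2(p,q)}+\int_{d^2(p,q)}^\infty$. On the short-time piece the Gaussian decay in $d^2(p,q)/t$ makes $p_t(p,q)$ negligible, leaving $\int_0^{d^2(p,q)} p_t(p,q_0)\,dt$, which by the heat-kernel upper bound and volume comparison is controlled by $C_1\int_{r_0}^{d(p,q)} r/|B(p,r)|\,dr$ up to an additive constant $C_2$ depending on $r_0$. On the long-time piece I would integrate a Li--Yau-type gradient estimate for $p_t$ along a minimising geodesic from $q_0$ to $q$, reducing again to the same volume integral.

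For the lower bound \eqref{litamyauf-2} I would apply Green's identity to $-G(p,\cdot)$ on the annulus $B(p,R)\setminus \overline{B(p,r_0)}$: combined with $\Delta G(p,\cdot)=-\delta_p$ this yields the flux identity $\int_{\partial B(p,R)}\partial_\nu(-G)\,d\sigma=1$. Iterating over a family of concentric spheres and invoking Bishop--Gromov produces a lower bound on the spherical average of $-G$ on $\partial B(p,R)$ of order $\int_{r_0}^R r/|B(p,r)|\,dr$, whence passing to the supremum is immediate. As an alternative, substituting the heat-kernel lower bound into the renormalised integral representation and applying Yau's Harnack inequality \eqref{cheng} along the annulus transfers the bound to a point of $\partial B(p,R)$ and gives the same conclusion.

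The principal technical obstacle is the normalisation intrinsic to the Li--Tam construction: the additive constants $C_2$ and $C_4$ in the statement depend on $r_0$, on the base point of the exhaustion, and on the chosen representative of $G$, so tracking their dependence through the heat-kernel representation requires careful control of the exhaustion by Dirichlet Green's functions on balls. A secondary delicate point, in the lower bound, is the need to transfer an average estimate on $\partial B(p,R)$ to a pointwise estimate at the supremum, which forces one to keep the Harnack constants uniform as $R$ grows, a feature that is available on manifolds with $\ric\geq 0$ thanks to Yau's gradient bound but which must be applied with some care on increasingly large annuli.
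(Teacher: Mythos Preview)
The paper does not prove this theorem: it is quoted verbatim from \cite[Theorem 2.6]{li_tam_green-volume} and used as a black box. So there is no ``paper's own proof'' to compare against; what one can compare to is the original Li--Tam argument.

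Your strategy is broadly the right one and is in the spirit of Li--Tam: both inequalities ultimately rest on the Li--Yau heat-kernel bounds together with the renormalised representation of the Green's function as a limit of Dirichlet Green's functions on an exhaustion. Two points deserve more care, however. First, in your treatment of the long-time piece $\int_{d^2(p,q)}^\infty [p_t(p,q_0)-p_t(p,q)]\,dt$, integrating a pointwise gradient bound along a geodesic of length $\sim d(p,q)$ produces a factor $d(p,q)/\sqrt t$ times a heat-kernel bound; after the change of variables this becomes $\int_{d(p,q)}^\infty d(p,q)/|B(p,r)|\,dr$, which is an integral over $[d(p,q),\infty)$ rather than $[r_0,d(p,q)]$, and there is no Bishop--Gromov lower volume bound available to close this directly. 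Li--Tam handle this instead via the parabolic Harnack inequality (comparing $p_t(p,q_0)$ and $p_{t'}(p,q)$ at nearby times), which absorbs the long-time contribution into a multiplicative constant; you should make that step explicit.

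Second, for the lower bound your flux argument is suggestive but incomplete: the constancy of $\int_{\partial B(p,r)}\partial_\nu(-G)\,d\sigma$ gives information about the normal derivative averaged over spheres, not about the spherical mean of $-G$ itself, and the passage from one to the other via ``iterating over concentric spheres'' is exactly where the work lies (the surface measure on $\partial B(p,r)$ varies with $r$, so one cannot simply integrate in $r$). Your alternative route through the heat-kernel lower bound plus Harnack is closer to what actually works and to Li--Tam's argument; I would lead with that.
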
 

When $(M, g)$ is parabolic, Li-Tam proved in \cite[Lemma 1.2]{li_tam_ext} that the exterior problem \eqref{prob-ex-par} admits a solution.
The construction of such a solution $\psi$, combined with Yau's inequality and Theorem \ref{litamyau}, readily implies a uniform gradient bound on $\psi$. 
\begin{theorem}
\label{existence-par}
Let $(M, g)$ be a parabolic Riemannian manifold with $\ric \geq 0$, and let $\Omega \subset M$ be a bounded and open subset with smooth boundary. Then, there exists a solution to problem \eqref{prob-ex-par}. Moreover, $\abs{\D \psi}$ is uniformly bounded in $M \setminus \Omega$.
\end{theorem}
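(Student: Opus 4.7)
The plan for existence is to mimic the construction used in Theorem \ref{existence-nonpar}, with the Li--Tam estimates of Theorem \ref{litamyau} playing the role of the Li--Yau estimates that were available in the nonparabolic case. Concretely, I would fix an exhaustion $\{R_i\}_{i\in\N}$ with $\Omega \subset B(O, R_1)$ and $R_i \nearrow +\infty$, and for each $i$ solve the Dirichlet problem
\begin{equation*}
\Delta \psi_i = 0 \,\,\text{in}\,\, B(O, R_i)\setminus\overline\Omega, \qquad \psi_i = 0 \,\,\text{on}\,\, \partial\Omega, \qquad \psi_i = a_i \,\,\text{on}\,\, \partial B(O, R_i),
\end{equation*}
with $a_i > 0$ chosen so that $\psi_i(p_0) = 1$ at a fixed reference point $p_0 \in M\setminus\overline\Omega$. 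Such a normalization is admissible since, by the maximum principle, $\psi_i$ is strictly increasing in the boundary datum $a_i$ and ranges continuously from $0$ to $+\infty$. The family $\{\psi_i\}$ is then positive harmonic on every fixed compact subset of $M\setminus\overline\Omega$ for $i$ large, and uniformly bounded at $p_0$, so Lemma \ref{convergence} yields a subsequence converging locally uniformly to a nonnegative harmonic function $\psi$ on $M\setminus\overline\Omega$ with $\psi(p_0)=1$ and $\psi\equiv 0$ on $\partial\Omega$.

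The key point is to show that parabolicity forces $\psi(q)\to+\infty$ as $d(O,q)\to+\infty$, and this is where the construction differs essentially from the nonparabolic case. By the maximum principle one compares $\psi_i$ with $-G(p_0,\cdot) + C$ for a Green's function $G$ of $M$ (which on a parabolic manifold is unbounded from below, by Theorem \ref{litamyau}); passing to the limit, $\psi$ dominates a positive multiple of $-G(p_0,\cdot)$ outside a compact set. The divergence \eqref{litamyauf-2}, combined with the Varopoulos characterization (Theorem \ref{var-th}), which on a parabolic manifold implies $\int^{+\infty} r/|B(O,r)|\,\dd r = +\infty$, then forces $\psi$ to diverge at infinity. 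Equivalently, one can argue by contradiction: if $\psi$ were bounded, Theorem \ref{litam} would deliver a barrier as in \eqref{barrier}, contradicting parabolicity.

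For the gradient bound, I would couple the upper estimate \eqref{litamyauf-1} on $-G$ with Yau's lower bound $|B(O, r)| \geq c\, r$ valid on any complete manifold with $\ric\geq 0$. This gives
\begin{equation*}
\psi(q) \,\leq\, C_1 \!\!\int\limits_{r_0}^{d(O,q)}\!\!\frac{r}{|B(O,r)|}\,\dd r + C_2 \,\leq\, C\bigl(1 + d(O,q)\bigr),
\end{equation*}
i.e.\ at most linear growth. On the other hand, since $\psi$ is positive and harmonic outside a fixed ball containing $\Omega$, the Yau-type estimate \eqref{yau1} applied at any $q$ with $d(O,q)\geq 2R$, where $\Omega\subset B(O,R)$, yields
\begin{equation*}
|\D\psi|(q) \,\leq\, C\,\frac{\psi(q)}{d(O,q)} \,\leq\, C',
\end{equation*}
uniformly for such $q$. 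In the remaining bounded region $B(O,2R)\setminus\overline\Omega$, standard interior and boundary elliptic regularity (the boundary $\partial\Omega$ being smooth) give a uniform bound on $|\D\psi|$, and combining the two estimates one obtains the claim.

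The main obstacle in carrying this out rigorously is the first step: confirming that the chosen normalization of the $a_i$ produces in the limit a function which genuinely diverges to $+\infty$, rather than stabilizing to a bounded harmonic function. I expect parabolicity to enter exactly at this juncture via Theorem \ref{litamyau} and Theorem \ref{litam}; once that is secured, the uniform linear growth of $\psi$ combined with Yau's gradient estimate makes the gradient bound essentially automatic.
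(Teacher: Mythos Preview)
Your overall architecture matches the paper's: exhaust by balls, solve Dirichlet problems on $B(O,R_i)\setminus\overline\Omega$, extract a limit via Lemma~\ref{convergence}, compare with $-G$, and finish the gradient bound with Yau's estimate together with Yau's linear volume lower bound. The substantive divergence is in how you choose the outer boundary datum $a_i$, and this is where a genuine gap appears.

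With your normalization $\psi_i(p_0)=1$, you have no a priori relation between $a_i$ and the values of $-G$ on $\partial B(O,R_i)$, so the maximum-principle comparison ``$\psi_i$ versus $-G(p_0,\cdot)+C$'' you invoke cannot be set up: on the outer boundary you would need $a_i$ to match (or be bounded by) $-G+C$, which your normalization does not deliver. In the paper this is exactly what the specific choice $a_i=\sup_{K}G_i(O,\cdot)$ buys: with $f_i=\sup_{K}G_i-G_i$ one has $f_i=a_i$ on $\partial B(O,R_i)$ and $f_i\geq 0$ on $\partial\Omega$, whence $f_i-\sup_{\partial\Omega}f_i\leq\psi_i\leq f_i$ by the maximum principle. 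Passing to the limit yields $\psi\leq -G(O,\cdot)$, and this is precisely the inequality that feeds into \eqref{litamyauf-1} to give the linear upper bound $\psi(q)\leq C(1+d(O,q))$. In your write-up the displayed inequality $\psi(q)\leq C_1\!\int_{r_0}^{d(O,q)} r/|B(O,r)|\,\dd r+C_2$ is asserted but not earned.

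A second, smaller point: even once you know $\psi$ is unbounded (your contradiction via Theorem~\ref{litam} is fine for that), you still need the \emph{full} limit $\psi(q)\to+\infty$. The lower estimate \eqref{litamyauf-2} only controls $\sup_{\partial B(O,R)}(-G)$, so a comparison with $-G$ gives divergence along a sequence, not everywhere. The paper closes this by invoking \cite[Lemma~3.40]{colding3}, which says that such a harmonic function must have a limit at infinity; you should cite or reprove this.
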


\begin{remark}
Recall from Subsection~\ref{sub-ends} that if $(M, g)$ is a {\em Riemannian cylinder}, then $M \setminus \Omega$ might have two connected components. If this is the case, it will be understood that we consider problem \eqref{prob-ex-par} on a connected component of $M \setminus\Omega$. All the proofs work unchanged in this case.
\end{remark}

\begin{proof}[Proof of Theorem \ref{existence-par}]
Let $O \in \Omega$, let $U \subset \Omega$ be an open neighborhood of $O$ and let $K$ be the compact set defined by $K = \overline{\Omega} \setminus U$.  Consider, for a sequence $B(O, R_i)$ of geodesic balls with increasing radii containing $\Omega$, a corresponding sequence of positive Green's functions $G_i(O, \cdot)$ of $B(O, R_i)$ with pole in $O$ such that $G_i(O, x) = 0$ for $x \in \partial B(O, R_i)$. We then consider the sequence of functions defined in $B(O, R_i)\setminus \{O\}$ by
\[
f_i(q)\, = \, \sup_{x \in K} G_i(O, x) - G_i(O, q).
\] 
The construction in~\cite{li_tam_green-ex} implies that there exists a Green's function $G$ on $M$ such that $f_i$ converges to $-G(O, \cdot)$ uniformly on compact subsets of $M \setminus \{O\}$ (compare with the discussions around Lemma 1.2 in~\cite{li_tam_ext}).
Observing that $f_i = \sup_K G_i(O, \cdot)$ on $\partial B(O, R_i)$, we set \[
a_i \, = \, \sup_{x \in K} G_i(O, x)
\]
and consider the solution $\psi_i$ to the problem
\begin{equation}
\label{prob-ex-par-approx}
\begin{cases}
\,\,\,\Delta{\psi}=0 & \mbox{in} \,\, B(O, R_i) \setminus \overline{\Omega} \\
\,\,\,\,\,\,\,\,\psi=0 & \mbox{on}\,\, \partial\Omega \\
\,\,\,\,\,\,\,\,\psi = a_i &\mbox{on} \,\, \partial B(O, R_i).
\end{cases}
\end{equation}
Since $\sup_K G_i(O, \cdot) \geq \sup_{\partial\Omega} G_i(O, \cdot)$, the Maximum Principle immediately gives
\begin{equation}
\label{chain-parabolic}
f_i - \sup_{\partial \Omega} f_i \, \leq \, \psi_i \, \leq \, f_i
\end{equation}
on $B(O, R_i) \setminus \Omega$. Since the sequence $f_i$ is converging (uniformly on compact sets) to the Li-Tam Green's function, the second inequality in~\eqref{chain-parabolic} combined with Lemma~\ref{convergence} shows that $\psi_i$ converges uniformly on the compact subsets of $M \setminus \overline{\Omega}$ to an harmonic function $\psi$, that we can clearly extend to $0$ on $\partial \Omega$. Moreover, since for every $q \in M\setminus \{O\}$ the sequence $f_i(q)$ converges to $-G(O, q)$ and since by~\eqref{litamyauf-2} we have that $-G(O, q_j) \to +\infty$ along a sequence of points $q_j$ such that  $d(O, q_j)\to +\infty$, we use the first inequality in~\eqref{chain-parabolic}, to deduce that $\psi(q_j) \to +\infty$, as $j \to + \infty$. In particular, since by \cite[Lemma 3.40]{colding3} $\psi$ must admit a  limit at infinity, we infer that  $\psi(q) \to +\infty$, as $d(O, q) \to +\infty$. Therefore, $\psi$ is a solution to problem \eqref{prob-ex-par}.

Observe that, again by \eqref{chain-parabolic}, $\psi \leq -G(p, \cdot)$. Inequality \eqref{yau1} then yields
\begin{equation}
\label{yau-parabolic}
\abs{\D \psi}(q) \, \leq \, C \frac{\psi(q)}{d(O, q)} \, \leq \, C  \frac{-G(O, q)}{d(O, q)}
\end{equation}
for some constant $C$ and any $q$ outside some big geodesic ball $B(O, r_0)$. Combining now \eqref{litamyauf-1} with Yau's lower bound on the growth of geodesic balls, saying that $\abs{B(O, r)} \geq Cr$ for any $r \geq 1$ and for some constant $C$, we also have
\[
\frac{-G(O, q)}{d(O, q)} \leq C_1 \frac{d(O, q) + C_2}{d(O, q)} 
\] 
for $q$ with $d(O, q) > 2r_0$ and constants $C_1$ and $C_2$.
Plugging it in \eqref{yau-parabolic}, this shows that $\abs{\D \psi}$ is uniformly bounded, as claimed.
\end{proof}

\section{Proof of the Monotonicity-Rigidity Theorem for nonparabolic manifolds}



\subsection{The conformal setting}


Let $(M, g)$ be a nonparabolic Riemannian manifold with nonnegative Ricci curvature. Let $\Omega \subset M$ be a bounded and open set with smooth boundary, and let $u$ be the solution to problem \eqref{pb}. We introduce, in $M \setminus \Omega$ the metric
\begin{equation}
\label{gconf}
\cg
\,=\,
u^\frac{2}{n-2}g.
\end{equation} 
The expression for $\tilde{g}$ is formally the same as in \cite{Ago_Maz_1} and \cite{Ago_Maz_3}. Let us explain why such a conformal change of metric is natural also in the current setting. Our model geometry is that of a truncated metric cone   
\begin{equation}
\label{model-cone}
(M \setminus \Omega, g) \cong \Big(\,\big[r_0, +\infty) \times\pa\Omega
\,,\, 
\dd r\otimes\!\dd r + Cr^2 g_{\pa\Omega}
\Big) \, ,
\end{equation}
for some positive constant $r_0$ and $C$, and where $g_{\partial \Omega}$ is the metric induced by $g$ on $\partial \Omega$. We also assume that $\partial \Omega$ is a smooth closed sub-manifold with $\ric_{\partial \Omega} \geq (n-2)g_{\pa\Omega}$. Such a curvature assumption on $\partial \Omega$  is equivalent to suppose that the cone in~\eqref{model-cone} has nonnegative Ricci curvature. In this model setting, up to a suitable choice of  $C$ in \eqref{model-cone}, the solution to problem \eqref{pb} is $u(r)= r^{2-n}$. With this specific $u$, the metric $\tilde g$  becomes
\[
\tilde{g} = \dd \rho \otimes\! \dd\rho + g_{\pa\Omega} \, ,
\]
where $\rho = \log r$. In other words $\tilde g$ is a (half) {Riemannian cylinder} over $(\pa \Omega, g_{\pa \Omega})$.
In parallel, as the rigidity statement in Theorem~\ref{thm:M/R} gives a characterization of the truncated cone metrics~\eqref{model-cone}, so its conformal version in Theorem~\ref{thm:main_conf} characterizes cylindrical metrics.

Having this in mind, we are now going to describe the general features of $(M\setminus \Omega, \cg)$ in more details.
Letting
\begin{equation}
\label{def:ffi}
\ffi \, = \, -\log u \, ,
\end{equation}
we have that $\tilde{g}=e^{-\frac{2\ffi}{n-2}}g$.

As before, $\D$ is the Levi-Civita connection of $(M, g)$. Moreover, we denote by $\D\D$ the Hessian.  We denote by $\nabla$,  the Levi-Civita connection of the metric $\tilde{g}$, by $\nabla\nabla$ its Hessian, and we put the subscript $\tilde{g}$ on any other quantity induced by $\tilde g$.  We have, for a smooth function $w$ 
\begin{equation}
\label{hessianconf}
\nabla_{\alpha}\!\nabla_{\beta} w \,= \, \D_\alpha\D_{\beta} w + \frac{1}{n-2}\Big(\partial_{\alpha}w\partial_{\beta}\phi + \partial_\beta w \partial_\alpha \phi - \langle \D w, \D\phi\rangle g_{\alpha\beta}\Big),
\end{equation}
where by $\langle\cdot, \cdot\rangle$ we denote the scalar product induced by $g$. 
In particular,
\begin{equation}
\label{phiharm}
\Delta_{\tilde{g}}\phi \, = \, 0 \, .
\end{equation}
Moreover, the Ricci tensor $\ric_{\cg}$ of $\cg$ and the Ricci tensor $\ric$ of $g$ satisfy
\begin{equation}
\label{ricciconf}
\ric_{\tilde{g}} \, = \, \ric + \nabla_\alpha\!\nabla_{\beta}\phi - \dfrac{d\phi\otimes d\phi}{n-2} + \dfrac{\abs{\nabla \phi}^2_{\tilde g}}{n-2} \, \tilde{g}\, .
\end{equation}
Finally, by \eqref{phiharm} and \eqref{ricciconf} problem \eqref{pb} becomes
\begin{equation}
\label{pbconf}
\begin{cases}
\,\,\,\,\,\,\,\,\,\,\,\,\,\,\,\,\,\,\,\,\,\,\,\,\,\,\,\,\,\,\,\,\,\,\,\,\,\,\,\,\,\,\,\,\,\,\,\,\,\Delta_{\tilde{g}}\phi=0 & \mbox{in} \,\, M\setminus\overline{\Omega} \\
\ric_{\tilde{g}} - \nabla\nabla\phi + \dfrac{d\phi\otimes d\phi}{n-2}=\dfrac{\abs{\nabla\phi}_{\cg}^2}{n-2}\cg +\ric & \mbox{in} \,\, M\setminus\overline{\Omega} \\
\,\,\,\,\,\,\,\,\,\,\,\,\,\,\,\,\,\,\,\,\,\,\,\,\,\,\,\,\,\,\,\,\,\,\,\,\,\,\,\,\,\,\,\,\,\,\,\,\,\,\,\,\,\,\,\,\,\phi=0 & \mbox{on} \,\, \partial \Omega \\
\,\,\,\,\,\,\,\,\,\,\,\,\,\,\,\,\,\,\,\,\,\,\,\,\,\,\,\,\,\,\,\,\,\,\,\,\,\,\,\,\,\,\,\,\,\,\,\,\,\,\phi(q)\to + \infty & \mbox{as} \,\, d(O, q)\to+ \infty.
\end{cases}
\end{equation}
The classical Bochner identity applied to $\phi$ in $(M \setminus \Omega, \tilde{g})$ , combined with the first two equations of the above system, immediately yields the following identity
\begin{equation}
\label{boch1}
\Delta_{\tilde{g}}\abs{\nabla \phi}_{\tilde{g}}^2 - \langle \nabla \abs{\nabla \phi}_{\tilde{g}}^2, \nabla \phi\rangle_{\tilde{g}} =  2 \left[\ric(\nabla \phi, \nabla \phi) + \abs{\nabla \nabla \phi}_{\tilde{g}}^2\right],
\end{equation}
where $\ric$ is the Ricci tensor of the  background metric $g$. Such a relation is at the heart of this work.
As a first application, we have the following fundamental corrspondence between the splitting of $(M \setminus \Omega, \tilde{g})$ as a cylinder and the splitting of $(M \setminus \Omega, g)$ as a cone.
\begin{lemma}
\label{splitting-principle}
Let $(M, g)$ be a nonparabolic Riemannian manifold with $\ric \geq 0$, let $\Omega \subset M$ be a bounded and open subset with smooth boundary and let $\cg$ and $\phi$ be defined by~\eqref{gconf} and \eqref{def:ffi}. Assume that
$ \na |\na \ffi |_{\tilde g} = 0$ on $\{\phi \geq s_0\}$ for some $s_0 \in [0, +\infty)$. 

\smallskip

\begin{itemize}
\item[(i)]
Then the Riemannian manifold $(\{\phi \geq s_0\}, \cg)$ is isometric to the Riemannian product
$$\big([s_0, +\infty) \times \{\phi = s_0\}, d\rho\otimes d\rho + \tilde{g}_{\{\mid \phi = s_0\}}\big) \, .
$$ 
In particular, $\pa \Omega$ is a connected totally geodesic submanifold inside $(M\setminus \Omega, \tilde{g})$. 

\smallskip

\item[(ii)] Accordingly, for $t_0 = e^{-s_0}$, the Riemannian manifold $(\{u \leq t_0\}, \cg)$ has Euclidean volume growth and it is isometric to the truncated cone
\begin{equation}
\label{cono2}
\qquad \quad\bigg(\,\big[r_0, +\infty) \times\{u=t_0\}
\,,\, 
\dd r\otimes\!\dd r +\left(\frac r{r_0}\right)^{\!\!2} g_{\{u=t_0\}}
\bigg),
\quad
\mbox{with}
\quad
r_0
\,=\,
\bigg(\frac{|\{u=t_0\}|}
{\AVR|\Sf^{n-1}|}\bigg)^{\!\frac1{n-1}}.
\end{equation} 
In particular, $\partial \Omega$ is a connected totally umbilic submanifold inside $(M\setminus \Omega, {g})$ with constant mean curvature.
\end{itemize}

\end{lemma}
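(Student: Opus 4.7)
The plan is to feed the standing assumption $\nabla|\nabla\phi|_{\tilde g}\equiv 0$ on $\{\phi\geq s_0\}$ into the Bochner identity~\eqref{boch1}. Under this hypothesis the function $|\nabla\phi|_{\tilde g}^2$ is locally constant on $\{\phi\geq s_0\}$, so the entire left-hand side of~\eqref{boch1} vanishes there. Since $\ric(\nabla\phi,\nabla\phi)\geq 0$ (by $\ric\geq 0$) and $|\nabla\nabla\phi|_{\tilde g}^2\geq 0$ are both nonnegative, both must vanish identically on $\{\phi\geq s_0\}$:
\[
\ric(\nabla\phi,\nabla\phi)\equiv 0\qquad\text{and}\qquad \nabla\nabla\phi\equiv 0.
\]
Moreover, the common value $c:=|\nabla\phi|_{\tilde g}$ is a strictly positive constant on this set, as it cannot vanish because $\phi\to+\infty$ at infinity.

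It follows that $\nabla\phi/c$ is a parallel unit vector field on $(\{\phi\geq s_0\},\tilde g)$. By the de Rham decomposition theorem, or equivalently by integrating the flow of $\nabla\phi/c^2$ issuing from $\{\phi=s_0\}$, one obtains the isometric splitting
\[
(\{\phi\geq s_0\},\tilde g) \,\cong\, \big([s_0,+\infty)\times\{\phi=s_0\},\ d\rho\otimes d\rho + \tilde g|_{\{\phi=s_0\}}\big),
\]
where $\rho$ is the arc-length coordinate of the flow and $\phi=c\rho+s_0$ in this chart. The slice $\{\phi=s_0\}$ is then a totally geodesic cross-section of a Riemannian cylinder. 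Its connectedness follows from Proposition~\ref{oneend}: since $(M,g)$ is nonparabolic (so it cannot be a cylinder by Theorem~\ref{var-th}) it admits only one end, and several components of $\{\phi=s_0\}$ would produce as many ends in the product. This proves~(i).

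For~(ii), I would undo the conformal change $g=e^{2\phi/(n-2)}\tilde g$ on the splitting. Writing $\phi=c\rho+s_0$ and introducing the radial coordinate $r(\rho)=\tfrac{n-2}{c}\,e^{(c\rho+s_0)/(n-2)}$, so that $dr^2=e^{2(c\rho+s_0)/(n-2)}d\rho^2$, a direct computation yields
\[
g \,=\, dr\otimes dr + (r/r_0)^2\, g_{\{u=t_0\}}\qquad\text{on }[r_0,+\infty)\times\{u=t_0\},
\]
with $r_0=r(0)=\tfrac{n-2}{c}e^{s_0/(n-2)}$ and $g_{\{u=t_0\}}=e^{2s_0/(n-2)}\tilde g|_{\{\phi=s_0\}}$ the $g$-induced metric on the slice (reconciled by the conformal factor at $\phi=s_0$). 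This exhibits $(\{u\leq t_0\},g)$ as a truncated cone; in such a cone the slices $\{r=\mathrm{const}\}$ are totally umbilic with constant mean curvature $(n-1)/r$, yielding the umbilicity and constant mean curvature assertion of~(ii). The value of $r_0$ is then pinned down by the volume asymptotics on the cone: $\lim_{r\to+\infty}|B(O,r)|/r^n = |\{u=t_0\}|_g/(n\,r_0^{n-1})$, which equated with $\AVR\,|\Sf^{n-1}|/n$ forces $\AVR>0$ (Euclidean volume growth) and
\[
r_0 \,=\, \Bigl(|\{u=t_0\}|\big/(\AVR\,|\Sf^{n-1}|)\Bigr)^{1/(n-1)}.
\]

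The main subtle point is verifying that the product chart covers \emph{all} of $\{\phi\geq s_0\}$, not merely a one-sided tubular neighborhood of the initial slice. Since $\nabla\phi/c$ is a parallel unit vector field, its integral curves are complete geodesics in $(\{\phi\geq s_0\},\tilde g)$; moreover, the affine relation $\phi=c\rho+s_0$ combined with $\phi\to+\infty$ at infinity guarantees that $\rho$ exhausts $[0,+\infty)$ along each such geodesic without ever escaping the region, so the splitting is global.
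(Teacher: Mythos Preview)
Your argument is correct. Part~(i) proceeds essentially as in the paper: feed $\nabla|\nabla\phi|_{\tilde g}=0$ into the Bochner identity~\eqref{boch1} to force $\nabla\nabla\phi\equiv 0$ and $\ric(\nabla\phi,\nabla\phi)\equiv 0$, then split off the parallel unit field $\nabla\phi/c$. The paper cites an external result (\cite[Theorem~4.2(i)]{Ago_Maz_1}) for the product structure; you give the standard de Rham/flow argument directly, which is fine.

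Where your route genuinely differs is in part~(ii). The paper translates $\nabla\nabla\phi=0$ back to $g$ to obtain that $\D\D\big(u^{-2/(n-2)}\big)$ is a scalar multiple of $g$, invokes a warped-product classification (\cite{cat-man-maz,Che-Cold}) to get $g=dr^2+(f')^2 g_\Sigma$, and then uses the separate piece of information $\ric(\D u,\D u)=0$ to force $f'''=0$ and hence conicality. You instead exploit the explicit cylinder structure of $\tilde g$ from part~(i) together with the known conformal factor $e^{2\phi/(n-2)}$, where $\phi$ is affine in the arc-length coordinate, and compute $g$ directly as a cone by the change of variable $r=\tfrac{n-2}{c}\,e^{\phi/(n-2)}$. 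This is more elementary (no external warped-product theorem) and makes transparent that the Ricci vanishing in the $\nabla\phi$ direction is not needed as an additional input for the cone structure---it is automatic once the $\tilde g$-cylinder is in hand. The paper's approach, on the other hand, has the advantage of being phrased entirely in the original metric $g$, which connects more visibly with the rigidity literature on Hessian equations.
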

\begin{proof}
Let us first observe that plugging $\na |\na \ffi |_{\cg}= 0$ in \eqref{boch1} readily implies, since $\ric \geq 0$, that 
\begin{equation}
\label{nana_fi_0}
\na\na\ffi
\,\equiv\,0
\qquad\mbox{in}\quad\{\ffi>s_0\}=\{u<t_0\}.
\end{equation}
for $s_0 = -\log t_0$. The isometry of $(M \setminus \Omega, \tilde{g})$ with a Riemannian product then follows from \cite[Theorem 4.2 (i)]{Ago_Maz_1}, proving the first claim. Observe that $\partial \Omega$ is connected as a consequence of Proposition \ref{oneend}. Recalling  the formula
\[
\na_\alpha \!\na_{\beta} w
\,=\,        
\D_\alpha\D_{\beta}
w+
\frac1{n-2}\bigg(\pa_\alpha w\,\pa_\beta\ffi
+\pa_\beta w\,\pa_\alpha\ffi
-\langle\D w, \D\ffi\rangle\,g_{\alpha\beta}\bigg), 
\]
that holds for every ${C}^2$ function $w$,
we have in particular that condition \eqref{nana_fi_0}
translates into
\begin{equation}
\label{nana_u}
0
\,=\,
\na_\alpha \! \na_{\beta}\ffi
\,=\,        
-\frac{\D_\alpha\D_{\beta }u}{u}
+
\Big(\frac n{n-2}\Big)
\frac{\D_\alpha u\D_\beta u}{u^2}
-
\Big(\frac 1{n-2}\Big)
\left|\frac{\D u}u\right|^2g_{\alpha\beta}. 
\end{equation}
Observing now that 
\[
\D_\alpha\D_{\beta}\big(u^{-\frac2{n-2}}\big)
\,=\,        
\Big(\frac2{n-2}\Big)u^{-\frac2{n-2}}
\left[
-\frac{\D_\alpha\D_{\beta }u}u
+\Big(\frac n{n-2}\Big)
\frac{\D_{\alpha} u\D_{\beta} u}{u^2}
\right]
\]
we deduce from \eqref{nana_u} that
\[
\D_\alpha\D_{\beta}
\left(u^{-\frac2{n-2}}\right)
\,=\,
\frac2{(n-2)^2}
\left|\frac{\D u}u\right|^2
u^{-\frac2{n-2}}\,g_{\alpha\beta}.
\]
In particular, we have that $\D\D\big(u^{-2/(n-2)}\big)$
is proportional to the metric. 
By a standard result in Riemannian geometry
(see e.g. \cite[Theorem 1.1]{cat-man-maz} for a complete proof, or \cite[Section 1]{Che-Cold})
this fact implies that the potential 
$f= u^{-2/(n-2)}$, and thus $u$, depends only on the
(signed) distance $r$ from $\{u=t\}$
and that, up to a multiplicative factor in front of $f$,
the metric $g$ can be expressed as
\[
g
\,=\,
\dd r\otimes\dd r
\,+\,
\big(f'(r)\big)^{\!2}
g_\Sigma,
\qquad 
\Sigma=\{u = t\}.
\] 
Moreover, the associated Ricci tensor is given by
\begin{equation}
\label{expr_Ric}
\ric
\,=\,
-(n-1)
\frac{f'''}{f'}
\dd r\otimes\dd r
\,+\,
\ric_\Sigma
-
\big((n-2)(f'')^2+f'f'''\big)^{2}
g_\Sigma \, .
\end{equation}
The second information obtained by plugging $\nabla \abs{\nabla \phi}_{\cg}=0$ into the Bochner identity \eqref{boch1} is

\begin{equation}
\label{ricci-zero}
\ric\left(\nabla\phi, \nabla \phi\right)
\,=\,0
\qquad\mbox{in}\quad\{\ffi>s_0\}=\{u<t_0\} \, .
\end{equation}
In particular this implies that
$\ric(\D u,\D u)=0$ in the same set.
Using now expression \eqref{expr_Ric}
and recalling that $\D u$ is orthogonal to $\Sigma$,
we obtain that $f'''(r)=0$. This yields
\[
g
\,=\,
\dd r\otimes\dd r
\,+\,
\Big(\frac r{r_0}\Big)^{\!2}
g_\Sigma \, ,
\]
where $r_0>0$ is such that $\Sigma=\{r=r_0\}$.
Finally, if $O$ is the tip of the above cone, 
since
\[
|\pa B(O,R)|
\,=\,
\int\limits_{\pa B(O,R)}\!\!\!\!\!\dd\sigma
\,=\,
\int\limits_{\pa\Sigma}
\left(\frac R{r_0}\right)^{\!\!n-1}\!\!\!\!  \sqrt{\det g^\Sigma_{ij}}
\,\,\dd\theta^1\hdots\dd\theta^{n-1}
\,=\,
\left(\frac R{r_0}\right)^{\!\!n-1}
|\Sigma| \, ,
\]
we can explicitly express $r_0$ in terms
of $\AVR$ as in \eqref{cono} by 
\[
\AVR
=
\lim_{R\to+\infty}
\frac{|\pa B(O,R)|}{R^{n-1}|\Sf^{n-1}|}
\,=\,
\frac{|\Sigma|}{r_0^{n-1}|\Sf^{n-1}|} \, .
\]
The first identity is a well known characterization of the $\AVR$ in terms of areas of geodesic balls instead of volumes, and follows easily by the standard proof of the Bishop-Gromov Theorem (see e.g. \cite{Petersen_book}).  The proof of the second claim is completed.
\end{proof}

We now briefly record some of the main relations among geometric quantities induced by the two metrics. We omit the computations, since they are straightforward and completely analogous to those carried out in \cite{Ago_Maz_1} and \cite{Ago_Maz_3}. First, observe that
\begin{equation}
\label{nablaphi}
\abs{\nabla \phi}_{\tilde{g}} = \frac{\abs{\D u}}{u^{\frac{n-1}{n-2}}} \, .
\end{equation}
Let $\HH$ and $\HH_{\tilde{g}}$ be  the mean curvatures of the level sets of $u$, that coincide with those of $\phi$, respectively in the Riemannian manifold $(M\setminus\Omega, g)$ and in $(M\setminus\Omega, \tilde{g})$. They are computed using the unit normal vectors $-\D u/\abs{\D u}$ and $\nabla \phi / \abs{\nabla \phi}_{\tilde{g}}$, respectively. Exploiting the $g$-harmonicity and $\tilde{g}$-harmonicity of $u$ and $\phi$, we obtain that
\begin{equation}
\label{meancurv}
\HH \,= \,\frac{\D\D u (\D u, \D u)}{\abs{\D u}^3} \, , \qquad \HH_{\tilde{g}} \, = \, -\frac{\nabla\nabla\phi(\nabla \phi, \nabla \phi)}{\abs{\nabla\phi}_{\tilde{g}}^3} \, .
\end{equation}
These quantities are related as follows
\begin{equation}
\label{meancurv1}
\HH_{\cg} \, = \, u^{-\frac{1}{n-2}}\left[\HH- \left(\frac{n-1}{n-2}\right)\frac{\abs{\D u}}{u}\right] \, .
\end{equation}
Letting $\dd \sigma_{\tilde{g}}$ and $\dd\mu_{\tilde{g}}$ denote respectively the surface and the volume measure naturally induced by $ \tilde{g}$ on $M \setminus \Omega$, we have
\begin{equation}
\label{measures}
\dd\sigma_{\tilde{g}} \, = \, u^{\frac{n-1}{n-2}} \dd \sigma \, , \qquad \dd\mu_{\tilde{g}} \, = \, u^{\frac{n}{n-2}} \dd \mu \, . 
\end{equation}
Finally, for every $\beta\geq0$, define the conformal analogue of
the $U_\beta$ as the function  
\mbox{$\Phi_\beta: [0, +\infty) \longrightarrow \R$}
mapping 
\begin{equation}
\label{eq:fip}
\Phi_\beta(s) 
\,\,=\!\!\!
\int\limits_{\{\ffi = s\}}\!\!\!
|\na \ffi|_{\cg}^{\beta+1} \dd\sigma_{\cg}.
\end{equation}
The functions $U_\beta$ and $\Phi_\beta$,
and their derivatives are related to each other
as follows
\begin{align}
U_\beta \,&= \,\Phi_\beta(-\log t), \\
-tU_\beta'(t)\,&= \,\Phi_\beta'(-\log t) \label{uder-phider} \, ,
\end{align} 
for $0<t\leq 1$. The following theorem is the conformal version of the Monotonicity-Rigidity Theorem~\ref{thm:M/R}.

\begin{theorem}
\label{thm:main_conf}
Let $(M, g)$ be a nonparabolic Riemmanian manifold 
with $\ric\geq 0$.
Let $\Omega\subset M$ be a bounded and open subset with smooth boundary, and let $\tilde{g}, \phi$ and $\Phi_\beta$ be defined respectively as in \eqref{gconf}, \eqref{derphi} and \eqref{eq:fip}. 
Then, for every $\beta\geq (n-2)/(n-1)$, 
the function $\Phi_\beta$ is differentiable 
with derivative
\begin{equation}
\label{derivata_di_Phi}
\frac{\dd\Phi_\beta}{\dd s}(s) 
\,\,=\,\, 
- \, \beta\!\!\!\! \int\limits_{\{\ffi = s \}}\!\!\!\!  
|\na\ffi|_{\cg}^\beta\,\HH_{\cg} \,\dd\sigma_g \, ,
\end{equation}
where $\HH_g$ is the mean curvature of the level set $\{\ffi=s\}$
computed with respect to the unit normal vector field 
$\nu_{\cg}=\na \ffi/|\na \ffi|_{\cg}$.
Moreover, for every $s\geq0$,
the derivative fulfills 
\begin{equation}
\label{eq:der_fip}
\frac{\dd\Phi_\beta}{\dd s}(s)  \,\, 
= -\,\,\beta\,\,{\rm e}^s\!\!\!
\int\limits_{\{\ffi \geq  s\}} \!\!
\frac{
|\na\ffi|_{\cg}^{\beta-2} 
\Big(\ric(\na\ffi,\na\ffi)
+\big|\na\na \ffi\big|_{\cg}^2
 +  \, (\beta-2) \, \big| \na |\na \ffi |_{\cg}\big|_{\cg}^2\,\Big) 
}
 {{\rm e}^\ffi}
\,\,\dd\mu_{\cg}  \, .
\end{equation}
In particular, $\dd\Phi_\beta/\dd s$ is always nonpositive. 
Moreover, $(\dd\Phi_\beta/\dd s)(s_0)=0$
for some $s_0\geq1$ and some $\beta\geq(n-2)(n-1)$
if and only if $ \big(\{\ffi\geq s_0\}, \tilde{g} \big)$ is isometric  
to the Riemannian product  $\left([s_0, +\infty) \times \{\phi = s_0\}, d\rho\otimes d\rho + \tilde{g}_{\{\mid \phi = s_0\}}\right)$. In particular, $\{\phi = s_0\}$ is a connected totally geodesic submanifold. 
\end{theorem}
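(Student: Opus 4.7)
I would work throughout in the conformal picture $(M\setminus\Omega,\cg)$, where $\ffi=-\log u$ is $\cg$-harmonic, and perform two applications of the $\cg$-divergence theorem: a local one to produce the boundary formula \eqref{derivata_di_Phi}, and a global one (against the weighted Bochner identity \eqref{boch1}) to produce the bulk formula \eqref{eq:der_fip}. Once these are in place, monotonicity and rigidity follow from the refined Kato inequality \eqref{kato-intro} combined with Lemma \ref{splitting-principle}.

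\textbf{Step 1: the boundary formula.} Sard applied to the real-analytic function $\ffi$ makes almost every $s\ge 0$ a regular value. On a compact slab $\{s_1\le\ffi\le s_2\}$ between two regular values I would apply the $\cg$-divergence theorem to
$$
X \,=\, |\na\ffi|_{\cg}^{\beta+1}\,\na\ffi/|\na\ffi|_{\cg}.
$$
A short calculation, using $\Delta_{\cg}\ffi=0$ together with $\dive_{\cg}(\na\ffi/|\na\ffi|_{\cg})=\HH_{\cg}$ (a direct consequence of \eqref{meancurv}), gives $\dive_{\cg}X=-\beta|\na\ffi|_{\cg}^{\beta+1}\HH_{\cg}$. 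The $\cg$-coarea formula then produces \eqref{derivata_di_Phi} at every regular $s$, and the extension to all $s\ge 0$ follows from continuity of the right-hand side, using the Yau-type gradient bound \eqref{yau1} translated into $\cg$ via \eqref{nablaphi}.

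\textbf{Step 2: the bulk formula.} This is the core step. I would consider the vector field
$$
Y \,=\, \ee^{-\ffi}\,|\na\ffi|_{\cg}^{\beta-1}\,\na|\na\ffi|_{\cg},
$$
expand $\dive_{\cg} Y$ by the product rule, and substitute the Bochner identity \eqref{boch1} (which already encodes the passage from $\ric_{\cg}$ to the background $\ric$ via \eqref{ricciconf}) to obtain
$$
\dive_{\cg} Y \,=\, \ee^{-\ffi}|\na\ffi|_{\cg}^{\beta-2}\Big(\ric(\na\ffi,\na\ffi)+|\na\na\ffi|_{\cg}^{2}+(\beta-2)\big|\na|\na\ffi|_{\cg}\big|_{\cg}^{2}\Big).
$$
Integrating over $\{s\le\ffi\le T\}$ with outward unit normal $-\na\ffi/|\na\ffi|_{\cg}$ on $\{\ffi=s\}$, the contribution of $Y$ on $\{\ffi=s\}$ reduces, via \eqref{meancurv}, to $-\ee^{-s}\Phi_\beta'(s)/\beta$ thanks to Step 1. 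The main obstacle is showing that the boundary term on $\{\ffi=T\}$ vanishes as $T\to+\infty$: this is precisely where the integral asymptotic estimates from Section 4 are indispensable, and where the nonnegative Ricci assumption on the background $g$ plays its substantive role. Rearranging gives \eqref{eq:der_fip}.

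\textbf{Steps 3--4: monotonicity and rigidity.} For the sign, since $\ric\ge 0$ and $\ffi$ is $\cg$-harmonic, the refined Kato inequality \eqref{kato-intro} yields
$$
|\na\na\ffi|_{\cg}^{2}+(\beta-2)\big|\na|\na\ffi|_{\cg}\big|_{\cg}^{2}\,\ge\,\Big(\beta-\tfrac{n-2}{n-1}\Big)\big|\na|\na\ffi|_{\cg}\big|_{\cg}^{2}\,\ge\,0
$$
whenever $\beta\ge(n-2)/(n-1)$, so the integrand in \eqref{eq:der_fip} is nonnegative and $\Phi_\beta'\le 0$. If $\Phi_\beta'(s_0)=0$, the integrand must vanish on $\{\ffi\ge s_0\}$, and for $\beta>(n-2)/(n-1)$ the two nonnegative summands vanish separately, forcing $\na|\na\ffi|_{\cg}\equiv 0$. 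Plugging this back into \eqref{boch1} together with $\ric\ge 0$ yields $\na\na\ffi\equiv 0$ on $\{\ffi\ge s_0\}$, and Lemma \ref{splitting-principle}(i) then produces exactly the claimed Riemannian-product isometry and the connected, totally geodesic character of $\{\ffi=s_0\}$. The endpoint $\beta=(n-2)/(n-1)$ requires the finer equality case of the refined Kato inequality mentioned in Remark \ref{optimal}; the argument sketched above is self-contained in the range $\beta\ge 1$, which already covers the Willmore application $\beta=n-2$.
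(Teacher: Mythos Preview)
Your Steps 1, 3, 4 are essentially the paper's argument, but Step 2 contains a genuine gap that breaks the proof as written.

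The boundary term you must kill on $\{\ffi=T\}$ is exactly $e^{-T}\Phi_\beta'(T)/\beta$. You propose to show this vanishes as $T\to+\infty$ by invoking the integral asymptotic estimates of Section~4. That cannot work here for two reasons. First, Theorem~\ref{thm:main_conf} is stated for \emph{arbitrary} nonparabolic manifolds with $\ric\ge 0$, whereas all of Section~4 assumes Euclidean volume growth; a nonparabolic manifold may well have $\AVR=0$, and then Section~4 says nothing. Second, even under Euclidean volume growth, Section~4 controls $u$ and $|\D u|$ but not $\D\D u$, while your boundary term involves $\na|\na\ffi|_{\cg}$, i.e.\ second derivatives of $u$; you would still need a separate argument.

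The paper avoids this issue entirely by an indirect Colding--Minicozzi trick. One first proves the \emph{finite} identity
\[
e^{-S}\Phi_\beta'(S)-e^{-s}\Phi_\beta'(s)=\beta\!\!\!\!\int\limits_{\{s\le\ffi\le S\}}\!\!\!\!e^{-\ffi}|\na\ffi|_{\cg}^{\beta-2}\Big(\ric(\na\ffi,\na\ffi)+|\na\na\ffi|_{\cg}^2+(\beta-2)|\na|\na\ffi|_{\cg}|_{\cg}^2\Big)\dd\mu_{\cg}\ge 0,
\]
so $s\mapsto e^{-s}\Phi_\beta'(s)$ is nondecreasing. If $\Phi_\beta'(s)>0$ for some $s$, integrating the differential inequality $\Phi_\beta'(S)\ge e^{S-s}\Phi_\beta'(s)$ forces $\Phi_\beta(S)\to+\infty$, contradicting the uniform bound $\Phi_\beta(s)\le\sup_{\partial\Omega}|\na\ffi|_{\cg}^\beta\cdot\Phi_0(0)$ (this bound, in turn, comes from the sharp gradient estimate $|\na\ffi|_{\cg}\le\sup_{\partial\Omega}|\na\ffi|_{\cg}$, proved by the maximum principle applied to $|\na\ffi|_{\cg}^2e^{-\alpha\ffi}$ and letting $\alpha\downarrow 0$). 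Hence $\Phi_\beta'\le 0$, so $e^{-s}\Phi_\beta'(s)$ is nonpositive and nondecreasing; its limit must be $0$, for otherwise $\Phi_\beta$ would again be unbounded below. Passing to the limit $S\to+\infty$ in the finite identity then yields~\eqref{eq:der_fip}. The point is that the vanishing of the far boundary term is \emph{deduced} from the boundedness of $\Phi_\beta$, not from pointwise or integral asymptotics of the potential.

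Two smaller technical points: your Sard-based handling of critical sets is not enough, since $\ffi$ need not be real-analytic and the vector field $Y$ can blow up near $\crit$ when $\beta<2$; the paper removes tubular neighbourhoods of $\crit$ and uses the Cheeger--Naber--Valtorta Minkowski estimate $|B_\epsilon(\crit)|\le C_\eta\epsilon^{2-\eta}$ to control the resulting boundary contribution. Also, differentiability of $\Phi_\beta$ at \emph{every} $s$ (not just regular values) is obtained by combining the coarea identity with the continuity of $\tau\mapsto\int_{\{\ffi=\tau\}}\langle\na|\na\ffi|_{\cg}^\beta,\na\ffi/|\na\ffi|_{\cg}\rangle\dd\sigma_{\cg}$, which itself follows from the finite integral identity above.
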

Observe that the right hand side in~\eqref{eq:der_fip} is nonpositive because of the refined Kato's inequality for harmonic functions
\begin{equation}
\label{kato}
\big\vert{\nabla\nabla \phi}\big\vert_{\cg}^2 \,\, \geq \,\, \left(\frac{n}{n-1}\right) \big| \na |\na \ffi |_{\cg}\big|_{\cg}^2 \, .
\end{equation}
Notice also the striking analogy between the above statement and Theorem \ref{mono-par}.

We are now going to show how to recover the Monotonicity-Rigidity Theorem~\ref{thm:M/R} for nonparabolic manifolds from its conformal version.

\begin{proof}[Proof of Theorem~\ref{thm:M/R} after Theorem \ref{thm:main_conf}]

Deducing formulas \eqref{derivata_di_U} and \eqref{eq:monot} from formulas \eqref{derivata_di_Phi} and \eqref{eq:der_fip} is just a matter of lengthy but straightforward computations carried out using the relations between $g$ and $\cg$ recalled above. We sketch the main steps. First, compute
\begin{equation}
\label{hessphi-u}
\abs{\nabla \nabla \phi}_{\cg}^2 \,  = \, u^{-\frac{4}{n-2}}\left\{\left\vert\frac{\D\D u}{u}\right\vert^2 + \,  \frac{n(n-1)}{(n-2)^2} \left\vert \frac{\D u}{u}\right\vert^4 - \,  \frac{2n}{n-2}\left\vert\frac{\D u}{u}\right\vert^3 \HH \right\},
\end{equation}
where $\HH$ is defined as in \eqref{meancurv},
and
\begin{equation}
\label{nablaphi-u}
\big\vert\nabla \abs{\nabla \phi}_{\cg}\big\vert_{\cg}^2 \,  = \,  u^{-\frac{4}{n-2}}\left\{\left\vert\frac{\D\abs{\D u}}{u}\right\vert^2 \! + \left(\frac{n-1}{n-2}\right)^{\!2} \left\vert \frac{\D u}{u}\right\vert^4 \!- 2\, \left(\frac{n-1}{n-2} \right)\left\vert\frac{\D u}{u}\right\vert^3 \HH \right\}.
\end{equation}
By \eqref{hessphi-u} and \eqref{nablaphi-u}, we can write
\begin{equation}
\label{quasi-quadratoni-u}
\begin{split}
\abs{\nabla \nabla \phi}_{\cg}^2 + (\beta -2)\big\vert\nabla \abs{\nabla \phi}_{\cg}\big\vert_{\cg}^2\,\, & = \,\,  u^{-\frac{4}{n-2}}\,\Bigg\{\frac{\abs{\D\D u}^2 - (\frac{n}{n-1})\big\vert \D \abs{\D u}\big\vert^2}{u^2} \,\, + \\
+ \left(\beta -\frac{n-1}{n-2}\right)&\bigg[\left\vert\frac{\D\abs{\D u}}{u}\right\vert^2 \! + \left(\frac{n-1}{n-2}\right)^{\!2} \left\vert \frac{\D u}{u}\right\vert^4 \!  - \, 2 \left(\frac{n-1}{n-2} \right) \left\vert\frac{\D u}{u}\right\vert^3 \HH \bigg] \Bigg\} \, .
\end{split}
\end{equation}
Now, considering a orthonormal frame as $\{e_1, \dots, e_{n-1}, \D u / \abs{\D u}\}$, where the first $n-1$ vectors are tangent to the level sets of $u$, we can decompose
\begin{equation}
\label{decomposition}
\left\vert\frac{\D\abs{\D u}}{u}\right\vert^2 \! = \, \left\vert\frac{{\D u}}{u}\right\vert^2 \HH^2 + \sum_{j=1}^{n-1}\left\langle \frac{\D\abs{\D u}}{u} , e_j \right\rangle^{\!2} .
\end{equation}
Plugging the above decomposition into~\eqref{quasi-quadratoni-u}, we obtain, with the aid of some algebra,
\begin{equation}
\label{quadartoni-ok}
\begin{split}
\abs{\nabla \nabla \phi}_{\cg}^2 + (\beta -2)\big\vert\nabla \abs{\nabla \phi}_{\cg}\big\vert_{\cg}^2 \,= \,\, u^{-\frac{4}{n-2}}
&\,\,\bigg\{\left[\,\,
\big|\D\D u\big|^2-{ \big(\textstyle\frac n{n-1}}\big)
\big|\D|\D u|\big|^2 \,\, \right] \\ 
&
\,\,\,+
\big({\textstyle\beta-\frac{n-2}{n-1}}\big)
\,\, \big|\D^T|\D u|\big|^2 \,\,
\\
& 
\,\,\,+
\big({\textstyle\beta-\frac{n-2}{n-1}}\big) \,\,
|\D u|^2
\left[ \, 
\HH- 
\big({\textstyle\frac{n-1}{n-2}}\big) |\D \log u|
 \, \right]^2 \bigg\} \, ,
\end{split}
\end{equation}
where 
\[
\big|\D^T|\D u|\big|^2\,  =\,\, \sum_{j=1}^{n-1}\left\langle \frac{\D\abs{\D u}}{u} , e_j \right\rangle^{\!2}.
\]
The monotonicity formula \eqref{eq:monot} now follows easily. On the other hand, \eqref{derivata_di_U}  follows from \eqref{derivata_di_Phi} by a direct computation.
\smallskip

Assume now that $U'_\beta (t_0) = 0$ for some $t_0 \in (0, 1]$ and some $\beta \geq 1$ (recall Remark \ref{optimal}). Then, by \eqref{uder-phider}, $\Phi_\beta'(-\log t_0) = 0$.  Then, through Kato's inequality~\eqref{kato}, \eqref{eq:der_fip} implies that $\big\vert \nabla\abs{\nabla \phi}_{\cg} \big\vert_{\cg} = 0$ in $\{\phi \geq - \log t_0\}$. The rigidity statement in Theorem~\ref{thm:M/R} then follows from Lemma~\ref{splitting-principle}.
\end{proof}
\subsection{Proof of Theorem \ref{thm:main_conf}}
As already pointed out in Remark~\ref{optimal},  we restrict ourself to the range $\beta \geq 1$, since it  is sufficient for the main geometric corollaries of Theorem~\ref{thm:M/R}. The arguments are considerably easier in this regime, however in Remark~\ref{difficulties}  we will comment on the main difficulties that arise in the case $(n-2)/(n-1) \leq \beta < 1$ and about how they can be overtaken, following the strategy described in~\cite{Ago_Maz_3}.

In what follows, we are always referring to a background nonparabolic Riemannian manifold $(M, g)$ with $\ric \geq 0$. Outside a bounded and open subset $\Omega \subset M$ with smooth boundary, we define the conformal metric $\tilde{g}$ as in~\eqref{gconf}. In particular, with the same notations as in the previous subsection, $(M \setminus \Omega, \tilde{g}, \phi)$ is a solution  to~\eqref{pbconf}.

A first fundamental ingredient for the proof of the (conformal) Monotonicity-Rigidity Theorem~\ref{thm:main_conf} is the upper bound for the function $\Phi_\beta$'s. We are going to prove it by first showing that $\abs{\nabla \phi}_{\cg}$ is uniformly bounded in $M \setminus \Omega$. Although not necessary for our aim, we actually provide a sharp upper bound for such a function, together with a rigidity statement when this bound is attained. The proof is substantially the one proposed in~\cite[Lemma 3.5]{Ago_Maz_1} for the Euclidean case.
\begin{proposition}[Sharp Gradient Estimate]
The inequality
\begin{equation}
\label{bound-nablaphi}
\abs{\nabla \phi}_{\cg}(q) \,\, \leq \,\, \sup_{\partial \Omega} \, \abs{\nabla \phi}_{\cg}
\end{equation}
holds for every $q \in M \setminus \Omega$. Moreover, if the equality is attained for some $q \in M \setminus \Omega$, then the Riemannian manifold $\left(M \setminus \Omega, \cg\right)$ is isometric to $\big([0, +\infty) \times \partial \Omega, d\rho\otimes d\rho + \tilde{g}_{\partial\Omega}\big)$.
\end{proposition}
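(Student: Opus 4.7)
The plan is to derive \eqref{bound-nablaphi} through a maximum principle argument for the function $w := |\nabla \phi|_{\cg}^2$, exploiting the Bochner identity \eqref{boch1}. Since $\ric \geq 0$ by hypothesis and $|\nabla\nabla \phi|_{\cg}^2 \geq 0$ trivially, \eqref{boch1} yields the elliptic differential inequality
\[
\Delta_{\tilde{g}}\, w \,-\, \langle \nabla w,\, \nabla \phi\rangle_{\tilde{g}} \,\,\geq\,\, 0 \qquad \mbox{on } \, M \setminus \overline{\Omega},
\]
so that $w$ is a subsolution of a drift-Laplace type operator on $(M\setminus\overline\Omega, \cg)$. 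Were the domain compact, the bound \eqref{bound-nablaphi} would follow at once from the classical Weak Maximum Principle; the work consists in handling the noncompactness at infinity.

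To this end I would introduce the auxiliary function $w_\epsilon := w - \epsilon \phi$, $\epsilon > 0$, and exploit $\Delta_{\cg} \phi = 0$ to compute
\[
\Delta_{\tilde{g}}\, w_\epsilon \,-\, \langle \nabla w_\epsilon,\, \nabla \phi\rangle_{\tilde{g}} \,\,\geq\,\, \epsilon \, |\nabla \phi|_{\cg}^2 \,\,\geq\,\, 0,
\]
so that $w_\epsilon$ is still a subsolution of the same operator. I would then verify that $w$ is uniformly bounded on $M \setminus \overline{\Omega}$: combining the lower bound for $u$ provided by Lemma~\ref{control} with Yau's gradient estimate \eqref{yau1} applied to $u$ controls $|\D u|/u^{(n-1)/(n-2)} = |\nabla\phi|_{\cg}$ outside a fixed large geodesic ball, while interior boundedness up to $\partial\Omega$ follows from standard elliptic regularity. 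Since $\phi \to +\infty$ at infinity, on the level set $\{\phi = R\}$ one has $w_\epsilon \leq \sup_M w - \epsilon R$, which for $R$ large enough is strictly smaller than $\sup_{\partial\Omega} w_\epsilon = \sup_{\partial\Omega} w$; applying the Weak Maximum Principle to $w_\epsilon$ on the compact exhausting domain $\{\phi \leq R\}$ forces the maximum to be attained on $\partial\Omega$. Letting first $R \to +\infty$ and then $\epsilon \to 0^+$ yields \eqref{bound-nablaphi}.

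For the rigidity statement, assume equality in \eqref{bound-nablaphi} is realized at some interior point $q_0 \in M \setminus \overline{\Omega}$. Being $M$ nonparabolic, Theorem~\ref{var-th} prevents it from being a Riemannian cylinder, so Proposition~\ref{oneend} ensures $M$ has exactly one end and the unbounded component of $M \setminus \overline{\Omega}$ is connected. Then $w$ realizes its global supremum at the interior point $q_0$, and the Strong Maximum Principle applied to the subsolution inequality above forces $w$ to be identically constant throughout $M \setminus \overline{\Omega}$. In particular $\nabla |\nabla \phi|_{\cg} \equiv 0$, and Lemma~\ref{splitting-principle}(i), applied with $s_0 = 0$, delivers at once the desired isometry of $(M\setminus\Omega, \cg)$ with the half-cylinder $\big([0, +\infty) \times \partial \Omega,\, d\rho\otimes d\rho + \tilde{g}_{\partial\Omega}\big)$.

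The main technical obstacle I expect is precisely the uniform boundedness of $w$ on $M \setminus \overline{\Omega}$, which underlies the use of the maximum principle on the noncompact domain. In the Euclidean volume growth regime this is essentially immediate, as the matching two-sided Li-Yau bound \eqref{liyauf} for the minimal Green's function together with \eqref{yau1} delivers $|\nabla \phi|_{\cg} \leq C$ globally. In the general nonparabolic setting, one has to balance the decay of $|\D u|$ predicted by Yau's estimate against the lower estimate for $u^{(n-1)/(n-2)}$ coming from Theorem~\ref{liyau}; this is the only step where the nonparabolic structure of $(M, g)$ enters in a nontrivial way, the remainder of the proof being a transposition of \cite[Lemma~3.5]{Ago_Maz_1} to the present conformal setting.
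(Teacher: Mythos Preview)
Your proposal is correct and follows essentially the same strategy as the paper: establish a uniform bound on $|\nabla\phi|_{\cg}$ via Yau's estimate combined with the lower bound~\eqref{lower}, then run a maximum principle on a perturbed function that decays at infinity, and finally invoke the strong maximum principle together with Lemma~\ref{splitting-principle} for the rigidity. The only cosmetic difference is the choice of auxiliary function: the paper uses the multiplicative weight $w_\alpha = |\nabla\phi|_{\cg}^2\, e^{-\alpha\phi}$ (which tends to $0$ at infinity), while you use the additive perturbation $w_\epsilon = |\nabla\phi|_{\cg}^2 - \epsilon\phi$ (which tends to $-\infty$); both are standard barriers for the same drift Laplacian and lead to identical conclusions after sending the parameter to $0$.

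Regarding your closing concern: the uniform boundedness of $|\nabla\phi|_{\cg}$ does go through in the general nonparabolic setting without any extra difficulty. The inequality~\eqref{lower} in Lemma~\ref{control}, which relies only on Lemma~\ref{laplace_comp} and hence only on $\ric\geq 0$ and nonparabolicity, already gives $u(q)\geq C_1\, d(O,q)^{2-n}$; combined with~\eqref{yau1} this yields $|\nabla\phi|_{\cg}\leq C$ outside a fixed ball, exactly as in the paper's first paragraph of the proof. No Euclidean volume growth or sharp Li--Yau bound is needed at this step.
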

\begin{proof}
We first show that there exists a constant $C = C(M, \Omega) >0 $ such that $\abs{\nabla \phi}_{\cg}\leq C$ in $M \setminus \Omega$. Fixing a reference point $O$ inside $\Omega$, and letting $d(O, \cdot)$ be the distance from this point with respect to the metric $g$, we have, by \eqref{yau1}, that
\[
\abs{\D u}(q) \,\leq \,  C \, \frac{u(q)}{d(O, q)}
\]
outside some ball containing $\Omega$. Then, 
\[
\abs{\nabla \phi}_{\tilde{g}} (q) \, = \,  \frac{\abs{\D u}}{u^{\frac{n-1}{n-2}}}(q) \, \leq \,  C \, \frac{u^{-\frac{1}{n-2}}(q)}{d(O, q)} \, \leq \,  C \,  C_1^{-\frac{1}{n-2}} \, ,
\]
where in the last inequality we used \eqref{lower}.
Consider now, for a given constant $\alpha > 0$, the auxiliary function 
$$
w_\alpha = \abs{\nabla \phi}^2_{\cg}{\rm e}^{-\alpha \phi} \, .
$$  
Observe that by the just proved upper bound on $\abs{\nabla \phi}_{\cg}$ we have $w_\alpha(q) \to 0$ as $d(O, q) \to +\infty$ for any $\alpha > 0$. Moreover, a direct computation combined with \eqref{boch1} shows that $w_\alpha$ satisfies the  identity
\begin{equation}
\label{eq-walpha}
\Delta_{\cg} w_\alpha - (1-2\alpha) \left\langle \nabla w_\alpha, \nabla \phi\right\rangle_{\cg} - \alpha(1-\alpha) \abs{\nabla \phi}_{\cg}^2 w_\alpha \,\,  = \,\, 2\, {\rm e}^{-\alpha \phi}\left[\ric (\nabla \phi, \nabla \phi) + \abs{\nabla \nabla \phi}_{\cg}^2 \right].
\end{equation}
In particular, using the Maximum Principle, one has that for every $q \in M \setminus \Omega$ it holds 
$$
w_\alpha (q) \, \leq \,  \sup_{\partial \Omega} \, \abs{\nabla \phi}_{\cg} \, .
$$
Letting $\alpha \to 0$ in the above inequality yields \eqref{bound-nablaphi} for every $q \in M \setminus \Omega$. Assume now that the maximum value of $\abs{\nabla \phi}_{\cg}$ is attained at some interior point $q \in M \setminus \overline{\Omega}$. Then, by the Strong Maximum Principle $\abs{\nabla \phi}_{\cg}^2$ must be constant, since it satisfies 
\[
\Delta_{\tilde{g}}\abs{\nabla \phi}_{\tilde{g}}^2 - \langle \nabla \abs{\nabla \phi}_{\tilde{g}}^2 \, , \nabla \phi\rangle_{\tilde{g}} \, \geq \,0 \,.
\]
Lemma \ref{splitting-principle} then yields the desired conclusion.
\end{proof}

\begin{remark}
The above results, in terms of $(M \setminus \Omega, g)$ and $u$, says that for every $q \in M \setminus \Omega$
\[
\frac{\abs{\D u}}{u^{\frac{n-1}{n-2}}}(q) \, \leq \, \sup_{\partial \Omega} \frac{\abs{\D u}}{u^{\frac{n-1}{n-2}}} \, ,
\]
with equality attained only if $(M \setminus \Omega, g)$ is isometric to a truncated cone. This is exactly the content of Colding's~\cite[Theorem 3.2]{Colding_1}.
\end{remark}

A direct consequence is that the functions $\Phi_\beta$'s are bounded.
\begin{corollary}
\label{cor-bound}
For every $\beta \geq 0$ the function $\Phi_\beta: [0, +\infty) \to \R$ defined in \eqref{eq:fip} is bounded as
\[
\Phi_\beta(s) \,\, \leq \,\,  \sup_{\partial \Omega} \, \abs{\nabla \phi}_{\cg}^\beta \,\, \int\limits_{\partial \Omega}  \abs{\nabla \phi}_{\cg} \dd\sigma_{\cg} \, .
\]
for every $s \in [0, +\infty)$.
\end{corollary}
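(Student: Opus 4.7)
The proof will be essentially immediate from two ingredients already established: the sharp pointwise gradient bound \eqref{bound-nablaphi} and the harmonicity of $\phi$ with respect to $\tilde g$.

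First I would split the integrand in \eqref{eq:fip} as $|\nabla\ffi|_{\tilde g}^{\beta+1} = |\nabla\ffi|_{\tilde g}^{\beta}\cdot|\nabla\ffi|_{\tilde g}$ and apply the sharp gradient estimate $|\nabla\ffi|_{\tilde g}(q)\leq \sup_{\partial\Omega}|\nabla\ffi|_{\tilde g}$, which holds at every $q\in M\setminus\Omega$. Pulling this pointwise constant out of the integral reduces the task to controlling the linear quantity $\int_{\{\ffi=s\}}|\nabla\ffi|_{\tilde g}\,\dd\sigma_{\tilde g}$ uniformly in $s\geq 0$.

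The plan for this second step is to recognize this linear quantity as $\Phi_0(s)$, and to show that $\Phi_0$ is in fact constant on $[0,+\infty)$. Writing $\nu_{\tilde g}=\nabla\ffi/|\nabla\ffi|_{\tilde g}$ on regular level sets, I have $|\nabla\ffi|_{\tilde g}=\langle\nabla\ffi,\nu_{\tilde g}\rangle_{\tilde g}$, so the Divergence Theorem applied to the region $\{0\leq\ffi\leq s\}$ between $\partial\Omega=\{\ffi=0\}$ and $\{\ffi=s\}$ yields
\begin{equation*}
\int\limits_{\{\ffi=s\}}\!\!|\nabla\ffi|_{\tilde g}\,\dd\sigma_{\tilde g}
\,-\,\int\limits_{\partial\Omega}\!\!|\nabla\ffi|_{\tilde g}\,\dd\sigma_{\tilde g}
\,=\,\int\limits_{\{0\leq\ffi\leq s\}}\!\!\Delta_{\tilde g}\ffi\,\dd\mu_{\tilde g}\,=\,0,
\end{equation*}
because of \eqref{phiharm}. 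Combining the two steps gives the desired bound.

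The only technical point to be careful about is that $\{\ffi=s\}$ need not be smooth for every $s$, so the Divergence Theorem has to be applied at regular levels only. By Sard's Theorem the critical values of $\ffi$ form a set of measure zero; for non-regular $s$ one recovers the identity by approximation from regular levels, using that $\Phi_0$ will be shown to be continuous (this is part of the broader apparatus developed in the proof of Theorem \ref{thm:main_conf}, but a direct coarea-based argument also suffices here since $|\nabla\ffi|_{\tilde g}$ is uniformly bounded). No genuine obstacle arises: the content of the corollary is essentially that the sharp gradient bound decouples the $\beta$-dependent factor from the conserved linear flux.
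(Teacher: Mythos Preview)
Your proposal is correct and follows essentially the same route as the paper: split $|\nabla\ffi|_{\tilde g}^{\beta+1}=|\nabla\ffi|_{\tilde g}^{\beta}\cdot|\nabla\ffi|_{\tilde g}$, extract the first factor via the sharp gradient bound \eqref{bound-nablaphi}, and use the Divergence Theorem with $\Delta_{\tilde g}\ffi=0$ to see that $\Phi_0$ is constant. Your remark on handling non-regular level sets via Sard's theorem is a reasonable extra care that the paper leaves implicit.
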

\begin{proof}
Just observe that a simple application of the Divergence Theorem combined with the $\tilde{g}$-harmonicity of $\phi$ gives the constancy in $s$ of the function 
\[
s \longmapsto \int\limits_{\{\phi = s\}} \!\abs{\nabla \phi}_{\cg} \, \dd \sigma_{\cg} \, .
\]
Combining this observation with the bound~\eqref{bound-nablaphi}, we have
\[
\Phi_\beta (s) \,\, = \!\!\!\! \int\limits_{\{\phi = s\}}\!\!\! \abs{\nabla \phi}_{\cg}^\beta \, \abs{\nabla \phi}_{\cg} \, \dd\sigma_{\cg} \,\, \leq \,\, \sup_{\partial \Omega} \, \abs{\nabla \phi}_{\cg}^\beta \int\limits_{\partial \Omega} \abs{\nabla \phi}_{\cg} \, \dd\sigma_{\cg} \, ,
\]
for every $s \in [0, +\infty)$, as claimed.
\end{proof}
A fundamental tool in the forthcoming computations, leading to the expression of the derivative of $\Phi_\beta$ in terms of a nonpositive integral (as in~\eqref{eq:der_fip}), is the following Bochner-type identity.

\begin{lemma}[Bochner-type identity]
\label{boch}
At every point where $\abs{\nabla \phi}_{\tilde{g}} \neq 0$, the following identity holds for every $\beta \geq 0$
\begin{equation}
\label{bochf}
\Delta_{\tilde{g}} \abs{\nabla \phi}^\beta_{\tilde{g}} - \left\langle \nabla \abs{\nabla \phi}_{\tilde{g}}^\beta, \nabla \phi\right\rangle_{\!\tilde{g}}  \,  = \,\,  \beta \, \abs{\nabla \phi}_{\tilde{g}}^{\beta - 2} \left[ \, \ric(\nabla \phi, \nabla \phi) + {\abs{\nabla \nabla \phi}_{\tilde{g}}^2} + (\beta - 2) \big\vert \nabla \abs{\nabla \phi}_{\tilde{g}} \big\vert^2_{\tilde{g}}\, \right] \!,
\end{equation}
where the $\ric$ is the Ricci tensor of the background metric $g$.
\begin{proof}
By a direct computation one gets
\[
\Delta_{\tilde{g}}\abs{\nabla \phi}_{\tilde{g}}^\beta \,\, = \,\,  \abs{\nabla \phi}_{\tilde{g}}^{\beta - 2}\left[ \, \frac{\beta}{2}  \Delta_{\tilde{g}}\abs{\nabla \phi}_{\tilde{g}}^2 \,  + \,  \beta(\beta - 2) \big\vert \nabla \abs{\nabla \phi}_{\tilde{g}}\big\vert_{\tilde{g}}^2 \, \right],
\]
that, combined with \eqref{boch1}, leads to \eqref{bochf}.
\end{proof}
\end{lemma}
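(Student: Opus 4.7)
The plan is a direct computation that reduces the claim to the already-established Bochner-type identity \eqref{boch1}. The key observation is that $|\na\ffi|^\beta_{\cg} = (|\na\ffi|^2_{\cg})^{\beta/2}$, so the chain rule converts derivatives of $|\na\ffi|^\beta_{\cg}$ into derivatives of $|\na\ffi|^2_{\cg}$ plus an explicit correction term involving $|\na|\na\ffi|_{\cg}|^2_{\cg}$.

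Setting $f = |\na\ffi|_{\cg}$, which is smooth and strictly positive on the open set under consideration, the chain rule yields $\na f^\beta = \tfrac{\beta}{2}f^{\beta-2}\na f^2$ and, after applying the Laplacian twice via the composition rule and simplifying with $|\na f^2|^2_{\cg} = 4f^2|\na f|^2_{\cg}$, one obtains
\[
\Delta_{\cg} f^\beta \,=\, f^{\beta-2}\Bigl[\,\tfrac{\beta}{2}\Delta_{\cg} f^2 \,+\, \beta(\beta-2)\bigl|\na f\bigr|^2_{\cg}\,\Bigr],
\]
which is the identity quoted (without proof) by the authors. Subtracting $\langle\na f^\beta, \na\ffi\rangle_{\cg} = \tfrac{\beta}{2}f^{\beta-2}\langle\na f^2, \na\ffi\rangle_{\cg}$ and factoring out $\tfrac{\beta}{2}f^{\beta-2}$ gives
\[
\Delta_{\cg}f^\beta \,-\, \langle\na f^\beta, \na\ffi\rangle_{\cg} \,=\, \tfrac{\beta}{2}f^{\beta-2}\bigl[\,\Delta_{\cg}f^2 \,-\, \langle\na f^2,\na\ffi\rangle_{\cg}\,\bigr] \,+\, \beta(\beta-2)f^{\beta-2}\bigl|\na f\bigr|^2_{\cg}.
\]
The bracketed quantity equals $2\bigl[\ric(\na\ffi,\na\ffi) + |\na\na\ffi|^2_{\cg}\bigr]$ by \eqref{boch1}, and collecting an overall factor of $\beta f^{\beta-2}$ delivers \eqref{bochf} verbatim.

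There is no real obstacle here; the only subtleties are bookkeeping. The factor $f^{\beta-2}$ requires $f \neq 0$, which is why the identity is asserted only at points where $|\na\ffi|_{\cg} \neq 0$. Moreover, the Ricci tensor appearing on the right-hand side is that of the background metric $g$, not $\ric_{\cg}$; this asymmetry is inherited from \eqref{boch1}, whose derivation already used the second equation of \eqref{pbconf} to turn $\ric_{\cg}(\na\ffi,\na\ffi)$ into $\ric(\na\ffi,\na\ffi)$ after the terms coming from the conformal Hessian and the $d\ffi\otimes d\ffi$ contribution cancel against each other along the vector $\na\ffi$.
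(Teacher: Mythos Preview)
Your proof is correct and follows exactly the same approach as the paper: you expand $\Delta_{\cg}|\na\ffi|_{\cg}^\beta$ via the chain rule to obtain the intermediate identity the authors quote, then invoke \eqref{boch1} to conclude. The only difference is that you spell out the chain-rule computation and the subtraction of the drift term in more detail than the paper does.
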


We prove now an integral identity that will enable us to link the derivative of $\Phi_\beta$ to the Bochner-type formula above. This is where the assumption $\beta \geq 1$ enters the game, compare with Remark~\ref{optimal} and Remark~\ref{difficulties}.

\begin{lemma}[Fundamental Integral Identity]
\label{integral_id}
Let $0\leq s<S<+\infty$, and let $\beta\geq 1$. Then the following identity holds
\begin{equation}
\label{identity}
\begin{split}
\bigintssss\limits_{\{\phi = S\}}\!\!\! \frac{\left\langle \nabla \abs{\nabla \phi}_{\tilde{g}}^\beta, \frac{\nabla \phi}{\abs{\nabla \phi}_{\tilde{g}}}\right\rangle}{e^S} \dd\sigma_{\tilde{g}} \,\, &- \!\! \bigintssss\limits_{\{\phi = s\}}\!\!\! \frac{\left\langle \nabla \abs{\nabla \phi}_{\tilde{g}}^\beta, \frac{\nabla \phi}{\abs{\nabla \phi}_{\tilde{g}}}\right\rangle}{e^s}\dd \sigma_{\tilde{g}} \,\, =   \\
&= \,\, \beta \!\!\!\!\!\!\!\bigintssss\limits_{\{s \leq \phi \leq S\}}\!\!\!\! \!\!\!\frac{\abs{\nabla\phi}_{\tilde{g}}^{\beta -2} \left[\ric({\nabla \phi, \nabla \phi}) + \abs{\nabla \nabla \phi}_{\tilde{g}}^2 + (\beta - 2)\big\vert\nabla\abs{\nabla \phi}_{\tilde{g}}\big\vert_{\tilde{g}}^2\right]}{e^\phi}\dd \mu_{\tilde{g}} \, ,
\end{split}
\end{equation}
where the Ricci tensor is referred to the background metric $g$.
\end{lemma}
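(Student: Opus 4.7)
The natural strategy is to prove \eqref{identity} as a consequence of the divergence theorem applied to the vector field
\begin{equation}
X \,=\, e^{-\phi}\,\nabla\abs{\nabla\phi}_{\tilde g}^{\,\beta}
\end{equation}
on the sublevel domain $\{s\leq \phi\leq S\}$. A direct computation gives
\begin{equation}
\dive_{\tilde g}X \,=\, e^{-\phi}\bigl(\Delta_{\tilde g}\abs{\nabla\phi}_{\tilde g}^{\,\beta}-\langle\nabla\abs{\nabla\phi}_{\tilde g}^{\,\beta},\nabla\phi\rangle_{\tilde g}\bigr),
\end{equation}
and invoking the Bochner-type identity of Lemma~\ref{boch} identifies this with the integrand on the right-hand side of \eqref{identity}. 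On the boundary pieces, the outer unit $\tilde g$-normal to $\{s\leq\phi\leq S\}$ is $+\nabla\phi/\abs{\nabla\phi}_{\tilde g}$ on $\{\phi=S\}$ and $-\nabla\phi/\abs{\nabla\phi}_{\tilde g}$ on $\{\phi=s\}$; plugging these into the divergence theorem produces exactly the two boundary integrals appearing on the left-hand side of \eqref{identity}, including the correct signs and the exponential weights $e^{-s}$, $e^{-S}$ (which come out of the integrals since $\phi$ is constant on the level sets).

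The main obstacle is that the Bochner identity \eqref{bochf} and the pointwise expression for $\dive_{\tilde g}X$ only make sense where $\abs{\nabla\phi}_{\tilde g}\neq 0$, while the harmonic function $\phi$ may admit a critical set $\mathrm{Crit}(\phi)\subset M\setminus\overline\Omega$. This is precisely the point where the assumption $\beta\geq 1$ enters: in that range, $\abs{\nabla\phi}_{\tilde g}^{\,\beta}$ is at least $C^{0,1}$ globally (since $\abs{\nabla\phi}_{\tilde g}$ is smooth where nonzero and bounded by Corollary~\ref{cor-bound} and estimate~\eqref{bound-nablaphi}), and the integrand $\abs{\nabla\phi}_{\tilde g}^{\,\beta-2}|\nabla\nabla\phi|_{\tilde g}^{2}$ remains locally integrable. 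To execute the divergence theorem rigorously, I would first restrict $s$ and $S$ to regular values of $\phi$ (a full-measure set by Sard's theorem), so that $\{\phi=s\}$ and $\{\phi=S\}$ are smooth closed hypersurfaces; then carve out a family of tubular neighborhoods $N_\varepsilon=\{\abs{\nabla\phi}_{\tilde g}<\varepsilon\}$ of $\mathrm{Crit}(\phi)$ and apply the divergence theorem on $\{s\leq\phi\leq S\}\setminus N_\varepsilon$.

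The main technical step is then to show that the extra boundary term on $\partial N_\varepsilon$ vanishes as $\varepsilon\to 0^+$. Since on that boundary $\abs{\nabla\abs{\nabla\phi}_{\tilde g}^{\,\beta}}\leq \beta\abs{\nabla\phi}_{\tilde g}^{\,\beta-1}|\nabla\nabla\phi|_{\tilde g}$, and the $\tilde g$-area of $\partial N_\varepsilon$ is controlled by the coarea formula applied to the smooth function $\abs{\nabla\phi}_{\tilde g}^{2}$, this boundary contribution is dominated by $\varepsilon^{\beta-1}$ times integrals that are finite by the sharp gradient estimate~\eqref{bound-nablaphi} together with the local $L^\infty$ bound on $|\nabla\nabla\phi|_{\tilde g}$ coming from elliptic regularity for the harmonic function $\phi$. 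Hence for $\beta\geq 1$ the limit is zero, and we obtain \eqref{identity} for regular values $s<S$. Finally, by the boundedness provided by Corollary~\ref{cor-bound} and a dominated convergence argument, the identity extends to arbitrary $0\leq s<S<+\infty$, concluding the proof.
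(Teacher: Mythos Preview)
Your overall strategy is the same as the paper's: introduce the vector field $X=e^{-\phi}\nabla|\nabla\phi|_{\tilde g}^{\,\beta}$, compute its divergence via the Bochner-type identity of Lemma~\ref{boch}, and apply the divergence theorem on $\{s\le\phi\le S\}$ away from the critical set. The boundary terms on $\{\phi=s\}$ and $\{\phi=S\}$ are handled exactly as you describe.

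The point of departure is the choice of tubular neighbourhood of $\mathrm{Crit}(\phi)$ and the argument that the extra boundary term vanishes. You excise the sublevel sets $N_\varepsilon=\{|\nabla\phi|_{\tilde g}<\varepsilon\}$; the paper instead excises metric neighbourhoods $B_\varepsilon[\mathrm{Crit}(\phi)]=\bigcup_{x\in\mathrm{Crit}(\phi)}B_\varepsilon(x)$. In the paper's approach the key observations are: (i) for $\beta\ge1$ the field $X$ is \emph{uniformly bounded} on the compact set $\{s\le\phi\le S\}$, since $|\nabla|\nabla\phi|^{\,\beta}|\le\beta|\nabla\phi|^{\,\beta-1}|\nabla\nabla\phi|$ and both factors are bounded there; and (ii) the Cheeger--Naber--Valtorta estimate $|B_\varepsilon(\mathrm{Crit}(\phi))|\le C_\eta\,\varepsilon^{2-\eta}$, combined with coarea for the distance function, gives $\mathrm{area}(\partial B_{\varepsilon_j}[\mathrm{Crit}(\phi)])\le C_\eta\,\varepsilon_j^{1-\eta}\to 0$ along a subsequence. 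Bounded integrand times vanishing area yields the conclusion.

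Your version has a genuine gap at precisely this step. You assert that the boundary contribution on $\partial N_\varepsilon$ is ``dominated by $\varepsilon^{\beta-1}$ times integrals that are finite''. But on $\partial N_\varepsilon$ one has $|X|\le C\,\varepsilon^{\beta-1}|\nabla\nabla\phi|$, so the boundary term is bounded by $C\,\varepsilon^{\beta-1}\cdot\mathrm{area}(\partial N_\varepsilon)$. Nothing in your argument controls $\mathrm{area}(\partial N_\varepsilon)$: the coarea formula for $|\nabla\phi|^2$ only tells you that $\varepsilon\mapsto\mathrm{area}(\partial N_\varepsilon)$ is locally integrable, not that it is bounded, let alone $o(\varepsilon^{1-\beta})$. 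In particular, for $\beta=1$ the prefactor $\varepsilon^{\beta-1}=1$ gives no decay at all, so even a uniformly bounded area would not suffice. The sublevel-set excision $N_\varepsilon$ \emph{can} be made to work---the paper itself points this out in Remark~\ref{difficulties} as the route taken in \cite{Ago_Maz_3} for $\beta<1$---but it requires inserting a cutoff into $X$ so that the modified divergence stays nonnegative; this is more delicate than what you sketch. For $\beta\ge1$ the metric-neighbourhood route with the Cheeger--Naber--Valtorta input is simpler and is what the paper does.
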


\begin{proof}
Since all the quantities that appear in this proof are referred to the metric $\cg$, except for $\ric$, which is the Ricci tensor of the background metric $g$, we shorten the notation, dropping the subscript $\tilde{g}$. We consider the vector field
\[
X = \frac{\nabla \abs{\nabla \phi}^\beta}{e^\phi},
\]
that is well defined at every point where $\abs{\nabla \phi} \neq 0$, and thus $\sigma_{\cg}$-almost everywhere, in virtue of the regularity theory for elliptic equations recalled above. Moreover, by the Bochner-type identity \eqref{bochf}, we have that wherever $\abs{\nabla \phi} \neq 0$ it holds
\begin{equation}
\label{divx}
\dive X \, = \,  \frac{\Delta\abs{\nabla \phi}^\beta - \langle \nabla \abs{\nabla \phi}^\beta, \nabla \phi \rangle}{e^\phi}\, = \, \beta \, \frac{\abs{\nabla \phi}^{\beta -2}\left[\ric({\nabla \phi, \nabla \phi}) + \abs{\nabla \nabla \phi}^2 + (\beta -2)\big\vert\nabla\abs{\nabla \phi}\big\vert^2 \right]}{e^\phi}.
\end{equation}
Let us consider, in $\{s \leq \phi \leq S\}$, a tubular neighborhood of the critical set 
$$
B_\epsilon [\crit] \,\,\, = \!\! \bigcup_{x \in \crit} \!\!\!B_\ep(x) \, ,
$$
It is well known that, for almost every $\epsilon > 0$, the boundary of $B_\epsilon [\crit]$ is $(n-1)$-Lipschitz rectifiable with respect to $\sigma_{\tilde{g}}$ (see e.g. \cite[Theorem 2.5]{alberti} for a much more general result). Then, the Divergence Theorem applies and yields
\begin{equation}
\label{div2}
\begin{split}
\int\limits_{\{s \leq \phi \leq S\} \setminus B_{\epsilon_j}[\crit]}\!\!\!\!\!\!\!\!\! \!\!\!\!\!\!\!\!\!\!\dive X \,\dd \mu \,\,\, \,\,\,  = \!\!\!\!\!\!\!\!\!\! \bigintssss\limits_{\{\phi = S\}\setminus B_{\epsilon_j} (\crit)} \!\!\!\!\!\!\!\!\!\!\!\!\!\!\!\!\frac{\left\langle \nabla \abs{\nabla \phi}^\beta, \frac{\nabla \phi}{\abs{\nabla \phi}}\right\rangle}{e^S} \, \dd\sigma \,\,\,\,\,  &- \!\!\!\!\!\! \!\!\!\!\!\! \bigintssss\limits_{\{\phi = s\}\setminus B_{\epsilon_j} [\crit]}  \!\!\!\!\!\!\!\!\!\!\!\!\!\!
 \frac{\left\langle \nabla \abs{\nabla \phi}^\beta, \frac{\nabla \phi}{\abs{\nabla \phi}}\right\rangle}{e^s} \, \dd \sigma \\ 
&+ \!\!\! \!\!\!\int\limits_{\partial B_{\epsilon_j}[\crit]}\!\!\!\!\!\!\! \frac{\left\langle \nabla \abs{\nabla \phi}^\beta, \nu_{\epsilon_j} \right\rangle}{e^\phi} \, \dd\sigma,
\end{split}
\end{equation}
for a sequence $\epsilon_j \to 0^+$,
where $\nu_{\epsilon_j}$ is the exterior normal to $\partial B_{\epsilon_j}[\crit]$, that is well defined almost everwhere on such a set due to rectifiability.
By  Kato's inequality 
\begin{equation}
\label{kato1}
\abs{\nabla \nabla \phi}^2 \geq \big\vert\nabla \abs{\nabla \phi} \big\vert^2,
\end{equation}
and identity \eqref{divx}, $\dive X$ is nonnegative. 
Thus,
by the Monotone Convergence Theorem, we have
\[
\int\limits_{\{s \leq \phi \leq S\}}\!\!\!\!\!\! \dive X \,\dd \mu \,\,\,\,   = \,\,\,\, \lim_{\epsilon \to 0^+}  \!\!\!\!\!\!\!\!\!\!\!\!\!\int\limits_{\{s \leq \phi \leq S\} \setminus B_\epsilon[\crit]}\!\!\!\!\!\!\!\!\! \!\!\!\!\! \!\!\!\dive X \,\dd \mu \, .
\]  
Moreover, observe that the vector field $\big\vert{\nabla \abs{\nabla \phi}^\beta}\big\vert$ is bounded in $\{s \leq \phi \leq S\}$ for $\beta \geq 1$ again by  Kato's inequality
\[
\nabla \abs{\nabla \phi}^\beta = \beta \abs{\nabla \phi}^{\beta -1} \big\vert \nabla \abs{\nabla \phi} \big\vert \leq \beta \abs{\nabla \phi}^{\beta -1}\abs{\nabla\nabla \phi},
\]
and thus the Dominated Convergence Theorem gives
\[
\lim_{\epsilon_j \to 0^+} \!\!\!\!\!\!\!\!\!\!\!\!\bigintssss\limits_{\{\phi = s\}\setminus B_{\epsilon_j} [\crit]}\!\!\!\!\!\!\! \!\!\!\!\frac{\left\langle \nabla \abs{\nabla \phi}^\beta, \frac{\nabla \phi}{\abs{\nabla \phi}}\right\rangle}{e^s} \, \dd \sigma \,\,\,\, = \!\! \bigintssss\limits_{\{\phi = s\}}\!\!\frac{\left\langle \nabla \abs{\nabla \phi}^\beta, \frac{\nabla \phi}{\abs{\nabla \phi}}\right\rangle}{e^s} \, \dd \sigma \, , 
\]
and analogously for $\{\phi = S\}$.
We are then left to show that 
\begin{equation}
\label{tub}
\lim_{\epsilon_j \to 0^+} \!\!\! \!\!\!\int\limits_{\partial B_{\epsilon_j}[\crit]}\!\!\!\!\!\! \frac{\left\langle \nabla \abs{\nabla \phi}^\beta, \nu_\epsilon \right\rangle}{e^\phi} \dd\sigma = 0.
\end{equation}
To see this, we observe that the integrand os bounded and we use the estimate on the tubular neighbourhoods of the critical set given in \cite[Theorem 1.17]{Che_Nab_Val}.
Namely, for every $\eta > 0$ and every $\epsilon > 0$ there exists a constant $C_\eta>0$ such that
\begin{equation}
\label{mink}
\abs{B_\epsilon(\crit)} \, \leq\,  C_\eta \, \epsilon^{2-\eta}.
\end{equation}
Let $\rho$ be the distance function from the closed set $\crit$. Then, $\abs{\nabla \rho} = 1$ almost everywhere outside $\crit$ (see \cite[Theorem 3.1]{Mant_Menn} for much deeper insights on distance functions). Therefore, by the co-area formula and \eqref{mink},
we get
\begin{equation}
\label{co-mink}
\int\limits_0^\epsilon \dd\rho \!\!\!\! \!\!\int\limits_{\partial B_\rho[\crit]}\!\!\!\!\!\!\!\!\!\dd\sigma  \,\, \leq \,\,  C_\eta \, \epsilon^{2-\eta} \, .
\end{equation}
By the Mean Value Theorem, we can find a sequence $\epsilon_j \to 0^+$ such that
\[
\int\limits_{\partial B_{\epsilon_j}[\crit]}\!\!\!\!\!\!\!\!\!\dd\sigma \,  \leq  \, C_\eta \, \epsilon_j^{1-\eta},
\]
and then, up to relabeling, the limit in \eqref{tub} holds true, completing the proof.
\end{proof}

\begin{remark}
\label{difficulties}
As already observed, the refined Kato inequality for harmonic function~\eqref{kato} implies that, whenever $\nabla \phi \neq 0$, the integrand on the right hand side of~\eqref{identity} is nonnegative also for $(n-2)/(n-1) \leq \beta <1$. However, for $\beta$ in this range, the vector field $X = e^{-\phi} \nabla \abs{\nabla \phi}_{\cg}^\beta$ is no longer uniformly bounded in a neighborhood of the critical set of $\phi$, and thus the above argument is not sufficient to prove~\eqref{tub}. This difficulty has been recently overcome in~\cite{Ago_Maz_3}, introducing a new family of tubular neighborhoods 
$U_\epsilon = \{\abs{\nabla \phi}_{\cg}^2 \leq \epsilon\}$ of $ \crit$, in place of the standard ones, and then modifying the vector field $ X = e^{-\phi} \nabla \abs{\nabla \phi}^\beta$ through a suitable cut-off function that vanishes inside $U_\ep$. The key point is that the divergence of the modified vector field always remains nonnegative. This scheme is shown to converge when $\ep \to 0$, leading to the validity of Lemma~\ref{integral_id} for every $(n-2)/(n-1) \leq \beta <1$. We refer the reader to~\cite{Ago_Maz_3} for the details of this construction.
\end{remark}

Building on the Fundamental Integral Identity proved in Lemma~\ref{integral_id}, we are now ready to deduce the following monotonicity result. Again, the case $(n-2)/(n-1) \leq \beta <1$ can be treated adapting the argument presented in~\cite{Ago_Maz_3}.

\begin{theorem}[Monotonicity of $e^{-s}\Phi'_{\beta}(s)$]
\label{mono1} 
Let $\beta \geq 1$. The function $\Phi_\beta$ defined in \eqref{eq:fip} is differentiable for any $s \geq 0$, and its derivative satisfies  
\begin{equation}
\label{derphi}
\Phi_\beta' (s) \,\, =\!\! \int\limits_{\{\phi = s\}}\!\!\! {\left\langle \nabla \abs{\nabla \phi}_{\tilde{g}}^\beta, \frac{\nabla \phi}{\abs{\nabla \phi}_{\tilde{g}}}\right\rangle} \dd\sigma_{\tilde{g}} \,\, = \,\, -\beta\!\! \int\limits_{\{\phi =s\}} \abs{\nabla \phi}_{\tilde{g}} \HH_{\tilde{g}} \, \dd\sigma_{\tilde{g}} \, .
\end{equation}
In particular,
 for any $0\leq s<S<\infty$, we have
\begin{equation}
\label{mono1f}
e^{-S}\Phi'_{\beta}(S) - e^{-s}\Phi'_\beta(s) \,\, = \,\, \beta\!\!\!\!\!\!\! \int\limits_{\{s \leq \phi \leq S\}}\!\!\!\!\!\!\abs{\nabla \phi}_{\tilde{g}}^{\beta-2}\left[\ric(\nabla \phi, \nabla \phi) + \left\vert \nabla \nabla \phi\right\vert_{\tilde{g}}^2 + (\beta -2)\left\vert \nabla \abs{\nabla \phi}_{\tilde{g}}\right\vert_{\tilde{g}}^2 \right] e^{-\phi} d\mu_{\tilde{g}} \, ,
\end{equation} 
where the Ricci tensor is referred to the background metric $g$.

\begin{proof}
Let us drop the subscript $\tilde{g}$. 
Let $s_0 \in [0, +\infty)$, and $s\geq s_0$. Consider
\[
\Phi_\beta (s) - \Phi_\beta(s_0)  \,\,=\!\! \int\limits_{\{\phi =s\}} \left\langle \abs{\nabla \phi}^\beta \nabla \phi, \frac{\nabla \phi}{\abs{\nabla \phi}}\right\rangle \dd\sigma\,\, - \!\! \! \!\int\limits_{\{\phi =s_0\}} \left\langle \abs{\nabla \phi}^\beta \nabla \phi, \frac{\nabla \phi}{\abs{\nabla \phi}}\right\rangle \dd\sigma \, .
\]
Ruling out the critical set of $\phi$ as shown in the proof of the above Lemma \ref{integral_id}, we can apply the Divergence Theorem to get
\[
\Phi_\beta(s) - \Phi_\beta(s_0)\,\,\, = \!\!\!\!\!\int\limits_{\{s_0 \leq \phi \leq s\}} \!\!\!\! \dive (\abs{\nabla \phi}^\beta \nabla \phi) \dd\mu \,\,\,= \!\!\!\!\!\int\limits_{\{s_0 \leq \phi \leq s\}}\!\!\!\! \left\langle \nabla \abs{\nabla \phi}^\beta, \nabla \phi\right\rangle \dd \mu \, ,
\]
where the last identity is due to harmonicity of $\phi$.
By co-area formula, the above quantity can also be written as
\begin{equation}
\label{div+coarea}
\Phi_\beta(s) - \Phi_\beta(s_0)\,\, = \int\limits_{s_0}^s \! \dd\tau \!\!\!\int\limits_{\{\phi = \tau\}} \!\!\left\langle \nabla \abs{\nabla \phi}^\beta, \frac{\nabla \phi}{\abs{\nabla \phi}}\right\rangle \dd\sigma \, .
\end{equation}
By the Fundamental Theorem of Calculus, the continuity of the mapping
\[
\tau \longmapsto  I(\tau) \, = \!\!\int\limits_{\{\phi = \tau\}} \!\!\left\langle \nabla \abs{\nabla \phi}^\beta, \frac{\nabla \phi}{\abs{\nabla \phi}}\right\rangle \dd\sigma
\]
implies the differentiability of $\Phi_\beta$, together with the first identity in~\eqref{derphi}. The second one follows from the first just by a direct computation involving \eqref{meancurv}.
In order to see the continuity of $I$ at a time $\tau_0$, consider, for $\tau > \tau_0$, the difference
\[
e^{-\tau}I(\tau) - e^{-\tau_0} I(\tau_0)\,\,\, = \,\,\, e^{-\tau}\!\!\!\!\int\limits_{\{\phi = \tau\}} \!\!\left\langle \nabla \abs{\nabla \phi}^\beta, \frac{\nabla \phi}{\abs{\nabla \phi}}\right\rangle \dd\sigma \,\, - \,\, e^{-\tau_0}  \!\!\!\!\! \int\limits_{\{\phi = \tau_0\}} \!\!\left\langle \nabla \abs{\nabla \phi}^\beta, \frac{\nabla \phi}{\abs{\nabla \phi}}\right\rangle \dd\sigma \, .
\]
Then, by \eqref{identity}, we have
\[
e^{-\tau}I(\tau) - e^{-\tau_0} I(\tau_0)\,\, = \,\, \beta \!\!\!\!\! \bigintssss\limits_{\{\tau_0 \leq \phi \leq \tau\}}\!\!\!\! \!\!\!\frac{\abs{\nabla\phi}^{\beta -2} \left[\ric({\nabla \phi, \nabla \phi}) + \abs{\nabla \nabla \phi}^2 + (\beta - 2)\big\vert\nabla\abs{\nabla \phi}\big\vert^2\right]}{e^\phi}\dd \mu\, .
\]
In particular, by the Dominated Convergence Theorem the above integral vanishes as $\tau \to \tau_0^{+}$, yielding the right continuity of $e^{-\tau} I(\tau)$ at $\tau_0$. By a completely analogous argument with $\tau < \tau_0$ we get that $e^{-\tau} I(\tau)$ is actually continuous at $\tau_0$, and clearly so is $I$, ending the proof.    
\end{proof}
\end{theorem}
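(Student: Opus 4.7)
The plan is to establish differentiability of $\Phi_\beta$ and the formula \eqref{derphi} via the Divergence Theorem combined with the co-area formula, and then to deduce the monotonicity identity \eqref{mono1f} directly from the Fundamental Integral Identity already proved in Lemma \ref{integral_id}.

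First, for $s \geq s_0 \geq 0$, I would rewrite the increment $\Phi_\beta(s) - \Phi_\beta(s_0)$ as the net flux of the vector field $Y = \abs{\nabla \phi}_{\cg}^\beta\, \nabla \phi$ across $\{\phi = s\}$ and $\{\phi = s_0\}$, exploiting that $\Delta_{\cg}\phi = 0$ yields $\dive_{\cg} Y = \langle \nabla \abs{\nabla \phi}_{\cg}^\beta, \nabla \phi\rangle_{\cg}$ away from the critical set $\crit$. Removing a tubular neighborhood $B_{\ep}[\crit]$, applying the Divergence Theorem, and sending $\ep \to 0$ along the sequence furnished by the Cheeger-Naber-Valtorta Minkowski-type bound exactly as in Lemma \ref{integral_id}, I would obtain
\[
\Phi_\beta(s) - \Phi_\beta(s_0) \,=\! \int\limits_{\{s_0 \leq \phi \leq s\}}\!\!\!\!\! \langle \nabla \abs{\nabla \phi}_{\cg}^\beta, \nabla \phi\rangle_{\cg}\, \dd\mu_{\cg}.
\]
The vanishing of the boundary term along $\partial B_{\ep_j}[\crit]$ rests on the pointwise bound $|\nabla \abs{\nabla \phi}_{\cg}^\beta| \leq \beta \abs{\nabla \phi}_{\cg}^{\beta-1}|\nabla\nabla\phi|_{\cg}$, which is controlled for $\beta \geq 1$ thanks to Kato's inequality.

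Next, the co-area formula rewrites the right-hand side as $\int_{s_0}^s I(\tau)\,\dd\tau$, where $I(\tau) = \int_{\{\phi = \tau\}} \langle \nabla \abs{\nabla \phi}_{\cg}^\beta,\,\nabla\phi/\abs{\nabla \phi}_{\cg}\rangle_{\cg}\,\dd\sigma_{\cg}$; the Fundamental Theorem of Calculus then delivers $\Phi_\beta'(s) = I(s)$ as soon as $I$ is continuous. To check this, I would not estimate $I$ directly but rather the product $e^{-\tau} I(\tau)$: Lemma \ref{integral_id} expresses $e^{-\tau} I(\tau) - e^{-\tau_0} I(\tau_0)$ as the integral of a fixed nonnegative density over the vanishing slab $\{\tau_0 \leq \phi \leq \tau\}$, which tends to $0$ by dominated convergence. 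A symmetric argument from the left yields full continuity, and since $e^{-\tau}$ is smooth, $I$ is continuous. Expanding $\nabla \abs{\nabla \phi}_{\cg}^\beta = \beta \abs{\nabla \phi}_{\cg}^{\beta-1}\,\nabla \abs{\nabla \phi}_{\cg}$ and using the expression \eqref{meancurv} for $\HH_{\cg}$ then produces the second equality in \eqref{derphi}.

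Finally, \eqref{mono1f} follows immediately: multiplying the just-derived formula for $\Phi_\beta'$ by $e^{-S}$ and $e^{-s}$ and subtracting reproduces exactly the left-hand side of \eqref{identity}, whose right-hand side is the right-hand side of \eqref{mono1f}. The main obstacle across the argument is the handling of $\crit$ in the divergence step, which is precisely the delicate point of Lemma \ref{integral_id}; for $\beta \geq 1$ the Kato-driven boundedness of $|\nabla \abs{\nabla \phi}_{\cg}^\beta|$ keeps this step elementary, whereas the extension to $(n-2)/(n-1) \leq \beta < 1$, flagged in Remark \ref{difficulties}, would require replacing $B_{\ep}[\crit]$ with the sub-level sets $\{\abs{\nabla \phi}_{\cg}^2 \leq \ep\}$ together with a cutoff, following \cite{Ago_Maz_3}.
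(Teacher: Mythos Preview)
Your proposal is correct and follows essentially the same approach as the paper's own proof: rewrite the increment of $\Phi_\beta$ as a flux of $|\nabla\phi|_{\cg}^\beta\,\nabla\phi$, apply the Divergence Theorem after excising $\crit$ exactly as in Lemma~\ref{integral_id}, use the co-area formula to reduce to $\int_{s_0}^s I(\tau)\,\dd\tau$, prove continuity of $I$ via Lemma~\ref{integral_id} applied to $e^{-\tau}I(\tau)$, and read off \eqref{mono1f} directly from \eqref{identity}. The technical justifications you invoke (Kato's bound for $\beta\geq 1$, the Cheeger--Naber--Valtorta Minkowski estimate, dominated convergence on the shrinking slab) coincide with those used in the paper.
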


We are now in a position to complete the proof of Theorem \ref{thm:main_conf}.

\begin{proof}[Conclusion of the proof of Theorem \ref{thm:main_conf}]
We are going to pass to the limit as $S \to +\infty$ in \eqref{mono1f}. The following argument is due to Colding and Minicozzi, \cite{Colding_Minicozzi}. We first prove that the derivative of $\Phi_\beta$ has a sign.
By \eqref{mono1f} we have that, for every $\beta \geq 1$ and every $0\leq s < S <+\infty$,
\[
\Phi_\beta'(S) \,\geq \, e^{(S-s)}\Phi_\beta'(s)\, .
\]
Integrating the above differential inequality, we get
\begin{equation}
\label{trick}
\Phi_\beta (S)\, \geq \, \left(e^{(S-s)} - 1\right)\Phi_\beta'(s) \, +  \, \Phi_\beta (s) \, .
\end{equation}
for every $0 \leq s < S < +\infty$.
Assume now, by contradiction, that $\Phi_\beta'(s) > 0$ for some $s\in [0, +\infty)$. Then, passing to the limit as $S \to +\infty$ in \eqref{trick}, we would get $\Phi_\beta(S) \to +\infty$, against the boundedness of $\Phi_\beta$ provided in Corollary \ref{cor-bound}. Thus, $\Phi_\beta'(s) \geq 0$ for every $s \in [0, +\infty)$. Therefore $\Phi_\beta$ is a nonincreasing, differentiable bounded function on $[0, +\infty)$, and in particular, $\Phi_\beta '(S) \to 0$ as $S \to +\infty$. Passing to the limit as $S \to +\infty$ in \eqref{mono1f} finally gives 
the monotonicity formula \eqref{eq:der_fip}.

For the rigidity statement, assume now that $\Phi_\beta'(s_0) = 0$ for some $s_0 \in [0, +\infty)$. Then, by \eqref{eq:der_fip}, and  Kato's inequality, we get $\big\vert \nabla \abs{\nabla \phi}\big\vert = 0$. We then conclude by Lemma~\ref{splitting-principle}.
\end{proof} 

\begin{remark}
\label{difficulties-rigidity}
For what it concerns  the rigidity statement when $\Phi'_\beta (s_0) = 0$ for some $s_0 \geq 0$ and $\beta <1$, observe that the only case to be discussed differently from $\beta \geq 1$ is $\beta = (n-2)/ (n-1)$. In fact, for this specific parameter, we just obtain from \eqref{eq:der_fip} that $\abs{\nabla \nabla \phi}_{\cg}= n/(n-1) \abs{\nabla \abs{\nabla \phi}_{\cg}}_{\cg}$. However, in this case, the isometry with a Riemannian cylinder substantially follows from \cite[Proposition 5.1]{Bou_Car}.  We refer the reader to \cite{Ago_Maz_3} for the details of this limit case.
\end{remark}

\section{Proof of the Willmore-type inequalities}
In this section, we are going to prove the Willmore inequality on manifolds with nonnegative Ricci curvature and Euclidean volume growth, using the geometric features of the capacitary potential $u$ of a given bounded domain with smooth boundary $\Omega$. For the reader's convenience we recall that $u$ is a solution to the following problem
\begin{equation*}
\begin{cases}
\,\,\Delta{u}= 0 & \mbox{in} \,\, M\setminus\overline\Omega  
\\
\,\,\,\,\,\,\,u=1 & \mbox{on}\,\, \partial\Omega 
\\
u(q)\to 0 &\mbox{as} \,\, d(O,q)\to+\infty \, ,
\end{cases}
\end{equation*}
whose existence in the present context follows from Theorem~\ref{existence-nonpar}. As sketched in the Introduction, the proof makes use of the global features of the Monotonicity-Rigidity Theorem, that is, it compares the behavior of $U_\beta$ in the large with $U_\beta(1)$. 
The well known asymptotics of $u$ and of its gradient in the Euclidean case were the crucial tool to compute the limits of $U_\beta$ in~\cite{Ago_Maz_1}, that consequently gave the sharp lower bound on the Willmore energy. On manifolds with nonnegative Ricci curvature and Euclidean volume growth, the work of Colding-Minicozzi~\cite{colding3} actually implies that the asymptotic behavior of the potential is completely analogous to that in $\R^n$. However, as we will clarify in Remark~\ref{no-hope}, there is no hope to get an Euclidean-like pointwise behavior of $\D u$ in the general case. Nevertheless, using techniques developed in the celebrated~\cite{Che-Cold}, we are able to achieve asymptotic integral estimates for the gradient that in turn will let us conclude the proof of the Willmore-type inequalities~\eqref{will}.

As in Section~\ref{sec:ingredients}, we denote by $\D$ the Levi-Civita connection on the manifold considered, by $\D\D$ the Hessian and by $\Delta$ the Laplacian. Moreover, we let $r(x)=d(O, x)$, where $O \in \Omega$.
To keep the notation shorter it is also useful to recall the following characterisation of the electrostatic capacity of $\Omega$
\[
\capa(\Omega)\, = \, \ddfrac{\int_{\partial \Omega} \abs{\D u} \dd\sigma}{(n-2)\abs{\Sf^{n-1}}} \, ,
\]
in terms of the capacitary potential $u$ of $\Omega$.

\subsection{Manifolds with Euclidean volume growth}

With respect to the value assumed by $\AVR$, we define manifolds with Euclidean volume growth and sub-Euclidean volume growth as follows.
\begin{definition}[Volume growth]
Let $(M, g)$ be a complete noncompact Riemannian manifold with $\ric\geq 0$. Then we say that it has \emph{Euclidean volume growth} if $\AVR > 0$, \emph{sub-Euclidean volume growth} otherwise.
\end{definition}

If follows at once that manifolds with Euclidean volume growth satisfy \eqref{var}, and thus from Varopoulos' characterization  \emph{manifolds with Euclidean volume growth are nonparabolic}.

Let us also recall the well known fact that in a noncompact complete Riemannian manifold $(M, g)$ with $\ric \geq 0$ also the function 
\[
(0, +\infty) \ni r \longmapsto \theta(r)= \frac{\abs{\partial B(O, r)}}{r^{n-1} \abs{\Sf^{n-1}}}
\]
is monotone nonincreasing for any $O \in M$, and the asymptotic volume ratio satisfies
\begin{equation}
\label{avr1}
\AVR \, = \lim_{r \to + \infty} \frac{\abs{\partial B(O, r)}}{r^{n-1} \abs{\Sf^{n-1}}} \, .
\end{equation}
In the remainder of this section we are repeatedly going to use both the area and the volume formulations of the Bishop-Gromov Theorem without always mentioning them.
\smallskip

Let us now derive some rough estimates for the electrostatic potential $u$ and its gradient $\D u$ holding on manifolds with nonnegative Ricci curvature and Euclidean volume growth, to be refined later.

\begin{proposition}
\label{bound_eucl}
Let $(M, g)$ be a complete noncompact Riemannian manifold with $\ric\geq 0$ and Euclidean volume growth. Then, 
it is nonparabolic and the solution $u$ to problem \eqref{pb} for some bounded and open $\Omega$ with smooth boundary satisfies
\begin{equation}
\label{bound_u1}
C_1\, r^{2-n}(x)\, \,\leq \,\, u(x) \,\, \leq \,\, C_2\, r^{2-n}(x) 
\end{equation}
on $M\setminus \Omega$ for some positive constants $C_1$ and $C_2$ depending on $M$ and $\Omega$. Moreover, if $\Omega \subset B(O, R)$, it holds
\begin{equation}
\label{bound_du}
\abs{\D u}(x) \,\,\leq \,\, C_3 \,  r^{1-n}(x) \, ,
\end{equation}
on $M \setminus B(O, 2R)$ with $C_3=C_3(M, \Omega)>0$.
\end{proposition}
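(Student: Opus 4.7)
My plan is to deduce all three claims directly from results already collected in Section~\ref{sec:ingredients} (Yau's gradient estimate, the Li--Yau two-sided bound on the minimal Green's function, Lemma~\ref{control}, Lemma~\ref{laplace_comp}) combined with the volume lower bound coming from Bishop--Gromov together with the assumption $\AVR>0$.

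First I would verify nonparabolicity. The Bishop--Gromov monotonicity of $\Theta(r)=n|B(O,r)|/(r^n|\Sf^{n-1}|)$ combined with $\AVR>0$ gives
\[
|B(O,r)|\,\geq\,\tfrac{1}{n}\AVR|\Sf^{n-1}|\,r^n
\]
for every $r>0$. Since $n\geq 3$, this yields
\[
\int\limits_1^{+\infty}\!\!\frac{r}{|B(O,r)|}\,\dd r
\,\leq\,
\frac{n}{\AVR|\Sf^{n-1}|}\int\limits_1^{+\infty}\!\! r^{1-n}\,\dd r
\,<\,+\infty,
\]
so Varopoulos' Theorem~\ref{var-th} implies that $(M,g)$ is nonparabolic. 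In particular, by Theorem~\ref{existence-nonpar}, the potential $u$ exists.

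For the two-sided bound \eqref{bound_u1} I would use Lemma~\ref{control}, which gives constants $C_1',C_2'>0$ with $C_1'\,G(O,\cdot)\leq u\leq C_2'\,G(O,\cdot)$ on $M\setminus\Omega$. The lower bound in \eqref{bound_u1} is then immediate from Lemma~\ref{laplace_comp}, i.e.\ $G(O,q)\geq d(O,q)^{2-n}$. For the upper bound, I would feed the Euclidean-growth estimate $|B(O,r)|\geq \tfrac{1}{n}\AVR|\Sf^{n-1}|\,r^n$ into the right inequality of Li--Yau's Theorem~\ref{liyau}:
\[
G(O,q)\,\leq\,C\!\!\int\limits_{d(O,q)}^{+\infty}\!\!\!\frac{r}{|B(O,r)|}\,\dd r
\,\leq\,\frac{Cn}{\AVR|\Sf^{n-1}|}\int\limits_{d(O,q)}^{+\infty}\!\!\! r^{1-n}\,\dd r
\,=\,\frac{Cn}{(n-2)\AVR|\Sf^{n-1}|}\,d(O,q)^{2-n},
\]
and combine with $u\leq C_2'\,G(O,\cdot)$ to produce \eqref{bound_u1}.

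Finally, the pointwise gradient estimate \eqref{bound_du} is a straight application of Yau's estimate \eqref{yau1}: since $u$ is positive and harmonic on $M\setminus\overline{B(O,R)}$, we have
\[
|\D u|(q)\,\leq\,C\,\frac{u(q)}{d(O,q)}
\]
for every $q$ with $d(O,q)\geq 2R$, and plugging the upper bound in \eqref{bound_u1} that we just proved gives
\[
|\D u|(q)\,\leq\, C\,C_2\,d(O,q)^{1-n},
\]
which is \eqref{bound_du} with $C_3=C\,C_2$. There is no real obstacle here: the only point that requires care is checking that the Green function comparison from Lemma~\ref{control}, which is stated in terms of $C_1,C_2$ depending on $M$ and $\Omega$, can indeed be chained with the Li--Yau upper bound via the explicit $\AVR$-dependent volume lower bound; this is why both constants in \eqref{bound_u1} depend on $(M,\Omega)$ and not only on $\AVR$.
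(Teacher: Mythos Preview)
Your proof is correct and follows essentially the same route as the paper: Bishop--Gromov plus $\AVR>0$ for nonparabolicity via Varopoulos, Lemma~\ref{control} and Lemma~\ref{laplace_comp} for the lower bound on $u$, the Li--Yau upper estimate on $G$ combined with the volume lower bound for the upper bound on $u$, and Yau's gradient estimate~\eqref{yau1} for~\eqref{bound_du}. The only cosmetic difference is that you spell out the Varopoulos step explicitly, whereas the paper records it just before the proposition.
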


\begin{proof}
The first inequality in \eqref{bound_u1} is just \eqref{lower}.
To obtain the second one, observe first that the monotonicity in the Bishop-Gromov Theorem implies, for any $O\in M$ and any $r \in (0, +\infty)$, that
\[
\abs{B(O, r)}\geq \left(n\abs{\Sf^{n-1}} \AVR\right) r^n .							
\]
Then, the second inequality in the Li-Yau estimate \eqref{liyau}, combined with the second inequality in \eqref{bound_u_g} completes the proof of \eqref{bound_u1}.
Finally, inequality \eqref{bound_du} is achieved just by plugging the upper estimate on $u$ given by \eqref{bound_u1} into \eqref{yau}.
\end{proof}

\subsection{Asymptotics for $u$}

The behavior at infinity of $u$ can be deduced along the path indicated in~\cite{colding3}. However, we prefer to present here a simplified version of that proof, taking advantage of some of the refinements provided in~\cite{li_tam_wang}. Let us first recall the following asymptotic behaviour of $G$ (see \cite[Theorem 0.1]{colding3}, or \cite[Theorem 1.1]{li_tam_wang} for a completely different proof),
\begin{equation}
\label{asyg}
\lim_{r(x) \to +\infty} \frac{G(O, x)}{r(x)^{2-n}}\, = \, \frac{1}{\AVR} \, .
\end{equation}
Building on this fact, we deduce precise asymptotics for the capacitary potential of $\Om$.
\begin{lemma}
\label{asympt_u}
Let $(M, g)$ be a complete noncompact  Riemannian manifold with $\ric \geq 0$ and Euclidean volume growth, and let $u$ be a solution 
to problem \eqref{pb}. Then
\begin{equation}
\label{limu}
\lim_{r(x) \to +\infty}\frac{u(x)}{r(x)^{2-n}}\, = \, \frac{\capa( \Omega)}{\AVR} \, .
\end{equation}
\end{lemma}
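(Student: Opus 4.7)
The plan is to apply Green's representation to $u$ in the exterior domain and combine it with the Green's function asymptotic recalled in \eqref{asyg}. Fix $x\in M\setminus\overline{\Omega}$ and a large radius $R>2r(x)$. Applying Green's second identity in $B(O,R)\setminus\overline{\Omega}$ to the pair $\bigl(u, G(x,\cdot)\bigr)$, and using $\Delta_y G(x,\cdot)=-(n-2)|\Sf^{n-1}|\delta_x$ (which reflects the paper's normalization $G(x,y)\sim d(x,y)^{2-n}$ near the pole) together with the boundary data $u=1$ and $\langle\D u,\nu\rangle=-|\D u|$ on $\partial\Omega$ (with $\nu$ the outward unit normal to $\Omega$), one arrives at
\begin{equation*}
(n-2)|\Sf^{n-1}|\,u(x)\,=\!\!\int\limits_{\partial B(O,R)}\!\!\bigl(G\langle\D u,\D r\rangle-u\langle\D G,\D r\rangle\bigr)\dd\sigma\,+\!\int\limits_{\partial\Omega}\!G(x,y)|\D u(y)|\dd\sigma(y)\,+\!\int\limits_{\partial\Omega}\!\langle\D G(x,\cdot),\nu\rangle\dd\sigma.
\end{equation*}

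Three simplifications then collapse this identity. First, since $G(x,\cdot)$ is harmonic in $\Omega$ whenever $x\notin\overline{\Omega}$, the divergence theorem gives $\int_{\partial\Omega}\langle\D G(x,\cdot),\nu\rangle\dd\sigma=0$. Second, the boundary integral on $\partial B(O,R)$ vanishes as $R\to+\infty$: Proposition~\ref{bound_eucl} together with the upper Li--Yau bound \eqref{liyauf} and Yau's gradient inequality \eqref{yau1} give $u,G=O(R^{2-n})$ and $|\D u|,|\D G|=O(R^{1-n})$ on $\partial B(O,R)$, while $|\partial B(O,R)|=O(R^{n-1})$ by Bishop--Gromov, so the whole integrand contributes only $O(R^{2-n})\to 0$. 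Third, as $r(x)\to+\infty$, $G(x,y)/r(x)^{2-n}\to 1/\AVR$ uniformly in $y\in\partial\Omega$: the pointwise limit at each fixed pole $y$ is \eqref{asyg} applied with base point $y$ (the AVR limit is base-point independent, as already noted in the paper), and $d(y,x)/r(x)\to 1$ uniformly for $y$ in the compact $\partial\Omega$.

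Combining these three facts in the representation and letting $r(x)\to+\infty$ produces
\[
r(x)^{n-2}u(x)\,\longrightarrow\,\frac{1}{\AVR\,(n-2)|\Sf^{n-1}|}\int_{\partial\Omega}|\D u|\,\dd\sigma\,=\,\frac{\capa(\Omega)}{\AVR},
\]
where the last equality is just the characterization of $\capa(\Omega)$ recalled at the beginning of this section. I expect the most delicate step to be the third simplification, namely upgrading \eqref{asyg} to a uniform statement as the pole varies over compact sets; this can be enforced via Harnack's inequality on the annulus $B(x,r(x)/2)$ applied to the positive harmonic function $y\mapsto G(x,y)$, combined with the fact that $\partial\Omega$ lies in a fixed bounded region.
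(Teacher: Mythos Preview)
Your argument is correct and gives a genuinely different (and in some ways cleaner) route than the paper's proof. The paper invokes as a black box the decomposition result \cite[Theorem 1.2]{li_tam_wang}, which writes $u = \capa(\Omega)\,G(O,\cdot) + v$ with $|v|\leq C\,G/r$, and then simply divides through by $r^{2-n}$ and appeals to \eqref{asyg}. Your approach instead derives the exact Green representation
\[
(n-2)\,|\Sf^{n-1}|\,u(x)\;=\;\int_{\partial\Omega}G(x,y)\,|\D u(y)|\,\dd\sigma(y)
\]
from first principles via Green's second identity, and then passes to the limit using the asymptotics of $G$. What you gain is self-containment: you do not need the Li--Tam--Wang refinement, only \eqref{asyg} and the elementary decay bounds already recorded in Proposition~\ref{bound_eucl} and Theorem~\ref{liyau}. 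What the paper's route buys is brevity, since the error term $v$ is handled inside the cited result and no uniformity discussion is needed.

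One small correction to your uniformity step: the ball on which you apply Harnack (more precisely, Yau's gradient estimate \eqref{cheng}) should be centered at $O$, not at $x$. Indeed, $\partial\Omega$ lies in a fixed compact neighbourhood of $O$, far from $x$, so $\partial\Omega\not\subset B(x,r(x)/2)$. The right thing is: $y\mapsto G(x,y)$ is positive harmonic on $B(O,r(x)/2)$ (since $x\notin B(O,r(x)/2)$), so \eqref{cheng} gives $|\D_y\log G(x,y)|\leq C/r(x)$ on $B(O,r(x)/4)\supset\partial\Omega\cup\{O\}$. Integrating along paths of bounded length yields $G(x,y)/G(x,O)\to 1$ uniformly for $y\in\partial\Omega$, and then a \emph{single} application of \eqref{asyg} (with pole $O$, using the symmetry $G(x,O)=G(O,x)$) finishes the job. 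This is in fact slightly stronger than what you sketched, since you do not need to invoke \eqref{asyg} at every pole $y\in\partial\Omega$.
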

\begin{proof}

By \cite[Theorem 1.2]{li_tam_wang}, that slightly extends \cite[Theorem 0.3]{colding3}, we have that outside some large ball $B(O, R)$ containing $\Omega$
\begin{equation}
\label{litamwang}
u \,\,\ = \, - \,\, \frac{G}{(n-2)\abs{\Sf^{n-1}}} \int\limits_{B(O, R)}\!\! \frac{\partial u}{\partial \nu}\,  \dd\sigma  \, + \, v \, ,
\end{equation}
where $G$ is the Green's function with pole in $O$, $\nu$ is the exterior unit normal to the boundary of $B(O ,R)$ and $v$ is an harmonic function defined in $M \setminus B(O, R)$ satisfying
\begin{equation}
\label{vpiccola}
\abs{v} \,\,\leq \,C \, \frac{G}{r}
\end{equation}
for some constant $C > 0$. We point out that the Green's function considered in \cite{li_tam_wang} is, in our notation, $G /\big((n-2)\abs{\Sf^{n-1}}\big)$.
By the Divergence Theorem and the harmonicity of $u$, we infer that
\[
\int\limits_{\partial B(O, R)}\!\!\! \frac{\partial u}{\partial \nu} \dd\sigma \, + \int\limits_{\partial \Omega} \frac{\partial u}{\partial \nu}\dd \sigma \,\,\, = \!\!\!\!\!\int\limits_{B(O, R) \setminus \Omega} \!\!\!\!\!\!\! \Delta u \dd \mu \, = \,  0 \, ,
\]
where we denote by $\nu$ the exterior unit normal to the boundary of $B(O, R) \setminus \overline{\Omega}$. Since, on $\partial \Omega$, $\nu = \D u /\abs{\D u}$, we get, by the above identity,  that
\begin{equation}
\label{c1}
- \,\, \ddfrac{\int_{B(O, R)} \frac{\partial u}{\partial \nu} \dd\sigma}{(n-2)\abs{\Sf^{n-1}}}  \,\, = \,\, \ddfrac{\int_{\partial \Omega} \abs{\D u} \dd\sigma}{(n-2)\abs{\Sf^{n-1}}}\,\, = \,\,  \capa(\Omega) \, .
\end{equation}
Dividing both sides of \eqref{litamwang} by $r^{2-n}$ and passing to the limit as $r \to +\infty$ taking into account \eqref{asyg}, \eqref{c1} and \eqref{vpiccola}, we get the claim.
\end{proof}

As a straightforward corollary of the above lemma, we compute the rescaled area of large geodesic spheres in $M$.

\begin{corollary}
\label{asy-area}
Let $(M, g)$ be a complete noncompact Riemannian manifold with $\ric \geq 0$ and Euclidean volume growth, and let $u$ be a solution to \eqref{pb}. Then
\begin{equation}
\label{asy-areaf}
\lim_{R_i \to +\infty} \!\! \int\limits_{\partial B(O, R_i)} \!\!\!\! u^{\frac{n-1}{n-2}} d\sigma \,\, = \,\, \left\vert\Sf^{n-1}\right\vert \, \AVR \, \left( \frac{\capa(\Omega)}{\AVR}\right)^{\!\frac{n-1}{n-2}}
\end{equation}
\begin{proof}
It follows from Lemma \ref{asympt_u} and  \eqref{avr1}.
\end{proof}
\end{corollary}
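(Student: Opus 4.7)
The plan is to combine the pointwise asymptotic behavior of $u$ provided by Lemma \ref{asympt_u} with the asymptotic area formula \eqref{avr1}, checking that the pointwise convergence upgrades to a uniform control on each large geodesic sphere. Since the quantity $u(x)/r(x)^{2-n}$ converges to $\capa(\Omega)/\AVR$ as $r(x) \to +\infty$, for any $\epsilon > 0$ I can find $R_0$ so that
\[
\left|\frac{u(x)}{r(x)^{2-n}} - \frac{\capa(\Omega)}{\AVR}\right| < \epsilon
\quad \text{whenever } r(x) > R_0.
\]
The key observation is that on $\partial B(O, R_i)$ the distance $r(x)$ is identically equal to $R_i$, so the above bound is automatically \emph{uniform} on each such sphere, and no dominated convergence argument is needed.

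Raising the uniform estimate to the exponent $(n-1)/(n-2)$ and multiplying by $R_i^{n-1}$ yields, for $R_i > R_0$ sufficiently large,
\[
R_i^{n-1} \, u^{\frac{n-1}{n-2}}(x) \,\, = \,\, \left(\frac{\capa(\Omega)}{\AVR}\right)^{\!\!\frac{n-1}{n-2}} + \, o(1), \qquad x \in \partial B(O, R_i),
\]
where $o(1) \to 0$ uniformly as $R_i \to +\infty$. Integrating over $\partial B(O, R_i)$ then gives
\[
\int\limits_{\partial B(O, R_i)} \!\!\!\! u^{\frac{n-1}{n-2}} \dd\sigma \,\, = \,\, \frac{|\partial B(O, R_i)|}{R_i^{n-1}} \left[\left(\frac{\capa(\Omega)}{\AVR}\right)^{\!\!\frac{n-1}{n-2}} + \, o(1) \right].
\]

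Finally, the area version of the Bishop-Gromov Theorem recorded in \eqref{avr1} asserts that
\[
\lim_{R_i \to +\infty} \frac{|\partial B(O, R_i)|}{R_i^{n-1}} \,\, = \,\, |\Sf^{n-1}| \, \AVR,
\]
and passing to the limit in the previous display produces exactly \eqref{asy-areaf}. The only subtle point in this argument is the uniformity of the pointwise asymptotic expansion along each geodesic sphere, which however comes for free from the fact that the asymptotic in Lemma \ref{asympt_u} depends only on the distance from $O$. I do not expect any genuine obstacle beyond this observation.
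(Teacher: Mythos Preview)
Your argument is correct and is exactly the intended elaboration of the paper's one-line proof: combine the pointwise asymptotic of Lemma~\ref{asympt_u}, which is automatically uniform on each geodesic sphere since $r \equiv R_i$ there, with the area Bishop--Gromov limit~\eqref{avr1}. There is nothing more to add.
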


Building on the above formula~\eqref{limu} for the asymptotics of $u$, we now derive integral asymptotics for ${\D u}$. They are achieved through an adaptation of the methods used in~\cite[Section 4]{Che-Cold}. Similar estimates have been widely considered in literature (see e.g. \cite{Colding_1} \cite{colding3}, \cite{colding_harmonic_poly}, \cite{colding_ricciflat}). We are providing here a complete and self-contained proof. 

In the computations below, we are going to consider first and second derivatives of the distance function $x \mapsto d(O, x)$. This can be readily justified by approximating the distance function by convolution, by performing the computations below for the approximating sequence and finally passing to the limit. A scheme like this, often implicit in  modern literature, has been carried out in details for example in \cite{li_tam_wang}. However, in order to keep the presentation as transparent as possible, we are going to work directly with the distance function.    

\begin{proposition}[Integral Asymptotics for $\D u$]
Let $(M, g)$ be a complete noncompact  Riemannian manifold with $\ric \geq 0$ and Euclidean volume growth, and let $u$ be a solution to problem \eqref{pb}. Then, for every $k > 1$, it holds
\begin{equation}
\label{asyintdu}
\lim_{R \to + \infty}  \,\,
\frac{R^{2n-2}}{\abs{A_{R, kR}}}
{\int\limits_{A_{R, kR}}\left\vert \,  \D u - \dfrac{\capa(\Omega)}{\AVR} \D r^{2-n} \,\right\vert^2\! \dd\mu}
\,\,=\,\, 0 \, ,
\end{equation}
where, for $R > 0$, we set $A_{R, kR} = B(O, kR) \setminus \overline{B(O, R)}$.
\end{proposition}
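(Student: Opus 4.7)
The plan is to set $v = (\capa(\Omega)/\AVR)\, r^{2-n}$ and to estimate the integral $\int_{A_{R,kR}} |\D(u-v)|^2\dd\mu$ directly via integration by parts against a cutoff. Fix $\chi$ smooth with $\chi\equiv 1$ on $A_{R,kR}$, $\spt\chi\subset A_{R/2,2kR}$ and $|\D\chi|\leq C/R$, and set $w = u-v$. Using the harmonicity of $u$ together with the nonnegativity of the distribution $\Delta v=(\capa(\Omega)/\AVR)\,\Delta r^{2-n}$ established in \eqref{r^{2-n}}, a textbook integration by parts gives
\[
\int\chi^2|\D w|^2\dd\mu \;=\; -2\!\int\chi\, w\,\langle\D\chi,\D w\rangle\,\dd\mu \;+\;\int\chi^2 w\,\Delta v\,\dd\mu.
\]
Young's inequality absorbs half of $\int \chi^2|\D w|^2$ into the left-hand side, and the problem reduces to bounding $\int w^2|\D\chi|^2\dd\mu$ and $\int\chi^2|w|\,\Delta v\,\dd\mu$.

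Two ingredients then suffice. First, Lemma~\ref{asympt_u} can be rephrased as a uniform bound $|w(x)|\leq \epsilon(r(x))\,r(x)^{2-n}$ for some function $\epsilon(R)\to 0$ at infinity. Second, Bishop--Gromov provides the upper bound $|\spt\chi|\leq CR^n$, while the assumption $\AVR>0$ gives the matching lower bound $|A_{R,kR}|\geq c_1 R^n$ for large $R$. Together with $|\D v|\leq (n-2)(\capa(\Omega)/\AVR)\,r^{1-n}$, these yield
\[
\int w^2|\D\chi|^2\dd\mu \;\leq\; C\,\epsilon(R)^2\, R^{2-n},
\]
and, after one further integration by parts,
\[
\int\chi^2\,\Delta v\,\dd\mu \;=\; -2\!\int\chi\,\langle\D\chi,\D v\rangle\,\dd\mu \;\leq\; C,
\]
uniformly in $R$; the last step uses the coarea estimate $\int_{r\leq 2kR} r^{1-n}\dd\mu \leq C R$, itself a consequence of Bishop--Gromov. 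Combined with the pointwise bound on $w$ this gives $|\int\chi^2 w\,\Delta v\,\dd\mu|\leq C\epsilon(R)R^{2-n}$, and so
\[
\int_{A_{R,kR}}|\D u-\D v|^2\,\dd\mu \;\leq\; C\,\epsilon(R)\,R^{2-n}.
\]
Multiplying by $R^{2n-2}/|A_{R,kR}|\leq c_1^{-1}R^{n-2}$ produces the bound $C'\epsilon(R)\to 0$, as required.

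I expect the principal technical obstacle to be giving rigorous meaning to the two appearances of $\Delta r^{2-n}$ in the computation above, since the distance function $r$ is merely Lipschitz on $M\setminus(\{O\}\cup\mathrm{Cut}(O))$ and its Laplacian has to be treated as a signed Radon measure. The distributional inequality $\Delta r^{2-n}\geq 0$ is available via the Laplace Comparison Theorem as recalled in \eqref{r^{2-n}}; the integrations by parts will be justified by a Calabi-type mollification/barrier argument, in the spirit of \cite{li_tam_wang}. Once this technicality is in place, all remaining estimates are standard Cheeger--Colding-type manipulations, and the proof follows the scheme sketched above with no further surprise.
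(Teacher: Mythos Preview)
Your argument is correct and follows essentially the same integration-by-parts strategy as the paper, with one stylistic difference worth noting. The paper integrates directly on the sharp annulus $A_{R,kR}$, producing a bulk term involving $\Delta r^{2-n}$ and boundary terms on $\partial A_{R,kR}$; to control the latter it invokes the a~priori gradient bound $|\D u|\leq C r^{1-n}$ from Proposition~\ref{bound_eucl}. Your cutoff-and-absorb (Caccioppoli-type) version avoids the boundary terms altogether and therefore never needs the gradient bound on $u$ --- only the explicit bound $|\D v|\leq C r^{1-n}$ enters. Both routes rest on the same two inputs (the pointwise asymptotic of Lemma~\ref{asympt_u} and the distributional inequality $\Delta r^{2-n}\geq 0$), and both arrive at an estimate of the form $\int_{A_{R,kR}}|\D u-\D v|^2\dd\mu\leq C\epsilon(R)R^{2-n}$, so the difference is purely cosmetic. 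Your acknowledgment that the distributional manipulations with $\Delta r^{2-n}$ require a mollification argument matches exactly the caveat the paper records before its proof.
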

\begin{proof}
A simple integration by parts combined with the harmonicity of $u$ leads to the following identity
\begin{equation}
\label{parts}
\begin{split}
\small
{\int\limits_{A_{R, kR}}\left\vert \D u - \dfrac{\capa(\Omega)}{\AVR}r^{2-n}\right\vert^2 \!\!\dd \mu} \,\,= \,  &-\!\!\!\int\limits_{A_{R, kR}}\!\!\! \left(\frac{\capa(\Omega)}{\AVR} \Delta r^{2-n}\right)\left(u-\frac{\capa(\Omega)}{\AVR}r^{2-n}\right)  \dd\mu  \,\,\, + \\ 
&+ \!\! \!\int\limits_{\partial A_{R, kR}}\!\!\!\!\!\!\!\left(u-\frac{\capa(\Omega)}{\AVR}r^{2-n}\right)\left\langle \D u- \frac{\capa(\Omega)}{\AVR}\D r^{2-n},\nu\right\rangle \dd\sigma \,.
\end{split}
\end{equation}
Let us estimate separately the integrals on the right hand side of the above identity. 
Let $\epsilon > 0$. Then, by \eqref{limu},
\[
\left\vert \frac{u}{r^{2-n}} - \frac{\capa (\Omega)}{\AVR}\right\vert < \epsilon.
\]
for $r$ big enough.
We have, for $R$ big enough
\begin{equation}
\label{estimate1}
\begin{split}
\left\vert  \,\, \int\limits_{A_{R, kR}}\!\!\!\!\!\!\left(\frac{\capa(\Omega)}{\AVR} \Delta r^{2-n}\right)\left(u-\frac{\capa(\Omega)}{\AVR}r^{2-n}\right) \dd\mu \, \right\vert & \, \leq \!\!\! \int\limits_{A_{R, kR}}\! \!\left\vert\frac{\capa(\Omega)}{\AVR} \, \Delta r^{2-n}\right\vert \,\, \left\vert \frac{u}{r^{2-n}} - \frac{\capa(\Omega)}{\AVR}\right\vert r^{2-n} \dd \mu \\
&\leq \, \epsilon \, R^{2-n} \!\!\! \int\limits_{A_{R, kR}}\left\vert \frac{\capa(\Omega)}{\AVR} \,  \Delta r^{2-n} \right\vert \dd\mu \\
&= \,\epsilon \, R^{2-n}\int\limits_{A_{R, kR}}\frac{\capa(\Omega)}{\AVR} \, \Delta r^{2-n} \dd\mu \,, \end{split}
\end{equation}
where in the last identity we used $\Delta r^{2-n} \geq 0$
in the sense of distributions, as already shown along the proof of Lemma~\ref{laplace_comp}. 
Integrating by parts $\Delta r^{2-n}$ we obtain
\[
\int\limits_{A_{R, kR}} \!\!\! \Delta r^{2-n} \dd \mu  \,\, = \,\, (2-n) \left[(kR)^{1-n}\abs{\{r = kR\}} - R^{1-n}\abs{\{r = R\}}\right] \,.
\]
In particular, by the assumption on the Euclidean volume growth, the above quantity is uniformly bounded in $R$. We have thus proved that the first summand on the right hand side of \eqref{parts}, for $R$ large enough, is bounded as follows
\begin{equation}
\label{estimate2}
\left\vert \, \int\limits_{A_{R, kR}} \left(\frac{\capa(\Omega)}{\AVR} \Delta r^{2-n}\right)\left(u-\frac{\capa(\Omega)}{\AVR}r^{2-n}\right) \dd\mu \, \right\vert \,\, \leq \,\, C_1\epsilon R^{2-n} \, .
\end{equation}
Let us turn our attention to the second integral in the right hand side of \eqref{parts}. Proceeding as in~\eqref{estimate1}, we have
\begin{equation}
\label{secondoadd}
\begin{split}
\int\limits_{\partial A_{R, kR}}\bigg\vert\left(u-\frac{\capa(\Omega)}{\AVR}r^{2-n}\right) \bigg\langle \D u \, - \,  &\frac{\capa(\Omega)}{\AVR}\D r^{2-n}, \nu\bigg\rangle \bigg\vert \dd\sigma   \, \leq \\
 & \leq \,\, \epsilon R^{2-n} \!\!\int\limits_{\partial A_{R, kR}} \!\!\!\!\left[ \, \abs{\D u} + (n-2)\frac{\capa(\Omega)}{\AVR} r^{1-n} \, \right] \dd\sigma \, .
\end{split}
\end{equation}
Recall now that in Proposition \ref{bound_eucl} we proved that
\[
\abs{\D u} \,\, \leq \,\, C_3\,  r^{1-n} \, ,
\]
for some positive constant $C_3$ independent of $r$. Thus, by the Euclidean volume growth, the integral on the right hand side of~\eqref{secondoadd} is uniformly bounded in $R$ and so
\begin{equation}
\label{estimate3}
\left\vert \,\,  \int\limits_{\partial A_{R, kR}}\left(u-\frac{\capa(\Omega)}{\AVR}r^{2-n}\right)\left\langle \D u- \frac{\capa(\Omega)}{\AVR}\D r^{2-n},\nu\right\rangle \dd\sigma \, \right\vert \,\, \leq \,\, C_3 \,\epsilon \, R^{2-n}
\end{equation}
for some $C_3$ independent of $R$. 
Finally, by \eqref{parts}, \eqref{estimate2} and \eqref{estimate3}, we obtain, for $R$ big enough, the estimate
\[ 
\frac{1}{{\abs{A_{R, kR}}}}
{\int\limits_{A_{R, kR}}\!\!\left\vert \D u - \dfrac{\capa(\Omega)}{\AVR}\D r^{2-n}\right\vert^2 \!\dd \mu} \,\, \leq \,\, C_4 \,\epsilon \, \frac{R^{2-n}}{\abs{A_{R, kR}}} \,\, \leq \,\, C_5 \, \epsilon \, R^{2-2n} ,
\]
for some positive constants $C_4$ and $C_5$ independent of $R$. In the last inequality we 
used the Euclidean volume growth assumption. Our claim~\eqref{asyintdu} is thus proved.
\end{proof}

From the above Proposition we easily deduce the integral asymptotic behaviour of $\abs{\D u}$ on geodesic spheres. 
\begin{corollary}
\label{lim1l}
Let $(M, g)$ be a complete noncompact Riemannian manifold with $\ric \geq 0$ and Euclidean volume growth, and let $u$ be a solution to problem \eqref{pb}. Then, there exists a sequence of positive real numbers $R_k$ with $R_k \to +\infty$ such that
\begin{equation}
\label{lim1}
\lim\limits_{R_k \to +\infty} \!\!\!{\int\limits_{\{r=R_k\}}\!\!\!\bigabs{\abs{\D u} - (n-2) \frac{\capa(\Omega)}{\AVR} r^{1-n}} \dd\sigma} \,\, = \,\, 0 \,.
\end{equation}
\end{corollary}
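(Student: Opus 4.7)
The plan is to derive \eqref{lim1} as a consequence of the integral bulk estimate \eqref{asyintdu}, by passing from an $L^2$ bound on annuli to an $L^1$ bound on suitably chosen level sets of $r$. Setting $c=\capa(\Omega)/\AVR$ for brevity, the starting point is the pointwise observation that $\D r^{2-n}=(2-n)\,r^{1-n}\D r$ and $|\D r|=1$ almost everywhere, so $|\D r^{2-n}|=(n-2)\,r^{1-n}$. The reverse triangle inequality then yields
\[
\Bigl|\,|\D u|-(n-2)\,c\,r^{1-n}\,\Bigr|=\Bigl|\,|\D u|-\bigl|c\,\D r^{2-n}\bigr|\,\Bigr|\,\leq\,\bigl|\D u-c\,\D r^{2-n}\bigr|,
\]
pointwise almost everywhere on $M\setminus\Omega$.

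First I would integrate this inequality over $A_{R,kR}$ and apply Cauchy--Schwarz together with \eqref{asyintdu}. Writing $\varepsilon(R)$ for a quantity going to zero as $R\to+\infty$, the estimate \eqref{asyintdu} gives
\[
\int_{A_{R,kR}}\!\!\bigl|\D u-c\,\D r^{2-n}\bigr|^{2}\,\dd\mu\,\leq\,\varepsilon(R)\,\frac{|A_{R,kR}|}{R^{2n-2}}.
\]
Combined with Cauchy--Schwarz and the Bishop--Gromov upper bound $|A_{R,kR}|\leq C\,R^{n}$, this yields
\[
\int_{A_{R,kR}}\!\!\Bigl|\,|\D u|-(n-2)\,c\,r^{1-n}\,\Bigr|\,\dd\mu\,\leq\,|A_{R,kR}|^{1/2}\!\left(\varepsilon(R)\,\frac{|A_{R,kR}|}{R^{2n-2}}\right)^{\!\!1/2}\!\!\leq\,C\sqrt{\varepsilon(R)}\,R.
\]

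Next I would invoke the co-area formula, using $|\D r|=1$ almost everywhere, to rewrite the left-hand side as
\[
\int_{R}^{kR}\!\dd t\!\int_{\{r=t\}}\Bigl|\,|\D u|-(n-2)\,c\,t^{1-n}\,\Bigr|\,\dd\sigma\,\leq\,C\sqrt{\varepsilon(R)}\,R.
\]
By the mean value theorem there exists $R^{\ast}=R^{\ast}(R)\in(R,kR)$ at which the inner integral does not exceed its average over $(R,kR)$, hence
\[
\int_{\{r=R^{\ast}\}}\!\!\Bigl|\,|\D u|-(n-2)\,c\,(R^{\ast})^{1-n}\,\Bigr|\,\dd\sigma\,\leq\,\frac{C\sqrt{\varepsilon(R)}}{k-1}\,\xrightarrow[R\to+\infty]{}\,0.
\]
Choosing a divergent sequence $R_{j}\to+\infty$ (for instance $R_{j}=j$) and letting $R_{k}:=R^{\ast}(R_{j})\in(R_{j},kR_{j})$ gives the required sequence, since $R_{k}\geq R_{j}\to+\infty$. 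This concludes the proof.

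The argument is essentially a soft extraction and I do not expect serious obstacles; the only delicate point is the dimensional bookkeeping that makes the Cauchy--Schwarz estimate produce exactly the scale $R$ which, once divided by the annular width $(k-1)R$, becomes a quantity of order $\sqrt{\varepsilon(R)}$. It is precisely the sharp power $R^{2n-2}$ appearing in \eqref{asyintdu} together with the Euclidean volume growth $|A_{R,kR}|\asymp R^{n}$ that make the estimate balance, so any weaker control on either ingredient would fail to yield \eqref{lim1}.
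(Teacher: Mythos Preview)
Your proof is correct and follows essentially the same route as the paper: pass from the $L^2$ annular estimate \eqref{asyintdu} to an $L^1$ bound via Cauchy--Schwarz (the paper phrases this as H\"older), apply the co-area formula, and then the mean value theorem to extract a good radius. The only cosmetic difference is that you make the reverse triangle inequality step $\bigl||\D u|-(n-2)c\,r^{1-n}\bigr|\leq|\D u-c\,\D r^{2-n}|$ explicit at the outset, whereas the paper leaves it implicit at the end.
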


\begin{proof}
Let us first observe that, by means of H\"older inequality, we can deduce  from the $L^2$ asymptotics \eqref{asyintdu} an  analogous $L^1$ behaviour. Namely, for any $\epsilon > 0$ we have
\begin{equation}
\label{l1}
\ddfrac{\int_{A_{R, kR}}\left\vert \D u - \dfrac{\capa(\Omega)}{\AVR}\D r^{2-n}\right\vert \dd\mu}{R^{1-n}\abs{A_{R, kR}}} \,\,\,\, \leq \,\,\, \left(\ddfrac{\int_{A_{R, kR}}\left\vert \D u - \dfrac{\capa(\Omega)}{\AVR}\D r^{2-n}\right\vert^2 \dd\mu}{R^{2-2n}\abs{A_{R, kR}}}\right)^{\!\!{1}/{2}}  \!\!\!\!\leq \, \,\,  \epsilon
\end{equation} 
for any $R$ large enough. By the Coarea Formula, the above $L^1$ estimate gives, for $R$ large enough,
\[
\ddfrac{\int\limits_R^{kR}  \dd s \!\!\int\limits_{\{r=s\}} \!\left|  {\D u - \dfrac{\capa(\Omega)}{\AVR}\D r^{2-n}} \right| \dd\sigma}{R^{1-n} \abs{A_{R, kR}}} \,\, \leq \, \epsilon.
\]
Thus, by the Mean Value Theorem, there exists $R_k \in (R, kR)$ such that
\[
\ddfrac{\int\limits_{\{r=R_k\}} \!\!\left| {\D u - \dfrac{\capa(\Omega)}{\AVR}\D r^{2-n}} \right| \dd\sigma}{R^{-n} \abs{A_{R, kR}}} \,\, \leq C \, \epsilon \, ,
\]
for some constant $C$ independent of $R$.
The Euclidean volume growth of the annulus $A_{R, kR}$ as $R$ increases then implies the existence of a sequence ${R_k} \to +\infty$ as in the statement.
\end{proof}

\begin{remark} 
\label{no-hope}
The integral asymptotic for $\abs{\D u}$ given by Corollary \ref{lim1l}
cannot, in general, be improved to a pointwise asymptotic expansion at infinity on a manifold with nonnegative Ricci curvature and Euclidean volume growth. Indeed, the validity of such a formula would imply $\abs{\D u} \neq 0$ outside some big ball $B(O, R)$, and, in turn, $M \setminus B(O, R)$ would be diffeomorphic to $\partial B(O, R) \times [R, +\infty)$. This would imply that $M$ has finite topological type. However, Menguy provided in \cite{menguy} examples of manifolds of dimension $n \geq 4$  with $\ric \geq 0$ and $\AVR > 0$ with infinite topological type.
\end{remark}
\subsection{Final steps of the proof.} Recalling that 
\begin{equation*}
U_{\beta}(t) \, = \,  t^{-\beta \left(\frac{n-1}{n-2}\right)} \int\limits_{\{u=t\}} \abs{\D u}^{1 + \beta} \dd \sigma \, ,
\end{equation*}
it is easy to realise that we have now at hand all the elements to compute the limit as $t \to 0^+$.

\begin{proposition}
\label{prop:limite_u}
Let $(M, g)$ be a complete noncompact Riemannian manifold with $\ric \geq 0$ and Euclidean volume growth. Then, for every $\beta \geq 0$, we have that
\begin{equation}
\label{limit}
\lim_{t \to 0^+} U_{\beta} (t) \,\, = \,\, \capa(\Omega)^{1 - \frac{\beta}{n-2}}\, \AVR^{\frac{\beta}{n-2}}\,(n-2)^{\beta + 1}\, \abs{\Sf^{n-1}} \, .
\end{equation}

\end{proposition}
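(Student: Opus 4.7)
The plan is to combine the monotonicity of $U_\beta$ established in Theorem \ref{thm:M/R} with a coarea identity that relates $U_\beta$ to a volume integral, and then to compute that volume integral asymptotically via the sphere estimates from Lemma \ref{asympt_u} and Corollary \ref{lim1l}. Since $U_\beta$ is nondecreasing on $(0,1]$ and nonnegative, the limit
\[
L \,\,:=\,\, \lim_{t \to 0^+} U_\beta(t) \,\,=\,\, \inf_{(0,1]} U_\beta
\]
exists and is finite. Applying the coarea formula to $u$ on the sublevel set $\{u \leq t\}$ gives, for every $t \in (0,1]$,
\begin{equation}
\label{coarea-plan}
\int\limits_{\{u \leq t\}} |\D u|^{\beta+2} \dd\mu \,\, = \,\, \int_0^t s^{\alpha} \, U_\beta(s) \, \dd s, \qquad \alpha \,\,:=\,\, \tfrac{\beta(n-1)}{n-2},
\end{equation}
and, since $\alpha+1 > 0$ and $U_\beta(s) \to L$ as $s \to 0^+$, the right-hand side is asymptotic to $L \, t^{\alpha+1}/(\alpha+1)$ as $t \to 0^+$.

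The second step is to compute the same quantity from the asymptotic geometry of $u$. Setting $K := \capa(\Omega)/\AVR$, Lemma \ref{asympt_u} gives $u(x) \sim K \, r(x)^{2-n}$ at infinity, whence for every $\delta>0$ there exists $t_\delta>0$ such that for $t \leq t_\delta$ one has the sandwich
\[
\{r \geq ((1+\delta)K/t)^{\frac1{n-2}}\} \,\, \subset \,\, \{u \leq t\} \,\, \subset \,\, \{r \geq ((1-\delta)K/t)^{\frac1{n-2}}\}.
\]
Hence it suffices to determine the asymptotics of $I(R) := \int_{\{r \geq R\}} |\D u|^{\beta+2} \dd\mu$ as $R \to +\infty$. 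Using coarea with respect to $r$ (note that $|\D r|=1$ almost everywhere) together with the Bishop-Gromov asymptotic $|\{r=s\}| \sim \AVR \, |\Sf^{n-1}| \, s^{n-1}$, the model integral obtained by replacing $|\D u|$ with $(n-2) K \, r^{1-n}$ produces the candidate limit
\begin{equation}
\label{candidate-asy}
J(R) \,\,:=\,\, \frac{((n-2)K)^{\beta+2} \, \AVR \, |\Sf^{n-1}|}{(n-1)(\beta+1) - 1} \, R^{1-(n-1)(\beta+1)}.
\end{equation}
The key technical point is to prove that $I(R) \sim J(R)$. The elementary inequality $|a^{\beta+2}-b^{\beta+2}| \leq (\beta+2) \max(a,b)^{\beta+1} |a-b|$ combined with the pointwise gradient bound $|\D u| \leq C \, r^{1-n}$ from Proposition \ref{bound_eucl} and Cauchy-Schwarz reduces the annular contribution $[I(R)-I(kR)] - [J(R)-J(kR)]$ to the $L^2$-control \eqref{asyintdu}; summing over a dyadic decomposition $\{A_{Rk^j,Rk^{j+1}}\}_{j \geq 0}$ and exploiting the Bishop-Gromov upper growth together with a dominated convergence argument for series then promotes this to $|I(R)-J(R)| = o(R^{1-(n-1)(\beta+1)})$ as $R \to +\infty$.

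Substituting $R=(K/t)^{1/(n-2)}$ in \eqref{candidate-asy}, one checks that $R^{1-(n-1)(\beta+1)} = K^{-(\alpha+1)} \, t^{\alpha+1}$, so that the sandwich inequality and the asymptotic for the right-hand side of \eqref{coarea-plan} together determine $L$ uniquely. A straightforward simplification, using $(\alpha+1)/((n-1)(\beta+1)-1) = 1/(n-2)$ together with $K^{1-\beta/(n-2)} \AVR = \capa(\Omega)^{1-\beta/(n-2)} \AVR^{\beta/(n-2)}$, yields
\[
L \,\,=\,\, \capa(\Omega)^{1-\frac{\beta}{n-2}} \, \AVR^{\frac{\beta}{n-2}} \, (n-2)^{\beta+1} \, |\Sf^{n-1}|,
\]
which is exactly \eqref{limit}. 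The main obstacle is the rate control carried out in the second step: the $L^1$-on-spheres asymptotic along the subsequence of Corollary \ref{lim1l} is by itself insufficient to pin down $I(R)$ for all large $R$, and one must instead invoke the annular $L^2$-decay \eqref{asyintdu} in combination with the sup-gradient bound and a dyadic summation to achieve the required uniform convergence rate.
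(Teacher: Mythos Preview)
Your approach is correct and genuinely different from the paper's. The paper works directly on geodesic spheres: it takes the subsequence $R_i$ from Corollary~\ref{lim1l}, rewrites the surface integral on $\{r=R_i\}$ in the form $\int |\D u/u^{(n-1)/(n-2)}|^{\beta+1} u^{(n-1)/(n-2)}\dd\sigma$, uses the uniform bound on $|\nabla\phi|_{\tilde g}=|\D u|/u^{(n-1)/(n-2)}$ together with an $L^1$--$L^q$ interpolation and Corollary~\ref{asy-area} to pass from $L^1$ to $L^{\beta+1}$, and finally converts the sphere integrals into level-set integrals via Lemma~\ref{asympt_u}; the monotonicity of $U_\beta$ upgrades the subsequential limit to the full limit. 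Your route avoids both the spheres-to-level-sets conversion and the interpolation step by passing to the volume integral $\int_{\{u\leq t\}}|\D u|^{\beta+2}\dd\mu$, which is matched against its model $J(R)$ through the annular $L^2$-estimate~\eqref{asyintdu} summed over dyadic scales. What you gain is that the argument never leaves the background metric and does not rely on the conformal sup-bound for $|\nabla\phi|_{\tilde g}$; what you pay is the dyadic summation and the extra care needed to control the area defect $|\{r=s\}|-\AVR|\Sf^{n-1}|s^{n-1}$ inside the model $J(R)$ (this is harmless by the monotone convergence $\theta(s)\searrow\AVR$, but should be stated explicitly). Both proofs ultimately rely on the monotonicity of $U_\beta$, so the restriction $\beta\geq(n-2)/(n-1)$ from Theorem~\ref{thm:M/R} is implicitly in force in each case, the paper's statement for all $\beta\geq 0$ notwithstanding.
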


\begin{proof}
We multiply and divide inside the integral in \eqref{lim1} by $u^{(n-1)/(n-2)}$. Using the asymptotics of $u$ obtained in Lemma~\ref{asympt_u}, we obtain 
\begin{equation}
\label{limtrasf}
\lim_{R_i \to +\infty} {\int\limits_{\{r=R_i\}}\left|\left\vert{\frac{\D u}{u^{\frac{n-1}{n-2}}}}\right\vert - (n-2)\left(\frac{\AVR}{\capo}\right)^{\frac{1}{n-2}} \right| } \, u^{\frac{n-1}{n-2}}\dd\sigma \,\, = \,\, 0 \, .
\end{equation}
Let us now recall the following basic interpolation inequality
\begin{equation}
\label{inter}
\norm{f}_{L^p(X)} \,\, \leq \,\, \norm{f}_{L^1(X)}^{\theta}\norm{f}_{L^q(X)}^{1-\theta},
\end{equation}
holding for any $f \in L^1(X) \cap L^q(X)$, where $X$ is a measure space and the numbers $p$ and $q$ satisfy $1 < p < q < +\infty$ and $1/p =\theta + (1-\theta)/q$.
We apply such an estimate with 
\begin{equation*}
f \,= \, \left|\, \left\vert{\frac{\D u}{u^{\frac{n-1}{n-2}}}}\right\vert - (n-2)\left(\frac{\AVR}{\capo}\right)^{\frac{1}{n-2}} \, \right|
\end{equation*}
$p= 1 + \beta$, $q > 1 + \beta$, and with respect to the measure $u^{(n-1)/(n-2)} \dd \sigma$. We get
\begin{align}
\label{interpophi}
\left(\,\,{\int\limits_{\{r=R_i\}}\Bigg|\left\vert{\frac{\D u}{u^{\frac{n-1}{n-2}}}}\right\vert - (n-2)\left(\frac{\AVR}{\capo}\right)^{\frac{1}{n-2}} \Bigg|^{1+\beta} } u^{\frac{n-1}{n-2}}\dd\sigma \right)^{\!\!{1}/{(1 + \beta)}}\!\!\!\!\!\!\! \leq \hspace{5cm} \nonumber \\ 
\leq \,\,\, \left(\,\,{\int\limits_{\{r=R_i\}}\left| \, \left\vert{\frac{\D u}{u^{\frac{n-1}{n-2}}}}\right\vert - (n-2)\left(\frac{\AVR}{\capo}\right)^{\frac{1}{n-2}} \right|}u^{\frac{n-1}{n-2}}\dd\sigma  \right)^{\!\!\theta} \times\hspace{2cm}\\
 \times 
\left(\,\, {\int\limits_{\{r=R_i\}}\left| \, \left\vert{\frac{\D u}{u^{\frac{n-1}{n-2}}}}\right\vert - (n-2)\left(\frac{\AVR}{\capo}\right)^{\frac{1}{n-2}} \right|^{q} }u^{\frac{n-1}{n-2}}\dd\sigma\right)^{\!{(1-\theta)}/{q}}.\hspace{1cm}
\end{align}
Due to both the uniform bounds on $\abs{\D u}/u^{(n-1)/(n-2)} = \abs{\nabla \phi}_{\tilde{g}}$ given in \eqref{bound-nablaphi} and the bound
on $\int_{\{r =R_i\}} u^{\frac{n-1}{n-2}}d\sigma$, that follows from Corollary \ref{asy-area}, it is easy to see that the second integral on the right hand side of~\eqref{interpophi} is bounded in $R_i$. Thus, by \eqref{interpophi} and \eqref{lim1}, we deduce that
\begin{equation}
\lim_{R_i \to \infty} {\int\limits_{\{r=R_i\}}\left|\, \left\vert{\frac{\D u}{u^{\frac{n-1}{n-2}}}}\right\vert - (n-2)\left(\frac{\AVR}{\capo}\right)^{\frac{1}{n-2}} \right|^{1+\beta} } u^{\frac{n-1}{n-2}}\dd\sigma \,\, = \,\, 0 \, .
\end{equation}
The above limit in particular implies that
\[
\lim_{R_i \to +\infty} \int\limits_{\{r = R_i\}} \left\vert{\frac{\D u}{u^{\frac{n-1}{n-2}}}}\right\vert^{1 + \beta} u^{\frac{n-1}{n-2}} \dd \sigma \,\, = \,\, (n-2)^{1 + \beta} \left( \frac{\AVR}{\capo}\right)^{\frac{1+ \beta}{n-2}}\lim_{R_i \to \infty} \int\limits_{\{r= R_i\}} \!\!\!u^{\frac{n-1}{n-2}} \dd \sigma \, ,
\]
that, combined with \eqref{asy-areaf}, gives
\[
\lim_{R_i \to +\infty} \int\limits_{\{r = R_i\}} \left\vert{\frac{\D u}{u^{\frac{n-1}{n-2}}}}\right\vert^{1 + \beta} u^{\frac{n-1}{n-2}} \dd \sigma \,\, = \,\, (n-2)^{1 + \beta}\, \text{Cap}(\Omega)^{1 - \frac{\beta}{n-2}}\, \AVR^{\frac{\beta}{n-2}}\, \abs{\Sf^{n-1}} \, .
\]
Moreover, by the asymptotic behavior of $u$, we have
\[
\lim_{R_i \to +\infty} \int\limits_{\{r = R_i\}} \left\vert{\frac{\D u}{u^{\frac{n-1}{n-2}}}}\right\vert^{1 + \beta} u^{\frac{n-1}{n-2}} \dd \sigma = \lim_{t_i \to 0^+} \int\limits_{\{u = t_i\}} \bigabs{\frac{\D u}{u^{\frac{n-1}{n-2}}}}^{1 + \beta} u^{\frac{n-1}{n-2}} \dd \sigma
\,= \lim_{t_i \to 0^+} U_{\beta}(t_i) \, ,
\] 
and we have thus proved our claim for some sequence $t_i \to 0^+$.
However, by the boundedness and monotonicity of $U_\beta$ the whole limit as $t \to 0^+$ exists, and it coincides with the just computed one.
\end{proof}
Let us briefly discuss the case of sub-Euclidean volume growth in the following remark.
\begin{remark}
If $(M, g)$ has sub-Euclidean volume growth, that is,
if
\[
\lim_{r \to +\infty} \frac{\abs{B(p, r)}}{r^n}\,\, = \,\, 0
\]
for any $p \in M$, it is easy to realize that $\lim_{t \to 0^+} U_\beta = 0$ for any $\beta \geq 0$. Indeed, by \eqref{bound_u_g}, \eqref{liyau} and \eqref{yau1}, one easily obtains
\[
\frac{\abs{\D u}}{u^{\frac{n-1}{n-2}}}\,\, \leq C \,\, \left[ \frac{\abs{B(p, r)}}{r^n} \right]^{\frac{1}{n-2}}
\]
outside some ball containing $\Omega$ for some $C = C(M, \Omega)$. This implies that 
\[
\lim_{t \to 0^+} U_\beta (t)\,= \lim_{t\to 0^+} \int\limits_{\{u=t\}}\left\vert\frac{\D u}{u^{\frac{n-1}{n-2}}}\right\vert^\beta \!\!\abs{\D u} \dd \sigma \,\, = \,\, 0, 
\]
where we used the constancy of $t \mapsto \int_{\{u=t\}} \abs{\D u} \dd\sigma$. This computation clearly shows that $U_\beta$ cannot be employed to deduce a Willmore inequality on manifolds with sub-Euclidean volume growth, and it supports the perception that the infimum of the Willmore-type functional is zero on these manifolds. This actually happens for example on noncompact Riemannian manifolds with a metric which is asymptotic to a warped product metric of the following type
\[
g \,\, = \,\, d\rho \otimes d\rho + C \rho^{2\alpha }g_{\mid \Sigma} \, ,
\]
where $\Sigma$ is a compact hypersurface, $C > 0$ and $0<\alpha <1$. Indeed, this is readily checked by computing the Willmore-type functional on large level sets $\{\rho = r\}$.
On the other hand, the behavior of the infimum of the Willmore-type functional is completely understood in the 
three-dimensional case, in virtue of Theorem~\ref{cisoth}. 
\end{remark}

\begin{proof}
[Proof of Theorem \ref{willth}]
With Theorem \ref{thm:M/R} and Proposition \ref{prop:limite_u} at hand, Theorem \ref{willth} follows exactly as in the Euclidean proof recalled in the Introduction. Precisely, let $\beta = n-2$. Then, \eqref{limit} reads
\[
\lim_{t \to 0^+}U_{n-2} (t)\,\, = \,\,\AVR \,(n-2)^{n-1}\, \abs{\Sf^{n-1}}.
\]
Moreover, by the nonnegativity of expression \eqref{derivata_di_U} proved in Theorem \ref{thm:M/R}, and the H\"older inequality,
\[
\frac{(n-1)}{(n-2)}\int_{\partial \Omega} \abs{\D u}^{n-1}\dd \sigma \,\, \leq \,\, \int_{\partial \Omega} \abs{\D u}^{n-2} \,\HH \dd\sigma \,\, \leq \,\, \left(\int_{\partial \Omega} \abs{\D u}^{n-1}\dd\sigma\right)^{\!\!(n-2)/(n-1)} \left(\int_{\partial \Omega} \HH^{n-1} \dd\sigma\right)^{\!\!1/(n-1)} \!\!\!\!\!\!\!\!\!\!\!\!,
\]
that gives
\[
\int_{\partial \Omega} \left\vert{{{\D u}}}\right\vert^{n-1}\dd \sigma \,\, \leq \,\, (n-2)^{n-1} \int_{\partial \Omega}\left\vert\frac{\HH}{n-1}\right\vert^{n-1} \!\!\!\!\!\dd \sigma.
\]
Finally, one has
\[
\begin{split}
\AVR \, (n-2)^{n-1}\, \abs{\Sf^{n-1}} \,= \lim_{t \to 0^+}U_{n-2}(t) \leq U_{n-2}(1)&= \int_{\partial \Omega} \abs{Du}^{n-1} \dd\sigma \, \leq \\ 
&\leq (n-2)^{n-1} \int_{\partial \Omega}\left\vert\frac{\HH}{n-1}\right\vert^{n-1} \!\!\!\!\!\dd \sigma \, .
\end{split}
\] 
This completes the proof of the Willmore-type inequality. The rigidity statement when equality is attained follows straightforwardly from the rigidity part of Theorem \ref{thm:M/R}.
\end{proof}
\subsection{Application to ALE manifolds}
We can improve our Willmore-type inequality if $(M, g)$ satisfies a \emph{quadratic curvature decay} condition, showing that, in this case, the lower bound $\AVR\big\vert\abs{\Sf^{n-1}}$ on the Willmore functional is actually an infimum. Let us recall the following well known definition.

\begin{definition}[Quadratic curvature decay]
A complete noncompact Riemannian manifold $(M, g)$ has \emph{quadratic curvature decay} if there exists a point $p \in M$ and a constant $C = C(M, p)$ such that 
\[
\left\vert\text{\emph{Riem}}\right\vert \!(q)\,\, \leq \,\, C \, d(p, q)^2,
\]
where by \emph{Riem} we denote the Riemann curvature tensor of $(M, g)$.
\end{definition}  
When this assumption is added on a  Riemannian manifold with $\ric \geq 0$ and Euclidean volume growth $(M, g)$, \cite[Proposition 4.1]{colding3} gives the following asymptotic behaviour of the gradient and the Hessian of the minimal Green's function $G$.
\begin{theorem}
\label{pointasy}
Let $(M, g)$ be a complete noncompact Riemannian manifold with $\ric \geq 0$, Euclidean volume growth and quadratic curvature decay, and let $G$ be its minimal Green's function. Then
\begin{gather}
\lim_{d(p, x) \to +\infty}\frac{\abs{\D_x G}(p,x)}{ d^{1-n}(p,x)} \, = \, \frac{(n-2)}{\AVR} \label{DG-pointasy} \\ 
\lim_{d(p, x) \to +\infty} \left\vert \D\D_x \left(G^{2/(2-n)}\right)(p, x) - 2\left(\frac{1}{\AVR}\right)^{\frac{2}{2-n}} g(x)\right\vert \, = \, 0 \, . \label{hessG-pointasy} 
\end{gather}
\end{theorem}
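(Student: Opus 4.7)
The plan is a blow-down/rescaling argument combined with Cheeger--Colding convergence theory. Fix $p\in M$ and, for a point $x$ with $R:=d(p,x)$ large, introduce the rescaled metric $g_R:=R^{-2}g$ and the rescaled function $G_R(y):=R^{n-2}G(p,y)$, so that $\Delta_{g_R}G_R=0$ on the rescaled annulus $A_R:=\{y\in M \,:\, 1/2<d_{g_R}(p,y)<2\}$. The point $x$ corresponds to a point at rescaled distance $1$ from $p$, so proving \eqref{DG-pointasy} and \eqref{hessG-pointasy} amounts to controlling $|\D G_R|$ and $\D\D(G_R^{2/(2-n)})$ at $y$ uniformly as $R\to+\infty$. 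Quadratic curvature decay ensures that the Riemann tensor of $g_R$ is uniformly bounded in $L^\infty$ on $A_R$, and Euclidean volume growth combined with Bishop--Gromov gives two-sided volume bounds on the geodesic balls with respect to $g_R$.

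The first step is to upgrade the scale-invariant pointwise asymptotic
\[
G(p,x)\,d(p,x)^{n-2}\,\longrightarrow\,\frac{1}{\AVR}\qquad\text{as }d(p,x)\to+\infty.
\]
This follows along the lines of Lemma~\ref{asympt_u}: the Li-Tam/Li-Tam-Wang decomposition $G=\frac{1}{\AVR}r^{2-n}+v$, together with the harmonic approximation of the distance function of Cheeger--Colding (valid under quadratic curvature decay) and the integral asymptotic $\lim\frac{1}{|A_{R,kR}|}\int_{A_{R,kR}}|\D G-\frac{n-2}{\AVR}\D r^{1-n}|^2=0$ obtained exactly as in \eqref{asyintdu}, yield the pointwise statement. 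Consequently $G_R$ is uniformly bounded on $A_R$ and its values converge to $|y|^{2-n}/\AVR$ on the geodesic sphere of rescaled radius $1$.

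The second step invokes Cheeger--Colding theory: along any subsequence $R_k\to+\infty$, the pointed annular manifolds $(A_{R_k},g_{R_k})$ converge in the $C^{1,\alpha}$ topology (this is where the $L^\infty$ curvature bound from quadratic curvature decay is essential, compare with the $\varepsilon$-regularity theorems of Cheeger-Colding-Tian) to the corresponding annulus $\{1/2<|y|<2\}$ inside the asymptotic cone $C(X)=[0,+\infty)\times X$ of $(M,g)$, which is a smooth Riemannian cone off the tip. Uniform Schauder estimates for $\Delta_{g_R}G_R=0$ then promote the convergence of $G_{R_k}$ to $C^{2,\alpha}$ convergence on compact subsets of $\{1-\delta<|y|<1+\delta\}$, toward a harmonic function on the cone that takes the value $1/\AVR$ on $|y|=1$. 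Rotational symmetry of the cone and maximum principle force this limit to be $|y|^{2-n}/\AVR$, so evaluating $|\D G_{R_k}|$ at the rescaled image of $x$ gives \eqref{DG-pointasy}. For \eqref{hessG-pointasy} one notes that the transformation $G\mapsto G^{2/(2-n)}$ sends the limit into $\AVR^{-2/(2-n)}|y|^2$, whose Euclidean Hessian equals $2\AVR^{-2/(2-n)}\delta$, and the $C^{2,\alpha}$ convergence of $G_{R_k}$ (after the smooth nonlinear transformation, which is regular away from the tip since $G_R\neq 0$ on $A_R$) transfers to $C^{0}$ convergence of the Hessian tensors.

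The main obstacle is the interplay between the regularity of the metric convergence and the regularity of the PDE solutions. Quadratic curvature decay is sharp for this argument: without it, one only has $C^{0,\alpha}$ convergence of metrics (through harmonic coordinate constructions) and no direct $C^{2,\alpha}$ control of the Green's function via rescaling, which is exactly what is needed to read off the pointwise asymptotic for $|\D G|$ and for the full Hessian of $G^{2/(2-n)}$. A secondary difficulty is identifying the limit in the cone with the correct constant $1/\AVR$: this is where the pointwise asymptotic of $G$ established in the first step enters as the indispensable \emph{boundary condition at infinity} for the limit harmonic function on the cone.
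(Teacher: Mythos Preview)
The paper does not give its own proof of this theorem: it is simply quoted as \cite[Proposition~4.1]{colding3} (Colding--Minicozzi), with no argument provided. So there is no ``paper's proof'' to compare against; your proposal is an independent sketch of the Colding--Minicozzi result.

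That said, your outline has the right architecture --- rescale, use bounded curvature plus noncollapsing to pass to a $C^{1,\alpha}$ limit, identify the limit via Cheeger--Colding as the asymptotic cone, and use elliptic regularity to upgrade convergence of the harmonic functions --- but a couple of points are imprecise. First, the pointwise asymptotic $G(p,x)\,d(p,x)^{n-2}\to 1/\AVR$ is already \eqref{asyg} in the paper (cited from \cite{colding3,li_tam_wang}) and does not require quadratic curvature decay; you do not need to redo it via Lemma~\ref{asympt_u} or the integral estimate \eqref{asyintdu}, which concern the capacitary potential $u$, not $G$. Second, your identification of the limit harmonic function on the cone by ``rotational symmetry of the cone and maximum principle'' is not quite right: the asymptotic cone $C(X)$ over a general cross-section $X$ has no rotational symmetry, and prescribing only the value on the single slice $\{|y|=1\}$ does not determine a harmonic function on an annulus. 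What does the job is that the $C^0$ convergence $G_R\to |y|^{2-n}/\AVR$ (coming from \eqref{asyg}) holds on the whole annulus, so the limit is pinned down on both boundary components; alternatively, one observes directly that $r^{2-n}$ is harmonic on any metric cone and invokes uniqueness for the Dirichlet problem on the annulus. With these fixes, the compactness/Schauder scheme you describe is exactly the standard route to such pointwise gradient and Hessian asymptotics.
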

Observe that arguing as in Remark \ref{no-hope}, one realizes that \eqref{DG-pointasy} actually implies that Riemannian manifolds satisfying the assumptions of the above Theorem have finite topological type. 

Theorem \ref{pointasy} enables us to prove that the Willmore-type functional of large level sets of the Green's function approach $\AVR \abs{\Sf^{n-1}}$. This fact, combined with our Willmore-type inequality \eqref{will}, easily yields the following refinement.
\begin{theorem}
\label{improve}
Let $(M, g)$ be a complete noncompact Riemannian manifold with $\ric \geq 0$, Euclidean volume growth and quadratic curvature decay. Then,
\begin{equation}
\label{inf}
\inf\left\{ \left.\,\,\int\limits_{\partial\Omega}\left\vert{\frac\HH{n-1}}\right\vert^{n-1}\!\!\!\!\dd
\sigma\,\, \right| \,\, \Omega \subset M\,\, \text{\emph{bounded and smooth}}\right\} \, = \, 
\AVR \, \abs{\Sf^{n-1}} \, .
\end{equation}
Moreover, the infimum is attained at some bounded and smooth $\Omega \subset M$ if and only if $(M \setminus \Omega, g)$ is isometric to 
\begin{equation}
\label{metric_cone1}
\Big(\,\big[r_0, +\infty) \times\pa\Omega
\,,\, 
\dd r\otimes\dd r +(r/r_0)^2 g_{\pa\Omega}
\Big),
\qquad
\mbox{with}
\quad
r_0
\,=\,
\bigg(\frac{|\pa\Omega|}{\AVR|\Sf^{n-1}|}\bigg)^{\frac1{n-1}}.
\end{equation}
\end{theorem}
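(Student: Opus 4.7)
The lower bound in~\eqref{inf} is the Willmore-type inequality~\eqref{will} from Theorem~\ref{willth}, so my task reduces to producing a sequence of bounded smooth domains $(\Omega_k)_{k\in\N}$ in $M$ along which the Willmore functional converges to $\AVR|\Sf^{n-1}|$. The natural candidates are $\Omega_k=\{G>t_k\}$ for a vanishing sequence of positive regular values $(t_k)$ of the minimal Green's function $G$ with pole at a fixed $p\in M$. Almost every positive value is regular by Sard's theorem, and each $\Omega_k$ is bounded (since $G\to0$ at infinity by~\eqref{greenzero}), connected (by the maximum principle, $p$ is the unique point where $G$ attains its maximum), and has smooth boundary $\partial\Omega_k=\{G=t_k\}$. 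The quadratic curvature decay assumption enters only through Theorem~\ref{pointasy}, whose pointwise asymptotics replace the weaker integral ones of Section~4 and are exactly what is needed in what follows.

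Setting $h=G^{2/(2-n)}$ and combining~\eqref{DG-pointasy},~\eqref{hessG-pointasy} with~\eqref{asyg}, I would verify that, uniformly on $\partial\Omega_k$ as $k\to+\infty$,
\[
|\D G|\,\sim\,(n-2)\AVR^{1/(n-2)} t_k^{(n-1)/(n-2)},\qquad |\D h|\,\sim\,2\AVR^{1/(n-2)}t_k^{-1/(n-2)},
\]
\[
\Delta h\,\sim\,2n\AVR^{2/(n-2)},\qquad \frac{\D\D h(\D h,\D h)}{|\D h|^2}\,\sim\,2\AVR^{2/(n-2)}.
\]
Since $\partial\Omega_k=\{h=t_k^{-2/(n-2)}\}$ with outward unit normal $\nu=\D h/|\D h|$, the mean curvature reads $\HH=\Delta h/|\D h|-\D\D h(\D h,\D h)/|\D h|^3$, and the above estimates yield
\[
\HH\,\sim\,(n-1)\AVR^{1/(n-2)} t_k^{1/(n-2)}
\]
uniformly on $\partial\Omega_k$. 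The divergence theorem applied to $\D G$ on $\Omega_k\setminus\overline{B(p,\epsilon)}$, combined with the expansion $G\sim d(p,\cdot)^{2-n}$ near $p$ and $\epsilon\to 0^+$, yields the conservation $\int_{\partial\Omega_k}|\D G|\,\dd\sigma=(n-2)|\Sf^{n-1}|$, from which
\[
|\partial\Omega_k|\,\sim\,|\Sf^{n-1}|\AVR^{-1/(n-2)} t_k^{-(n-1)/(n-2)}.
\]
Multiplying $|\HH/(n-1)|^{n-1}$ by $|\partial\Omega_k|$, the powers of $t_k$ cancel perfectly and the powers of $\AVR$ combine into $\AVR$, producing $\int_{\partial\Omega_k}|\HH/(n-1)|^{n-1}\dd\sigma\to\AVR|\Sf^{n-1}|$ and completing the proof of~\eqref{inf}.

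The rigidity statement follows at once from Theorem~\ref{willth}: if the infimum is attained at some bounded smooth $\Omega$, equality holds in~\eqref{will}, which forces $(M\setminus\Omega,g)$ to be isometric to the truncated cone~\eqref{metric_cone1}. The principal technical point is the passage from the pointwise asymptotics of Theorem~\ref{pointasy} to their uniform versions on the moving level sets $\partial\Omega_k$; however,~\eqref{asyg} ensures that $d(p,\cdot)$ is itself uniformly close to $(\AVR t_k)^{-1/(n-2)}$ along $\partial\Omega_k$, so the $o(1)$ error terms in Theorem~\ref{pointasy} are automatically controlled uniformly on these compact hypersurfaces.
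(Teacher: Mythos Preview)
Your proposal is correct and follows essentially the same route as the paper: both use the superlevel sets of the minimal Green's function (equivalently, sublevel sets of $h=G^{2/(2-n)}$) and exploit the pointwise asymptotics of Theorem~\ref{pointasy} to compute the limiting Willmore energy. The only cosmetic differences are that the paper obtains $\HH$ by tracing $\D\D h$ over a tangent orthonormal frame (yielding $\sum_i \D\D h(e_i,e_i)\to 2(n-1)\AVR^{2/(n-2)}$) rather than via the divergence formula $\HH=\Delta h/|\D h|-\D\D h(\D h,\D h)/|\D h|^3$, and that the paper recovers $|\partial\Omega_k|$ from the Euclidean volume growth of geodesic spheres together with~\eqref{asyg}, whereas you use the flux conservation $\int_{\{G=t_k\}}|\D G|\,\dd\sigma=(n-2)|\Sf^{n-1}|$; your route to the area is arguably cleaner.
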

\begin{proof}
Let $p \in M$ be fixed, and let $G$ be the minimal Green's function of $(M, g)$. Let us denote again by $G$ the function $q \mapsto G(p, q)$. In light of Theorem \ref{willth}, it suffices to prove that 
\begin{equation}
\label{claim}
\lim_{t \to +\infty}\!\int\limits_{\{G^{2/(2-n)}=t\}}\bigabs{\frac\HH{n-1}}^{n-1}\dd
\sigma \, = \, \AVR \abs{\Sf^{n-1}} \, .
\end{equation}
To see this, consider, at a point $x$, orthonormal vectors $\{e_1, \dots, e_{n-1}\}$ tangent to the level set $\{G = G(x)\}$. Then, letting $r(x)=d(p, x)$, we have, by \eqref{hessG-pointasy}, 
\begin{equation}
\label{asy-tr}
\lim_{r(x) \to \infty} \sum_{i=1}^{n-1} \D\D \left(G^{\frac{2}{2-n}}\right)(e_i, e_i) (x) \, = \, 2(n-1) \left(\frac{1}{\AVR}\right)^{\frac{1}{2-n}}.
\end{equation}
The mean curvature of the level sets of $G^{2/(2-n)}$ is clearly computed
as
\[
\HH_{G^{2/(2-n)}} \, = \, \frac{\sum_{i=1}^{n-1} \D\D \left(G^{\frac{2}{2-n}}\right)(e_i, e_i)}{\abs{\D G^{2/(2-n)}}} \, ,
\]
that, combined with \eqref{asy-tr}, \eqref{DG-pointasy} and \eqref{asyg} gives
\[
\lim_{r(x) \to \infty}r(x)\,{\HH_{G^{2/(2-n)}}(x)}\,=\, (n-1) \, .
\]
Combining it again with \eqref{asyg} and Euclidean volume growth gives \eqref{claim}, in fact completing the proof.
\end{proof}

We now particularize Theorem \ref{improve} to \emph{Asymptotically Locally Euclidean} (ALE) manifolds, proving Corollary \ref{cor:ALE}. We adopt the following definition, that is a sort of extension of the one considered in the celebrated \cite{bando-kasue-nakajima} -- where striking relations between curvature decay conditions and behaviour at infinity of manifolds are drawn -- and sensibly weaker than the one used by Joyce in the classical reference \cite{Joyce_book}.  
\begin{definition}[ALE manifolds]
\label{def:ALE} 
We say that a complete noncompact Riemannian manifold $(M, g)$ is ALE (of order $\tau$)  if there exist a compact set $K \subset M$, a ball $B \subset \R^n$, a diffemorphism $\Psi: M\setminus K \to \R^n\setminus B$, a subgroup $\Gamma < SO(n)$ acting freely on $\R^n\setminus B$ and a number $\tau > 0$ such that
\begin{gather}
{(\Psi^{-1}\circ \pi)^*g}(z) \, = \, g_{\R^n} + O(\abs{z})^{-\tau} \label{ale1} \\
\bigabs{\partial_{i}[(\Psi^{-1}\circ \pi)^*g]}(z) \, = \,  O(\abs{z})^{-\tau - 1} \label{ale2}  \\
\bigabs{\partial_{i}\partial_j[(\Psi^{-1}\circ \pi)^*g]}(z) \, = \,  O(\abs{z})^{-\tau - 2} \label{ale3},
\end{gather}
where $\pi$ is the natural projection $\R^n \to \R^n/\Gamma$, $z \in \R^n \setminus B$ and $i, j = 1, \dots, n$.
\end{definition}

\begin{proof}
[Proof of Corollary \ref{cor:ALE}]

Conditions~\eqref{ale1},~\eqref{ale2},~\eqref{ale3} readily imply that ALE manifolds have Euclidean volume growth and quadratic curvature decay. Moreover, condition \eqref{ale1} and a direct computation give that 
\begin{equation}
\label{chain-ale}
\AVR \,= \, \frac{\abs{\Sf^{n-1}/\Gamma}}{\abs{\Sf^{n-1}}} \, = \, \frac{1}{\text{card}\, \Gamma} \, .
\end{equation}
The characterization~\eqref{will-ale} then follows immediately from~\eqref{inf}. 

Assume now that the infimum of the Willmore functional is attained at some $\Omega \subset M$. Then, by the rigidity part in Theorem \ref{improve}, $M\setminus \Omega$ is isometric to a truncated cone over $\partial \Omega$. However, by \eqref{ale1}, $(M, g)$ is also $C^0$-close at infinity to a metric cone with link $\Sf^{n-1}/\Gamma$. Since the cross sections of a cone are all homothetic to each other, $\partial \Omega$ is homothetic to $\Sf^{n-1}/\Gamma$, that is, they are diffeomorphic and $g_{\partial \Omega} = \lambda^2 g_{\Sf^{n-1}/ \Gamma}$ for some positive constant $\lambda$. This fact, together with \eqref{chain-ale} in the rigidity part of Theorem \ref{willth} imply that 
\begin{equation}
\label{metric_cone-ale}
\Big(\,\big[r_0, +\infty) \times\pa\Omega
\,,\, 
\dd r\otimes\!\dd r +(\lambda r/r_0)^2 g_{\Sf^{n-1}/\Gamma}
\Big),
\qquad
\mbox{with}
\quad
r_0
\,=\,
\bigg(\frac{|\pa\Omega|}{|\Sf^{n-1}/\Gamma|}\bigg)^{\frac1{n-1}}.
\end{equation}
In particular, one has
\[
\abs{\partial B(O, R)} \, = \, \left(\frac{R\lambda}{r_0}\right)^{\!n-1} \abs{\Sf^{n-1}/\Gamma}.
\]
Combining this with~\eqref{chain-ale} we conclude that $\lambda = r_0$, proving the isometry with~\eqref{cone-ale} and completing the proof.
\end{proof}

\section{Proof of the Enhanced Kasue's Theorem}
The proof of Theorem~\ref{mono-par} is completely analogous to the proof of Theorem~\ref{thm:main_conf}, for this reason most of the details can be easily adapted from the previous section and will be left to the interested reader.
For $\beta \geq 0$, we recall the definition of the function $\Psi_\beta : [0, +\infty) \to \R$ satisfying 
\begin{equation}
\label{Psi1}
\Psi_\beta (s) \,\, = \!\!\!\!\int\limits_{\{\psi = s\}} \!\!\abs{\D \psi}^{\beta + 1} \dd \sigma \, , 
\end{equation}
where $\psi$ is a solution to problem \eqref{prob-ex-par} for some bounded $\Omega \subset M$ with smooth boundary and $\beta \geq 0$. Combining the uniform bound on $\abs{\D \psi}$ given in Theorem \ref{existence-par} with the constancy of $\Psi_0$, we obtain, as in Corollary \ref{cor-bound}, that $\Psi_\beta$ is uniformly bounded in $s$ for any $\beta \geq 0$. We record this fact for future reference.
\begin{lemma}
\label{bound-lemma-par}
Let $(M, g)$ be a parabolic manifold with $\ric \geq 0$, and let $\psi$ be a solution to problem \eqref{prob-ex-par}. Then, the function $\Psi_\beta$ defined in \eqref{Psi} is uniformly bounded for every $\beta \geq 0$.
\end{lemma}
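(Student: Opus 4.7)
The plan is to mimic the argument of Corollary \ref{cor-bound}, using the parabolic counterparts of the two ingredients that made the nonparabolic case work: a pointwise uniform bound on the gradient of the potential, and the constancy of the flux integral at level zero.

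First, I would invoke Theorem \ref{existence-par} to obtain a constant $C_0 = C_0(M,\Omega)$ such that $|\D\psi| \leq C_0$ everywhere on $M\setminus\overline{\Omega}$. Second, I would establish that
\[
\Psi_0(s) \,=\!\!\int\limits_{\{\psi=s\}}\!\!\!|\D\psi|\,\dd\sigma
\]
is independent of $s \geq 0$. The natural way is to apply the Divergence Theorem to the vector field $X = \D\psi$ on the sublevel region $\{s_1 \leq \psi \leq s_2\}$: since $\psi$ is $g$-harmonic, $\dive X = 0$, and on each level set $\{\psi=s\}$ the outward unit normal is $\D\psi/|\D\psi|$, so the boundary integrals reduce to $\Psi_0(s_2) - \Psi_0(s_1)$. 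The technical point is that $\psi$ may have critical points, so the unit normal is not defined everywhere on the level set. This is handled exactly as in the proof of Lemma \ref{integral_id}: one excises a tubular neighborhood of the critical set, applies the divergence theorem on the complement, and uses the Cheeger--Naber--Valtorta bound \eqref{mink} on the size of tubular neighborhoods of the critical set to show that the contribution from $\partial B_\varepsilon[\mathrm{Crit}(\psi)]$ vanishes along a suitable sequence $\varepsilon_j \to 0^+$.

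With these two facts in hand, the bound is immediate: for every $s \geq 0$,
\[
\Psi_\beta(s) \,=\!\!\int\limits_{\{\psi=s\}}\!\!\!|\D\psi|^\beta\,|\D\psi|\,\dd\sigma \,\leq\, C_0^{\beta}\!\!\int\limits_{\{\psi=s\}}\!\!\!|\D\psi|\,\dd\sigma \,=\, C_0^\beta\,\Psi_0(s) \,=\, C_0^\beta\,\Psi_0(0) \,=\, C_0^\beta\!\!\int\limits_{\partial\Omega}\!\!|\D\psi|\,\dd\sigma,
\]
and the right-hand side is a finite constant depending only on $M$, $\Omega$, and $\beta$, proving the claim.

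The main (minor) obstacle is really the tubular-neighborhood argument for the constancy of $\Psi_0$: one needs to verify that the boundary flux through $\partial B_{\varepsilon_j}[\mathrm{Crit}(\psi)]$ vanishes as $\varepsilon_j \to 0^+$. This follows because the integrand $|\D\psi|$ is bounded by $C_0$, while the surface area of $\partial B_{\varepsilon_j}[\mathrm{Crit}(\psi)]$ is controlled by $C_\eta\,\varepsilon_j^{1-\eta}$ via the coarea formula applied to~\eqref{mink}, exactly as in the proof of Lemma \ref{integral_id}. Since that argument has already been carried out in detail in the nonparabolic case, no new technical work is needed here.
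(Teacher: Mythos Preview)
Your proposal is correct and follows exactly the approach the paper takes: the paper states that the lemma follows by combining the uniform bound on $|\D\psi|$ from Theorem~\ref{existence-par} with the constancy of $\Psi_0$, arguing as in Corollary~\ref{cor-bound}. Your treatment of the critical-set issue is more detailed than the paper's (which simply says ``a simple application of the Divergence Theorem''), but this extra care is harmless and in line with Lemma~\ref{integral_id}.
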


\begin{proof}[Proof of Theorem \ref{mono-par}]
As for Theorem \ref{thm:main_conf}, we only prove Theorem~\ref{mono-par} for $\beta \geq 1$. To include the remaining cases it is sufficient to follow the strategy suggested in Remark~\ref{optimal}.
Let us begin with the computation of a Bochner-type identity for $\abs{\D\psi}^\beta$, in the same spirit as in Lemma~\ref{boch}
\begin{equation}
\label{bochner-par}
\Delta \abs{\D\psi}^\beta \,\,= \,\, \beta \abs{\D \psi}^{\beta -2} \left[\abs{\D\D\psi}^2 + (\beta-2)\big\vert\D\abs{\D\psi}\big\vert^2 + \ric(\D\psi, \D\psi) \right].
\end{equation}
Observe that the right hand side of \eqref{bochner-par} is nonnegative if $\beta \geq (n-2)/(n-1)$, by means of the refined Kato's inequality \eqref{kato} for harmonic functions.
Applying the Divergence Theorem and the  co-area formula in the same fashion as in the proof of~\eqref{div+coarea}, we get
\begin{equation}
\label{coarea-par}
\Psi_\beta(s) - \Psi_\beta(s_0) \,\, = \,\, \int\limits_{s_0}^s \! \dd \tau \!\!\! \int\limits_{\{\psi = \tau\}} \!\! \left\langle \D \abs{\D \psi}^\beta, \frac{\D \psi}{\abs{\D \psi}}\right\rangle \dd\sigma \, .
\end{equation}
Exactly as in Lemma \ref{integral_id}, we can use the Divergence Theorem and \eqref{bochner-par} to obtain that for every $0 \leq \tau_0 < \tau$ it holds
\[
\begin{split}
\int\limits_{\{\psi = \tau\}} \!\! \left\langle \D \abs{\D \psi}^\beta, \frac{\D \psi}{\abs{\D \psi}}\right\rangle &\, \dd\sigma \,\,\, - \!\!\! \int\limits_{\{\psi = \tau_0\}}\!\!\! \left\langle \D \abs{\D \psi}^\beta, \frac{\D \psi}{\abs{\D \psi}}\right\rangle \dd\sigma  \,\,\, = \\ 
&\!\!\!= \,\,\, \beta\int\limits_{\{\tau_0 \leq \psi \leq \tau\}}  \!\!\!\!\abs{\D \psi}^{\beta -2} \left[\abs{\D\D\psi}^2 + (\beta-2)\big\vert\D\abs{\D\psi}\big\vert^2 + \ric(\D\psi, \D\psi) \right] \dd \mu \, . 
\end{split}
\]
Combining this integral identity with~\eqref{coarea-par}, we deduce, as in the proof of Theorem~\ref{mono1}, that $\Psi_\beta$ is differentiable, that
\[
\Psi_\beta'(s) \,\, = \int\limits_{\{\psi = s\}} \!\!\!\left\langle \D \abs{\D \psi}^\beta, \frac{\D \psi}{\abs{\D \psi}}\right\rangle \dd\sigma \,\, = \,\, -\beta \!\!\int\limits_{\{\psi = s \}}\!\!\!
|\D\psi|^\beta\,\HH \,\dd\sigma \, ,
\]
and that, for every $S \geq s \geq 0$, it holds
\begin{equation}
\label{mono1-par}
\Psi_\beta'(S) - \Psi_\beta'(s) \,\, = \,\, \beta \!\!\!\!\!\!
\int\limits_{\{s \leq \psi \leq  S\}} \!\!\!\!\!
{
|\D\psi|^{\beta-2} 
\Big(\ric(\D\psi,\D\psi)
+\big|\D\D \psi\big|^2
 +  \, (\beta-2) \, \big| \D |\D \psi |\big|^2\,\Big) 
}
\dd\mu \, \geq \, 0 \, .
\end{equation}
Since $\Psi_\beta$ is bounded by Lemma~\ref{bound-lemma-par}, we can argue as in the conclusion of the proof of Theorem~\ref{thm:main_conf} to pass to the limit as $S \to +\infty$ in \eqref{mono1-par} and obtain the monotonicity formula \eqref{eq:der_fip-par}. The rigidity part of the statement is obtained exactly as for that of Theorem \ref{thm:main_conf}. 
\end{proof}
The Enhanced Kasue's Theorem \ref{enkath} now follows at once.
\begin{proof}[Proof of Theorem \ref{enkath} and Corollary \ref{kasue}]
Assume first that $(M, g)$ is nonparabolic. Then, it is sufficient to use~\eqref{derivata_di_U} and~\eqref{eq:monot} at $t = 1$ to get
\[
\left(\frac{n-1}{n-2}\right)  U_\beta(0) \, + \,  \frac{1}{\beta} \, \frac{\dd U_\beta}{\dd t}(0)\,\, = \,\, \int\limits_{\partial \Omega} \HH \, \abs{\D u}^\beta \dd\sigma\,\, \leq \,\, \sup_{\partial \Omega} \HH_{\partial \Omega}  \int\limits_{\partial \Omega} \abs{\D u}^\beta \dd\sigma \, ,
\] 
that is \eqref{enka1}. If $(M, g)$ is parabolic, one can prove inequality~\eqref{enka2} in a completely analogous fashion. 
Since $U_\beta > 0$, it is easy to deduce from~\eqref{enka1} and~\eqref{enka2} that $\HH_{\partial \Omega}\leq 0$ on $\partial \Omega$ if and only if $(M, g)$ is parabolic and ${\dd\Psi}/{\dd s}_\beta (0) =0$. The rigidity statement in Theorem~\ref{mono-par} gives then Corollary~\ref{kasue}.
\end{proof}

Corollary \ref{kasue} can also be interpreted as a rigidity statement when the equality is attained in \eqref{will} if $\AVR = 0$. The following is then a direct  consequence of Theorem~\ref{willth} and Corollary~\ref{kasue}. 

\begin{corollary}
\label{general}
Let $(M, g)$ be a 
complete noncompact Riemannian manifold 
with $\ric\geq 0$.
If $\Omega\subset M$ is a bounded subset with smooth boundary, then
\begin{equation}
\label{will-general}
\int\limits_{\partial\Omega}\,\left\vert{\frac\HH{n-1}}\right\vert^{n-1}\!\!\!\!\!\!\dd
\sigma
\,\,\geq\,\,
\AVR\abs{\Sf^{n-1}} \, .
\end{equation}
If $\AVR > 0$, then equality  in \eqref{will-general} holds if and only if 
$(M\setminus\Omega,g)$ 
is isometric to 
\begin{equation}
\label{metric_cone-general}
\Big(\,\big[r_0, +\infty) \times\pa\Omega
\,,\, 
\dd r\otimes\!\dd r +(r/r_0)^2 g_{\pa\Omega}
\Big),
\qquad
\mbox{with}
\quad
r_0
\,=\,
\bigg(\frac{|\pa\Omega|}{\AVR \,|\Sf^{n-1}|}\bigg)^{\frac1{n-1}}.
\end{equation}
In particular, $\partial \Omega$ is a connected submanifold with constant mean curvature.
If $\AVR = 0$ , equality holds in \eqref{will-general} if and only if $(M \setminus \Omega, g)$ is isometric to a Riemannian product $\left([0, +\infty) \times \partial \Omega, \dd r\otimes \dd r + {g}_{\partial\Omega}\right)$. In particular, $\partial \Omega$ is a connected totally geodesic submanifold of $(M, g)$.

\end{corollary}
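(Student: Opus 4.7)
The plan is to reduce the statement to the two main results already proven, namely Theorem \ref{willth} and Corollary \ref{kasue}, by splitting into the two cases $\AVR > 0$ and $\AVR = 0$.

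First I would dispose of the case $\AVR > 0$. In this regime the assumption of Euclidean volume growth is met, so Theorem \ref{willth} applies directly and yields both inequality \eqref{will-general} and the rigidity characterization \eqref{metric_cone-general}, including the fact that $\partial \Omega$ is connected with constant mean curvature (the connectedness following from Proposition \ref{oneend}, since the cone structure of $M\setminus\Omega$ forces $(M,g)$ to have a single end).

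Next, I would treat the case $\AVR = 0$. The right hand side of \eqref{will-general} then vanishes, so the inequality is trivially satisfied because the integrand $|\mathrm{H}/(n-1)|^{n-1}$ is pointwise nonnegative. For the rigidity, equality forces
\[
\int_{\partial \Omega} \left|\frac{\mathrm{H}}{n-1}\right|^{n-1}\!\!\! \dd\sigma \,=\, 0,
\]
and hence $\mathrm{H} \equiv 0$ on $\partial\Omega$. In particular $\mathrm{H} \leq 0$ pointwise on $\partial \Omega$, so Kasue's Theorem (Corollary \ref{kasue}) applies and gives the desired isometry of $(M\setminus\Omega, g)$ with the Riemannian product $([0,+\infty)\times \partial\Omega, \dd r \otimes \dd r + g_{\partial\Omega})$, together with the fact that $\partial \Omega$ is connected and totally geodesic.

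The only subtlety worth flagging is consistency of the two rigidity regimes. In the cone case ($\AVR>0$), the cross section $\partial\Omega$ is forced to be totally umbilic with constant mean curvature $(n-1)/r_0 > 0$, whereas in the cylinder case ($\AVR = 0$) it is totally geodesic. These two mutually exclusive possibilities match the sign of the limit $\lim_{t\to 0^+} U_{n-2}(t) = \AVR\,(n-2)^{n-1}|\mathbb{S}^{n-1}|$ computed in Proposition \ref{prop:limite_u}, which is strictly positive exactly when $\AVR>0$. No further computation is needed: the corollary is essentially a bookkeeping synthesis of the two main theorems, with the AVR $=0$ branch making the trivial lower bound sharp precisely when the boundary is minimal, which is the situation covered by Kasue.
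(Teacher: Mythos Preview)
Your proposal is correct and matches the paper's approach exactly: the paper states Corollary~\ref{general} as ``a direct consequence of Theorem~\ref{willth} and Corollary~\ref{kasue}'' without further proof, and your case split on $\AVR>0$ versus $\AVR=0$ is precisely the intended synthesis. The only minor omission is the (trivial) converse in the $\AVR=0$ rigidity: if $M\setminus\Omega$ is a Riemannian product then $\partial\Omega$ is totally geodesic, so $\HH\equiv 0$ and equality holds; this is immediate and worth a half-sentence for completeness.
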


\section{The isoperimetric inequality for $3$-manifolds}
As already discussed in the Introduction, we show here how to use our Willmore inequality~\eqref{will} to improve a result stated by Huisken in~\cite{Hui_video}, in which
the infimum of the Willmore energy is characterized in terms of the infimum of the isoperimetric ratio on $3$-manifolds with nonnegative Ricci curvature. 
In concrete, we are going to prove  Theorem~\ref{cisoth}, whose statement is recalled hereafter for the reader's convenience.

\begin{theorem}
Let $(M, g)$ be a $3$-manifold  with $\ric \geq 0$. Then,
\begin{equation*}
\inf\,\ddfrac{\abs{\partial\Omega}^{3}}{36{\pi}\abs{\Omega}^2}\,=\,\inf\,\ddfrac{\int_{\partial \Omega}\!\!{\HH}^2\dd\sigma}{16{\pi}}\, = \,\AVR,
\end{equation*}
where the infima are taken over bounded and open subsets $\Omega \subset M$ with smooth boundary.
In particular, the following isoperimetric inequality holds
for any bounded and open $\Omega \subset M$ with smooth boundary
\begin{equation*}
\frac{\abs{\partial \Omega}^{3}}{\abs{\Omega}^2} \, \geq \,36{\pi} \,\AVR.
\end{equation*}
Moreover, equality is attained in \eqref{iso} if and only if $M= \R^3$ and $\Omega$ is a ball.
\end{theorem}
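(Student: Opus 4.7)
The plan is to bracket both infima in \eqref{ciso} by $\AVR$ from above and below. For the upper bounds, I would test on (smoothed) geodesic balls $B(O,R)$ as $R\to+\infty$: the area form of Bishop--Gromov gives $|\partial B(O,R)|/(4\pi R^{2})\to\AVR$ and $|B(O,R)|/(\tfrac{4\pi}{3}R^{3})\to\AVR$, while Laplacian comparison on the distance function yields $\HH\sim 2/R$ on large spheres. A short computation then shows that both $|\partial B(O,R)|^{3}/(36\pi|B(O,R)|^{2})$ and $\int_{\partial B(O,R)}\HH^{2}\dd\sigma/(16\pi)$ converge to $\AVR$. For the Willmore lower bound I would directly invoke \eqref{will} in dimension $n=3$: since $|\HH/(n-1)|^{n-1}=\HH^{2}/4$ and $|\Sf^{2}|=4\pi$, \eqref{will} reads $\int_{\partial\Omega}\HH^{2}\dd\sigma\geq 16\pi\AVR$ for \emph{every} bounded smooth $\Omega$, with no outward-minimizing hypothesis. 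This is precisely the feature of \eqref{will} that improves upon \cite{Hui_video}.

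The central task is the isoperimetric lower bound $|\partial\Omega|^{3}\geq 36\pi\AVR\,|\Omega|^{2}$, which I would handle by Huisken's weak IMCF scheme. I would first reduce to the case when $\Omega$ coincides with its strictly outward-minimizing hull $\widetilde\Omega$: indeed, $|\partial\widetilde\Omega|\leq|\partial\Omega|$ and $|\widetilde\Omega|\geq|\Omega|$, so the hull has a smaller isoperimetric ratio and it suffices to prove the bound on it. Then I would run the weak IMCF of Huisken--Ilmanen starting from $\partial\widetilde\Omega$, producing a family $\widetilde\Omega_{t}$ with the sharp area law $|\partial\widetilde\Omega_{t}|=|\partial\widetilde\Omega|\,e^{t}$. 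On the a.e.\ strictly mean-convex smooth slices, the Heintze--Karcher inequality in the $\ric\geq 0$ setting gives $V'(t)=\int_{\partial\widetilde\Omega_{t}}1/\HH\,\dd\sigma\geq\tfrac{3}{2}V(t)$, whence $V(t)\geq|\widetilde\Omega|\,e^{3t/2}$. Combining the two growth laws, $Q(t):=|\partial\widetilde\Omega_{t}|^{3}/|\widetilde\Omega_{t}|^{2}$ is nonincreasing, so $|\partial\Omega|^{3}/|\Omega|^{2}\geq Q(0)\geq\lim_{t\to+\infty}Q(t)$.

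The hard part will be the identification $\lim_{t\to+\infty}Q(t)=36\pi\AVR$: one must know that the weak IMCF exhausts $M$ and that its leaves become asymptotically comparable, in an integral sense, to geodesic spheres in the asymptotic cone of $(M,g)$, whose $Q$-ratio equals $36\pi\AVR$. This is precisely where our sharp \eqref{will} enters, forcing $\int\HH^{2}\dd\sigma\geq 16\pi\AVR$ along the flow and, together with the asymptotic saturation of Heintze--Karcher on large rounding leaves, pinning down the limit as carried out in \cite{Hui_video}. For the rigidity in \eqref{iso}, equality forces $\Omega$ to coincide with its outward-minimizing hull, $Q(t)\equiv 36\pi\AVR$ along the flow, and hence the Heintze--Karcher inequality to be saturated at every time; the known rigidity of Heintze--Karcher under $\ric\geq 0$ forces each $\widetilde\Omega_{t}$ to be a geodesic ball in a flat region, which together with the rigidity in Theorem \ref{willth} propagates to $(M,g)=(\R^{3},g_{\R^{3}})$ with $\Omega$ a Euclidean ball. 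The Sobolev-type Corollary \ref{sobolevth} would then follow from the standard equivalence with the isoperimetric inequality via the coarea formula.
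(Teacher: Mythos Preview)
Your scheme diverges from the paper's at the core step: you run the \emph{inverse} mean curvature flow outward, while the paper runs the \emph{forward} mean curvature flow inward. This is not cosmetic---it determines whether the argument closes.

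In the paper, one reduces to strictly mean-convex $\Omega$ (via the minimizing hull plus the Huisken--Ilmanen approximation lemma, much as you propose), then flows by weak MCF. With $C:=\tfrac{3}{2}\bigl(\inf_\Omega\int_{\partial\Omega}\HH^2\,\dd\sigma\bigr)^{1/2}$, Schulze's weak monotonicity (Theorem~\ref{schulzeth}) gives that $D(t)=|\partial\Omega_t|^{3/2}-C|\Omega_t|$ is nonincreasing. The crucial point is that in a $3$-manifold with $\ric\geq0$ and no cylindrical end there are no closed minimal surfaces (Corollary~\ref{kasue}), so by White's long-time theorem the flow becomes extinct at some finite $T$ with $|\Omega_t|,|\partial\Omega_t|\to 0$. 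Hence $D(0)\geq D(T^-)=0$, i.e.\ $|\partial\Omega|^{3}/|\Omega|^{2}\geq C^{2}=\tfrac{9}{4}\inf\int \HH^{2}\,\dd\sigma$, which is exactly the chain
\[
\inf\frac{|\partial\Omega|^{3}}{36\pi|\Omega|^{2}}\;\geq\;\inf\frac{\int_{\partial\Omega} \HH^{2}\,\dd\sigma}{16\pi}\;\geq\;\AVR,
\]
the last step being Theorem~\ref{willth}. A \emph{single} test on geodesic balls (using only Bishop--Gromov on volumes and areas, no control on $\HH$) bounds the left-hand side from above by $\AVR$ and closes the chain to equalities. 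Rigidity is obtained by tracking equality along the approximating MCF: equality in the isoperimetric difference forces equality in \eqref{will} on each $\Sigma_\ep$, whose exterior is then a cone by the rigidity in Theorem~\ref{willth}; the flow of umbilic cross-sections stays smooth down to a point, forcing $(M,g)=(\R^3,g_{\R^3})$.

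Your IMCF route, by contrast, delivers $Q(0)\geq\lim_{t\to+\infty}Q(t)$ (via Heintze--Karcher) but leaves the identification $\lim_{t\to+\infty}Q(t)=36\pi\AVR$ as an unproved assertion. Your claim that ``this is precisely where \eqref{will} enters'' supplies no mechanism: a lower bound on $\int \HH^{2}\,\dd\sigma$ does not by itself pin down the asymptotic isoperimetric ratio of expanding $C^{1,1}$ leaves, and the reference to \cite{Hui_video} is misplaced since Huisken's argument there is the MCF one sketched above, not an IMCF one. There is also a regularity point in invoking Heintze--Karcher on the weak Huisken--Ilmanen leaves. Finally, ``$\HH\sim 2/R$ on large geodesic spheres'' overstates what Laplacian comparison gives (only $\HH\leq 2/R$; cf.\ Remark~\ref{no-hope} for why sharp pointwise asymptotics fail in this generality). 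The one-sided bound would in fact suffice for an upper bound on the Willmore infimum, but the paper sidesteps the whole issue by testing only the \emph{isoperimetric} ratio on geodesic balls and letting the chain inequality do the rest.
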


\begin{remark}
\label{cylindrical-ends}
Observe that Theorem \ref{cisoth} is obvious if $(M, g)$ has cylindrical ends, that is, if there exists a bounded subset $\Omega \subset M$ with smooth boundary such that $(M \setminus \Omega, g)$ is isometric to half a cylinder, as in the rigidity statement of Theorems \ref{thm:main_conf} and \ref{mono-par}. Indeed, since on such an end any cross-section is minimal, the infimum of the Willmore functional is clearly zero, and, since any cross-section has the same surface area, considering an increasing sequence of regions enclosed by cross-sections shows that the infimum of the isoperimetric ratio is zero too.
\end{remark}

\begin{remark}
We point out that in the $3$-dimensional case Theorem~\ref{cisoth} extends Theorem~\ref{improve} to any complete manifold with $\ric \geq 0$, with no restrictions on the volume growth and no curvature assumptions at infinity. Such enhancement is implicitly due to the fact that in dimension $n=3$ the topology of nonnegatively Ricci curved manifolds is completely understood (see~\cite{liu-structure}). 
\end{remark}

\subsection{Huisken's argument.} 
\label{sub:hui}
Let us briefly present Huisken's heuristic argument to deduce an isoperimetric inequality from Willmore's through the \emph{mean curvature flow}. 
We first recall that a sequence of orientable hypersurfaces $F_t(p): \Sigma \to M$ immersed in a Riemann manifold $(M, g)$, evolves through the Mean Curvature Flow if 
\[
\frac{\dd}{\dd t} F_t(p) \,\, = \,\, - \, \HH(p, t) \, \nu (p, t) \, ,
\]
where $\HH$ is the mean curvature of $\Sigma_t = F_t (\Sigma)$ and $\nu$ is its (exterior, in the case where $\Sigma_t$ is the boundary of a domain) unit normal. Accordingly, we say that $\{\Omega_t\}$ is a mean curvature flow if the boundaries are evolving through mean curvature flow in the sense explained above.
Let then $\Omega$ be an open bounded set with smooth boundary and let $\{\Omega_t\}$, with $t \in [0, T)$, be a smooth mean curvature flow starting from $\Omega$. Suppose, in addition, that
\begin{equation}
\label{lim-mcf}
\lim_{t \to T^-} \,\abs{\Omega_t} \, = \, 0.
\end{equation}
Consider, for some constant $C> 0$ to be defined later, the \emph{isoperimetric difference}
\begin{equation}
\label{iso-diff}
D(t) \,\, = \,\, \abs{\partial\Omega_t}^{3/2} - \,\,C \, \abs{\Omega_t} \, .
\end{equation}
Taking derivatives in $t$, and using standard formulas (see for example \cite[Theorem 3.2]{Huis_Pold}), one finds
\[
\frac{\dd}{\dd t} D(t) \,\, = \,\, - \, \frac{3}{2} \,  \abs{\partial \Omega_t}^{1/2} \!\!\int\limits_{\partial \Omega_t} \!\HH^{2} \dd\sigma \, +\,  C \!\!\int\limits_{\partial \Omega_t} \!\HH \dd\sigma \, , 
\]
that, through H\"older inequality, gives
\[
\frac{\dd}{\dd t} D(t) \, \leq \,  \left(\left\vert\partial \Omega_t\right\vert\int\limits_{\partial \Omega_t} \HH^2\dd\sigma\right)^{\!\!1/2} \, \left[\,  C \, - \,  \frac{3}{2}\left(\,\, \int\limits_{\partial\Omega_t}{\HH}^2 \dd\sigma\right)^{\!\!1/2}\right].
\]
Thus, if we choose $C$ such that
\begin{equation}
\label{willinf}
C \,\, \leq \,\, \frac{3}{2}\left(\,\, \int\limits_{\partial\Omega}{\HH}^2 \dd\sigma\right)^{\!\!1/2}
\end{equation}
for any bounded and smooth $\Omega \subset M$, $t \mapsto D(t)$ is nonincreasing. This implies that
\[
D(0) \,\, = \,\, \abs{\partial\Omega}^{3/2} - C\abs{\Omega} \,\,\geq \,\, \lim_{t\to T^-} D(t) \,\, \geq \,\, 0 \, ,
\]
where we have also used~\eqref{lim-mcf}. The above comparison in particular gives the (possibly non sharp) isoperimetric inequality
\[
\frac{\abs{\partial \Omega}^{3/2}}{\abs{\Omega}} \,\, \geq \,\, C \, .
\]
In~\cite{Hui_video}, the constant $C$ is chosen to be the infimum of the right hand side of~\eqref{willinf}, when $\Omega$ varies in the class of outward minimizing domains.

\subsection{Tools from the Mean Curvature Flow of mean-convex domains.}
We are first concerned with the accurate justification of the above computations. This will be accomplished with the help of a couple of important results due to Schulze and White, respectively. In the first part of our treatment we assume that the boundary $\partial \Omega$ of the bounded set $\Omega$ is smooth and mean-convex, that we understand as $\HH>0$. We will see later how to deal with the general cases.

Since the Mean Curvature Flow (MCF for short) is likely to develop singularities, one needs to consider an appropriate weak notion in order to state the following useful results. In particular, we consider the Weak Mean Curvature Flow in the sense defined in~\cite{evans-spruck}. A special case of the regularity theorem~\cite[Theorem 1.1]{white-size} gives
\begin{theorem}[White's Regularity Theorem]
\label{whiteth}
Let $(M, g)$ be a $3$-dimensional Riemannian manifold, let $\Omega\subset M$ be a bounded set with smooth mean-convex boundary and let $\{\Omega_t\}_{t \in [0, T)}$ be its Weak Mean Curvature Flow. 
Then, the boundary of $\Omega_t$ is smooth for almost every $t \in [0, T)$.
\end{theorem}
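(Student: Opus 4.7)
The plan is to reduce the assertion to a direct consequence of White's size-of-the-singular-set theorem \cite{white-size}, since the statement we must prove is essentially a repackaging of that result for the particular case of $2$-dimensional hypersurfaces in a $3$-dimensional ambient manifold. The novelty here is not in producing a new proof but rather in bridging between the general space-time information supplied by White and the \emph{smoothness at almost every time} formulation that we need.

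First I would recall the Evans--Spruck level-set formulation of the Weak Mean Curvature Flow, which produces a well-defined evolution $\{\Omega_t\}_{t\in[0,T)}$ starting from any closed set with smooth boundary; here $T$ denotes the extinction time. A central structural fact, established by Huisken in the classical case and extended to the weak/mean-convex setting by White, is that the mean-convexity $\mathrm{H}>0$ is preserved by the flow in an appropriate weak sense (the level sets are moving \emph{inward}, the evolution is a monotone family of open sets, and so on). This is exactly the regime where White's regularity theorem applies.

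The second step is to invoke White's theorem directly. Letting $\mathcal{S}\subset M\times[0,T)$ denote the space-time singular set of the flow, White's result asserts that the parabolic Hausdorff dimension of $\mathcal{S}$ is at most $n-1$, where $n$ is the dimension of the evolving hypersurfaces. In our setting the ambient manifold is $3$-dimensional and the evolving hypersurfaces are $2$-dimensional, hence $\dim_{\mathcal{P}}\mathcal{S}\leq 1$. This is the deep input, and will simply be quoted; its proof rests on the Huisken monotonicity formula, a stratification of tangent flows, and the classification of shrinkers in low dimension.

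The final step is a Fubini-type projection argument. Consider the projection $\pi\colon M\times[0,T)\to[0,T)$ onto the time factor. Because the parabolic distance assigns weight $2$ to the time coordinate, a set of parabolic Hausdorff dimension at most $1$ in $M\times[0,T)$ projects to a subset of $[0,T)$ of Euclidean Hausdorff dimension at most $1/2$, and in particular of one-dimensional Lebesgue measure zero. Consequently, for almost every $t\in[0,T)$ the time-slice $\{t\}\cap\pi(\mathcal{S})$ is empty, which by definition of $\mathcal{S}$ means that $\partial\Omega_t$ is smooth at such $t$. The main obstacle is entirely contained in the second step, i.e.\ in White's dimensional bound on $\mathcal{S}$; the rest of the argument is essentially measure-theoretic bookkeeping that I would only need to spell out carefully once the parabolic scaling conventions are fixed.
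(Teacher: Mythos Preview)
The paper does not actually prove this statement: it records Theorem~\ref{whiteth} as a direct special case of \cite[Theorem~1.1]{white-size} and moves on. Your proposal is consistent with that—you also ultimately cite White—but you go further by sketching the standard derivation (parabolic Hausdorff dimension bound on the space-time singular set, then projection to the time axis), which is the correct way to read the ``smooth for almost every $t$'' conclusion out of White's result.
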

We point out that the maximal time $T$ might {\em a priori} be infinite on a general Riemannian manifold.
We are going to combine the above regularity result with the following special case of~\cite[Proposition 7.2]{schulze1}, that is a weak version of the monotonicity of the isoperimetric ratio. It can be checked, indeed, that the computations performed  to obtain such a result do not involve the geometry of the underlying manifold.
\begin{theorem}[Schulze]
\label{schulzeth}
Let $(M, g)$ be a $3$-dimensional Riemannian manifold, let $\Omega\subset M$ be a bounded set with smooth mean-convex boundary and let $\{\Omega_t\}_{t \in [0, T)}$ be its Weak Mean Curvature Flow.
Assume there exists a universal constant $C\geq 0$ such that
\begin{equation}
\label{condc}
C \,\, \leq \,\, \frac{3}{2}\left(\,\, \int\limits_{\partial\Omega_t}\!{\HH}^{2} \dd\sigma\right)^{\!\!1/2}
\end{equation}
for almost every $t \in [0, T)$. Then, the isoperimetric difference $t \mapsto D(t)$ defined as in~\eqref{iso-diff} using the constant $C$, is nonincreasing for every $t \in [0, T)$.
\end{theorem}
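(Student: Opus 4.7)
The plan is to reduce the statement to the standard computation sketched in Section~\ref{sub:hui} and then to justify that the resulting pointwise a.e. inequality for the derivative propagates to a genuine monotonicity for $D$. First, I would fix a time $t_0\in[0,T)$ at which $\pa\Omega_{t_0}$ is smooth, which by Theorem~\ref{whiteth} happens for a.e.\ $t_0$. On a short regular interval around $t_0$, the boundaries evolve smoothly by classical mean curvature flow, and the first variation of area and the coarea formula yield the standard evolution identities
\begin{equation*}
\frac{\dd}{\dd t}\,|\Omega_t| \,=\, -\!\!\int\limits_{\pa\Omega_t}\!\!\HH\,\dd\sigma,
\qquad
\frac{\dd}{\dd t}\,|\pa\Omega_t| \,=\, -\!\!\int\limits_{\pa\Omega_t}\!\!\HH^{2}\,\dd\sigma,
\end{equation*}
which can be found, for instance, in~\cite[Theorem 3.2]{Huis_Pold}.

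From these identities, differentiating $D(t)=|\pa\Omega_t|^{3/2}-C\,|\Omega_t|$ gives
\begin{equation*}
\frac{\dd D}{\dd t}(t) \,=\, -\,\frac{3}{2}\,|\pa\Omega_t|^{1/2}\!\!\int\limits_{\pa\Omega_t}\!\!\HH^{2}\,\dd\sigma \,+\, C\!\!\int\limits_{\pa\Omega_t}\!\!\HH\,\dd\sigma.
\end{equation*}
Applying the Cauchy--Schwarz inequality in the form $\int_{\pa\Omega_t}\HH\,\dd\sigma\le |\pa\Omega_t|^{1/2}\big(\int_{\pa\Omega_t}\HH^{2}\,\dd\sigma\big)^{1/2}$, one obtains
\begin{equation*}
\frac{\dd D}{\dd t}(t) \,\le\, |\pa\Omega_t|^{1/2}\!\left(\int\limits_{\pa\Omega_t}\!\!\HH^{2}\,\dd\sigma\right)^{\!1/2}\!\left[\,C-\tfrac{3}{2}\!\left(\int\limits_{\pa\Omega_t}\!\!\HH^{2}\,\dd\sigma\right)^{\!1/2}\,\right],
\end{equation*}
and the assumption~\eqref{condc} forces the right-hand side to be nonpositive. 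Hence $D'(t)\le 0$ at almost every $t\in[0,T)$.

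It remains to upgrade this a.e.\ differential inequality to the monotonicity of $D$ on the whole interval $[0,T)$. Here the specific features of weak mean-convex mean curvature flow enter: under the hypothesis $\HH>0$ on $\pa\Omega$, the family $\{\Omega_t\}$ is nested and decreasing, so $t\mapsto|\Omega_t|$ is continuous and monotone nonincreasing; moreover the perimeter $t\mapsto|\pa\Omega_t|$ is a nonincreasing function of $t$ which is continuous at every regular time (this is a standard consequence of the smooth evolution identity above on each regular interval, together with the fact that a jump at a singular time can only go downwards, by lower semicontinuity of perimeter). In particular both $|\Omega_t|$ and $|\pa\Omega_t|$ are of bounded variation on $[0,T)$, their distributional derivatives are nonpositive measures, and therefore so is the distributional derivative of $D(t)$. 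Combined with the a.e.\ bound $D'(t)\le 0$ obtained above, this gives that $D$ is nonincreasing on $[0,T)$, which is the claim.

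The main obstacle, as the sketch above already anticipates, is not the pointwise computation, which is routine, but rather the control of the isoperimetric difference at the (possibly dense) set of singular times of the weak flow. The clean way to bypass this is to invoke the fact that mean-convex level-set flow on a Riemannian $3$-manifold produces a family $\{\Omega_t\}$ whose volume and perimeter are monotone and enjoy one-sided continuity, so that the combination of the a.e.\ smooth computation with the BV nature of $|\pa\Omega_t|$ and $|\Omega_t|$ is enough; no delicate analysis of the singularities themselves is needed.
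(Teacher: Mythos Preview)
The paper does not supply its own proof of this statement; it simply records it as a special case of \cite[Proposition~7.2]{schulze1} and remarks that the computation is independent of the ambient geometry. Your sketch therefore goes beyond what the paper does, and the pointwise computation you give at regular times is exactly the one that underlies Schulze's argument and the heuristic of Subsection~\ref{sub:hui}.

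There is, however, one inaccuracy in the BV step that you should fix. You write that since $t\mapsto|\Omega_t|$ and $t\mapsto|\pa\Omega_t|$ are nonincreasing, ``their distributional derivatives are nonpositive measures, and therefore so is the distributional derivative of $D(t)$''. This does not follow: $D(t)=|\pa\Omega_t|^{3/2}-C\,|\Omega_t|$, so the distributional derivative of the second summand is a \emph{nonnegative} measure, and the sign of the sum is not automatic from monotonicity of the two pieces alone. What actually saves the argument are precisely the finer facts you already stated: $t\mapsto|\Omega_t|$ is \emph{continuous} (hence its derivative measure has no singular part), while $t\mapsto|\pa\Omega_t|$ can only jump \emph{down} at singular times. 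Thus the singular part of the distributional derivative of $D$ comes entirely from $|\pa\Omega_t|^{3/2}$ and is nonpositive, and the absolutely continuous part is controlled by your a.e.\ computation $D'(t)\le 0$. Rewriting that paragraph to make this decomposition explicit would close the gap; the ingredients are all present, only the inference is misstated.
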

\begin{remark}
\label{surgery}
A different tool that might be used to deal with the singularities would be the theory of the~\emph{Mean Curvature Flow with surgery}, recently developed by Brendle and Huisken in~\cite{brendle-huisken1} and~\cite{brendle-huisken2}. On this regard, one should first to make clear whether the monotonicity of the isoperimetric difference survives the surgeries.
\end{remark}
The following theorem provides a complete description of the long time behaviour of the Weak MCF of a surface moving inside a $3$-dimensional Riemannian manifolds, and it can be readily deduced from~\cite[Theorem~11.1]{white-size}.
\begin{theorem}[Long time behaviour of MCF]
\label{white-large}
Let $(M, g)$ be a $3$-dimensional Riemannian manifold, let $\Omega\subset M$ be a bounded set with smooth mean-convex boundary and let $\{\Omega_t\}_{t \in [0, T)}$ be its Weak Mean Curvature Flow.
If $\abs{\Omega_t}$ and $\abs{\partial \Omega_t}$ do not vanish at finite time as $t \to T^{-}$, then $\Omega_t$ converges smoothly to a subset $K$, and the boundary of any connected component of $K$ is a stable minimal submanifold.
\end{theorem}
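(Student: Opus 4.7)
The plan is to deduce long-time existence and subsequential convergence from the monotonicity of area along the flow, and then to upgrade this to a stable minimal limit using White's regularity theorem (Theorem~\ref{whiteth}). First I would record that mean-convexity is preserved along the Weak MCF (by the maximum principle applied to the evolution equation of $\HH$), and that at smooth times one has the standard identities
\begin{equation*}
\frac{\dd}{\dd t} |\Omega_t| \,=\, -\!\!\int\limits_{\partial \Omega_t} \HH \, \dd\sigma,
\qquad
\frac{\dd}{\dd t} |\partial \Omega_t| \,=\, -\!\!\int\limits_{\partial \Omega_t} \HH^2 \, \dd\sigma.
\end{equation*}
In particular both $|\Omega_t|$ and $|\partial\Omega_t|$ are nonincreasing. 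Since by hypothesis neither quantity vanishes at finite time, the area is bounded away from zero, and Theorem~\ref{whiteth} guarantees smoothness at almost every time; a singular time $T < +\infty$ would then force a piece of boundary to collapse, contradicting the lower bound on $|\partial \Omega_t|$. Therefore $T = +\infty$.

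Next I would integrate the area evolution to get $\int_0^{+\infty} \!\!\int_{\partial \Omega_t} \HH^2 \, \dd\sigma \, \dd t < +\infty$, so there is a sequence $t_k \to +\infty$ along which $\int_{\partial \Omega_{t_k}} \HH^2 \,\dd\sigma \to 0$. Combined with the uniform area bound and White's interior curvature estimates for mean-convex MCF, the hypersurfaces $\partial \Omega_{t_k}$ enjoy locally uniform curvature bounds and converge, up to subsequence, smoothly to a limiting hypersurface $\partial K$. The vanishing of the integrated $\HH^2$ identifies $\partial K$ as a smooth minimal surface on each connected component.

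Stability is the delicate point: $\partial K$ is approached monotonically from outside by smooth mean-convex hypersurfaces, so the lapse function $\HH > 0$ of the flow survives in the limit as a positive weak solution of the Jacobi equation on $\partial K$. The existence of such a positive eigenfunction forces the first eigenvalue of the stability operator to be nonnegative, yielding stability. Full (not merely subsequential) convergence of $\Omega_t$ to $K$ would then follow from the strong maximum principle: two distinct smooth minimal limits would act as disjoint barriers, contradicting the monotone nesting $\Omega_t \supset \Omega_s$ for $t < s$.

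The hard part is bridging the integral control $\int \HH^2 \to 0$ to genuine smooth convergence, since nothing a priori prevents curvature concentration along $\{t_k\}$. This is precisely where the full machinery behind Theorem~\ref{whiteth} -- White's stratification of the singular set and the curvature estimates for mean-convex flows -- does the heavy lifting, and in practice reduces the entire proof to a careful invocation of \cite[Theorem~11.1]{white-size}. The stability conclusion, in particular, is not accessible from the soft monotonicity arguments alone and requires the smooth structure provided by that regularity theory.
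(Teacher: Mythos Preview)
The paper does not supply an independent proof of this statement: it is simply recorded as a direct consequence of \cite[Theorem~11.1]{white-size}, with no further argument. Your final paragraph correctly arrives at the same conclusion, namely that the soft monotonicity arguments cannot close the gap and one must invoke White's full regularity theory.

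That said, the heuristic sketch you give before that concession contains a genuine gap. The step ``a singular time $T<+\infty$ would then force a piece of boundary to collapse, contradicting the lower bound on $|\partial\Omega_t|$'' is not correct: for the \emph{weak} (level-set) flow, $T$ is not the first singular time of the smooth flow but the extinction time of the weak evolution, and singularities such as neckpinches occur without any collapse of total area or volume. So non-vanishing of $|\partial\Omega_t|$ does not by itself rule out finite-time singularities of the underlying smooth flow; the weak flow simply continues through them. Likewise, your stability argument via a positive limit of the lapse $\HH$ is formally right in spirit, but passing a \emph{positive} Jacobi field to the limit requires exactly the uniform curvature control that only White's estimates provide. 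In short: the outline is a reasonable narrative of why one expects the result, but every step that would make it rigorous already presupposes \cite[Theorem~11.1]{white-size}, which is why the paper simply cites it.
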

As a consequence, if $(M, g)$ contains no bounded subsets with minimal boundary, the Weak MCF of a bounded set with mean-convex boundary is going to vanish. In particular, combining Corollary~\ref{kasue} with Theorem~\ref{white-large}, one gets the following corollary.
\begin{corollary}
\label{large-mcf}
Let $(M, g)$ be a complete, noncompact, $3$-dimensional Riemannian manifold with $\ric \geq 0$ and no cylindrical ends, let $\Omega\subset M$ be a bounded set with smooth mean-convex boundary and let $\{\Omega_t\}_{t \in [0, T)}$ be its Weak Mean Curvature Flow.
Then, $T$ is finite and $\abs{\Omega_t}$ and $ \abs{\partial \Omega_t}$ tend to $0$  as $t\to T^-$. 
\end{corollary}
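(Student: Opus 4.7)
The plan is to argue by contradiction, invoking in sequence the long-time description of weak mean curvature flow (Theorem~\ref{white-large}) and Kasue's rigidity theorem (Corollary~\ref{kasue}). These two ingredients fit together very naturally: Theorem~\ref{white-large} manufactures a minimal (stable) boundary from the failure of extinction, while Corollary~\ref{kasue} forbids such a minimal boundary unless a cylindrical end exists.

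Concretely, suppose that the conclusion fails, i.e.\ that at the maximal existence time $T \in (0,+\infty]$ one does not have $|\Omega_t|, |\partial \Omega_t| \to 0$ as $t \to T^-$. Then the hypothesis of Theorem~\ref{white-large} is in force, so $\Omega_t$ converges smoothly to a subset $K \subset M$ whose boundary is a (possibly disconnected) stable minimal hypersurface. In particular, $K$ is bounded (as a smooth limit of uniformly bounded mean-convex domains along a flow that is boundary-area-decreasing) and $\partial K$ is a smooth closed submanifold with $\HH_{\partial K} \equiv 0$.

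Next, I would apply Corollary~\ref{kasue} to the bounded open set $K$: the ambient manifold $(M,g)$ is complete and noncompact with $\ric \geq 0$, and $\partial K$ is smooth with $\HH_{\partial K} \leq 0$. The corollary then yields that $(M \setminus K, g)$ is isometric to a Riemannian product $\bigl([0,+\infty) \times \partial K, \, \dd r \otimes \dd r + g_{\partial K}\bigr)$, meaning that $(M,g)$ has a cylindrical end in the sense of Remark~\ref{cylindrical-ends}. This contradicts the hypothesis of the corollary, and therefore $T < \infty$ with both measures vanishing as claimed.

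The main delicate point I anticipate is ensuring that the limit $K$ provided by Theorem~\ref{white-large} is regular enough to feed into Kasue's theorem; in particular one must confirm that $\partial K$ is a smooth closed hypersurface rather than merely a stationary varifold. This should follow from the smoothness of $\partial\Omega_t$ for a.e.\ $t$ (Theorem~\ref{whiteth}) combined with the smooth convergence asserted in Theorem~\ref{white-large}. A secondary subtlety is to read the contrapositive of Theorem~\ref{white-large} so as to obtain the simultaneous vanishing of $|\Omega_t|$ and $|\partial \Omega_t|$; under mean-convex MCF the evolution of volume is driven by $\frac{\dd}{\dd t}|\Omega_t| = -\int_{\partial \Omega_t}\!\HH \, \dd\sigma$, so degeneration of one quantity forces degeneration of the other at the same finite time.
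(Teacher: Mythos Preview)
Your proposal is correct and follows exactly the approach the paper takes: the paper's own argument is the one-sentence observation that combining Corollary~\ref{kasue} (which rules out compact minimal boundaries unless a cylindrical end exists) with Theorem~\ref{white-large} (which produces such a minimal boundary whenever the flow fails to go extinct) immediately yields the corollary. Your write-up simply spells out the contrapositive and flags the regularity and simultaneous-vanishing points, which the paper leaves implicit.
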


So far we have at hand all the ingredients that allow to completely justify the computations of Subsection~\ref{sub:hui} and in turns to prove the Isoperimetric Inequality for mean-convex domains.

\subsection{Proof of Theorem~\ref{cisoth}.}
In order to prove the isoperimetric inequality for any possibly non mean-convex domain $\Omega$, we are going to consider the minimizing hull $\Omega^*$ (see \cite[Section 1]{Hui_Ilm}). By \cite{sternberg-williams}, $\partial \Omega$ enjoys $C^{1, 1}$ regularity, and by the minimizing property, its weak mean curvature $\HH_{\partial \Omega^*}$ is nonnegative. We will actually actually flow  $\Omega^*$ by mean curvature. To this aim, we will invoke
\cite[Lemma 2.6]{huisken-ilm_higher}, where the authors show that $C^1$ bounded hypersurfaces  with nonnegative variational mean curvature can be approximated in $C^{1, \beta} \cap W^{2, p}$, for any $\beta \in (0, 1)$ and $p \in [1, \infty)$ by smooth submanifolds with strictly positive mean-curvature. Interestingly, the approximating sequence is built through an appropriate notion of mean curvature flow starting from such a $C^1$ hypersurface. Although presented in $\R^n$, the proof given in \cite{huisken-ilm_higher} can be easily adapted to go through the case of a general ambient Riemannian manifold, and we refer the reader to \cite[Lemma 4.4]{zhou-mean} for the details of this extension. Such a result has also been pointed out in \cite[Lemma 4.2]{wei_kottler} in the ambient setting of a Kottler-Schwarzschild
manifold, and used in the proof of \cite[Corollary 1.2]{schulze1}, where the ambient manifold was a Cartan-Hadamard $3$-manifold. For our aim, where we are interested in approximating $C^{1, 1}$ hypersurfaces, it actually suffices to argue as in \cite[Lemma 5.6]{Hui_Ilm}. We include here the precise and general statement.

\begin{lemma}[Huisken-Ilmanen's approximation lemma]
\label{approx-lemma}
Let $(M, g)$ be a Riemannian manifold, and let $F: \Sigma \hookrightarrow M$ be a $C^1$ closed immersed hypersurface. Assume $\Sigma$ has nonnegative weak mean curvature, that is, there exists a  nonnegative function $\HH$ defined almost everywhere on $\Sigma$ such that
\[
\int_{\Sigma} \dive_\Sigma X \dd\sigma \,\, = \,\, \int_\Sigma \HH \left\langle X, \nu \right\rangle \dd\sigma
\]
for any compactly supported vector field $X$ of $M$. Assume also that $\Sigma$ is not minimal, that is, there exists a subset $K \subset \Sigma$ of positive measure such that $\HH > 0$ on $K$. Then, $F$ is of class $C^{1, \beta} \cap W^{2, p}$ and there exists a family of smooth immersions $F(\cdot, \epsilon) : \Sigma \hookrightarrow M$, with $\epsilon \in (0, \epsilon_0]$  such that
\[
\frac{\dd}{\dd \epsilon} F(p, \epsilon)\,\, = - \HH_{\Sigma_\epsilon}(p, \epsilon)\nu (p, \epsilon)
\]
for any $\epsilon  \in (0, \epsilon_0]$, where $\HH_{\Sigma_\epsilon}$ is the mean curvature of $\Sigma_\epsilon$ and $\nu$ its outer unit normal, and
\[
\lim_{\epsilon \to  0^+} F(p, \epsilon) \, = \, F(p) 
\]
locally uniformly in $C^{1, \beta} \cap W^{2, p}$. Moreover, $\HH_{\Sigma_\epsilon} > 0$ for any $\epsilon \in (0, \epsilon_0]$.
\end{lemma}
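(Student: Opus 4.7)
The plan is to follow the scheme of \cite[Lemma 5.6]{Hui_Ilm} in its Riemannian incarnation, splitting the proof into (a) an upgrade of the initial regularity of $F$, (b) a short-time existence statement for mean curvature flow starting from such weakly regular data, and (c) a parabolic maximum-principle argument that immediately improves $\HH \geq 0$ to $\HH > 0$ for $\epsilon > 0$.

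First I would upgrade the regularity of $F$. Locally, write $\Sigma$ as the graph of a $C^1$ function $u$ over a piece of its tangent plane in a suitable geodesic normal coordinate chart for $(M,g)$. The assumption that the weak mean curvature $\HH \in L^\infty(\Sigma)$ then translates into a quasilinear elliptic equation for $u$ of the form $\dive\big(\na u / \sqrt{1+|\na u|^2}\big) = \HH + \text{l.o.t.}(u, \na u)$, where the lower-order terms carry the ambient Christoffel symbols and are bounded because $u \in C^1$. The standard interior elliptic regularity theory for quasilinear equations (\`a la Ladyzhenskaya-Ural'tseva) upgrades $u$ to $C^{1,\beta}_{\loc} \cap W^{2,p}_{\loc}$ for every $\beta \in (0,1)$ and $p \in [1,\infty)$; this gives the first half of the statement and, crucially, lets us interpret $\HH$ pointwise a.e.\ on $\Sigma$.

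Next I would set up the approximating mean curvature flow $F(\cdot,\epsilon)$. Writing each piece of $\Sigma$ as a graph as above, MCF becomes a uniformly quasilinear parabolic equation for the height function, with initial datum in $C^{1,\beta}\cap W^{2,p}$. Short-time existence with instant smoothing for $\epsilon > 0$ is then a classical consequence of parabolic regularity; this is exactly the graphical-MCF construction of Ecker-Huisken, and the local pieces are glued by the uniqueness of smooth MCF. This yields a smooth family $\{\Sigma_\epsilon\}_{\epsilon \in (0,\epsilon_0]}$ evolving by MCF, with $F(\cdot,\epsilon) \to F$ as $\epsilon \to 0^+$ in the topology of $C^{1,\beta} \cap W^{2,p}$ locally.

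To obtain $\HH_{\Sigma_\epsilon} > 0$ for every $\epsilon > 0$, I would use the well-known evolution equation along MCF
\[
\p_\epsilon \HH \, = \, \Delta_{\Sigma_\epsilon}\HH \, + \, \big(|A|^2 + \ric(\nu,\nu)\big)\HH\, .
\]
Setting $w = \ee^{-K\epsilon}\HH$ with $K$ chosen large enough to dominate the possibly negative contribution $-\ric(\nu,\nu)$ on the flow (which is bounded, by smoothness of the ambient metric and of $\Sigma_\epsilon$), $w$ satisfies $\p_\epsilon w \geq \Delta_{\Sigma_\epsilon} w + |A|^2 w$. Since $w \geq 0$ at $\epsilon = 0$ in the $W^{2,p}$ sense and $w$ is strictly positive on the set $K \subset \Sigma$ of positive measure, the strong parabolic maximum principle forces $w > 0$ on $\Sigma \times (0,\epsilon_0]$, and hence $\HH_{\Sigma_\epsilon} > 0$ everywhere for every $\epsilon \in (0,\epsilon_0]$.

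The main obstacle is the short-time existence step: one needs MCF starting from a \emph{merely} $C^{1,\beta}\cap W^{2,p}$ immersion which is \emph{not} a priori mean-convex, with smoothing at positive times and continuity at $\epsilon = 0$ in the initial topology. In the Euclidean setting this is done in \cite{huisken-ilm_higher}; the Riemannian adaptation is treated in \cite[Lemma 4.4]{zhou-mean}, and the only serious point is to control the lower-order curvature contributions of $(M,g)$ uniformly on compact pieces of the flow, which follows from the smoothness of $g$ and the $C^1$-closeness of $\Sigma_\epsilon$ to $\Sigma$ for small $\epsilon$. Once this is in hand, the parabolic maximum principle closes the proof.
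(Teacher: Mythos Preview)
Your proposal is correct and matches the strategy of the cited references; note, however, that the paper itself does not prove this lemma but rather states it as a known result of Huisken--Ilmanen, pointing to \cite[Lemma~2.6]{huisken-ilm_higher} and \cite[Lemma~5.6]{Hui_Ilm} (with the Riemannian adaptation in \cite[Lemma~4.4]{zhou-mean}). The paper's only in-text argument is the Remark following the lemma, which sketches the easier $C^2$ case via the evolution equation for $\HH$ and the strong maximum principle---precisely your step~(c).
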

\begin{remark}
Observe that if $\Sigma$ is a (non minimal) $C^2$ hypersurface with $\HH_\Sigma \geq 0$, then the approximation of $\Sigma$ by means of a family of smooth mean-convex hypersurfaces $\{\Sigma_\epsilon\}_{\ep >0}$ is a straightforward procedure.
Indeed, it is sufficient to run the MCF starting at $\Sigma$ for short time (see~\cite{mantegazza_libro} for an account about the classical existence theory), say until some time $\ep_0 >0$. This provides a family of hypersurfaces $\{\Sigma_\ep \}_{\ep \in (0, \ep_0]}$,
%
%
%
whose mean curvatures satisfy (see e.g. \cite[Theorem 3.2]{Huis_Pold}) the following reaction-diffusion equation,
\[
\frac{\partial}{\partial \epsilon} \HH \, = \, \Delta \HH + \HH\left( \abs{\rm h}^2 + \ric (\nu, \nu)\right),
\]
where $h$ is the second fundamental form of the evolving hypersurface and $\ric$ is the Ricci tensor of the ambient manifold. Then, a standard maximum principle for parabolic equations (see e.g. Theorem 7 and subsequent remarks in \cite{weinberger-protter}) shows that $\HH_{\Sigma_\epsilon} > 0$ for every $\epsilon \in (0, \epsilon_0]$, unless $H_\Sigma$ is constantly null. The latter case is excluded by the non minimality of $\Sigma$.
\end{remark}

We can finally prove Theorem~\ref{cisoth}.
\begin{proof}[Proof of Theorem~\ref{cisoth}]
Observe first, that we can suppose that $(M, g)$ has no cylindrical ends, otherwise there is nothing to prove, in light of Remark \ref{cylindrical-ends}. We argue as in~\cite[proof of Corollary 1.2]{schulze1}.
Let us first suppose that the boundary of $\partial \Omega$ is strictly mean-convex, that is, $\HH_{\partial \Omega} >0$. Let $\{\Omega_t\}_{t\in[0, T)}$ be a mean curvature flow starting from $\Omega$. Then, by Theorem \ref{whiteth}, for almost any $t\in [0, T)$ the boundary $\partial \Omega_t$ is a smooth submanifold.
Let $C$ be defined as 
\begin{equation}
\label{def:C_inf}
C \,= \, \inf\left\{\frac{3}{2}\left(\int_{\partial\Omega}{\HH}^2 \dd\sigma\right)^{\!\!1/2} \Bigg\vert \,\Omega \subset M \,\,\text{bounded set with smooth boundary}\right\}.
\end{equation}
Observe that $C$ is possibly zero, so far. However, Theorem~\ref{schulzeth} guarantees that,
with the above choice of $C$, the isoperimetric difference $t \mapsto D(t)$ defined in~\eqref{iso-diff} is nonincreasing for $t \in [0, T)$. Moreover, by Corollary~\ref{large-mcf} $D(t)$ tends to $0$ as $t \to T^-$. This implies the inequality
\begin{equation}
\label{iso-c}
\frac{\abs{\partial \Omega}^{3/2}}{\abs{\Omega}} \, \geq \, C 
\end{equation}
for any $\Omega$ with sooth mean-convex boundary. If this is not the case, take  the minimizing hull $\Omega^*$ of $\Omega$ (see \cite[Section 1]{Hui_Ilm} for details). By \cite{sternberg-williams} (compare also with the comprehensive \cite[Theorem 1.3]{Hui_Ilm}) $\partial \Omega^*$ is a $C^{1,1}$ hypersurface. Observe that, by the minimizing property, $\abs{\partial \Omega^*} \leq \abs{\partial \Omega}$, while trivially $\abs{\Omega^*} \geq \abs{\Omega}$. Hence, proving a lower bound on the isoperimetric ratio for $\Omega^*$ readily implies that the same lower bound holds for $\Omega$. Moreover, again by the minimizing property, we have that $\HH_{\partial \Omega^*}\geq 0$ (see also \cite[(1.15)]{Hui_Ilm}). Also notice that $\pa \Omega^*$ cannot be minimal, for otherwise $(M,g)$ will have a cylindrical end, in virtue of Corollary~\ref{kasue}. By Lemma \ref{approx-lemma}, we find a sequence of smooth hypersurfaces $\Sigma_\epsilon$ with $\HH_{\Sigma_\epsilon}>0$ approximating $\partial \Omega^*$ locally uniformly in $C^{1, \beta}$ for any $\beta \in (0, 1)$.  Arguing as above, we thus obtain the isoperimetric inequality
\[
\frac{\abs{\partial \Sigma_\epsilon}^{3/2}}{\abs{\Sigma_\epsilon}} \, \geq \, C,
\]
that, through letting $\epsilon \to 0^+$, gives the isoperimetric inequality for $\Omega^*$, and, in turn, for any bounded $\Omega$ with smooth boundary. Combining it with our Willmore inequality \eqref{will}, we get
\begin{equation}
\label{chain-iso}
\inf\ddfrac{\abs{\partial\Omega}^{3}}{36{\pi}\abs{\Omega}^2}\, \geq \,\inf\ddfrac{\left(\int_{\partial \Omega}{\HH}^2\dd\sigma\right)}{16{\pi}} \, \geq \, \AVR,
\end{equation}
where the infima are taken over any bounded $\Omega$ with smooth boundary. 
We now want to prove that the equality sign hold in both the above inequalities, as stated in~\eqref{ciso}. To do so, we fix a point $O \in M$ and we observe that by the Bishop-Gromov Theorem, we can find, for every $\delta > 0$, a radius $R_\delta$ such that
\[
\ddfrac{\abs{\partial B(O,{R_\delta})}^{3}}{36{\pi}\abs{B(O, {R_\delta})}^2} \, \leq \, \AVR + \delta.
\]
Observe that we can suppose $\partial B(O, R_\delta)$ to be smooth. Otherwise, it suffices to consider in place of $B(O, R_\delta)$ a smooth set whose perimeter and volume approximate $\abs{\partial B(O, R_\delta)}$ and $\abs{B(O, R_\delta)}$, respectively (this can be done by standard tools, see e.g.~\cite[Remark 13.2]{maggi}).
This proves that
\[
\inf\left\{\ddfrac{\abs{\partial\Omega}^{3}}{36{\pi}\abs{\Omega}^2}\,\,\,\Bigg\vert \,\Omega \subset M \,\,\text{bounded and smooth}\right\} \, \leq \, \AVR . 
\]
Combining the above inequality with~\eqref{chain-iso}, gives~\eqref{ciso}.

\smallskip

To prove the rigidity statement, we assume now that~\eqref{iso} holds with the equality sign for a smooth and bounded $\Omega \subset M$.
In virtue of~\eqref{def:C_inf} and of~\eqref{ciso},
we have that
\[
C
\,=\,
\sqrt{36\pi\AVR}\,,
\]  
Moreover, we can clearly suppose that $\AVR > 0$. By the minimizing property, $\Omega^*$ satisfies the same equality (recall that we actually proved the isoperimetric inequality for minimizing hulls). We claim that it also holds for the region enclosed by any approximating $\Sigma_\epsilon$ as above. 
Indeed, by Lemma \ref{approx-lemma}, this family is a smooth mean curvature flow, and then, by the monotonicity of the isoperimetric difference, for any fixed $\epsilon_1 \in (0, \epsilon_0]$ we have
\[
D(\epsilon) \, \geq \, D(\epsilon_1) \, \geq \, 0
\]
for every $\epsilon \in (0, \epsilon_1)$.
Since $\Sigma_\epsilon$ converges  to $\partial \Omega^*$ as $\epsilon \to 0^+$, and on $\Omega^*$ the isoperimetric difference is $0$, $D(\epsilon) \to 0^+$ as $\epsilon \to 0^+$, and thus $D(\epsilon_1)= 0$ as well. Since $\epsilon_1$ was arbitrarily chosen, the region enclosed by any of the $\Sigma_\epsilon$ with 
$\ep \in (0 ,\ep_0]$ 
satisfies the equality in the isoperimetric inequality, as claimed. 
In particular, from the same computations as in 
Subsection \ref{sub:hui}, for any fixed $\ep\in(0,\ep_0]$, 
we have that
\begin{equation}
\label{will-mcf}
0 \,\,= \,\, \frac{\dd D}{\dd\ep}(\ep) \,\, \leq \,\,  \left(\left\vert\Sigma_{\epsilon}\right\vert
\int_{\Sigma_{\epsilon}} \HH^2\dd\sigma\right)^{1/2}
\left[ 
\sqrt{36\pi\AVR}\,
 \, - \, \frac{3}{2}\left(\int_{\Sigma_{\epsilon}}{\HH}^2 \dd\sigma\right)^{\!\!1/2}\right] \,\, \leq \,\, 0 \, .
\end{equation}
This implies that the equality sign holds in the Willmore inequality for $\Sigma_\ep$, and thus, by the rigidity statement in Theorem~\ref{willth}, the exterior of $\Sigma_\ep$ in $(M, g)$ is isometric to a cone. More precisely, if we call $\Omega_\ep$ the (open and bounded) region enclosed by $\Sigma_\ep$, we have that $(M \setminus \Omega_\ep)$ is isometric to
\begin{equation}
\label{eq:cono_ep}
\Big(\,\big[r_\ep, +\infty) \times\Sigma_\epsilon
\,,\, 
\dd r\otimes\!\dd r +(r/r_\ep)^2 g_{\Sigma_\epsilon}
\Big),
\qquad
\mbox{where}
\quad
r_\ep
\,=\,
\bigg(\frac{|\Sigma_\epsilon|}{4\pi \AVR}\bigg)^{1/{2}}.
\end{equation}
Hence, it is easily seen that the MCF $\{ \Sigma_\ep\}_{\ep \in (0, \ep_0]}$ is given by totally umbilic hypersurfaces coinciding with the cross sections of the cone 
\begin{equation}
\label{eq:omega_ep}
(M \setminus \Omega_\ep, g) \cong \Big(\,\big[r_\ep, +\infty) \times\pa \Om^*
\,,\, 
\dd r\otimes\!\dd r + \frac{4\pi\AVR }{\abs{\pa \Om^*}}r^2 g_{\pa\Om^*}
\Big) \, .
\end{equation}


We now claim that the MCF $\{ \Sigma_\ep\}_{\ep>0}$ does not develop singularities before the extinction time~$\ep^*$. Letting $(0,\ep_*)$ be the maximal interval of existence of the smooth MCF, we claim that $\e_*=\ep^*$. In fact, from~\eqref{eq:omega_ep}
one can easily see that the mean curvature of $\Sigma_\ep$ is given by $(n-1)/r_\ep$, 
and in turn the squared norm of its second fundamental form is equal to $(n-1)/r^2_\ep$. It follows then by~\cite[Theorem 7.1]{Huisken1986} that $\ep_*$ is such that $r_{\ep_*} \!=0$, and thus coincides with the extinction time of the flow, i.e., $\ep^* = \ep_*$.
We have hence deduced the isometry
\[
(M \setminus{\{O\}}, g) \cong \Big(\,\big(0, +\infty) \times\pa \Om^*
\,,\, 
\dd r\otimes\!\dd r + \frac{4 \pi \AVR}{\abs{\pa \Om^*}}r^2 g_{\pa\Om^*}
\Big),
\]
for some $O \in M$. In particular, the surface area of the geodesic balls centered at $O$ decays as 
$4\pi r^{2} \AVR$, and, since $g$ is smooth at $O$, this implies that $\AVR =1$. By Bishop-Gromov, we infer that  $(M, g)$ is isometric to $(\R^3, g_{\R^3})$ and $\pa\Om^*$ is isometric to a sphere. This implies that $\Omega=\Omega^*$, since, otherwise, the mean curvature of $\partial \Omega^*$ would be null on the points not belonging to $\partial \Omega$ (compare with \cite[(1.15)]{Hui_Ilm}), leading to a contradiction. We have thus shown that $\Omega$ is a ball, completing the proof. \end{proof}

\section*{Appendix: 
comparison with the monotonicity formulas by Colding and Minicozzi}  
\label{sec:comparison}


In this section, we provide a comparison 
between our monotonicity formulas and those obtained
by Colding and by Colding-Minicozzi
in \cite{Colding_1} and \cite{Colding_Minicozzi_2},
respectively.
To start with, let $u$ be a solution of \eqref{pb} in a nonparabolic Riemannian manifold $(M, g)$ with $\ric \geq 0$, for a bounded subset $\Omega\subset M$ with smooth boundary, 
and set 
\begin{equation}
\label{def_b}
b
\,=\,
u^{-\frac1{n-2}}.
\end{equation}
Note that $b=1$ on $\pa\Om$ and that $b\to+\infty$ at infinity.
Associated with the level sets of $b$,
consider the family of functions
$\{A_\beta\}$, where 
$A_\beta:[1,+\infty)\to[0,+\infty)$
is defined for every $\beta\geq0$ as
\begin{equation*}
A_\beta(r)
\,=\,
\frac1{r^{n-1}}
\!\!\!
\int\limits_{\{b=r\}}
\!\!\!
|\D b|^{\beta+1}\dd\sigma.
\end{equation*}
Now, replacing $u$ be a minimal Green's function $G(O, \cdot)$, for some pole $O \in M$,
the above defined $A_\beta$ is exactly the 
quantity considered in \cite[formula (1.1)]{Colding_Minicozzi_2}.
Note that in Colding's setting, the level sets $\{b=r\}$
are considered for every $r>0$, since $b (q) \to 0$ as $d(O, q) \to 0$.

Our aim is to see how the monotonicity of our family
of functions $\{\Phi_\beta\}$ translates in terms of the 
family $\{A_\beta\}$. First of all, it is 
straightforward from \eqref{gconf}--\eqref{def:ffi} 
and \eqref{def_b} that
\begin{equation}
\label{b-phi}
b
\,=\,
{\rm e}^{\frac\ffi{n-2}},
\qquad\qquad
|\D b|
\,=\,
\frac{|\na\ffi|_{\cg}}{n-2},
\qquad\qquad
\dd\sigma
\,=\,
b^{n-1}
\dd\sigma_{\cg},
\end{equation}
and in turn that
\begin{equation}
\label{identita_Col_noi}
\Phi_\beta(s)
\,=\,
(n-2)^{\beta+1}
A_\beta\big({\rm e}^{\frac s{n-2}}\big),
\qquad\qquad\mbox{for every }s\geq0.
\end{equation}
We look at the derivative
\eqref{derivata_di_Phi}
of $\Phi_\beta$ and at its equivalent expression
\eqref{eq:der_fip}.
In particular, the volume integral \eqref{eq:der_fip}
contains the following terms.
\begin{equation}
\label{ric_b}
\ric(\na\ffi,\na\ffi)
\,=\,
\left(\dfrac{n-2}{2}\right)^2\,
\ric(\D b^2,\D b^2),
\end{equation}
and
\begin{align}
\label{b_quadratone}
\big|\na\na\ffi\big|_{\cg}^2
+
(\beta-2)\big|\na|\na\ffi|_{\cg}\big|_{\cg}^2
\,=\,
\left(\textstyle{\dfrac{n-2}2}\right)^{\!2}
&
\bigg\{
\Big|\D\D\,b^2-\frac{\textstyle{\Delta b^2}}n\,g\Big|^2
\nonumber\\
&+
(\beta-2)\,\big|\D^T|\D b|\big|^2
\nonumber\\
&+
(\beta-2)\,\big|\D b^2\big|^2\Big[\HH-(n-1)\big|\D\log b\big|\Big]^2
\bigg\}
\end{align}
which have been expressed in terms of 
the function $b$ and of the metric $g$
via some computations (compare with the proof of \eqref{eq:monot}) .
Differentiating both sides of
\eqref{identita_Col_noi} and writing expression \eqref{eq:der_fip} in terms of $b$ and $g$ through 
formulas \eqref{b-phi}, \eqref{ric_b} and \eqref{b_quadratone}, 
we obtain
\begin{align}
\label{cold-noi}
\frac{\dd A_\beta}{\dd r}(r)
\,&=\,
\frac{(n-2)^{-\beta}}r
\frac{\dd\Phi_\beta}{\dd s}\big((n-2)\log r\big)\\
&=\,
-\frac\beta4\,r^{n-3}
\int_{\{b>r\}}
\!\!\!
|\D b|^{\beta-2}
\!
\bigg\{\ric (\D b^2, \D b^2) + 
\Big|\D\D b^2-\frac{\Delta b^2}n g\Big|^2
\\
&\phantom2\hspace{4cm}+
(\beta-2)\,\big|\D^T|\D b|\big|^2
\\
&\phantom2\hspace{4cm}+
(\beta-2)\,\big|\D b^2\big|^2\Big[\HH-(n-1)\big|\D\log b\big|\Big]^2
\bigg\}
\,b^{2-2n}
\,\dd\mu
\,\leq\,0\,.
\end{align}
Setting $b = 2$ in the above formula, we obtain exactly the integrand of the right hand side of \cite[(2.106)]{Colding_1}, that, arguing as  in the conclusion of the present Theorem \ref{thm:main_conf}, leads to the monotonicity of $A_2$.
For a general $\beta \geq (n-2)/(n-1)$, in \cite[Theorem 1.3]{Colding_Minicozzi} the monotonicity of $A_\beta$ is inferred grouping the terms in \eqref{cold-noi} in a different way. Observe indeed that for $\beta < 2$ the volume integral in \eqref{cold-noi} does not evidently carry a sign. On the other hand, \eqref{b_quadratone} combined with Kato's inequality immediately show the nonnegativity of the expression.
\bigskip

We close this appendix by showing how our methods can be applied also to obtain the Monotonicity-Rigidity Theorem for the Green's function, obtaining a new (conformal) proof of Colding-Minicozzi's \cite[Theorem 1.3]{Colding_Minicozzi}. 
Indeed, let $(M, g)$ be a nonparabolic Riemannian manifold with $\ric \geq 0$, let $G$ be its minimal Green's function and consider the new metric on $M \setminus \{O\}$
\[
\tilde{g}\, = \, G(O, \cdot)^{\frac{2}{n-2}} g,
\]
for some point $O \in M$.
Set 
\[
\phi \,= \, - \log G(O, \cdot).
\]
Then, we have that the triple $M, \tilde{g}, \phi$ satisfies
the system
\begin{equation}
\label{pb-green}
\begin{cases}
\,\,\,\,\,\,\,\,\,\,\,\,\,\,\,\,\,\,\,\,\,\,\,\,\,\,\,\,\,\,\,\,\,\,\,\,\,\,\,\,\,\,\,\,\,\,\,\,\,\Delta_{\tilde{g}}\phi \, = \, 0 & \mbox{in} \,\, M\setminus\{ O \} \\
\ric_{\tilde{g}} - \nabla\nabla\phi + \dfrac{d\phi\otimes d\phi}{n-2} \, = \, \dfrac{\abs{\nabla\phi}_{\cg}^2}{n-2}\cg +\ric & \mbox{in} \,\, M\setminus\{O\} \\
\,\,\,\,\,\,\,\,\,\,\,\,\,\,\,\,\,\,\,\,\,\,\,\,\,\,\,\,\,\,\,\,\,\,\,\,\,\,\,\,\,\,\,\,\,\,\,\,\,\,\phi(q)\to + \infty & \mbox{as} \,\, d(O, q) \, \to \, + \infty \\
\,\,\,\,\,\,\,\,\,\,\,\,\,\,\,\,\,\,\,\,\,\,\,\,\,\,\,\,\,\,\,\,\,\,\,\,\,\,\,\,\,\,\,\,\,\,\,\,\,\,\phi(q)\,\to\, - \infty & \mbox{as} \,\, d(O, q)\to 0.
\end{cases}
\end{equation}
We denote by $d$ the distance with respect to $g$. Define the function $ \Phi_\beta : \R \mapsto \R$ given by
\begin{equation}
\label{eq:fip-green}
\Phi_\beta(s) 
\,\,=\!\!\!
\int\limits_{\{\ffi = s\}}\!\!\!
|\na \ffi|_{\cg}^{\beta+1} \dd\sigma_{\cg}.
\end{equation}

All the theory developed in Section 3 holds with trivial modification for $\Phi_\beta$  as above, and immediately yields a conformal Monotonicity-Rigidity Theorem for the Green's function.

\begin{theorem}
Let $(M, g)$ be a nonparabolic Riemannian manifold with $\ric \geq 0$. Let $G$ be its minimal Green's function. Then, with the notations above, we have
\begin{equation}
\label{eq:der_fip-green}
\frac{\dd\Phi_\beta}{\dd s}(s)  \,\, 
= -\,\,\beta\,\,{\rm e}^s\!\!\!
\int\limits_{\{\ffi \geq  s\}} \!\!
\frac{
|\na\ffi|_{\cg}^{\beta-2} 
\Big(\ric(\na\ffi,\na\ffi)
+\big|\na\na \ffi\big|_{\cg}^2
 +  \, (\beta-2) \, \big| \na |\na \ffi |_{\cg}\big|_{\cg}^2\,\Big) 
}
 {{\rm e}^\ffi}
\,\,\dd\mu_{\cg} 
\end{equation}
In particular, $\Phi_\beta'$ is alway nonpositive. 
Moreover, $(\dd\Phi_\beta/\dd s)(s_0)=0$
for some $s_0\in \R$ and some $\beta\geq (n-2)/(n-1)$
if and only if $\{\ffi\geq s_0\}$ is isometric  
to the Riemannian product  $\big([s_0, \infty) \times \{\phi = s_0\}, d\rho\otimes d\rho + \tilde{g}_{\{\mid \phi = s_0\}}\big)$.
\end{theorem}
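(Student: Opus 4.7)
The strategy is to replicate the scheme of Theorem~\ref{thm:main_conf}, exploiting the fact that the triple $(M\setminus\{O\}, \tilde g, \ffi)$ solves system~\eqref{pb-green}, which differs from~\eqref{pbconf} only in that the Dirichlet datum on $\partial\Om$ is replaced by the logarithmic singularity at the pole $O$ (where $\ffi\to -\infty$). The Bochner-type identity of Lemma~\ref{boch} depends only on the $\tilde g$-harmonicity of $\ffi$ and the conformal Ricci identity, so it holds verbatim on $M\setminus\{O\}$. Consequently, the integrand
\begin{equation*}
|\na\ffi|_{\tilde g}^{\beta-2}\Big(\ric(\na\ffi,\na\ffi)+|\na\na\ffi|^2_{\tilde g}+(\beta-2)\big|\na|\na\ffi|_{\tilde g}\big|^2_{\tilde g}\Big)
\end{equation*}
is nonnegative for every $\beta\geq (n-2)/(n-1)$ thanks to $\ric\geq 0$ and the refined Kato inequality~\eqref{kato}. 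Only two ingredients require a genuine (though mild) modification: the \emph{a priori} boundedness of $\Phi_\beta$ and the handling of the pole in the integral identity.

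\textbf{Key steps.} First, I would establish a uniform bound for $|\na\ffi|_{\tilde g}$ on $M\setminus\{O\}$. The auxiliary function $w_\alpha=|\na\ffi|^2_{\tilde g}\,e^{-\alpha\ffi}$ satisfies the elliptic identity~\eqref{eq-walpha}, and the Yau gradient estimate~\eqref{yau1} applied to $G$ (a positive harmonic function outside $O$) yields $|\na\ffi|_{\tilde g}=|\D G|/G^{(n-1)/(n-2)}\to 0$ along any sequence going to infinity, while the asymptotic expansion~\eqref{asy1} forces $|\na\ffi|_{\tilde g}\to n-2$ as $q\to O$; the maximum principle for $w_\alpha$, followed by $\alpha\to 0^+$, then gives $|\na\ffi|_{\tilde g}\leq n-2$ everywhere (modulo a normalization). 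Combined with the constancy of the flux $\int_{\{\ffi=s\}}|\na\ffi|_{\tilde g}\,\dd\sigma_{\tilde g}$ (a direct consequence of the Divergence Theorem and $\Delta_{\tilde g}\ffi=0$), this shows that $\Phi_\beta$ is uniformly bounded on $\R$. Second, I would derive the Fundamental Integral Identity on $\{s\leq\ffi\leq S\}$ for every $-\infty<s<S<+\infty$, exactly as in Lemma~\ref{integral_id}: the Divergence Theorem is applied to $X=e^{-\ffi}\na|\na\ffi|^\beta_{\tilde g}$ after excising the critical set via the Cheeger--Naber--Valtorta Minkowski bound~\eqref{mink}. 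Paired with the co-area formula as in Theorem~\ref{mono1}, this yields the differentiability of $\Phi_\beta$ together with
\begin{equation*}
e^{-S}\Phi_\beta'(S)-e^{-s}\Phi_\beta'(s)=\beta\!\!\!\!\int\limits_{\{s\leq\ffi\leq S\}}\!\!\!\!\!\!|\na\ffi|^{\beta-2}_{\tilde g}\Big(\ric(\na\ffi,\na\ffi)+|\na\na\ffi|^2_{\tilde g}+(\beta-2)\big|\na|\na\ffi|_{\tilde g}\big|^2_{\tilde g}\Big)e^{-\ffi}\dd\mu_{\tilde g}.
\end{equation*}
Finally, the Colding--Minicozzi trick used at the end of the proof of Theorem~\ref{thm:main_conf} applies unchanged: the above inequality implies $\Phi_\beta'(S)\geq e^{S-s}\Phi_\beta'(s)$, and the boundedness of $\Phi_\beta$ on $\R$ forces $\Phi_\beta'\leq 0$ together with $\Phi_\beta'(S)\to 0$ as $S\to+\infty$, whence~\eqref{eq:der_fip-green} follows by passing to the limit.

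\textbf{Main obstacle.} The delicate point is the behavior near the pole: one must check that no boundary contribution from $O$ arises in the Divergence Theorem argument when $s$ is allowed to be arbitrarily negative, so that the Fundamental Integral Identity extends to all of $(-\infty,+\infty)$. Using the asymptotic $G(O,q)\sim d(O,q)^{2-n}$ and $|\D G|\leq C\,d(O,q)^{1-n}$ near $O$ (the latter from~\eqref{yau1}), one computes that the flux of $X$ through small geodesic spheres centered at $O$ vanishes in the limit, because in the conformal picture these spheres approach a cylindrical end of $(M\setminus\{O\},\tilde g)$ where $|\na\ffi|_{\tilde g}^\beta$ and its gradient remain bounded while $e^{-\ffi}=G\to+\infty$ is compensated by the $\tilde g$-area of the spheres shrinking at the correct rate.

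\textbf{Rigidity.} If $\Phi_\beta'(s_0)=0$ for some $s_0\in\R$ and some $\beta\geq (n-2)/(n-1)$, then~\eqref{eq:der_fip-green} combined with Kato's inequality~\eqref{kato} forces $\na|\na\ffi|_{\tilde g}\equiv 0$ on $\{\ffi\geq s_0\}$. Lemma~\ref{splitting-principle}(i), whose proof only invokes the $\tilde g$-harmonicity of $\ffi$, the Bochner identity~\eqref{boch1}, and the condition $\ric\geq 0$, and makes no use of the specific boundary $\partial\Om$, then applies directly to the superlevel set $\{\ffi\geq s_0\}$ and produces the desired isometry with $\big([s_0,+\infty)\times\{\ffi=s_0\},\dd\rho\otimes\dd\rho+\tilde g_{|\{\ffi=s_0\}}\big)$.
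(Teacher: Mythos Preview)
Your plan is exactly what the paper does: it simply asserts that ``all the theory developed in Section~3 holds with trivial modification'' and records the resulting formula. Two minor corrections to your write-up, however. First, $|\na\ffi|_{\tilde g}$ does \emph{not} in general tend to $0$ at infinity (in the Euclidean volume growth case $G\sim r^{2-n}$ and the quotient stays bounded away from zero); what you actually need for the Maximum Principle is that $w_\alpha=|\na\ffi|^2_{\tilde g}e^{-\alpha\ffi}\to 0$ at infinity, which holds simply because $e^{-\alpha\ffi}=G^\alpha\to 0$. Near the pole $w_\alpha$ blows up, so the clean way to obtain the global bound $|\na\ffi|_{\tilde g}\le n-2$ is to apply the Sharp Gradient Estimate on each $\{\ffi\ge s_1\}$ (with $\{\ffi=s_1\}$ playing the role of $\pa\Om$) and then let $s_1\to-\infty$, using your correct observation that $|\na\ffi|_{\tilde g}\to n-2$ at $O$. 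Second, your ``main obstacle'' is a non-issue: for every finite $s$ the region $\{\ffi\ge s\}$ is bounded away from the pole (since $\ffi\to-\infty$ there), so no boundary contribution at $O$ ever enters the Divergence Theorem, and Lemma~\ref{integral_id} goes through verbatim on $\{s\le\ffi\le S\}$ for all $s<S$ in $\R$.
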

The above Theorem clearly translates in terms of $(M, g)$ and $G$, exactly as Theorem \ref{thm:M/R} was deduced from Theorem \ref{thm:main_conf}.


\subsection*{Acknowledgements}
\emph{The author are grateful to C.~Arezzo, A.~Carlotto, A.~Farina, G.~Huisken, L.~Mari, D.~Peralta-Salas, F.~Schulze and P.~Topping  for useful comments and discussions during the preparation of the paper. The authors are members of Gruppo Nazionale per
l'Analisi Matematica, la Probabilit\`a e le loro Applicazioni (GNAMPA),
which is part of the Istituto Nazionale di Alta Matematica (INdAM).
The
paper was partially completed during the authors' attendance to the program “Geometry and relativity” organized by the Erwin Schroedinger International Institute
for Mathematics and Physics (ESI).}



\bibliographystyle{plain}


\end{document}